\documentclass[reqno]{amsart}

\usepackage{amssymb,amsmath,amsthm,amsfonts,color} 

\usepackage[left=3cm,right=3cm,top=3cm,bottom=3cm]{geometry} 
\usepackage{enumitem}
\parskip1mm
\usepackage{comment,graphicx,setspace}
\usepackage{wrapfig}

\usepackage[colorlinks=true, pdfstartview=FitV, linkcolor=blue, citecolor=blue, urlcolor=blue]{hyperref}

\tolerance=7000
\allowdisplaybreaks
\numberwithin{equation}{section}

\theoremstyle{plain}
\newtheorem{theorem}{Theorem}[section]
\newtheorem{proposition}[theorem]{Proposition}
\newtheorem{lemma}[theorem]{Lemma}
\newtheorem{corollary}[theorem]{Corollary}

\theoremstyle{definition}
\newtheorem{definition}[theorem]{Definition}
\newtheorem{remark}[theorem]{Remark}


\newcommand{\C}{\mathbb{C}}

\newcommand{\R}{\mathbb{R}}
\newcommand{\N}{\mathbb{N}}
\renewcommand{\S}{\mathbb{S}}


\newcommand{\bN}{\mathbf{N}}

\newcommand{\bM}{\mathbf{M}}




\newcommand{\fm}{\mathtt{v}}

\newcommand{\cH}{\mathcal{H}}
\newcommand{\cL}{\mathcal{L}}

\newcommand{\cS}{\mathcal{S}}
\newcommand{\cF}{\mathcal{F}}
\newcommand{\cG}{\mathcal{G}}
\newcommand{\cW}{\mathcal{W}}
\newcommand{\cU}{\mathcal{U}}

\newcommand{\p}{\partial}
\newcommand{\wk}{\rightharpoonup}
\newcommand{\cl}[1]{\overline{#1}}
\newcommand{\bound}{b}
\renewcommand{\d}{\mathrm{d}}
\newcommand{\e}{{\bf e}}
\renewcommand{\hat}{\widehat}
\newcommand{\strictlyincluded}{\subset\subset}
\newcommand{\res}{\mathop{\hbox{\vrule height 9pt width 0.5pt depth 0pt
\vrule height 0.5pt width 5pt depth 0pt}}\nolimits}


\renewcommand{\tilde}{\widetilde}
\newcommand{\supp}{\mathrm{supp}\,}
\newcommand{\dist}{\mathrm{dist}}

\newcommand{\loc}{\mathrm{loc}}
\newcommand{\Lip}{\mathrm{Lip}}
\newcommand{\Int}[1]{\mathrm{Int}(#1)}
\newcommand{\tr}{\mathrm{tr}}

\newcommand{\aplim}{\mathop{\mathrm{ap\,\, lim}}}

\newcommand{\openset}{O}

\newcommand{\str}[1]{\mathcal{E}#1}

\newcommand{\mtwo}{\mathbb{M}^{n\times n}_{\rm sym}}
\newcommand{\admissible}{\mathcal{C}}
\newcommand{\substrate}{S}
\newcommand{\Ins}[1]{{\rm Int}{(#1\cup \substrate\cup \Sigma)}}

\title[Existence for the SDRI model in $\R^n$]{Existence of minimizers for the SDRI model in $\R^n$: Wetting and dewetting regimes with mismatch strain}

\author[Sh. Kholmatov] {Shokhrukh Yu. Kholmatov} 
\address[Sh. Kholmatov]{University of Vienna, Oskar-Morgenstern Platz 1, 1090 Vienna (Austria) \& Samarkand State University, Univ. Boulevard 15, 140104 Samarkand (Uzbekistan)}
\email{shokhrukh.kholmatov@univie.ac.at}

\author[P. Piovano] {Paolo Piovano} 
\address[P. Piovano]{Politecnico di Milano, P.zza Leonardo da Vinci 32, 20133 Milano (Italy)}
\email{paolo.piovano@polimi.it}

\subjclass[2020]{49J45, 35R35, 74G65}

\keywords{Minimal configurations, elastic energy, surface energy,  mismatch strain, Dirichlet boundary conditions, existence, regularity, compactness, lower semicontinuity, density estimates,  SDRI, interface instabilities, thin films, crystal cavities, fractures.}

\date{\today}
\begin{document}

\begin{abstract} 
The existence and the regularity results obtained in  \cite{HP:2021_arxiv} for the variational model introduced in \cite{HP:2020_arma} to study the optimal shape of crystalline materials in the setting of stress-driven rearrangement instabilities (SDRI) are extended from two dimensions to any dimensions $n\geq2$. The energy is the sum of the elastic and the surface energy contributions, which cannot be decoupled, and depend on configurational pairs consisting of a set and a function that model the region occupied by the crystal and  the bulk displacement field, respectively. By following the physical  literature,  the  ``driving stress'' due to the mismatch  between the ideal free-standing equilibrium lattice of the crystal with respect to adjacent materials is included in the model by considering a discontinuous mismatch strain in the elastic energy. Since two-dimensional methods and the methods used in the previous literature where Dirichlet boundary conditions instead of the mismatch strain and only the wetting regime were considered, cannot be employed in this setting, we proceed differently, by including in the analysis the dewetting regime and carefully analyzing the fine properties of energy-equibounded sequences. This analysis allows to establish both a compactness property in the family of admissible configurations and the lower semicontinuity of the energy with respect to the topology induced by the $L^1$-convergence of sets and a.e.\ convergence of displacement fields, so that the direct method  can be applied. We also prove that our arguments work as well in the setting with Dirichlet boundary conditions.
\end{abstract}

\maketitle

\section{Introduction}

Elastic effects can strongly affect the structure of crystalline materials by inducing morphological destabilizations  from the optimal free-standing crystalline equilibrium, that are often referred to as the family of \emph{stress-driven rearrangement instabilities} (SDRI)  \cite{AT:1972,D:2001,FG:2004,Gr:1993,Srolovitz:1989}. In order to relieve the strain, atoms move from their crystalline order possibly inducing  both bulk deformations and interface irregularities. The latter can be originated in various forms, such as the roughness of the exposed crystalline boundaries, the formation of internal cracks in the bulk, the nucleation of dislocations in the crystalline lattice, and the delamination at contact edges with adjacent materials. However, such corrugations and extra boundary interfaces are not favorable with respect to the surface energy, which would instead prescribe regular specific Wulff/Winterbottom-type shapes \cite{PV:2022,PV:2023,Winterbottom,Wulff01}. Therefore, the surface energy competes against the destabilizing effect of the elastic energy with a regularizing effect: a delicate microscopical compromise between such opposite mechanisms  must then be reached strongly affecting in a variety of ways the original crystalline-material macroscopical properties.  

In the strive of capturing such interplay between elastic and (anisotropic) surface energy described by the physical literature  \cite{Deng:1995,HS:1991,Baldelli:2013,SMV:2004,Spencer:1999,WL:2003,Xia:2000}, various mathematical  models with a variational nature have been introduced in relation to the different  settings relevant for the applications. A non-exhaustive list includes \cite{BGZ:2015,BCh:2002,DP:2018_1,DP:2018_2,FFLM:2007,GZ:2014,KP:2021} for epitaxially-strained thin films deposited on supporting materials, \cite{BFM:2008,CCF:2016,FGL:2019} for fractures, \cite{Babadjian:2016,Baldelli:2014} for delamination, and, e.g.,  \cite{FFLM:2011} for crystalline cavities. Establishing the existence of minimizers for such models even in dimension $n=2$ is a challenging task especially due to compactness issues. Such issues were first solved in simplified settings, by working under the antiplane-shear assumption \cite{Bonaci:2015,ChS:2007}, or by distinguishing the applications with \emph{adhoc} geometric assumptions on the morphology of the crystalline materials, such as adopting graph-type  and star-shapedness constraints on film profiles and crystal cavities, respectively.
However, numerous experiments in dewetting and crystal growth exhibit morphologies that cannot be represented as graphs, due to the formation of holes, rims with overhangs, merging fronts, pinch-off events, and other topological changes. For instance, the dewetting experiments of \cite{R:1992,R:1994} clearly illustrate the nucleation of holes, rim growth, coalescence into cellular patterns, and eventual breakup into droplets. Similarly, solid-state dewetting experiments \cite{KGT:2009,RG:2021} reveal faceted rims and islands, sharp corners, and pinch-off events, while related phenomena are also documented in \cite{SS:1986}.
These observations motivate the need for a geometric formulation of the SDRI model that does not rely on a graph representation.

More recently, the development of several techniques  related to GSBD-functions, a specific subclass of functions of bounded deformation \cite{D:2013_jems}, have been sucessfully applied to models related to the  Griffith energy \cite{CCF:2016,CCI:2017,ChC:2019_arxiv,ChC:2020_jems,D:2013_jems,FGL:2019}. Following this progress, there has been a growing effort \cite{ChC:2020_arxiv,CF:2020_arma,HP:2020_arma,HP:2021_arxiv} to develop  mathematical frameworks enabling the simultaneous treatment of the various mechanisms of mass rearrangement and boundary instabilities, which is of crucial importance, as often such phenomena concomitantly occur in applications.

The aim of  this paper is to extend to dimension  $n\ge2$, and hence including the physical relevant case of $n=3$, the existence and the regularity results  obtained in \cite{HP:2021_arxiv} for $n=2$ for the SDRI model introduced in  \cite{HP:2020_arma}.  In regard of the existence, such an extension was previously achieved in \cite{CF:2020_arma}  for the  \emph{wetting regime}, i.e., the case  for which it is more convenient for the crystal material to always cover the surface of a (supporting) adjacent material rather than letting it exposed,  and the setting in which the stress driving effect characterizing SDRI is mathematically prescribed by introducing  \emph{boundary Dirichlet conditions}. Here we address also the dewetting regime and,  as previously done by the authors in \cite{HP:2020_arma,HP:2021_arxiv} for $n=2$, by following  the physical literature \cite{AT:1972,D:2001,FG:2004,Gr:1993,Spencer:1999,Srolovitz:1989,Xia:2000} we avoid the use of any Dirichlet boundary conditions and we directly introduce a \emph{mismatch strain}  in the elastic energy. As suggested by its name, such strain is induced in the \emph{free crystal}, i.e., the crystal of which we are studying the morphology,  by the mismatch between its ideal free-standing equilibrium lattice and the lattice of adjacent materials. Since the approach used in \cite{CF:2020_arma} cannot be applied to this setting without boundary conditions as it is described below (see also \cite{HP:2021_arxiv}), we have developed an alternative strategy that allows us to tackle both the case with mismatch strain and the one with Dirichlet conditions (see Remark \ref{rem:literature_models} for more details).   Finally, the method of this paper extends (also to both the settings with and without Dirichlet conditions) the regularity results for the bulk displacements and the morphologies of the energy minimizing configurations obtained by the authors in \cite{HP:2021_arxiv} for $n=2$  (besides extending the existence results of \cite{HP:2021_arxiv} to the presence of different adjacent materials and to Griffith-type models with mismatch strain and delamination).

To facilitate this generalization, we adopt the terminology introduced in \cite{HP:2020_arma,HP:2021_arxiv}, by referring to the  bounded region $\Omega$ in the space $\R^n$ where the free crystal is located as the \emph{container} in analogy to capillarity problems,  and to the region $S$  occupied by adjacent materials outside the container, i.e., $S\subset\R^n\setminus \Omega$,  as the  \emph{substrate} in analogy to the thin-film setting where $S$ is the supporting material on which the film is being deposited. We notice that the  \emph{contact region} between the container and the substrate  $\Sigma:=\p\Omega\cap\p \substrate$ is assumed to be a Lipschitz $(n-1)$-manifold and that $S$ can be given by a finite number of different connected components possibly modeling different adjacent materials. The free crystals are then represented by configurational pairs of set-function type $(A,u)$, where $A\subset\Omega$ is a set of finite perimeter denoting the region occupied by the free crystal and subject to the volume constraint $|A|=\fm$ with $\fm\in(0,|\Omega|]$,  and $u$ is a vector valued faction in $GSBD^2({\rm Int}(A\cup\Sigma\cup S)) \cap H_\loc^1(\substrate)$ denoting the displacement field of the free-crystal and substrate bulk materials with respect to their optimal equilibrium arrangements. The family of all such admissible configurational pairs $(A,u)$ is denoted by $\mathcal{C}$.

The configurational energy of any free-crystal pair $(A,u)\in\mathcal{C}$ is defined by
\begin{equation}\label{ebergy:sauri}
\cF(A,u) =  \cW(A,u) +\cS(A,u),
\end{equation}
where $\cS$ and $\cW$ represent the  elastic  and the surface energy, respectively. The elastic energy $\cW$ in \eqref{ebergy:sauri} is defined as in \cite{FFLM:2007} by
$$
\cW(A,u) = \int_{A\cup S} \C(x)[\str{u}-\bM_0]:[\str{u}-\bM_0]\,\d x,
$$
where $\C$ is a  bounded measurable tensor-valued map $\C$ in $\Omega\cup S$ satisfying the coercivity assumption $\C\ge c\,\mathbb{I}>0$ (in the sense of linear operators), where $\mathbb{I}$ is the identity tensor, $\str{u}$ is the approximate symmetric gradient of $u$ (see \eqref{approx_sym_gradents}) and 
$
\bM_0
$ 
is the (discontinuous) \emph{mismatch strain} defined as 
\begin{equation}\label{mismatchstrain}
\bM_0 = 
\begin{cases}
\str{u_0} & \text{in $\Omega,$}\\
0 & \text{in $S$}
\end{cases}
\end{equation}
for some fixed $u_0\in H^1(\R^n)$. In the special case, when the equilibrium lattice of the free crystal and of the substrate matches at $\Sigma$, we take $u_0\equiv0$. The surface energy $\cS$ in \eqref{ebergy:sauri} is defined as
$$
\cS(A,u):=\int_{\p^*A \cup J_u} \psi(x,\nu(x))\d\cH^{n-1},
$$
where $\p^*A$ is the reduced boundary of $A$, $J_u$ is the jump set of $u$, and the surface energy density $\psi(\cdot,\nu(\cdot))$  is given by 
\begin{equation}\label{surergy}
\psi(x,\nu(x)):=
\begin{cases}
\varphi(x,\nu_A(x)) & \text{if $x\in \Omega\cap \p^*A,$}\\
2\varphi(x,\nu_{J_u}(x)) & \text{if $x\in A^{(1)}\cap J_u,$}\\
\beta(x) & \text{if $x\in [\Sigma\cap \p^*A]\setminus J_u,$}\\
\varphi(x,\nu_\Sigma(x)) & \text{if $x\in \Sigma\cap \p^*A \cap J_u$,}
\end{cases}
\end{equation}
where $\nu_U(x)$  denotes the outward-pointing normal vector to $U$ at $x\in\p^*U$ for any set of finte perimeter $U\subset\R^n$, $\nu_\Sigma:=\nu_S$, $\nu_{J_u}$ is the  normal on $J_u$,  $A^{(1)}$   is the set of points of density $1$ for $A$, $\varphi\in C(\cl{\Omega}\times \R^n)$ is a Finsler norm denoting the  \emph{anisotropic surface tension}   of the free-crystal material, and  $\beta\in L^\infty(\Sigma)$ represents the  \emph{relative adhesion coefficient} of $\Sigma$ for which we assumed, as in capillarity theory (see, e.g., \cite{DPhM:2015}), that
\begin{equation*}
|\beta(x)| \le \varphi(x,\nu_\Sigma)\quad\text{for a.e.\ $x\in\Sigma$.} 
\end{equation*}

Notice that the weights in \eqref{surergy}, which forbid to  decouple  the surface energy from the elastic energy making the energy $\cF$ highly nonlocal, are consistent with the ones chosen in  \cite{CF:2020_arma,FFLM:2007,FFLM:2011,HP:2020_arma,HP:2021_arxiv},  where they were crucial to prove energy lower semicontinuity-type properties. In particular, the anisotropy on  \emph{internal cracks} $A^{(1)}\cap J_u$ is weighted twice as much as the  \emph{free boundary} $\Omega\cap \p^*A$ of the exposed boundary of the free crystal, because cracks can be approximated by ``closing voids'' as in \cite{CF:2020_arma,FFLM:2007, HP:2020_arma}.  The presence of the surface energy over $\Sigma\cap \p^*A\cap J_u$ allows to consider a more general framework for thin films depositing on a substrate, in which cracks  are allowed to appear not only inside the film material, but also along the surface of the substrate characterizing the \emph{delamination region}, where debonding between the atoms of the two materials occurs, and as such,  the corresponding surface tension in \eqref{surergy}  is regarded as the same of the one on the  free-crystal exposed  boundary. Finally, on the complementary  region to the delamination in $\Sigma\cap\p^*A$ where the bulk displacement is continuous,  the relative adhesion coefficient $\beta$ is considered.

We also observe that  in the case of total wetting case, i.e., if $\beta(x) = -\varphi(x,\nu_\Sigma(x))$ for a.e.\ $x\in\Sigma,$ we reduce to the setting of material voids considered in \cite{CF:2020_arma} (with the mismatch strain $\bM_0$ replaced by a Dirichlet boundary condition). On the contrary, in the total dewetting case, i.e., if $\beta(x) = \varphi(x,\nu_\Sigma(x))$ for a.e.\ $x\in\Sigma,$ the energy $\cF$ is minimized by configurational pairs with displacement $u\equiv u_0$ in $\Omega$ and null otherwise, and so characterized by having a zero elastic energy: the model reduces to the dewetted capillarity setting, or in other words, to the anisotropic isoperimetric problem in a container. Finally, in the case with $\fm=|\Omega|$, we reduce to the Griffith model with the inclusion of possible delamination at the substrate boundary, which generalize also for $n=2$ the setting considered by the authors in \cite{HP:2020_arma,HP:2021_arxiv} together with $S\neq\emptyset$.

We now present the two main results of the paper (see Section \ref{subsec:main_results} for more detailed statements) and comment their proofs. We begin by observing that, since the values of the admissible displacement fields $u$ in the void regions $\Omega\setminus A$ do not play any role in the energy of $(A,u)$, as only a formal difference with respect to the previous presentation of the SDRI models introduced in \cite{HP:2020_arma,HP:2021_arxiv}, for every $(A,u)\in\mathcal{C}$ we can redefine $u$ in $\Omega\setminus A$ with a properly chosen constant such that $\Omega\cap \p^*A \subset J_u$ (see Remark \ref{rem:ext_u_out_A}), and so without changing the value of $\cF(A,u)$. We make use of this observation in the following.

\begin{theorem}[\textbf{Existence of minimizing configurations}]\label{teo:intro_existos}
The minimum problem 
\begin{equation}\label{shohdoahoda}
\min\limits_{(A,u)\in\admissible,\,|A|=\fm}\,\,\cF(A,u) 
\end{equation} 
admits a solution.
\end{theorem}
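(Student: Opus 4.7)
The plan is to apply the direct method of the calculus of variations. First, take a minimizing sequence $(A_k,u_k)\in\admissible$ with $|A_k|=\fm$ and $\cF(A_k,u_k)\to\inf \cF$; the infimum is finite because $\admissible$ is non-empty (for instance a smooth crystal of volume $\fm$ paired with $u\equiv u_0$ yields a finite-energy competitor). By the coercivity of the Finsler norm $\varphi$ and the control \eqref{beta_varphi_contact} on $|\beta|$, the bound on $\cS(A_k,u_k)$ controls the perimeter of $A_k$ inside $\Omega$ (the contact contribution on $\Sigma$ is a priori finite since $\Sigma$ has finite $\cH^{n-1}$-measure and $\beta\in L^\infty$). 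Standard $BV$ compactness then yields, up to a subsequence, $\chi_{A_k}\to\chi_A$ in $L^1(\Omega)$ for some set of finite perimeter $A\subset\Omega$ with $|A|=\fm$.

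The main work is the compactness for the displacements. From $\cW(A_k,u_k)\le C$, the coercivity $\C\ge c\mathbb{I}$, and $\bM_0\in L^2$, we obtain $\int_{A_k\cup \substrate}|\str{u_k}|^2\,\d x\le C$ and $\cH^{n-1}(J_{u_k})\le C$. Using the observation preceding the theorem, I redefine $u_k$ on $\Omega\setminus A_k$ by a constant so that $\Omega\cap\p^*A_k\subset J_{u_k}$ without altering $\cF(A_k,u_k)$, and view each $u_k$ as a $GSBD^2$ function on $\Ins{A_k}$. I would then invoke a $GSBD$-compactness theorem of Dal Maso and Chambolle--Crismale type on the varying domain: up to a further subsequence, there exist a limit $u\in GSBD^2$ and, possibly, a modification $\tilde u_k$ differing from $u_k$ only on an exceptional set $\omega_k$ of small measure, such that $\tilde u_k\to u$ a.e.\ on $A\cup\substrate$, $\str{\tilde u_k}\wk \str u$ weakly in $L^2$, and $\cH^{n-1}(J_u)\le\liminf_k \cH^{n-1}(J_{\tilde u_k})$.

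The principal obstacle --- and the reason the techniques of the wetting/Dirichlet setting do not transfer directly --- is controlling this exceptional set $\omega_k$ and the coupling between the displacement on $A_k$ and on the fixed substrate $\substrate$ without any Dirichlet anchor. A Poincar\'e--Korn-type inequality in $GSBD$ produces infinitesimal rigid motions to subtract on connected pieces, but $A_k$ may shed components in the limit and $A_k\cup \substrate$ need not be connected uniformly in $k$. I expect the heart of the proof to be a fine analysis of the level sets of $|u_k|$ and the components of $A_k$, combined with density-estimate arguments and a careful jump-transfer step, so that any new jumps introduced on $\p^*A_k\cup\Sigma$ by the modification are absorbed by $\cS(A_k,u_k)$ without loss, and so that the limit pair $(A,u)$ belongs to $\admissible$ with the correct placement of free boundary, internal crack, delamination, and contact region.

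Once compactness is established, the lower semicontinuity of $\cF$ is verified piecewise. The elastic term $\cW$ passes to the limit from below by weak $L^2$-lower semicontinuity of $\xi\mapsto\int\C[\xi-\bM_0]:[\xi-\bM_0]$, combined with the $L^1$-convergence $\chi_{A_k\cup \substrate}\to\chi_{A\cup \substrate}$ to handle the varying integration domain. The surface term $\cS$ is handled by an Ambrosio-type lower semicontinuity result tailored to the composite density $\psi$ in \eqref{surergy}, using \eqref{beta_varphi_contact} to treat the adhesion contribution with the correct sign, together with $\chi_{A_k}\to\chi_A$ in $L^1$ and the semicontinuity of $\cH^{n-1}(J_{\tilde u_k})$ to correctly distribute $\cH^{n-1}$-mass among the four cases of \eqref{surergy}. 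The direct method then concludes the existence of a minimizer.
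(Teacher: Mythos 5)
Your outline identifies the direct method as the right frame, and correctly flags the absence of a Dirichlet anchor as the principal obstacle, but it misses the two structural features that actually make the paper's proof work, and as written it would break at both points.

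First, the compactness step cannot be carried out by ``a modification $\tilde u_k$ differing from $u_k$ only on an exceptional set $\omega_k$ of small measure.'' The Chambolle--Crismale compactness theorem produces an exceptional set $E\subset\Omega\cup S$ of \emph{arbitrary} (possibly full) measure on which $|u_k|\to+\infty$; there is no a priori smallness, precisely because there is no boundary datum. The paper's resolution is structural, not a perturbative modification: it applies Poincar\'e--Korn on each connected component of $S$ to obtain a finite collection $\{S^i\}$ with associated rigid motions $\{a_k^i\}$ satisfying $|a_k^i-a_k^j|\to\infty$ for $i\neq j$; it then applies GSBD compactness to each $u_k-a_k^i$ to produce a Caccioppoli partition $\{F^i\}_{i\ge0}$ of $A$ into phases interacting with the respective $S^i$, plus a ``hanging phase'' $F^0$; and it then cuts the crystal along $\bigcup_i\p^*F^i$ by inserting open voids $G_k^\delta$ (built via Propositions~\ref{prop:estimate_inner_jump} and \ref{prop:estimate_red_boundary}, with careful handling of large traces of $A_k\setminus A$ along $\Sigma$), so that the modified pair $(B_k^\delta,v_k^\delta)$ with $B_k^\delta=A_k\setminus G_k^\delta$ and piecewise-translated $v_k^\delta$ converges and has energy controlled up to an $O(\sqrt\delta)$ error (Proposition~\ref{prop:pass_to_good_seq_compacte}). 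Nothing in your sketch produces this partition-and-cut step, and the ``fine analysis of level sets and density estimates'' you anticipate is not a substitute for it.

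Second, this cut destroys the volume constraint: $|B_k^\delta|=|A_k|-|G_k^\delta|<\fm$, so the modified sequence is \emph{not} admissible for the constrained problem, and your claim that BV compactness directly yields $|A|=\fm$ does not survive the modification. The paper never minimizes $\cF$ under the constraint directly; it instead proves compactness and lower semicontinuity for the penalized functional $\cF^\lambda=\cF+\lambda||A|-\fm|$ (which forgives the volume loss since the penalty is $L^1$-continuous), and then invokes the equivalence between the constrained problem and the penalized problem for $\lambda\ge\lambda_0$ (Proposition~\ref{prop:fusco}) to recover a constrained minimizer. This detour is not optional here; your proposal omits it entirely.

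A smaller but real issue: the weight $2$ on internal cracks in $\psi$ is not captured by an off-the-shelf Ambrosio-type lower semicontinuity theorem. The paper obtains the factor $2$ by replacing small portions of $J_{u_k}$ by open voids (Lemma~\ref{lem:creating_hole}, Corollaries~\ref{cor:functions_with_good_cracks} and \ref{cor:functions_with_cracks_Sigma}) so that one-dimensional slices cross the hole boundary at least twice, and then by a Besicovitch differentiation argument for the associated Radon measures. That mechanism is what ties the compactness constructions to the lower semicontinuity estimates; it is a genuine ingredient, not a routine appeal to known semicontinuity results.
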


\noindent We refer the Reader  to Theorem \ref{teo:global_existence} for a more detailed and comprehensive statement.

Theorem \ref{teo:intro_existos}  is established by means of the \emph{direct method of the calculus of variations} with respect to a properly chosen  topology $\tau_\admissible$ in $\admissible,$ characterized by the convergence:
$$
(A_k,u_k) \overset{\tau_\admissible}{\longrightarrow} (A,u) \qquad \Longleftrightarrow\qquad 
\begin{cases}
A_k\to A \quad\text{in $L^1(\R^n),$}\\
u_k\to u \quad\text{a.e.\ in $\Omega\cup S.$}
\end{cases}
$$

In order to establish the $\tau_\admissible$-lower semicontinuity of $\cF$ in Theorem \ref{teo:lower_semicontinuity} we consider the positive Radon measures  $\mu_k$ and $\mu$  in $\R^n$ associated to the localized versions of $\cF(A_k,u_k)$ and $\cF(A,u)$, respectively, for which it holds that
\begin{equation}\label{lsc_F_saa}
\liminf\limits_{k\to+\infty} \cF(A_k,u_k) \ge \cF(A,u)\qquad\Longleftrightarrow\qquad \liminf\limits_{k\to+\infty} \mu_k(\R^n) \ge \mu(\R^n).
\end{equation}
 Then, we observe that, up to a subsequence, $\mu_k$ weakly* converges to some positive Radon measure $\mu_0$, and that $\mu$ is absolutely continuous with respect to $\cH^{n-1}\res (\p^*A\cup J_u\cup \Sigma) + \cL^n\res(\Omega\cup S),$  and we establish the following estimates for the Radon-Nikodym derivatives:
\begin{align}
& \frac{\d\mu_0}{\d\cH^{n-1}\res(\p^*A\cup J_u\cup \Sigma)} \ge \frac{\d\mu}{\d\cH^{n-1}\res(\p^*A\cup J_u\cup \Sigma)} \quad  \text{$\cH^{n-1}$ -a.e. on $\p^*A\cup J_u\cup \Sigma,$}\label{surface_part_mm}\\
&\frac{\d\mu_0}{\d\cL^n \res(\Omega\cup S)} \ge \frac{\d\mu}{\d\cL^n \res(\Omega\cup S)} \quad  \text{$\cL^n$-a.e. in $\Omega\cup S,$} \label{volume_part_mm}
\end{align}
which imply that
$
\lim \mu_k(\R^n) = \mu_0(\R^n) \ge \mu(\R^n)  
$
and, in view of \eqref{lsc_F_saa}, conclude the proof of the lower semicontinuity. For the estimate \eqref{surface_part_mm} we need to distinguish between  the estimate at the reduced boundary of $A$ and at $\Sigma\setminus J_u$, where we can implement techniques developed in capillarity theory \cite{ADT:2017,DPhM:2015}, from the  estimate at the (approximate) jump points of $u$, where we employ arguments based on the slicing properties of GSBD-functions as in the Griffith model \cite{ChC:2019_arxiv,ChC:2020_jems,ChC:2020_arxiv}, for  which though extra care is needed: unless $\fm=|\Omega|,$ we cannot directly apply those arguments because at jump points we need to obtain different weights with respect to the ones at the reduced boundary of $A.$ Rather, we replace $J_{u_k}$ in small ``holes'' up to some error by means of Corollaries \ref{cor:functions_with_good_cracks} and \ref{cor:functions_with_cracks_Sigma}  in such a way that each slice   intersects  the boundary of those holes at least in  two points (see the proof of Proposition \ref{prop:estimate_inner_jump}),  which in turns yields the desired estimate with weight $2$ at such  jump points (see Corollary \ref{cor:further_estimates}). Finally, we prove  \eqref{volume_part_mm} by using the convexity of $\cW(A,\cdot)$ and by observing that the condition $u_k\to u$ a.e.\ in $\Omega\cup S,$ together with the compactness result \cite[Theorem 1.1]{ChC:2020_jems}, allows us to conclude  that $\str{u_k} \wk \str{u}$ in $L^2(\Omega\cup S)$.
We recall that in \cite{CF:2020_arma} the authors prove the lower semicontinuity of an energy for crystalline voids via relaxation arguments. Namely, the authors start in the regular family of pair configurations given by voids with a Lipschitz boundary and Sobolev displacement fields, and then in the relaxation, the jump set appears as the void boundaries collapse, resulting in a coefficient $2$ in front of the jump energy of $\cS$. We are here actually arguing in the reverse direction:  first we start in $\mathcal{C}$ with admissible pairs allowing displacements with jump sets, and then we carefully create an at most countable family of voids around them.

The $\tau_\admissible$-compactness of an energy-equibounded sequence $\{(A_k,u_k)\}\subset \admissible$ is established in Theorem \ref{teo:compactness}. We easily get the uniform bounds on the perimeters of $A_k$, the $\cH^{n-1}$- measure of the jumps $J_{u_k}$, and the $L^2$-norm of $\str{u_k}$ by the assumptions on the  anisotropic surface tensions and the elasticity tensor (see Remark \ref{rem:apriori_bounds_seq}). Thus, we can directly deduce the convergence in $L^1(\R^n)$ up to a non-relabelled subsequence of $A_k$ to some set $A\subset\Omega$ of finite perimeter. However,  establishing the $\mathcal{L}^n$ a.e.\ convergence of the displacements $u_k$ is delicate: by \cite[Theorem 1.1]{ChC:2020_jems} there could be an exceptional set $E$ with $\mathcal{L}^n$ positive measure, in which $|u_k|\to+\infty$. The presence of such an exceptional set has been  previously treated by prescribing Dirichlet boundary conditions  \cite{ChC:2019_arxiv,ChC:2020_jems, CF:2020_arma}. For instance, in \cite{CF:2020_arma} the compactness issue is solved by considering in the proof an auxiliary  more general class $GSBD_\infty^p$, $p>1$, of displacements (which are allowed to attain the infinite value on a subset of their domain of also $\mathcal{L}^n$ positive measure) and then, by using the  Dirichlet condition imposed on the displacements at the boundary, the authors are able to prove that the minimizing displacements belong to the original space $GSBD^p.$ However, as in the setting with the mismatch strain \eqref{mismatchstrain}, we cannot rely on any  fixed  boundary condition, one cannot  even exclude the situation with $E=\Omega\cup S$   and hence, this issue unfortunately forbids the implementation of the strategy of \cite{CF:2020_arma} to our SDRI setting. The other option of excluding the presence of the exceptional set is based on the employment of Poincar\'e-Korn inequality for GSBD-functions \cite{CCF:2016} with small jump: the set $\Omega$ is partitioned into a Caccioppoli family $\{P_j\}$ of sets $P_j$ in which a sequence $\{a_k^j\}$ of rigid displacements are defined in such a way that $u_k - a_k^j$ is convergent pointwise a.e.\ in $P_j$, so that one can conclude that  the sequence
\begin{equation}\label{vk_defIntroa}
v_k:=u_k - \sum\limits_j a_k^j\chi_{P_j} 
\end{equation}
converges to some $u\in GSBD^p(\Omega)$ a.e.\ in $\Omega$, $\str{v_k}\wk \str{u}$ in $L^p(\Omega)$, and 
$$
\lim\limits_{k\to+\infty} \cH^{n-1}(J_{u_k}) \ge \cH^{n-1}\Big(J_u\cup(\Omega\cap \bigcup_j \p^*P_j)\Big)
$$
(see \cite[Theorem 1.1]{ChC:2020_arxiv}). However, also this approach seems not implementable in our SDRI setting, since the functions $v_k$ defined in \eqref{vk_defIntroa} may admit extra jumps along the boundary of the partition phases $P_j$ that should be counted with different weights in our setting with different surface tensions. 

In view of these  issues, in order to prove compactness we use a different strategy in this  paper by directly partitioning the sets $A$ and $A_k$ (not only $A$!) into Caccioppoli families (that need to be created by starting from the connected components of the substrate) up to a controllable error (see Figures \ref{fig:construction_Gkdelta} and \ref{fig:set_Gdelta}). Such strategy is a reminiscence of the ideas already used by the authors  in \cite[Theorem 2.7]{HP:2020_arma}, of partitioning $A_k$  by means of introducing extra circles closing the shrinking ``necks'', which though works only for $n=2$ and under the constraint assumed in  \cite{HP:2020_arma} on the number of boundary components for the admissible free-crystal regions. More precisely, we proceed here arguing as follows: First,  by  the classical Poincar\'e-Korn inequality we partition $S$ in a family $\{S^i\}_{i\geq1}$ of sets $S^i$ such that for each $i\geq1$ the set $S^i$ is a union of connected components of $S$ and  there exists a sequence of rigid displacements $\{a_k^i\}$ such that, up to a subsequence, $u_k - a_k^i $ converges a.e.\ in $S^i$ and  $|a_k^i - a_k^j|\to+\infty$ a.e.\ in $\R^n$ for every $j\neq i$. Second, by applying \cite[Theorem 1.1]{ChC:2020_jems} with $u_k-a_k^i$ we construct a family $\{F^i\}_{i\geq0}$ of pairwise disjoint Caccioppoli subsets of $A,$ such that for $i\geq1$ the sequence $u_k-a_k^i$ converges a.e.\ in $F^i\cup S^i$ and diverges to infinity otherwise, and $F^0:=A\setminus\bigcup_{i\ge1} F^i$. Furthermore, since $F^0$ is the portion of the free crystal, so-called in the following ``hanging phase'' (see Figure \ref{fig:parto_intro}), that does not ``interact'' with any substrate component, we can redefine the displacements  in $F^0$ as $u_0$ (see \eqref{mismatchstrain}), which corresponds to providing a zero contribution to the overall elastic energy. Third, by using the $\cH^{n-1}$-rectifiability of $\p^*F^i$ and Propositions \ref{prop:estimate_inner_jump} and \ref{prop:estimate_red_boundary}, we construct for any $\delta>0$ a union $G_k^\delta\subset\Omega$ of open sets covering $\bigcup \p^*F^i$ up to some error of order $O(\sqrt{\delta})$ and whose perimeter and volume are controlled, and we set 
\begin{equation}\label{intro_approx_conf}
B_k^\delta:=A_k\setminus G_k^\delta\quad\text{and}
\quad 
v_k^\delta:=u_0\chi_{F^0} + \sum\limits_{i\ge1}(u_k-a_k^i)\chi_{S^i\cup (F^i\setminus G_k^\delta)} + u_0 \chi_{F^0}.
\end{equation}
We notice that actually the definition of the $v_k^\delta$ in \eqref{intro_approx_conf} is more involved  (see \eqref{def_vvkdelta}), as we need also to control the possible large jumps created along $\Sigma$, that though in the limit disappear (becoming wetting layer), by creating artificial small jumps in $A_k^\delta\setminus A$ and redefining $v_k^\delta$ in that set near $\Sigma$. The obtained configurations satisfy
\begin{equation*}
\cF(A_k,u_k) \ge \cF(B_k^\delta, v_k^\delta) - c\sqrt\delta \Big(\cH^{n-1}(\p^*A_k) + \cH^{n-1}(J_{u_k}) + \sum\limits_{h=0}^mP(F^h)\Big) 
\end{equation*}
for some constant $c>0$ (see Proposition \ref{prop:pass_to_good_seq_compacte}), from which Theorem \ref{teo:compactness} follows by a diagonal argument. 

\begin{figure}[htp!]
\includegraphics[width=0.8\textwidth]{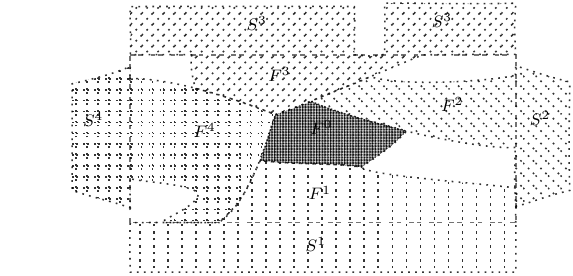}
\caption{\small The partitions of the substrate and the free crystal respectively into the families of Caccioppoli sets $\{S^i\}_{i\geq1}$ and $\{F^i\}_{i\geq0},$  used in the proof of the $\tau_\admissible$-compactness, are illustrated by assigning different line patterns to the crystal phases that interact with the
substrate, and a dotted pattern to the remaining  ``hanging phase''  $F^0.$}
\label{fig:parto_intro}
\end{figure}

We also notice that in the  case with Dirichlet boundary conditions, one see at most 2 elements in the partition, the  hanging phase $F^0$ and a phase $F^1$ interacting with the substrate, since in this case we do not need to add any rigid displacements. Apart from this simplification, the methods used in the proof of Theorems \ref{teo:lower_semicontinuity} and \ref{teo:compactness} still work, even by relaxing the assumptions on the   convex elastic energy densities, i.e., by allowing for a $p$-growth  with respect to strains (see Section \ref{subsec:generalization}). This allows us in particular to recover in Remark \ref{rem:literature_models} the  existence results for the model representing material voids in the  framework with Dirichlet boundary conditions of \cite{CF:2020_arma} and the existence and regularity results  for the Griffith fracture model with Dirichlet boundary conditions of \cite{ChC:2019_arxiv}.

The second main result of the paper relates to properties of partial regularity satisfied by the minimizers $(A,u)$ of $\cF$, such as the essential closedness of $J_u$ and $\p^*A.$ 

\begin{theorem}[\textbf{Regularity results for minimizing configurations}]\label{teo:reguaksaisa}
Let $(\tilde A,\tilde u)$  be solution of \eqref{shohdoahoda}. Then the pair $(A,u)$ defined by 
$$
A:= \Int{A^{(1)}}\qquad\text{and}\qquad u:= \tilde u\chi_{A\cup S} + \xi \chi_{\Omega\setminus A},
$$
where $\xi\in\R^n$ is chosen  such that $\Omega\cap\p^*A\subset J_u$ (see Remark \ref{rem:ext_u_out_A}), is also a solution of \eqref{shohdoahoda}. Furthermore, we have that
$$
\cH^{n-1}(\tilde A^{(1)} \setminus A)<+\infty,\quad 
\cH^{n-1}(J_u\setminus J_u^*)=0,\qquad\text{and}\qquad
\cH^{n-1}(\cl{J_u^*}\setminus J_u^*)=0,
$$
where 
$$
J_u^*:=\{x\in J_u:\,\, \theta(J_u,x)=1\}
$$
with $\theta(J_u,x)$ denoting the $(n-1)$-dimensional density of $J_u$ at $x.$ Finally, there exists a constant $c>0$ such that if $E\subset A$ is a ``hanging'' component of $A,$  i.e., if $\cH^{n-1}([\p^*E\cap\Sigma]\setminus J_u) =0,$ then $|E| \ge c.$ 
\end{theorem}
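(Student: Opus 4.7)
The plan is to establish the four assertions by a combination of direct bookkeeping, localized variational comparisons, and standard density-type estimates. For the first claim, observe that $A$ and $\tilde A$ agree modulo a Lebesgue-null set, so both the volume constraint and the equality $\cW(A,u)=\cW(\tilde A,\tilde u)$ hold (the new $u$ coincides with $\tilde u$ on $A\cup S$, and the values of $\tilde u$ on $\Omega\setminus \tilde A$ played no role in the energy). The reduced boundaries agree up to $\cH^{n-1}$-null sets, so the $\Sigma$-based contributions in \eqref{surergy} are unaffected; the choice of $\xi$ in Remark \ref{rem:ext_u_out_A} enforces $\Omega\cap \p^*A\subset J_u$, making every point of the free boundary count with weight $\varphi(x,\nu_A(x))$, yielding $\cF(A,u)=\cF(\tilde A,\tilde u)$. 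For the finiteness of $\cH^{n-1}(\tilde A^{(1)}\setminus A)$, note that up to $\cH^{n-1}$-null sets $\tilde A^{(1)}\setminus A\subset A^{(1)}\setminus \Int{A^{(1)}}\subset \Omega\cap\cl{\p^*A}$; I would prove a uniform lower density estimate $\cH^{n-1}(\p^*A\cap B_r(x))\ge c\,r^{n-1}$ for small $r$ by comparing with competitors that fill or void $A\cap B_r(x)$, redistributing the volume via the perturbation construction used below in the hanging-component step. Standard covering then yields $\cH^{n-1}(\cl{\p^*A}\setminus \p^*A)=0$, and the finiteness follows from $\cH^{n-1}(\p^*A)\le \cF(A,u)/\inf\varphi<+\infty$.

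For the regularity of $J_u$, at $\cH^{n-1}$-a.e.\ point $x\in J_u$ with $\theta(J_u,x)<1$, I would construct a competitor on a small ball $B_r(x)\strictlyincluded\Omega\cap A^{(1)}$ that heals the jump by replacing $u$ with a suitable one-sided Sobolev extension, in the spirit of the de Giorgi--Carriero--Leaci argument adapted to $GSBD$ and using the slicing and small-holes techniques already deployed in Propositions \ref{prop:estimate_inner_jump}--\ref{prop:estimate_red_boundary}. The energy balance (surface saving of order $(1-\theta)\,r^{n-1}$ against volume and elastic cost of order $r^{n}$) forces $\theta(J_u,x)=1$ for $\cH^{n-1}$-a.e.\ $x\in J_u$, giving $\cH^{n-1}(J_u\setminus J_u^*)=0$. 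The same uniform lower density bound extends to every point of $\cl{J_u^*}$ by passing to limits, and combined with the $\cH^{n-1}$-rectifiability of $J_u$ this is the classical ingredient that yields $\cH^{n-1}(\cl{J_u^*}\setminus J_u^*)=0$.

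Finally, for the hanging-component bound, if $E\subset A$ is a hanging component with $|E|$ small, the hypothesis $\cH^{n-1}([\p^*E\cap \Sigma]\setminus J_u)=0$ forces all of $\p^*E$ to carry (up to $\cH^{n-1}$-null sets) either the weight $\varphi\ge c>0$ or the weight $2\varphi$ in \eqref{surergy}, so the $\cS$-contribution of $E$ is at least $c\,P(E)\ge c'|E|^{(n-1)/n}$ by the anisotropic isoperimetric inequality. I would then compare with the competitor obtained by removing $E$, setting the displacement to $u_0$ there so that the elastic contribution vanishes by \eqref{mismatchstrain}, and compensating the missing volume $|E|$ by an outward normal perturbation of a fixed substrate-anchored phase of $A$; the added surface and elastic energies then scale as $O(|E|)$, so that for $|E|$ below a universal threshold $\cF$ strictly decreases, contradicting minimality. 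The main obstacle is the localized healing construction in Step 3: the competitor must be $\admissible$-admissible, respect the volume constraint, and distinguish the inner-jump weight $2\varphi$ from the free-boundary weight $\varphi$, which is where the full strength of the GSBD slicing tools developed in the proof of the lower-semicontinuity must be deployed most carefully.
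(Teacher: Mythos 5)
Your plan captures the right circle of ideas (density estimates for the jump set, rectifiability, isoperimetric comparison for the hanging phase), but several steps are logically out of order or misattribute the difficulty, and one of them would leave a real gap if executed as written.

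First, the opening claim ``$A$ and $\tilde A$ agree modulo a Lebesgue-null set'' is \emph{not} automatic. One has $A=\Int{\tilde A^{(1)}}$ and $\tilde A^{(1)}\setminus A\subset\p\tilde A^{(1)}=\cl{\p^*\tilde A^{(1)}}$; for a general set of finite perimeter the topological boundary can carry positive Lebesgue measure, so $|A\Delta\tilde A|=0$ requires knowing $|\cl{\p^*\tilde A^{(1)}}|=0$. The paper derives exactly this from the density estimates of Theorem~\ref{teo:density_estimates}: working first with the normalized pair $(A',u')=(\tilde A^{(1)},\tilde u\chi_{A'\cup S}+\xi\chi_{\Omega\setminus A'})$, which is trivially a minimizer, one gets $\Omega\cap\p A'=\Omega\cap\cl{\p^*A'}\subset\cl{J_{u'}}$ and $\cH^{n-1}(\cl{J_{u'}}\setminus J_{u'})=0$, whence $\cH^{n-1}(\p A')<+\infty$ and only then $|A\Delta A'|=0$. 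Your ordering puts the density estimate after the ``agree modulo null'' claim it is needed to justify, which is circular as written.

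Second, the path you propose for closedness of $\p^*A$ differs from the paper's and is riskier. The paper never establishes a density estimate for $\p^*A$ directly; instead it relies on the normalization $\Omega\cap\p^*A\subset_{\cH^{n-1}}J_u$ (Remark~\ref{rem:ext_u_out_A}) so the boundary is a subset of $J_u$ and the $J_u$ estimate carries over for free. Attempting a stand-alone fill/void density estimate for $\p^*A$ would force you to keep track of the different surface weights ($\varphi$ on $\Omega\cap\p^*A$, $2\varphi$ on $A^{(1)}\cap J_u$, $\beta$ and $\varphi$ on $\Sigma$) and to patch the displacement across the filled ball; the $J_u$ route sidesteps all of that. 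Relatedly, you invoke a de Giorgi--Carriero--Leaci type healing argument to conclude $\cH^{n-1}(J_u\setminus J_u^*)=0$, but this is already a consequence of rectifiability (\cite[Theorem 2.63]{AFP:2000}) and requires no variational input; the genuinely variational ingredient (the Decay Lemma / Proposition~\ref{prop:functional_decay}) is needed only for the harder claim $\cH^{n-1}(\cl{J_u^*}\setminus J_u^*)=0$. You should also be explicit that this decay machinery is developed once and for all in Section~\ref{sec:decay_estimates} (Theorem~\ref{teo:density_estimates}), after which the present theorem reduces essentially to bookkeeping.

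For the hanging-component estimate, your plan works but is heavier than necessary: you set $u\equiv u_0$ on $E$, remove $E$, and restore the volume by an outward bi-Lipschitz perturbation of a substrate-anchored phase, i.e.\ you re-run the Esposito--Fusco argument inline. The paper has already established the equivalence of the constrained problem with the penalized one (Proposition~\ref{prop:fusco}), so it can compare directly with $(A\setminus E,u)$ using $\cF^{\lambda_0}$: one gets $\int_{\p^*E}\varphi(x,\nu_E)\,\d\cH^{n-1}\le\lambda_0|E|$, and the anisotropic isoperimetric inequality gives $|E|\ge(\bound_1 n\omega_n^{1/n}/\lambda_0)^n$ in two lines, with no need for a volume-compensation competitor. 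Your argument is correct in principle but duplicates work the paper has already factored out into the appendix.

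In summary: the ideas you name are the right ones, but (1) the claim $|A\Delta\tilde A|=0$ must be postponed until after the density estimate for the normalized minimizer $(\tilde A^{(1)},u')$, (2) the essential closedness of $\p^*A$ should be derived from that of $J_u$ via the inclusion $\Omega\cap\p^*A\subset J_u$ rather than from a separate density estimate on $\p^*A$, (3) the claim $\cH^{n-1}(J_u\setminus J_u^*)=0$ is pure rectifiability, and (4) the penalization equivalence should be invoked for the hanging-component bound rather than reproved.
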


\noindent We refer the Reader  to Theorem \ref{teo:regularity_of_minimizers} for a more detailed  statement.

The proof of Theorem \ref{teo:reguaksaisa}  is carried out by implementing in the SDRI setting the methods for the partial regularity of the minimizers of the Griffith model by means of the ideas already employed by the authors in \cite{HP:2021_arxiv} for $n=2$: we introduce a localized version of $\cF$ and establish uniform lower and upper $\mathcal{H}^{n-1}$ density estimates for the jump sets (see Section \ref{sec:decay_estimates}) by paying extra care to treat the presence of voids and of the different weights for the surface tension in the surface energy, which is a crucial difference from the Griffith model. We overcome such difficulties by means of the strategy employed in  \cite{P:2012} and based on the \emph{relative isoperimetric inequality} \cite{AFP:2000} to distinguish in the \emph{Decay Lemma} the blows up ``inside the free crystal'' from the ones ``in the voids'', and by applying the approximation result of \cite[Theorem 3]{CCI:2017}.

In this work, we have established existence and partial regularity results for SDRI models in the two-phase setting. Several natural directions remain open for further investigation:
\begin{itemize}[left=5pt]
\item[] {\it Multiphase SDRI models).}
Our analysis considers only two phases, but many applications involve
multiple interacting phases, see e.g. \cite{FPS:2021,LP:2025} and the references therein. Extending existence and regularity results to the multiphase case is challenging even without elasticity \cite[Chapter 30]{Maggi:2012}, due to complex interfaces, junctions, and topological changes, and a general theory for SDRI setting is still lacking. An elastic model for multilayer films presenting both coherent and incoherent interlayer interfaces is introduced and studied in \cite{LP:2025}.
\medskip

\item[] {\it Contact-angle conditions).}
Thin films and crystalline surfaces often meet substrates or other phases at nontrivial contact angles. Incorporating such boundary effects into SDRI models is largely unexplored; see e.g. \cite{DP:2018_1,DP:2018_2} for the 2D graph setting.
\medskip

\item[] {\it Regularity and singularities).}
Even in the static case, detailed regularity beyond partial results is largely unknown. Extending classical results on minimal surfaces to SDRI energies could provide deeper insight into interface geometry and stability.
\medskip

\item[] {\it Quasistatic and dynamic evolution).} Beyond static configurations, one may consider quasistatic gradient-flow evolutions or fully dynamic models including inertia, surface diffusion, or elasticity. While some results exist for quasistatic crack growth \cite{DFT:2005,DT:2002,FL:2003,FS:2018}, including elastic stresses in a general evolution framework remains an open problem.

\end{itemize}
These directions illustrate that while the mathematical theory of SDRI models is developing, there is a rich landscape of questions linking variational analysis, geometric measure theory, and materials science that remains to be explored.

The paper is organized as follows: In Section \ref{sec:main_results} we introduce the SDRI model, some preliminary results related to sets of finite perimeter and GSBD-functions, and state the main results. In Section \ref{sec:prelim_results} we
provide some technical results which allows to replace a part of jump set with an open set without modifying too much the corresponding SDRI energy. Section \ref{sec:lower_semicontos} is devoted to the proof of the lower semicontinuity of $\cF. $ Section \ref{sec:compactoser} contains the proof of the compactness for energy-equibounded sequences. In Section \ref{sec:decay_estimates} we prove the decay estimates for $\cF$ and the regularity results of Theorem \ref{teo:regularity_of_minimizers}. Finally, we conclude the paper with  the  Appendix containing the results related to the equivalence of  the volume-constrained minimum problem with the volume-uncontrained penalized minimum problem, and to some properties of GSBD-functions.

\section{Mathematical setting and formulation of the main results}\label{sec:main_results}

\subsection*{Notation} 

Unless otherwise stated, all sets we consider are subsets of $\R^n,$ in which the coordinates $(x_1,\ldots,x_n)$ of $x\in\R^n$ are given with respect to the standard basis $\{\e_1,\ldots, \e_n\}.$  The symbol $B_r(x)$  stands for the open ball in  $\R^n$  centered at $x$  and of radius $r>0.$  
The symbol $Q_r(x):=x+[-\frac{r}{2},\frac{r}{2}]^n$ stands for the standard $n$-dimensional (hyper) cube in $\R^n$ of sidelength $r$ centered at $x.$ We write $Q_r:=[-\frac{r}{2},\frac{r}{2}]^n.$ Given $r>0,$ $\nu\in\S^{n-1}$ and $x\in\R^n,$ we denote  by $ Q_{r,\nu}(x)$ the cube of sidelength $r$ centered at $x$ whose sides are either parallel or perpendicular to $\nu$.
The characteristic function of a Lebesgue measurable set $F$ is denoted by $\chi_F$ and its Lebesgue measure by $|F|;$ we set also $\omega_n:=|B_1(0)|.$  We denote by $E^c$ the complement of $E$ in $\R^n.$ 
By $\cH^{n-1}$ we denote by $(n-1)$-dimensional Hausdorff measure in $\R^n$ and we write $K=_{\cH^{n-1}} L$ and $K\subset_{\cH^{n-1}} L$ to mean $\cH^{n-1}(K\Delta L)=0$ and $\cH^{n-1}(K\setminus L)=0.$ 

Given an open set $U\subset\R^n,$ the set of $L^1(U)$-functions having bounded total variation in $U$ is denoted by  $BV(U)$ and the elements of 
$$
BV(U;\{0,1\}):=\{E\subseteq U:\,\, \chi_E\in BV(U)\}
$$
are called sets of finite perimeter in $U$. The standard references for $BV$-functions and sets of finite perimeter are for instance \cite{AFP:2000,Gi:1984,Maggi:2012}.

Given $E\in BV(U,\{0,1\}),$ we denote
\begin{itemize}
\item[--] by $P(E,U):=\int_U|D\chi_E|$ the perimeter of $E$ in $U;$

\item[--] by $\p E$ the measure-theoretic boundary of $E,$ i.e., 
$$
\p E:=\{x\in \R^n:\,\, 0<|B_\rho\cap E|<|B_\rho|\quad\forall \rho>0\};
$$
\item[--] by $\p^*E$ the reduced boundary of $E,$ i.e.,  
$$
\p^*E:=\Big\{x\in\R^n:\,\, \exists \nu_E(x):=-\lim\limits_{r\to0}
\frac{D\chi_E(B_r(x))}{|D\chi_E|(B_r(x))}\quad \text{and} \quad |\nu_E(x)|=1\Big\}. 
$$ 
\item[--] by $\nu_E$   the measure-theoretic outer  unit normal to $\p^* E.$
\end{itemize}
Given a Lebesgue measurable set $E\subseteq\R^n$ and $\alpha\in [0,1],$ we define
$$
E^{(\alpha)}:=\left\{x\in\R^n:\,\,\lim\limits_{\rho\to0^+}
\frac{|B_\rho(x)\cap E|}{|B_\rho(x)|} =\alpha\right\}.
$$
Given a set $K\subset\R^n$ and a point $x_0\in \R^n,$ we denote by 
$$
\theta_*(K,x_0):= \liminf\limits_{r\to0} \frac{\cH^{n-1}(B_r(x_0) \cap K)}{ \omega_{n-1}r^{n-1}}
$$
and
$$
\theta^*(K,x_0):= \limsup\limits_{r\to0} \frac{\cH^{n-1}(B_r(x_0) \cap K)}{ \omega_{n-1} r^{n-1}}
$$ 
the $(n-1)$-dimensional lower and upper density of $K$ at $x_0$,  respectively (see e.g., \cite[page 78]{AFP:2000}).  When these  densities coincide, we denote their common value by $\theta(K,x_0)$. Recall that by \cite[Theorem 2.63]{AFP:2000}, $K$ is $\cH^{n-1}$-rectifiable if and only if $\theta(K,x)=1$ for $\cH^{n-1}$-a.e.\ $x\in K.$

Given $x\in\R^n$ and $r>0,$ the blow-up map $\sigma_{x,r}$ is defined as
\begin{equation}\label{blow_ups}
\sigma_{x,r} (y)= \frac{y-x}{r}. 
\end{equation}

Given an open set $U\subset\R^n$ and a metric space $X,$ we denote by $\Lip(U;X)$ the family of all Lipschitz functions $\psi:U\to X.$ We denote by $\Lip(\psi)$ the Lipschitz constant of $\psi\in \Lip(U;X).$

By $GSBD(U;\R^n)$ we denote the collection of all \emph{generalized special functions of bounded deformation} (see \cite{ChC:2020_jems,D:2013_jems} for their definition and properties). Given $u\in GSBD(U;\R^n)$ we denote by $\str{u}\in\mtwo$ the
\emph{approximate symmetric gradient} and by $J_u$ the jump set of $u;$ we recall that  by \cite[Theorem 9.1]{D:2013_jems},
\begin{equation}\label{approx_sym_gradents}
\aplim\limits_{y\to x} \frac{[u(y) - u(x) - \str{u}(x)(y-x)] \cdot (y-x)}{|y-x|^2}=0\qquad\text{for a.e.\  $x\in U$} 
\end{equation} 
and by \cite[Theorem 6.2]{D:2013_jems}, $J_u$ is $\cH^{n-1}$-rectifiable.  Let us also define
$$
GSBD^2(U):=\{u\in GSBD(U;\R^n):\,\,\str{u}\in L^2(U;\mtwo)\}.
$$ 
Given a $\cH^{n-1}$-rectifiable set $K\subset \overline{U},$ we
consider a normal vector $\nu_K$ to its approximate tangent space and we denote by $u_K^+$ and $u_K^-$ the approximate limits of $u\in GSBD(U;\R^n)$ with respect to $\nu_K,$  i.e., 
\begin{equation*} 
u_K^+(x):=\aplim\limits_{\substack{(y-x)\cdot \nu_K>0,\\y\in U}} \,\,u(y)\quad \text{and}\quad u_K^-(x):= \aplim\limits_{\substack{(y-x)\cdot \nu_K<0\\y\in U}}\,\, u(y)  
\end{equation*}
for every $x\in K$ whenever they exist \cite[Definition 2.4]{D:2013_jems}.
We refer to $u_K^+$ and $u_K^-$ as the \emph{one-sided traces} of $u$ at $K$ and we notice that reversing the sign of $\nu_K$ simply  interchanges them.

Let us recall some   notation from \cite{ChC:2020_jems} related to GSBD-functions. For $\xi\in\S^{n-1},$ $y\in\R^n,$  $B\subset\R^n$ and $v:B\to\R^n$ let 
$$
\Pi_\xi:=\{x\in\R^n:\,x\cdot\xi=0\},\qquad B_y^\xi:=\{t\in\R:\,\,y+t\xi\in B\},
$$
and
$$
v_y^\xi(t):=v(y+t\xi),\qquad \hat v_y^\xi(t):=v_y^\xi(t)\cdot \xi.
$$ 
We denote by $\pi_\xi$ the projection of $\R^n$ onto $\Pi_\xi,$ i.e., 
$$
\pi_\xi:=x - (x\cdot\xi) \xi .
$$
Recall that if $v\in GSBD^2(U)$ for an open set $U\subset\R^n,$  then $\hat v_y^\xi \in SBV^2_\loc(U_y^\xi)$ for every $\xi\in\S^{n-1}$ and $\cH^{n-1}$-a.e.\  $y\in\Pi_\xi.$  We denote by $\dot u_y^\xi$ the the absolutely continuous part of $D u_y^\xi$ w.r.t. $\cL^1.$ Let us introduce 
$$
I_{y,\xi}^U (v):=\int_{U_y^\xi} |\dot v_y^\xi|^2\d t 
$$
and
$$
II_{y,\xi}^U(v):=|D[\tau(v\cdot \xi)]_y^\xi|(U_y^\xi),
$$
where $\tau\in  C^1(\R,(-\frac12,\frac12))$ and satisfies $0\le \tau'\le1.$
By \cite[Eq. 3.8]{ChC:2020_jems},
\begin{equation}\label{estimate_I}
\int_{\Pi_\xi} I_{y,\xi}^U(v)\d\cH^{n-1}(y) = \int_U |\str{v}(x)\xi\cdot \xi|^2\d x \le \int_U |\str{v}|^2\d x   
\end{equation}
and by \cite[Eq. 3.9]{ChC:2020_jems} and obvious estimate $a\le 1+a^2$
\begin{align}
\int_{\Pi_\xi} II_{y,\xi}^U(v)\d\cH^{n-1}(y) =  |D_\xi[\tau(v\cdot\xi)]|(U) \le & \int_U |\str{v}|\d x  + \cH^{n-1}(U\cap J_v)\nonumber \\
\le & |U| + \int_U |\str{v}|^2\d x  + \cH^{n-1}(U\cap J_v). \label{estimate_II}
\end{align}
By the Fubini Theorem and the equality 
$$
\int_{\S^{n-1}} |\nu \cdot \xi|\,\d\cH^{n-1}(\xi)=2\omega_{n-1},\qquad \nu\in\S^{n-1},
$$
for any $\cH^{n-1}$-rectifiable Borel set $L\subset\R^n$ and an open set $U\subset\R^n$ we have  
\begin{align}
\cH^{n-1}(U\cap L) = & \frac{1}{2\omega_{n-1}}\,\int_{\S^{n-1}} \d\cH^{n-1}(\xi) \int_{U\cap L} |\nu_L\cdot \xi|\,\d\cH^{n-1}(y)\nonumber \\
=& \frac{1}{2\omega_{n-1}}\,\int_{\S^{n-1}} \d\cH^{n-1}(\xi) \int_{\Pi_\xi} \cH^0(U_y^\xi\cap L_y^\xi)\,\d\cH^{n-1}(y), \label{jump_estimate}
\end{align}
where we applied the area formula with $\pi_\xi$ in the second equality.

A linear function $a:\R^n\to\R^n$ satisfying $\nabla a= - (\nabla a)^T$ is called an (infinitesimal) \emph{rigid displacement}.

\subsection{The SDRI model} 

Given nonempty open sets $\Omega \subset\R^n$ and $\substrate\subset\R^n\setminus\Omega,$ we define  
the space of {\it admissible  configurations} by 
$$
\begin{aligned}
\admissible:=\Big\{(A,u):\,\,& A\in BV(\Omega;\{0,1\}),\,\,u\in GSBD^2(\Ins{\Omega})\cap H_\loc^1(\substrate) \Big\} 
\end{aligned}
$$
where $\Sigma:=\p \substrate\cap \p \Omega.$

The {\it energy} of admissible configurations is given by
\begin{equation*} 
\cF:\admissible\to (-\infty,+\infty], \qquad \cF:=\cS + \cW,
\end{equation*}
where $\cS$ and $\cW$ are the surface and elastic energies of the configuration, respectively. The surface energy of $(A,u)\in\admissible$ is defined as 
\begin{align*}
\cS(A,u):=& \int_{\Omega \cap\p^*A} \varphi(x,\nu_A(x))\,\d \cH^{n-1}(x) \nonumber \\
&+\int_{A^{(1)}\cap J_u} \big[\varphi(x,\nu_{J_u}(x)) + \varphi(x,-\nu_{J_u}(x))\big]\,\d\cH^{n-1}(x)\nonumber\\  
& + \int_{\Sigma\cap \p^*A\setminus J_u} \beta(x) \,\d\cH^{n-1}(x)  + \int_{\Sigma\cap \p^*A\cap J_u} \varphi(x,-\nu_\Sigma(x))\,\d\cH^{n-1}(x), 
\end{align*}
where
$\varphi:\overline\Omega\times\S^{n-1}\to(0,+\infty)$ and $\beta:\Sigma\to\R$  are  Borel functions  denoting the {\it anisotropy} of crystal and the {\it relative adhesion} coefficient of the substrate boundary, respectively, and  $\nu_\Sigma:=\nu_\substrate.$
In applications  it is often convenient to use  the positively one-homogeneous extension $|\xi|\varphi(x,\xi/|\xi|),$ which we also denote by $\varphi.$

The elastic energy of $(A,u)\in\admissible$ is defined as
$$
{\mathcal W}(A,u):= \int_{A\cup  \substrate } W(x,\str{u}  - \bM_0) \d x,
$$
where the elastic energy density $W$ is a quadratic form 
$$
W(x,\bM): =  \C(x)\bM:\bM,
$$
determined by a tensor-valued measurable map $x\in\Omega\cup\substrate\to\C(x),$ the so-called {\it stress-tensor}, in the Hilbert space  $\mtwo$ of all $n\times n$-symmetric matrices with the natural inner product 
$$
\bM:\bN=\sum\limits_{i,j=1}^n M_{ij}N_{ij}.
$$

The {\it mismatch strain}  $x\in\Omega\cup\substrate\mapsto \bM_0(x)\in\mtwo$ is given by  
$$
\bM_0: = 
\begin{cases}
\str{u_0} & \text{in $\Omega ,$}\\
0 &  \text{in $\substrate,$}
\end{cases}
$$ 
for a fixed $u_0\in H^1(\R^n)$. 

\begin{remark}[\textbf{Values of displacements outside a set}]\label{rem:ext_u_out_A}
$\,$
\begin{itemize}[left = 7pt]
\item[(i)] The functional $\cF(A,u)$ does not ``see'' the values of $u$ in $\Omega\setminus A,$ i.e., 
$$
\cF(A,u) = \cF(A,u\chi_{A\cup S}+v\chi_{\Omega\setminus A})\quad \text{for any $v\in GSBD^2(\Omega).$}
$$
Thus, we can redefine $u$ in $\Omega\setminus A$ arbitrarily without changing the energy of the configuration $(A,u)$.

\item[(ii)] For any $(A,u)\in\admissible$ there exists an at most countable set $\Xi^{(A,u)}\subset \R^n$  such that for any $\xi\in\R^n\setminus \Xi^{(A,u)}$ the function 
\begin{equation}\label{replace_uk}
u^\xi:=u\chi_{A\cup S} + \xi\chi_{\Omega\setminus A} 
\end{equation}
satisfies 
\begin{equation}\label{jump_vk}
J_{u^\xi} =_{\cH^{n-1}} (\Omega\cap\p^*A)\cup (\Sigma\cap J_u)\cup (A^{(1)}\cap J_u)\cup (\Sigma\setminus \p^*A).
\end{equation}
Indeed, for $\xi\in\R^n$ let 
$
E_\xi^{(A,u)}:=\{x\in \p^*A\cup \Sigma:\,\, \tr_{A\cup S}^{}u(x) =\xi\} \subset \Sigma\cup \p^*A
$
and let
$$
\Xi^{(A,u)}: = \{\xi\in\R^n:\,\,\cH^{n-1}(E_\xi^{(A,u)})>0\}.
$$
Since $\cH^{n-1}(\p^*A\cup \Sigma)<+\infty$ and $E_\xi^{(A,u)}\cap E_\eta^{(A,u)}=\emptyset$ for $\xi\ne\eta,$  by slicing arguments (see e.g. \cite[Proposition A.2]{HP:2021_arxiv}) the set $\Xi^{(A,u)}$ is at most countable. By the definition of jump, for any $\xi\in\R^n\setminus \Xi^{(A,u)}$ the function $u^\xi$ satisfies \eqref{jump_vk}.

\item[(iii)] For any countable set $\cU\subset\admissible$ there exists an at most countable set $\Xi_{\cU}\subset (0,1)^n$ such that for any $\xi\in(0,1)^n\setminus \Xi_\cU$ and $(A,u)\in \mathcal{U}$ the function $\tilde u^\xi$, defined as in \eqref{replace_uk}, 
satisfies 
\eqref{jump_vk}.  Indeed, it is enough to set
$$
\Xi_\cU:=\bigcup_{(A,u)\in\cU}\Xi^{(A,u)}.
$$ 
\end{itemize}

\end{remark}

\noindent
We introduce a topology in $\admissible$ as follows.

\begin{definition}
We say that a sequence $\{(A_k,u_k)\}$ converges to $(A,u)\in\admissible$ in the $\tau_\admissible$-topology (or shortly $\tau_\admissible$-converges) and denote as $(A_k,u_k)\overset{\tau_\admissible}{\to}(A,u)$  if
\begin{itemize}
 \item $A_k\to A$ in $L^1(\R^n),$ 
 
 \item $u_k\to u$ a.e.\  in $\Omega\cup\substrate.$
\end{itemize}
\end{definition}

\subsection{Main results} \label{subsec:main_results}

Unless otherwise stated, throughout the paper the parameters  $\Omega,$ $\substrate,$ $\varphi,$ $\beta,$ $\C$ of the SDRI energy and volume constant $\fm$  are assumed to satisfy the following hypotheses:

\begin{itemize}
\item[(H0)] $\Omega$ and $\substrate$ are bounded Lipschitz open sets, $\substrate$ has finitely many connected components, $\Sigma:=\p\Omega\cap\p \substrate$ is a Lipschitz $(n-1)$-manifold; 

\item[(H1)] $\varphi\in C^0(\cl{\Omega}\times \R^n)$ and is a 
Finsler norm, i.e., there exist $\bound_2\ge \bound_1>0$ such that 
for every $x\in \cl{\Omega },$ $\varphi(x,\cdot)$ is a norm in $\R^n$ satisfying  
\begin{equation}\label{finsler_norm}
\bound_1|\xi| \le \varphi(x,\xi) \le \bound_2|\xi|,\qquad x\in\cl{\Omega},\quad \xi\in\R^n;
\end{equation}

\item[(H2)] $\beta\in L^\infty(\Sigma)$ and 
satisfies
\begin{equation}\label{hyp:bound_anis}
-\varphi(x,\nu_\Sigma(x))\le \beta(x) \le \varphi(x,\nu_\Sigma(x)) \qquad \text{$\cH^{n-1}$-a.e.\ $x\in \Sigma$};
\end{equation}

\item[(H3)] $\C\in L^\infty(\Omega\cup\substrate)\cap C^{0}(\cl{\Omega})$ and there exists $\bound_4\ge \bound_3>0$ such that 
\begin{equation}\label{hyp:elastic}
2\bound_3\,\bM:\bM \le \C(x)\bM:\bM \le 2\bound_4\,\bM:\bM, \qquad x\in\Omega\cup\substrate,\quad \bM\in\mtwo;
\end{equation}

\item[(H4)] $\fm\in(0,|\Omega|].$
\end{itemize}

\begin{remark}[\textbf{A priori bounds}]\label{rem:apriori_bounds_seq}
Hypotheses (H1)-(H3) are important to get a priori estimates for energy-equibounded countable families. 
Indeed, let $\mathcal{U}\subset \admissible$ be any at most countable family of $\admissible$ such that  
$$
M:=\sup\limits_{(A,u)\in\mathcal{U}}\, \cF(A,u)<+\infty. 
$$
Then by \eqref{finsler_norm} and \eqref{hyp:bound_anis},
\begin{equation*}
\cS(A,u) \le M\quad\text{and}\quad \cW(A,u) \le M + \int_\Sigma |\beta|\d\cH^{n-1} \le M + \bound_2\cH^{n-1}(\Sigma). 
\end{equation*}
Moreover:
\begin{itemize}
\item[(i)]  for any $(A,u)\in\mathcal{U}$
$$
P(A) + \cH^{n-1}(A^{(1)}\cap J_u) \le \frac{M+\bound_2\cH^{n-1}(\Sigma)}{\bound_1} +P(\Omega)   
$$
and
$$
\int_{A\cup\substrate} |\str{u}|^2\d x \le  \frac{2M+2\bound_2\cH^{n-1}(\Sigma)}{\bound_3} + 3\int_{\Omega} |\str{u_0}|^2 \d x ;
$$

\item[(ii)] if $\cU\ni (A_k,u_k)\overset{\tau_\admissible}{\to}(A,u)$ for some $(A,u)\in\admissible,$ then\footnote{Indeed, let $\Xi_\cU\subset\R^n$ be the countable set,   given by Remark \ref{rem:ext_u_out_A} (c), and let $\xi\in (0,1)^n\setminus \Xi_\cU.$ Since the values of $u_k$ are not important in $\Omega\setminus A_k,$ we may assume  $u = u^\xi,$ where $  u^\xi$ is given as \eqref{replace_uk}. Then $u_k\to u\chi_{A\cup S}+\xi\chi_{\Omega\setminus A}$ a.e.\   and hence, \eqref{strain_converges} follows from \cite[Theorem 1.1]{ChC:2020_jems}.} 
\begin{equation}\label{strain_converges}
\chi_{A_k\cup S}\str{u_k} \wk \chi_{A\cup S}\str{u}\quad\text{in $L^2(\Ins{\Omega})$.}
\end{equation} 

\end{itemize}
 
\end{remark}

Now we formulate main results of the paper. First we deal with the existence of admissible configurations with minimal energy.

\begin{theorem}[\textbf{Existence  of minimizing configurations}]\label{teo:global_existence}
The minimum problem 
\begin{equation}\label{min_prob_globals}
\inf\limits_{(A,u)\in \admissible,\,\,|A| = \fm} \cF(A,u)  
\end{equation}
has a solution. Moreover, there exists $\lambda_0>0$ such that $(A,u)\in\admissible$ is a solution of \eqref{min_prob_globals} if and only if it solves  
\begin{equation}\label{min_prob_globals_uncons}
\inf\limits_{(A,u)\in \admissible} \cF^\lambda(A,u)
\end{equation}
for any $\lambda\ge \lambda_0,$ where  
$$
\cF^\lambda(A,u):=\cF(A,u) +\lambda\big||A| - \fm\big|.
$$
\end{theorem}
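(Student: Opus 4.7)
The plan is to apply the direct method of the calculus of variations to the penalized functional $\cF^\lambda$ to obtain existence for \eqref{min_prob_globals_uncons}, and then to establish via a volume-adjustment lemma that, for $\lambda$ sufficiently large, the constrained and penalized problems share their minimizers. These two steps together prove both assertions of Theorem \ref{teo:global_existence}.

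For existence for $\cF^\lambda$ with any fixed $\lambda>0$, I would take a minimizing sequence $\{(A_k,u_k)\}\subset\admissible$. Since the penalty term is nonnegative, $\cF(A_k,u_k)$ is equibounded from above, so by Remark \ref{rem:apriori_bounds_seq} the perimeters, jump measures and symmetrized gradients are uniformly bounded. The compactness result (Theorem \ref{teo:compactness}) then produces, for each $\delta>0$ and up to a non-relabeled subsequence, a modified sequence $\{(B_k^\delta,v_k^\delta)\}\subset\admissible$ that $\tau_\admissible$-converges to some $(A,u)\in\admissible$ and satisfies an energy comparison of the form \eqref{azshhhda}. Since $B_k^\delta\to A$ in $L^1(\R^n)$, the volume penalty passes to the limit, and combining with the $\tau_\admissible$-lower semicontinuity (Theorem \ref{teo:lower_semicontinuity}) gives
$$
\cF^\lambda(A,u)\le \liminf_{k\to\infty}\cF^\lambda(B_k^\delta,v_k^\delta)\le \liminf_{k\to\infty}\cF^\lambda(A_k,u_k)+c\sqrt\delta,
$$
where $c$ absorbs the uniform bounds on $P(A_k)$, $\cH^{n-1}(J_{u_k})$ and $P(F^h)$. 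A standard diagonal argument as $\delta\to 0$ then shows that $(A,u)$ is a minimizer of $\cF^\lambda$.

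For the equivalence of \eqref{min_prob_globals} and \eqref{min_prob_globals_uncons}, the implication ``$\cF^\lambda$-minimizer satisfying $|A|=\fm$ solves the constrained problem'' is immediate since $\cF^\lambda\equiv\cF$ on the constraint set and $\inf\cF^\lambda\le\inf_{|A|=\fm}\cF$. The converse requires a volume-adjustment lemma (to be placed in the Appendix, as announced in the Introduction): there exists a constant $C=C(\Omega,\substrate,\varphi,\beta,\C,u_0)>0$ such that every $(A,u)\in\admissible$ admits a modification $(\tilde A,\tilde u)\in\admissible$ with $|\tilde A|=\fm$ and $\cF(\tilde A,\tilde u)\le \cF(A,u)+C\,\big||A|-\fm\big|$. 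Setting $\lambda_0:=C+1$, comparing $\cF^\lambda$-values via this lemma shows both that every $\cF^\lambda$-minimizer satisfies $|A|=\fm$ and hence solves \eqref{min_prob_globals}, and conversely that every constrained minimizer solves \eqref{min_prob_globals_uncons}, for every $\lambda\ge\lambda_0$.

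The main obstacle will be the volume-adjustment lemma. The natural strategy is to pick a regular interior point $x_0\in\Omega\setminus\cl{\substrate}$ together with a smooth, compactly supported family of diffeomorphisms $\Phi_t:\Omega\to\Omega$ transporting $(A,u)$ into $(\Phi_t(A),u\circ\Phi_t^{-1})$, with linear control on the volume variation, on the perimeter, and on $\cW$ by absolute continuity of the elastic integral, uniform ellipticity of $\C$ and $u_0\in H^1(\R^n)$. The specific difficulties in the SDRI setting are that $u$ is only $GSBD^2$, so one must verify that no new uncontrolled jumps are created; that the full target range $\fm\in(0,|\Omega|]$ may require iterated infinitesimal adjustments with a cumulative constant $C$ independent of the starting configuration; and that the construction must leave a neighbourhood of $\Sigma$ untouched in order to preserve $u\in H_\loc^1(\substrate)$ and hence admissibility. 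I would follow the appendix of \cite{HP:2021_arxiv}, adapted to arbitrary dimension $n\ge 2$ and to the presence of the mismatch strain $\bM_0$, which enters the elastic contribution via the trivial bound $|\str{u}-\bM_0|^2\le 2|\str{u}|^2+2|\str{u_0}|^2$ and therefore does not change the structure of the argument.
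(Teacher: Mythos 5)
Your first half is essentially the paper's argument: take a minimizing sequence for $\cF^\lambda$, note $\cF$-equiboundedness, invoke Theorem~\ref{teo:compactness} to obtain $(B_l,v_l)\overset{\tau_\admissible}{\to}(A,u)$ with $|A_{k_l}\Delta B_l|\to 0$, and conclude via Theorem~\ref{teo:lower_semicontinuity} and $L^1$-continuity of the volume penalty. (You actually do not need to re-run a separate diagonal argument over $\delta$: Theorem~\ref{teo:compactness} already packages the diagonalization and hands you a single sequence $(B_l,v_l)$ with the required inequality chain. Re-opening its proof is harmless but superfluous.)

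The second half contains a genuine gap. You propose a \emph{universal} volume-adjustment lemma: a constant $C$ depending only on the data such that \emph{every} $(A,u)\in\admissible$ admits $(\tilde A,\tilde u)$ with $|\tilde A|=\fm$ and $\cF(\tilde A,\tilde u)\le\cF(A,u)+C\,\big||A|-\fm\big|$. Such a statement is not achievable. Any mechanism that creates or destroys volume in a region disjoint from $\p^*A$ pays a surface cost at least $c\,\big||A|-\fm\big|^{(n-1)/n}$ by the isoperimetric inequality, which is \emph{super}-linear in $\big||A|-\fm\big|$ as $\big||A|-\fm\big|\to 0$, so no linear bound with a fixed $C$ is possible. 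The only way to achieve a linear bound is the bi-Lipschitz diffeomorphism $\Phi_\sigma$ acting near $\p^*A$: there the volume change is $\sim\sigma r^n$ and the surface cost is $\sim\sigma\,P(A,B_r)$, so the ratio is $\sim P(A,B_r)/r^n$, which is controlled only if you already have an a priori bound on $P(A)$ \emph{and} a radius $r$ bounded away from zero at which $A$ has intermediate density. Neither is available for an arbitrary $(A,u)\in\admissible$; both are available only after you restrict to $\cF^\lambda$-minimizers and pass to a compact subsequence. This is exactly what the paper's Proposition~\ref{prop:fusco} does, following Esposito--Fusco \cite{EF:2011}: argue by contradiction with $\lambda_h\to\infty$, use the a priori bound \eqref{gsrtaer} on $P(A_h)$ and the fact that $\lambda_h\big||A_h|-\fm\big|$ is bounded to conclude $|A_h|\to\fm$ and $A_h\to A$ in $L^1$ with $|A|=\fm$, then pick a single ball $B_r(x_r)$ at a scale $r$ fixed by the \emph{limit} set $A$ where the density gap holds, and run the diffeomorphism estimate there uniformly in $h$ large. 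The contradiction comes from the penalty gain $\sim\lambda_h\sigma r^n$ overwhelming the bounded surface cost. Your ``iterated infinitesimal adjustments with a cumulative constant $C$ independent of the starting configuration'' is precisely the step that cannot be made uniform, and it is not needed once one passes to the subsequence of minimizers where the volume discrepancy vanishes.

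A minor additional concern: you set $v_h$ on the modified set without addressing whether the surface energy on the newly added part of $\p^*\tilde A$ and on $J_{\tilde u}$ is controlled. The paper handles this by defining $v_h:=u_h\chi_{A_h\setminus B_r}+u_0\chi_{E_h\cap B_r}$ so that the elastic energy in the perturbed ball is nonnegative and the surface term is estimated via the tangential Jacobian bound \eqref{tang_jacob} and the a priori perimeter bound; you would need the same care, which again requires the a priori bounds available only along minimizing sequences.

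So: keep your existence-for-$\cF^\lambda$ argument (it matches the paper), but replace your universal volume-adjustment lemma by the compactness/contradiction argument of Proposition~\ref{prop:fusco}.
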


To prove Theorem \ref{teo:global_existence} we will apply direct methods of Calculus of Variations. To this aim, we establish the $\tau_\admissible$-lower semicontinuity of $\cF$ and the $\tau$-compactness of energy-equibounded sequences in $\admissible.$

\begin{theorem}[\textbf{Lower semicontinuity}]\label{teo:lower_semicontinuity}
Assume that the sequence $\{(A_k,u_k)\}\subset\admissible$ $\tau_\admissible$-converges to $(A,u)\in\admissible.$ Then 
\begin{equation}\label{eq:lsc_functional}
\liminf\limits_{k\to+\infty}\,\cF(A_k,u_k) \ge \cF(A,u). 
\end{equation}
\end{theorem}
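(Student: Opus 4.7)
My plan is to follow the blow-up / localization strategy already outlined by the authors in the introduction, which reduces the lower semicontinuity of the global functional to pointwise lower bounds on Radon--Nikodym derivatives of suitable measures.

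First I would pass to a subsequence achieving $\liminf \cF(A_k,u_k)$ and assume this limit is finite (otherwise there is nothing to prove). Using Remark \ref{rem:apriori_bounds_seq}, I get uniform bounds on $P(A_k)$, $\cH^{n-1}(A_k^{(1)}\cap J_{u_k})$, and $\|\str{u_k}\|_{L^2(A_k\cup S)}$. I then define the positive Radon measures
\[
\mu_k(B):=\cS(A_k,u_k;B) + \int_{(A_k\cup S)\cap B}\C(x)[\str{u_k}-\bM_0]:[\str{u_k}-\bM_0]\,\d x
\]
(i.e.\ the localized SDRI energy) and the analogous $\mu$ built from $(A,u)$, so that $\cF(A_k,u_k)=\mu_k(\R^n)$ and $\cF(A,u)=\mu(\R^n)$. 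Up to a further subsequence, $\mu_k\overset{*}{\rightharpoonup}\mu_0$ for some finite positive Radon measure $\mu_0$ on $\R^n$. Since $\mu$ is supported on the $\sigma$-finite set $(\p^*A\cup J_u\cup\Sigma)\cup (\Omega\cup S)$, I decompose $\mu=\mu^s+\mu^v$ with $\mu^s\ll\cH^{n-1}\res(\p^*A\cup J_u\cup\Sigma)$ and $\mu^v\ll\cL^n\res(\Omega\cup S)$, and reduce the theorem to proving
\[
\frac{\d\mu_0}{\d\cH^{n-1}\res(\p^*A\cup J_u\cup\Sigma)}\ge \frac{\d\mu^s}{\d\cH^{n-1}\res(\p^*A\cup J_u\cup\Sigma)},\qquad \frac{\d\mu_0}{\d\cL^n}\ge \frac{\d\mu^v}{\d\cL^n}.
\]

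The volume estimate is the easier one: using Remark \ref{rem:apriori_bounds_seq}(ii), $\chi_{A_k\cup S}\str{u_k}\wk\chi_{A\cup S}\str{u}$ in $L^2$; combined with the convexity of $\bM\mapsto\C(x)[\bM-\bM_0]:[\bM-\bM_0]$ and standard lower semicontinuity of convex integrands under weak $L^2$ convergence (applied to any open set $U$ and passed to a Besicovitch blow-up), one gets the estimate $\cL^n$-a.e.\ in $\Omega\cup S$.

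The surface estimate is the main obstacle and I would split it by the location of the blow-up point $x_0$: (a) $x_0\in\Omega\cap\p^*A$ or $x_0\in\Sigma\setminus J_u$, (b) $x_0\in A^{(1)}\cap J_u$, (c) $x_0\in\Sigma\cap J_u$. For (a), the displacement is $\cH^{n-1}$-essentially continuous across the interface, so on balls $B_r(x_0)$ and up to small error $\mu_k(B_r(x_0))$ is controlled from below by the anisotropic perimeter/adhesion integrand of $(A_k,u_k)$; a De~Giorgi--Ambrosio--Dal~Maso blow-up argument mimicking \cite{ADT:2017,DPhM:2015} then yields the correct densities $\varphi(x_0,\nu_A(x_0))$, $\beta(x_0)$, and $\varphi(x_0,\nu_\Sigma(x_0))$. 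For (b), the key new point is the factor $2$ in front of $\varphi$. I cannot just apply the Griffith-type slicing bound \cite{ChC:2019_arxiv,ChC:2020_jems,ChC:2020_arxiv} directly, because that only produces weight $1$. Instead I would invoke Corollaries \ref{cor:functions_with_good_cracks}--\ref{cor:functions_with_cracks_Sigma} to replace, up to arbitrarily small error, the jump set $J_{u_k}$ near $x_0$ by the topological boundary of a thin hole $H_k\subset B_r(x_0)$, so that $\cH^0$-a.e.\ line through $H_k$ meets $\partial H_k$ in at least two points; this is precisely what produces the factor $2$ after applying the slicing formula \eqref{jump_estimate} together with \eqref{estimate_I}--\eqref{estimate_II} applied to $u_k-u$ (which converges in measure by hypothesis). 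Case (c) is handled similarly but using the trace side and the normalization $\varphi(x_0,\nu_\Sigma(x_0))$ coming from \eqref{surergy}.

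Putting the three density estimates together gives $\liminf_k\mu_k(\R^n)=\mu_0(\R^n)\ge \mu(\R^n)$, which is exactly \eqref{eq:lsc_functional}. I expect the most delicate step to be (b): carefully controlling the insertion of the artificial holes so that the associated error is $o(1)$ uniformly in $k$ and so that, in the limit $r\to 0$, the factor $2$ emerges without double-counting the parts of $J_u$ that lie on $\partial^*A$ or on $\Sigma$.
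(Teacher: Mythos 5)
Your overall strategy is the same as the paper's: pass to a subsequence, form localized Radon measures, extract a weak-$*$ limit $\mu_0$, and prove $\mu_0\ge\mu$ via density estimates at the various constituent types of points, using capillarity-type lower semicontinuity for the reduced boundary and contact/exposed parts, crack-opening plus slicing for the factor $2$ at internal cracks and delamination, and convexity plus weak $L^2$-convergence of strains for the bulk. There is, however, one genuine gap and two points of imprecision that would bite if you carried the proof out.

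\textbf{The gap: positivity of $\mu_k$.} As you have written it, $\mu_k(B):=\cS(A_k,u_k;B)+\int_{(A_k\cup S)\cap B}\C[\str{u_k}-\bM_0]:[\str{u_k}-\bM_0]\,\d x$ is \emph{not} a nonnegative measure when $\beta<0$ on a set of positive $\cH^{n-1}$-measure in $\Sigma$ (which is exactly the wetting regime). A Borel set $B$ concentrated near such a contact point would have $\mu_k(B)<0$. The weak-$*$ compactness, the Besicovitch differentiation theorem, and the final deduction $\mu_0(\R^n)\ge\mu(\R^n)$ all require nonnegativity. The paper's fix is to add the constant $\int_{B\cap\Sigma}\varphi(x,\nu_\Sigma)\,\d\cH^{n-1}$ to every $\mu_k$ and to $\mu$; by \eqref{hyp:bound_anis} the adhesion contribution becomes $\beta+\varphi(\cdot,\nu_\Sigma)\ge0$, the delamination weight becomes $2\varphi(\cdot,\nu_\Sigma)$, a new term $\varphi(\cdot,\nu_\Sigma)$ appears on $\Sigma\setminus\p^*A_k$, and the shift cancels when comparing $\mu_k(\R^n)$ with $\mu(\R^n)$. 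You would need this (or an equivalent device) for your argument to close; without it the inequality between weak-$*$ limits of signed measures is not under control.

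\textbf{Secondary imprecision at the jump set.} The factor $2$ at $A^{(1)}\cap J_u$ does not come from a topological fact that ``a.e.\ line through $H_k$ meets $\p H_k$ in at least two points'', nor from slicing $u_k-u$. The paper slices the \emph{crack-opened modification} $v_k^U$ of $u_k$ (so that $J_{v_k^U}$ is, up to a small error, the reduced boundary of the modified set), and then shows by a careful $Y_0,Y_1,Y_2$ count that the set of slicing lines whose slice of $v_k^U$ has fewer than $2$ jump points has $\cH^{n-1}$-measure bounded by $P(A,U)+\cH^{n-1}(U\cap[\Gamma\setminus\p^*E])$, both small by the blow-up hypotheses \eqref{good_Gamma_K1}--\eqref{error_esto_K1} (in the proof these are (a5), (a7)). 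The smallness of $P(A,U)$ is what kills double-counting with $\p^*A$; you flag ``no double-counting'' as a worry, and this is precisely where it is resolved. Finally, the paper first replaces $u_k$ by a fixed generic constant $\xi$ on $\Omega\setminus A_k$ (via Remark \ref{rem:ext_u_out_A}), which is needed to align $J_{u_k}$ with $\p^*A_k\cup(A_k^{(1)}\cap J_{u_k})$ and to make the slicing bookkeeping work; this normalization should be stated at the outset of your proof.
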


\begin{theorem}[\textbf{Compactness}]\label{teo:compactness}
Let $\{(A_k,u_k)\}\subset  \admissible$ be such that
$$
M:=\sup_k\,\cF(A_k,u_k)<+\infty.
$$
Then there exists a subsequence $\{(A_{k_l},u_{k_l})\},$ a sequence $\{(B_l,v_l)\}\subset \admissible$ and $(A,u)\in\admissible$ such that $(B_l,v_l)\overset{\tau_\admissible}{\to} (A,u),$ $|A_{k_l}\Delta B_l|\to0$ and 
\begin{equation*}
\liminf\limits_{l\to+\infty}\, \cF(A_{k_l},u_{k_l}) \ge \liminf\limits_{l\to+\infty}\, \cF(B_l,v_l) \ge \cF(A,u).  
\end{equation*}
\end{theorem}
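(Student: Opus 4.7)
The plan is to follow the strategy sketched in the introduction, which proceeds by constructing, for every $\delta>0$, an approximating sequence $(B_k^\delta,v_k^\delta)\in\admissible$ whose energy is controlled by $\cF(A_k,u_k)$ up to an error of order $\sqrt{\delta}$, and whose $\tau_\admissible$-limit exists. The final $(B_l,v_l)$ is extracted by a diagonal argument.

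First, I would extract the set-limit. By Remark \ref{rem:apriori_bounds_seq}, the sequence $\{P(A_k)\}$ is uniformly bounded, so by standard $BV$-compactness there exists $A\in BV(\Omega;\{0,1\})$ and a (non-relabelled) subsequence with $A_k\to A$ in $L^1(\R^n)$. Next I would exploit the Poincar\'e--Korn inequality on the connected components of $\substrate\in H^1_{\loc}$: since $\substrate$ has finitely many components, after grouping components on which the  rigid motions $a_k^i$ extracted from $u_k|_\substrate$ stay at bounded mutual distance (a.e.\ in $\R^n$) and extracting a further subsequence, one obtains a finite (or countable) partition $\{S^i\}_{i\ge1}$ of $\substrate$ and rigid displacements $\{a_k^i\}$ such that $u_k-a_k^i$ converges a.e.\ on $S^i$ while $|a_k^i-a_k^j|\to+\infty$ a.e.\ whenever $i\ne j$. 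Then applying the Chambolle--Conti compactness theorem \cite[Theorem 1.1]{ChC:2020_jems} to each $u_k-a_k^i$ on $\mathrm{Int}(A\cup \Sigma\cup S)$ yields pairwise disjoint Caccioppoli sets $F^i\subset A$ on which $u_k-a_k^i$ converges a.e.\ to some $u^i\in GSBD^2$; the ``hanging phase'' is then $F^0:=A\setminus\bigcup_{i\ge1}F^i$, where no rigid motion is able to compensate the divergence of $u_k$, and on $F^0$ I would simply set the limit displacement equal to the mismatch field $u_0$, which contributes zero elastic energy.

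The main obstacle is to modify $(A_k,u_k)$ so that the partition interfaces $\bigcup_i\partial^*F^i$ (together with the parts of $\Sigma$ created by the mismatch between the $a_k^i$) do not spuriously inflate the surface energy in the limit. For this I would use the $\cH^{n-1}$-rectifiability of each $\partial^*F^i$ combined with Propositions \ref{prop:estimate_inner_jump} and \ref{prop:estimate_red_boundary} (the analogous constructions used for lower semicontinuity): for every $\delta>0$, cover $\bigcup_i\partial^*F^i$ by a finite union $G_k^\delta$ of small open sets such that $|G_k^\delta|\to 0$ as $\delta\to 0$, the perimeter $\cH^{n-1}(\partial G_k^\delta)$ is controlled by $\cH^{n-1}(\bigcup\partial^*F^i)$, and whose slices through the constructed holes intersect $\partial G_k^\delta$ in at least two points (so the surface-tension weights are respected). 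I would then define
\begin{equation*}
B_k^\delta:=A_k\setminus G_k^\delta,\qquad v_k^\delta:=u_0\chi_{F^0}+\sum_{i\ge 1}(u_k-a_k^i)\chi_{S^i\cup(F^i\setminus G_k^\delta)},
\end{equation*}
adjusting $v_k^\delta$ near $\Sigma$ in $A_k^\delta\setminus A$ so as to replace the large jumps created by the mismatching $a_k^i$ on $\Sigma$ with small artificial jumps that disappear in the $L^1$-limit (the ``wetting-layer'' mechanism described in the introduction). The key energy estimate \eqref{azshhhda} follows by bounding the elastic energy of the removed region (using $|G_k^\delta|\to 0$ and Remark \ref{rem:apriori_bounds_seq}(i)) and the extra surface energy on $\partial G_k^\delta$ (using hypothesis (H1) and the perimeter bound).

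Finally, for each fixed $\delta>0$ the modified pair $(B_k^\delta,v_k^\delta)$ satisfies $|B_k^\delta\Delta A|\to 0$ as $k\to\infty$ (since $|A_k\Delta A|\to 0$ and $|G_k^\delta|\to 0$) and $v_k^\delta$ converges a.e.\ on $\Omega\cup\substrate$ to the displacement $u:=u_0\chi_{F^0}+\sum_{i\ge1}u^i\chi_{S^i\cup F^i}$, so by Theorem \ref{teo:lower_semicontinuity} applied to $(B_k^\delta,v_k^\delta)\overset{\tau_\admissible}{\to}(A,u)$ we get $\liminf_k\cF(B_k^\delta,v_k^\delta)\ge \cF(A,u)$. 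Combining with \eqref{azshhhda} gives
\begin{equation*}
\liminf_k\cF(A_k,u_k)\ge \cF(A,u)-c\sqrt{\delta}\,C(M),
\end{equation*}
where $C(M)$ depends only on the uniform energy bound and $\sum P(F^h)$. Letting $\delta=\delta_l\to 0$ and extracting a diagonal subsequence $k_l$ produces the desired sequence $(B_l,v_l)=(B_{k_l}^{\delta_l},v_{k_l}^{\delta_l})$ $\tau_\admissible$-converging to $(A,u)$, with $|A_{k_l}\Delta B_l|\le |G_{k_l}^{\delta_l}|\to 0$ and the two required liminf inequalities. The delicate point I expect to fight with is ensuring that the artificial jumps added on $\Sigma$ to absorb the $a_k^i$-mismatch have the correct surface weight (controlled by $\varphi(x,\nu_\Sigma)$ rather than by $\beta$, via hypothesis (H2)), which is what makes the construction admissible in the dewetting regime.
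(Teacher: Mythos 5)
Your proposal follows essentially the same strategy as the paper: partition the substrate $\substrate$ via Poincar\'e--Korn after grouping connected components, apply the Chambolle--Conti compactness theorem to each $u_k-a_k^i$ to extract the pairwise-disjoint Caccioppoli sets $F^i\subset A$ (with the hanging phase $F^0$ on which you set the limit displacement to $u_0$), open holes $G_k^\delta$ along $\bigcup_i\p^*F^i$ controlled by Propositions \ref{prop:estimate_inner_jump} and \ref{prop:estimate_red_boundary}, adjust $v_k^\delta$ near $\Sigma$ to handle the wetting-layer/delamination traces, and conclude by a diagonal extraction. One order-of-limits slip worth fixing: for fixed $\delta$, the paper's construction gives $|G_k^\delta|\le c^*\sqrt\delta\sum_h P(F^h)$, which does \emph{not} vanish as $k\to\infty$, so $(B_k^\delta,v_k^\delta)$ $\tau_\admissible$-converges to a limit of the form $(A\setminus G^\delta,\,v^\delta)$ rather than to $(A,u)$; thus your intermediate claim ``$\liminf_k\cF(B_k^\delta,v_k^\delta)\ge\cF(A,u)$ for fixed $\delta$'' is not justified by a direct application of Theorem \ref{teo:lower_semicontinuity}. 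The convergence to $(A,u)$ (and with it the second liminf inequality, via Theorem \ref{teo:lower_semicontinuity}) only materializes along the diagonal $l\mapsto(B_{k_l}^{\delta_l},v_{k_l}^{\delta_l})$ after $\delta_l\to0$ makes $G^{\delta_l}\to\emptyset$ and $v^{\delta_l}\to u$; this is indeed where the argument should be closed, and your final paragraph already records the correct conclusion.
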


\noindent
Notice that our compactness result is analogous to those in \cite{FFLM:2007,HP:2020_arma}. According to the proof, in general we have $|B_l|\le |A_{k_l}|,$ i.e., the volume constraint may not be preserved. Rather, Theorems \ref{teo:lower_semicontinuity} and \ref{teo:compactness} allow to solve the unconstrained minimum problem \eqref{min_prob_globals_uncons}, and then, as in \cite[Theorem 1]{EF:2011}, using the equivalence of the minimum problems \eqref{min_prob_globals} and \eqref{min_prob_globals_uncons} (see Proposition \ref{prop:fusco}), we establish the existence of a volume-constraint minimizer.

It is worth to remark that in both Theorems \ref{teo:lower_semicontinuity} and \ref{teo:compactness} (and hence, in the existence) the assumption $\C\in C(\cl{\Omega})$ can be relaxed to $\C\in L^\infty(\Omega)$. The continuity of $\C$ is important in the (partial) regularity of minimizers of $\cF$.

\begin{theorem}[\textbf{Properties of minimizing configurations)}]\label{teo:regularity_of_minimizers}  
Let $(\tilde A,\tilde u)\in \admissible$ be a solution of \eqref{min_prob_globals},
$$
A=\Int{\tilde A^{(1)}}\quad\text{and}\quad u = \tilde u\chi_{A\cup S} + \xi \chi_{\Omega\setminus A},
$$
where $\xi\in(0,1)^n$ is chosen such that $\Omega\cap\p^* A\subset_{\cH^{n-1}} J_u$ (see Remark \ref{rem:ext_u_out_A}), and let
$$
J_u^*=\{x\in J_u:\,\, \theta(J_u,x)=1\}.
$$
Then: 

\begin{itemize}[left=12pt]
\item[\rm(i)]  $(A,u)$ is a minimizer of $\cF$ and 
$$
\cH^{n-1}(\tilde A^{(1)} \setminus A)<+\infty,\quad 
\cH^{n-1}(J_u\setminus J_u^*)=0,\quad
\cH^{n-1}(\cl{J_u^*}\setminus J_u^*)=0; 
$$

\item[\rm(ii)]  for any $x\in\Omega$ and $r\in(0,\min\{1,\dist(x,\p \Omega)\})$
$$
\frac{\cH^{n-1}(Q_r(x)\cap J_u)}{r^{n-1}} \le \frac{4n\bound_2 + \lambda_0}{\bound_1},
$$
where $\lambda_0$ is given by Theorem \ref{teo:global_existence}; 

\item[\rm(iii)]  there exist $\varsigma_0=\varsigma_0(\bound_1,\bound_2,\bound_3,\bound_4)\in(0,1)$ and $R_0=R_0(\bound_1,\bound_2,\bound_3,\bound_4)>0$  such that 
$$
\frac{\cH^{n-1}(Q_r(x)\cap J_u)}{r^{n-1}} \ge \varsigma_0
$$
for all cubes $Q_r(x)\subset\Omega$ centered at $x\in \Omega\cap \cl{J_u^*}$ with sidelength $r\in(0,R_0);$ 

\item[\rm(iv)]   if $E\subset A$ is any connected component of $A$ with $\cH^{n-1}([\p^* E\cap\Sigma] \setminus J_u)=0,$ then $|E|\ge\omega_n \big(\frac{\bound_1n}{\lambda_0}\big)^n$  and $u=u_0+a$ in $E$   for some rigid displacement $a.$
\end{itemize}
\end{theorem}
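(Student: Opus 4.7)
The plan is to verify items (i)--(iv) in order, throughout exploiting the penalized formulation of Theorem~\ref{teo:global_existence} so that arbitrary local competitors without volume constraint become admissible.

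For (i), first observe that $A:=\Int{\tilde A^{(1)}}$ and $\tilde A$ agree up to an $\cL^n$-negligible set, so the elastic contribution is unchanged; moreover $\p^*A=\p^*\tilde A$ up to an $\cH^{n-1}$-null set, so the surface contributions coincide as well. Hence $\cF(A,u)=\cF(\tilde A,\tilde u)$ and $(A,u)$ is again a minimizer, the choice of $\xi$ in Remark~\ref{rem:ext_u_out_A}(ii) ensuring $\Omega\cap\p^*A\subset_{\cH^{n-1}} J_u$. The bound $\cH^{n-1}(\tilde A^{(1)}\setminus A)<+\infty$ follows since this difference is contained, up to an $\cH^{n-1}$-null set, in $\p^*\tilde A$ and $\cH^{n-1}(\p^*\tilde A)=P(\tilde A)<+\infty$. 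Both essential-closedness statements will be derived a posteriori from (ii)--(iii): the $\cH^{n-1}$-rectifiability of $J_u$ from GSBD theory yields $\theta(J_u,x)=1$ at $\cH^{n-1}$-a.e.\ $x\in J_u$, hence $J_u=_{\cH^{n-1}} J_u^*$, while the uniform lower density bound in (iii), combined with the classical De Giorgi--Carriero--Leaci covering argument, gives $\cH^{n-1}(\cl{J_u^*}\setminus J_u^*)=0$.

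For (ii), fix a cube $Q_r(x)\subset\Omega$ and build the competitor $(A',u')$ with $A':=A\cup Q_r(x)$ and with $u':=u$ outside $Q_r(x)$, $u':=u_0$ inside $Q_r(x)$. Inside $Q_r(x)$ the elastic integrand vanishes since $\str{u'}-\bM_0=\str{u_0}-\str{u_0}=0$, so the elastic energy does not increase. Up to a harmless constant perturbation that absorbs the new trace on $\p Q_r(x)$ into $J_{u'}$, the surface energy inside $Q_r(x)$ loses $\cH^{n-1}(Q_r(x)\cap J_u)$ (weighted by at least $\bound_1$ by (H1)) and gains at most $\bound_2\cH^{n-1}(\p Q_r(x))\le 2n\bound_2 r^{n-1}$. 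Writing $\cF^{\lambda_0}(A,u)\le\cF^{\lambda_0}(A',u')$ and using $|A\Delta A'|\le r^n$ gives
\[
\bound_1\,\cH^{n-1}(Q_r(x)\cap J_u)\le 4n\bound_2 r^{n-1}+\lambda_0 r^n,
\]
whence the claim follows after dividing by $r^{n-1}$ and using $r\le 1$.

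The main obstacle is (iii), whose proof constitutes Section~\ref{sec:decay_estimates} and whose strategy I would adapt from the $n=2$ argument of \cite{HP:2021_arxiv} to arbitrary dimension. The heart is a \emph{Decay Lemma} for the localized energy $\cF^{\lambda_0}(\cdot,Q_r(x))$ asserting that, when the normalized local energy lies below a universal threshold, it contracts geometrically on a smaller concentric cube. The proof rests on a dichotomy for the blow-up of $A$ at $x$ detected by the relative isoperimetric inequality of \cite{AFP:2000}. In the \emph{bulk regime}, where $A$ blows up to a half-space, I would use Propositions~\ref{prop:estimate_inner_jump} and \ref{prop:estimate_red_boundary} to enclose $J_u$ locally within the boundary of a controlled open set, then apply the Chambolle--Conti--Iurlano approximation \cite[Theorem 3]{CCI:2017} to replace $u$ by a Sobolev function and compare with its harmonic elastic extension, using the coercivity in (H3) and the continuity of $\C$ to freeze coefficients. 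In the \emph{void regime}, where $A$ blows up to the full space or to the empty set, one fills the void cube as in (ii), balancing the isoperimetric deficit against $\lambda_0$. Iterating the Decay Lemma geometrically at any $x\in\cl{J_u^*}$ then produces the uniform lower density $\varsigma_0$, which in turn delivers the essential closedness needed in~(i).

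For (iv), let $E$ be a connected component of the open set $A$ with $\cH^{n-1}([\p^*E\cap\Sigma]\setminus J_u)=0$. Since $E$ is open and the boundary traces of $u$ at $\p E$ do not enter the elastic integrand, we may compare $(A,u)$ with the competitor obtained by replacing $u|_E$ by $u_0+a$ for any rigid displacement $a$: the surface energy cannot increase, because $J_u\cap E^{(1)}$ is erased while the weights at $\p^*E$ remain in their original position, and the elastic contribution over $E$ drops to zero. Minimality therefore forces $\str u=\bM_0=\str{u_0}$ a.e.\ on $E$, so by the classical Korn identification on the open connected set $E$ we get $u=u_0+a$ on $E$ for some rigid $a$. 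For the volume bound, compare now $(A,u)$ with $(A\setminus E,u'')$, where $u''$ extends $u$ by a generic constant inside $E$: the surface energy decreases by at least $\bound_1\cH^{n-1}(\p^*E)$, using that the $\Sigma$-trace of $\p^*E$ lies, up to a null set, in $J_u$ and is thus weighted by $\varphi\ge\bound_1$ rather than by $\beta$; the elastic contribution is unchanged as it was already zero on $E$; and the penalization grows by at most $\lambda_0|E|$. This yields
\[
\bound_1\,\cH^{n-1}(\p^*E)\le\lambda_0\,|E|,
\]
which combined with the classical isoperimetric inequality $\cH^{n-1}(\p^*E)\ge n\omega_n^{1/n}|E|^{(n-1)/n}$ rearranges into $|E|\ge\omega_n\bigl(\bound_1 n/\lambda_0\bigr)^n$.
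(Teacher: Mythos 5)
Your overall blueprint is the right one (reduce (ii)--(iii) to density estimates for $\lambda_0$-minimizers, use the penalized formulation to admit volume-changing competitors), and parts (ii) and (iv) are essentially the paper's argument: your fill-the-cube competitor in (ii) gives the stated bound $4n\bound_2+\lambda_0$ (the paper happens to remove the cube and obtains $2n\bound_2+\lambda_0$, but both are valid), and your (iv) replicates the paper's two comparisons. Part (iii) is only a sketch, but you correctly identify that it is the technical heart (the Decay Lemma of Section~\ref{sec:decay_estimates}) and your description of the strategy — bulk/void dichotomy via the relative isoperimetric inequality, opening of the jump set via Propositions~\ref{prop:estimate_inner_jump} and \ref{prop:estimate_red_boundary}, and the approximation of \cite[Theorem 3]{CCI:2017} — is aligned with the paper.

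There is, however, a genuine error in (i). You claim that $\tilde A^{(1)}\setminus A$ is contained, up to an $\cH^{n-1}$-null set, in $\p^*\tilde A$. This cannot hold: $\tilde A^{(1)}\setminus A\subset\tilde A^{(1)}$ consists of density-$1$ points of $\tilde A$, while $\p^*\tilde A$ consists (up to $\cH^{n-1}$-null sets) of density-$\tfrac12$ points, so the two sets are essentially \emph{disjoint}. The true inclusion is $\tilde A^{(1)}\setminus\Int{\tilde A^{(1)}}\subset\p(\tilde A^{(1)})=\cl{\p^*\tilde A}$ (topological boundary, Maggi Ch.~15), and for a generic set of finite perimeter $\cl{\p^*\tilde A}$ can have infinite $\cH^{n-1}$-measure — and even positive $\cL^n$-measure — so neither the finiteness claim nor your opening assertion that $A$ and $\tilde A$ agree up to an $\cL^n$-null set is automatic. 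The paper sidesteps this by working in the correct logical order: it first sets $A':=\tilde A^{(1)}$ (so $|A'\Delta\tilde A|=0$ \emph{is} immediate and $(A',u')$ is a minimizer), applies the density estimates of Theorem~\ref{teo:density_estimates} to $(A',u')$ to obtain $\cH^{n-1}\big(\Omega\cap[\cl{J_{u'}^*}\setminus J_{u'}^*]\big)=0$ and hence $\cH^{n-1}(\p A')\le\cH^{n-1}(\p\Omega)+\cH^{n-1}(J_{u'})<+\infty$, and only then infers $|\p A'|=0$, justifying the passage to $A=\Int{A'}$ and the bound $\cH^{n-1}(\tilde A^{(1)}\setminus A)\le\cH^{n-1}(\p A')<+\infty$. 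Your write-up states the finiteness as an independent fact and then treats essential closedness as ``a posteriori,'' which is circular; the essential closedness must come first.
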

 
\subsection{Generalization and extra results related to literature models}\label{subsec:generalization}

In this section we discuss some SDRI-type models to which the arguments used for the main results extend with only minor modifications. In doing so, we also recover a number of results previously established in the literature.

First we consider more general elastic energy densities.

\begin{theorem}[\textbf{Elastic density with $p$-growth}]\label{teo:elastic_plow}
For $p>1$ let an elastic energy density $W_p:\Ins{\Omega}\times \mtwo\to\R$ be such that
\begin{itemize}[left=15pt]
 \item[\rm(a1)] for any $x\in \Ins{\Omega},$ the map $\bM\mapsto W_p(x,\bM)$ is convex and lower semicontinuous in $\mtwo$,

 \item[\rm(a2)] for any $\bM\in\mtwo,$ the map $x\mapsto W(x,\bM)$ is measurable,

 \item[\rm(a3)] there exist $c>0$ and $f\in L^1(\Ins{\Omega})$ such that
 \begin{equation}\label{lower_boundas_papap}
 W_p(x,\bM) \ge c|\bM|^p + f(x)\quad \text{for a.e.\ $x\in \Ins{\Omega}$ and for all $\bM\in\mtwo.$}
 \end{equation}
\end{itemize}
Let 
$$
\admissible_p:=\{(A,u):\,\, A\in BV(\Omega;\{0,1\}),\,\, u\in GSBD^p(\Ins{\Omega})\}
$$
be a class of admissible configurations and let
$$
\cF_p = \cS + \cW_p\quad \text{in $\admissible_p$},
$$
where 
$$
\cW_p(A,u) = \int_{A\cup S} W_p(x,\str{u} - \bM_0)\,\d x.
$$
Then for any $\fm\in (0,|\Omega|]$ the minimum problem 
\begin{equation}\label{min_prob_plow}
\min\limits_{(A,u)\in\admissible_p,\,\,|A| = \fm} \,\cF_p(A,u) 
\end{equation}
admits a solution. Moreover, there exists $\lambda_0>0$ such that for any $\lambda>\lambda_0$ a configuration $(A,u)$ is a solution to \eqref{min_prob_plow} if and only if it is a minimizer of 
$$
\cF_p^\lambda(A,u) = \cF(A,u) + \lambda\big||A| - \fm\big|.
$$
\end{theorem}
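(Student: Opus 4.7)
The plan is to extend the direct method argument of Theorem \ref{teo:global_existence} to the $p$-growth setting by re-running the three-step scheme: $\tau_\admissible$-lower semicontinuity of $\cF_p$, $\tau_\admissible$-compactness of energy-equibounded sequences in $\admissible_p$, and then the passage from the volume-penalized to the volume-constrained minimum problem. Assumption (a2) guarantees that $\cF_p$ is finite on $\admissible_p$, while the coercivity in (a1) plays the same role as (H3) did in the quadratic case: for any at most countable $\mathcal{U}\subset\admissible_p$ with $\sup\cF_p<+\infty$, the bounds \eqref{finsler_norm}--\eqref{hyp:bound_anis} give uniform control on $P(A)$, on $\cH^{n-1}(\p^*A\cup J_u)$ and on $\|\str{u}\|_{L^p(A\cup S)}$, exactly as in Remark \ref{rem:apriori_bounds_seq}.

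For the lower semicontinuity step, suppose $(A_k,u_k)\overset{\tau_\admissible}{\to}(A,u)$ with $\sup_k\cF_p(A_k,u_k)<+\infty$. The surface contribution $\cS$ is independent of $p$, and the proof of Theorem \ref{teo:lower_semicontinuity} carries over verbatim: the key Propositions \ref{prop:estimate_inner_jump} and \ref{prop:estimate_red_boundary} rely only on the slicing properties of GSBD functions and on the Finsler structure of $\varphi$, not on the integrability exponent. For the bulk term, I would pick $\xi\in(0,1)^n\setminus\Xi_\cU$ as in Remark \ref{rem:ext_u_out_A}(iii), redefine $u_k$ and $u$ outside $A_k$ and $A$ by the constant $\xi$, and then invoke the GSBD$^p$ compactness theorem \cite[Theorem 1.1]{ChC:2020_jems} (which holds for every $p>1$) to upgrade the a.e.\ convergence to the weak convergence
\[
\chi_{A_k\cup S}\,\str{u_k} \wk \chi_{A\cup S}\,\str{u}\qquad\text{in }L^p(\Ins{\Omega}).
\]
Since $W_p(x,\cdot)$ is convex and bounded below by an $L^1$ function, Ioffe's lower semicontinuity theorem for convex integral functionals yields $\liminf_k\cW_p(A_k,u_k)\ge\cW_p(A,u)$, and combining with the surface estimate gives lower semicontinuity of $\cF_p$.

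For compactness, the strategy of Theorem \ref{teo:compactness} generalizes directly because every building block survives in GSBD$^p$. I would first partition $S$ via classical Korn--Poincar\'e into components $\{S^i\}_{i\ge1}$ carrying rigid displacement sequences $\{a_k^i\}$, then apply \cite[Theorem 1.1]{ChC:2020_jems} to each translated sequence $u_k-a_k^i$ to obtain the Caccioppoli family $\{F^i\}_{i\ge0}$ inside $A$, then build the cover $G_k^\delta$ of $\bigcup_i\p^*F^i$ from Propositions \ref{prop:estimate_inner_jump} and \ref{prop:estimate_red_boundary} so as to absorb the possibly diverging pieces of the jump set, and finally define $(B_k^\delta,v_k^\delta)$ by the analogue of \eqref{intro_approx_conf}. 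The estimate \eqref{azshhhda} then holds with $\cF$ replaced by $\cF_p$, and a diagonal extraction produces the improved sequence $(B_l,v_l)\overset{\tau_\admissible}{\to}(A,u)$ with $|A_{k_l}\Delta B_l|\to0$ and $\liminf\cF_p(A_{k_l},u_{k_l})\ge\liminf\cF_p(B_l,v_l)$.

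Combining these two ingredients with the direct method yields a minimizer for the penalized functional $\cF_p^\lambda$ for every $\lambda>0$, since the penalization term is continuous along $L^1$-convergent sets. To recover the volume-constrained problem \eqref{min_prob_plow}, I would invoke the abstract equivalence of Proposition \ref{prop:fusco} (stated in the Appendix for generic functionals satisfying our lower semicontinuity and compactness properties), following the Esposito--Fusco scheme of \cite[Theorem 1]{EF:2011}: a local volume-adjusting competitor shows that for $\lambda$ larger than a threshold $\lambda_0$ depending only on $\bound_1,\bound_2,\bound_3,\bound_4,\fm$ and the geometry of $\Omega,S$, any minimizer of $\cF_p^\lambda$ must satisfy $|A|=\fm$. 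The main obstacle I expect is ensuring that the Ioffe-type weak lower semicontinuity applies in the presence of the moving characteristic function $\chi_{A_k\cup S}$; the Remark \ref{rem:ext_u_out_A}(iii) trick of fixing a generic $\xi$ uniformly for the countable minimizing family is exactly what bypasses this difficulty, transforming the problem into a standard weak-$L^p$ convergence statement for strains of GSBD$^p$ functions whose jump sets are controlled.
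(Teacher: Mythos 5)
Your proposal follows essentially the same route as the paper: reuse the $\tau_\admissible$-lower semicontinuity machinery (the surface density estimates from Propositions \ref{prop:estimate_inner_jump} and \ref{prop:estimate_red_boundary} depend only on slicing and the Finsler norm, not on the exponent), obtain weak $L^p$-convergence of strains from \cite[Theorem~1.1]{ChC:2020_jems} after the Remark \ref{rem:ext_u_out_A}(iii) renormalization, use convexity of $W_p(x,\cdot)$ for the bulk lower semicontinuity, rerun the Caccioppoli-partition compactness argument, and conclude via the Esposito--Fusco penalization equivalence.

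The one step you elide is the possible negativity of $W_p$. In the quadratic case the elastic comparison $\cW(A_k,u_k)\ge\cW(B_k^\delta,v_k^\delta)$ is immediate from nonnegativity of the integrand and $B_k^\delta\subset A_k$. Under \eqref{lower_boundas_papap} this no longer holds pointwise: one only gets
\begin{equation*}
\cW_p(A_k,u_k)\ge\cW_p(B_k^\delta,v_k^\delta)+\int_{G_k^\delta}W_p(x,\str{v_k^\delta})\,\d x \ge \cW_p(B_k^\delta,v_k^\delta)-\int_{G_k^\delta}|f|\,\d x,
\end{equation*}
so the estimate \eqref{azshhhda} does not carry over ``with $\cF$ replaced by $\cF_p$'' literally; the extra error is $o_\delta(1)$ by the volume bound \eqref{vol_est_Gk} and absolute continuity of the Lebesgue integral of $f\in L^1$, and it must be tracked through the diagonal argument. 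The same issue recurs in the Esposito--Fusco comparison: the elastic term $I_3$ (there the difference of elastic energies over $B_r$) is no longer nonnegative, and one needs $I_3\ge-\int_{B_r}|f|\,\d x$, a bound uniform in $h$, to close the contradiction. These are minor technical modifications the paper spells out in Subsection \ref{subsec:extension_CLS} and Remark \ref{rem:extension_fuscortr}; your overall scheme is otherwise the one the paper uses.
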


\noindent
A standard example of $W_p$ is 
$$
W_p(x,\bM) = f(x) |\bM|^p + g(x)
$$
for some $f\in L^\infty(\Ins{\Omega})$ with $f\ge c>0$ a.e.\  and $g\in L^1(\Ins{\Omega}).$ 

Now we study the existence of minimizers in models related to the SDRI setting, but with Dirichlet boundary conditions.

\begin{theorem}[\textbf{Dirichlet case with a $p$-growth elastic density}]\label{teo:dirichlet_plow}
For $p>1$ let
$$
\admissible_{\rm Dir}:=\{(A,u):\,\,A\in BV(\Omega;\{0,1\}),\,\, u\in GSBD^p(\Ins{\Omega}),\,\,u=u_0\,\,\text{in $S$}\},
$$
where $u_0\in H^1(\R^n)$ is fixed, and let 
$$
\cF_{\rm Dir}:=\cS + \cW_{\rm Dir} \quad\text{in $\admissible_{\rm Dir}$},
$$
where
$$
\cW_{\rm Dir}(A,u):=\int_{A} W_p(x,\str{u})\,\d x 
$$
and the elastic energy density $W_p$ satisfies all assumptions of Theorem \ref{teo:elastic_plow}. Then for any $\fm\in (0,|\Omega|]$ the minimum problem 
\begin{equation}\label{min_prob_dir_plow}
\min\limits_{(A,u)\in\admissible_{\rm Dir},\,\,|A| = \fm} \,\cF_{\rm Dir}(A,u)
\end{equation}
admits a solution. Moreover, there exists $\lambda_0>0$ such that for any $\lambda>\lambda_0$ a configuration $(A,u)$ is a solution to \eqref{min_prob_dir_plow} and if only if it is a minimizer of
$$
\cF_{\rm Dir}^\lambda(A,u) = \cF_{\rm Dir}(A,u) + \lambda\big||A| - \fm\big|.
$$
\end{theorem}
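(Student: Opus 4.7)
The plan is to argue by the direct method of the calculus of variations, in close parallel to Theorems \ref{teo:global_existence} and \ref{teo:elastic_plow}. I would first invoke the penalization equivalence of Proposition \ref{prop:fusco} (with the obvious modifications to account for the Dirichlet class $\admissible_{\rm Dir}$ in place of $\admissible$), which reduces the existence of a minimizer for \eqref{min_prob_dir_plow} to that of a minimizer of the unconstrained functional $\cF_{\rm Dir}^\lambda$ for every $\lambda$ sufficiently large. This in turn requires the $\tau_\admissible$-lower semicontinuity of $\cF_{\rm Dir}$ on $\admissible_{\rm Dir}$ and the $\tau_\admissible$-compactness of $\cF_{\rm Dir}^\lambda$-equibounded sequences in $\admissible_{\rm Dir}$.

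For the lower semicontinuity, given $\{(A_k,u_k)\}\subset\admissible_{\rm Dir}$ with $(A_k,u_k)\overset{\tau_\admissible}{\to}(A,u)$, the pointwise a.e.\ convergence on $\Omega\cup S$ automatically preserves the Dirichlet condition $u=u_0$ on $S$, so $(A,u)\in\admissible_{\rm Dir}$. Since the surface component of $\cF_{\rm Dir}$ coincides with that of $\cF$, the proof of Theorem \ref{teo:lower_semicontinuity} applies essentially unchanged, once the $L^2$-estimates of Section \ref{sec:lower_semicontos} are replaced by their $L^p$ counterparts as in Theorem \ref{teo:elastic_plow}. The elastic part is lower-semicontinuous by the convexity of $W_p(x,\cdot)$ from assumption (a1), combined with the weak convergence $\chi_{A_k}\str{u_k}\wk\chi_A\str{u}$ in $L^p(\Ins{\Omega})$, which follows as in Remark \ref{rem:apriori_bounds_seq}(ii) by applying \cite[Theorem 1.1]{ChC:2020_jems} to $u_k-u_0$.

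The compactness is in fact simpler here than in Theorem \ref{teo:compactness}: since $u_k\equiv u_0$ on $S$, there is no need to partition $S$ into components with distinct rigid displacements $a_k^i$. Starting from the a priori bounds on $P(A_k)$, $\cH^{n-1}(J_{u_k})$, and $\|\str{u_k}\|_{L^p}$ (adapted to $p>1$ in Remark \ref{rem:apriori_bounds_seq}(i)), I would apply \cite[Theorem 1.1]{ChC:2020_jems} to $u_k-u_0$ to obtain, up to a subsequence, a set $F^\infty\subset\Omega$ of finite perimeter on which $|u_k-u_0|\to+\infty$ a.e., while $u_k-u_0$ converges a.e.\ on $\Ins{\Omega}\setminus F^\infty$. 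The Dirichlet data force $F^\infty\cap S=\emptyset$ and, more generally, exclude every component of $A$ sharing $\cH^{n-1}$-positive contact with $S$ through $\Sigma\setminus J_{u_k}$. Setting $F^0:=A\cap F^\infty$ (the hanging phase) and $F^1:=A\setminus F^\infty$, I would then run the hole-cover construction of Propositions \ref{prop:estimate_inner_jump}, \ref{prop:estimate_red_boundary} and Proposition \ref{prop:pass_to_good_seq_compacte} to obtain, for each $\delta>0$, a set $G_k^\delta\subset\Omega$ covering $\p^*F^0\cup\p^*F^1$ with perimeter and volume of order $O(\sqrt\delta)$, together with $B_k^\delta:=A_k\setminus G_k^\delta$ and
\[
v_k^\delta:=u_0\chi_{F^0\cup(\Omega\setminus A)}+u_k\chi_{S\cup(F^1\setminus G_k^\delta)}.
\]
This pair satisfies the analog of \eqref{azshhhda} with $\cF_{\rm Dir}$ in place of $\cF$ and converges a.e.\ on $\Omega\cup S$, whence a diagonal argument concludes the proof.

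The main obstacle I anticipate is preserving the Dirichlet constraint along $\{(B_k^\delta,v_k^\delta)\}$: although $v_k^\delta=u_0$ on $S\setminus G_k^\delta$ by construction, the hole set $G_k^\delta$ could a priori intersect $S$ near $\Sigma$, breaking $v_k^\delta=u_0$ on $S$. I would resolve this by arranging the cover $G_k^\delta$ so that its trace on $S$ lies in a Lipschitz neighborhood of $\Sigma\cap(\p^*F^0\cup\p^*F^1)$ on which $v_k^\delta$ is post-modified to equal $u_0$, adding only $O(\sqrt\delta)$ to the surface energy by the Lipschitz regularity of $\Sigma$; the elastic energy is unaffected since it is integrated only over $B_k^\delta\subset\Omega$ and not over $S$.
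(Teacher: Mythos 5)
Your proposal matches the paper's approach: the proof proceeds by the direct method, re-running the lower-semicontinuity argument of Theorem \ref{teo:lower_semicontinuity} with $L^2$ replaced by $L^p$ and using the convexity of $W_p(x,\cdot)$ from (a1), then the compactness argument of Theorem \ref{teo:compactness} in the degenerate case where (thanks to $u_k\equiv u_0$ on $S$) no rigid displacements are needed and only the two phases $F^0$ and $F^1$ appear, and finally the penalization equivalence of Proposition \ref{prop:fusco}, adapted as in Remark \ref{rem:extension_fuscortr} by taking the competitor $v_h=u_h\chi_{A_h\setminus B_r}$ so as to stay inside $\admissible_{\rm Dir}$.

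Two small remarks. First, the ``main obstacle'' you anticipate at the end does not actually arise: in the paper the hole set $G_k^\delta$ is always contained in $\Omega$, because the pieces $C_2^j$ and $D_k^j$ live in cubes $\strictlyincluded\Omega$, and the pieces $C_1^j$ near $\Sigma$ are produced by Lemma \ref{lem:holes_near_Sigma}, which explicitly yields an open set $C\subset U\cap\Omega$ with $C\cap S=\emptyset$. So $B_k^\delta\supset S$ automatically, $v_k^\delta=u_0$ on $S$ is preserved for free, and no post-modification near $\Sigma$ is needed. Second, your assertion that the Dirichlet data ``exclude every component of $A$ sharing $\cH^{n-1}$-positive contact with $S$ through $\Sigma\setminus J_{u_k}$'' is not proved in your argument and is not actually needed: the only relevant consequence of the boundary condition is the trivial one, namely $F^\infty\cap S=\emptyset$, which makes $S$ a single ``interacting'' piece with $a_k\equiv 0$ and reduces the partition to $\{F^0,F^1\}$; the stronger claim about components of $A$ would require a Poincar\'e--Korn argument that the proof never invokes.
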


\begin{remark}[\textbf{Relation to some literature results}]\label{rem:literature_models}
As a consequence of Theorem  \ref{teo:dirichlet_plow}, we have:
\begin{itemize}[left=12pt]
\item[(i)] Let $\beta(x) = -\varphi(x,\nu_\Sigma(x))$ for $\cH^{n-1}$-a.e.\ $x\in \Sigma$ and let $W_p:\mtwo\to\R$ satisfy
$$
c'|\bM|^p - c'' \le W_p(\bM) \le c'' (|\bM|^p + 1) 
$$
for some $c'',c'>0.$ Then Theorem \ref{teo:dirichlet_plow} coincides with the existence result \cite[Proposition 5.8]{CF:2020_arma} in the setting of material voids.

\item[(ii)] Let $\beta=0$ and $W_p$ be as in (i). Then the minimizers of $\cF_{\rm Dir}$ in $\admissible_{\rm Dir}$ with volume constraint $|\fm| = |\Omega|$ (i.e., free-crystal regions have full $\mathcal{L}^n$-measure) coincide with the (strong) Griffith minimizers in \cite{ChC:2019_arxiv} under Dirichlet boundary condition.

\item[(iii)] In the proof of Theorem \ref{teo:regularity_of_minimizers} we work only in $\Omega,$ i.e., we study the regularity of $\p^*A$ and $J_u$ only in the points of $\Omega.$ Therefore,  the essential closedness of $J_u$ and $\p^*A$ holds also for minimizers of $\cF_{\rm Dir}$ with
$W_p(x, \bM)=\C(x)\bM:\bM.$ In particular, this covers a partial regularity result in \cite{ChC:2019_arxiv}.
\end{itemize}
\end{remark}

We anticipate here that we equip both $\admissible_p$ and $\admissible_{\rm Dir}$ with the same type of convergence introduced in $\admissible,$ i.e. 
\begin{equation}
(A_k,u_k) \overset{\tau}\to (A,u)\quad\Longleftrightarrow \quad A_k\overset{L^1(\R^n)}{\longrightarrow} A\,\, \text{and}\,\, u_k\to u\,\,\text{a.e.\  in $\Omega\cup S.$}
\label{tau_convergence_CACA}
\end{equation}

\section{Replacing cracks with voids}\label{sec:prelim_results}

In this section we provide some technical results that allow to replace a portion of the jump set of the displacement fields with an open set without modifying too much the corresponding SDRI energy. These results will be used in the proofs of both lower semicontinuity and compactness results. We start with the following main ingredient of all crack-opening results.

\begin{lemma}\label{lem:creating_hole}
Let $\delta\in(0,1/4),$ $Q:=Q_{r,\nu}(x_0)$ be a cube, $\Gamma\subset Q$ is an $(n-1)$-dimensional Lipschitz graph and $K\subset Q$ be an $\cH^{n-1}$-rectifiable set. 
Assume that 
\begin{itemize}[left=15pt]
 \item[\rm (a1)]  $x_0\in\Gamma,$  $\nu$ is the unit normal to $\Gamma$ at $x_0$ and   $|(x-x_0)\cdot\nu|\le r/2$ for all $x\in\Gamma$; 
 
 \item[\rm (a2)] $\Gamma$ separates $Q$ into two open connected components $G_1$ and $G_2$;
 
 \item[\rm (a3)] $\theta(K,x_0)=\theta(K\cap\Gamma,x_0)=1,$ $\nu$ is the generalized unit normal to $K$ at $x_0,$ and 
 $$
(1-\delta)r^{n-1}\le  \cH^{n-1}( K\cap\Gamma) \le \cH^{n-1}(\Gamma)\le (1+\delta)r^{n-1};
 $$
 
 \item[\rm (a4)] $\cH^{n-1}( K \setminus \Gamma)<\delta r^{n-1}$.
\end{itemize}
Then there exist open sets $C,D\strictlyincluded Q$ of finite perimeter such that 
\begin{itemize}[left=20pt]

\item[\rm (i)] $C\subset G_1,$  and $\cH^{n-1}(\p C\setminus \p^*C)=\cH^{n-1}(\p D\setminus \p^*D)=0;$
 
 \item[\rm (ii)] $\cH^{n-1}(K\setminus \cl{C}) <2\delta r^{n-1}$ and $\cH^{n-1}(K\setminus D)<2\delta r^{n-1};$ 
 
 \item[\rm (iii)] $|C|< \delta r^n$ and $|D|<\delta r^n;$ 
 
 \item[\rm (iv)] 
 $
 (1-2\delta)r^{n-1}\le \cH^{n-1}(K\cap \p C\cap \Gamma) \le \cH^{n-1}(\p C\cap \Gamma) < (1+\delta)r^{n-1}; 
 $

 \item[\rm (v)] for any norm $\phi$ in $\R^n$
 satisfying
 \begin{equation}\label{fjskiii098}
\bound_1\le \phi(\nu) \le \bound_2,\quad \nu\in\S^{n-1},
\end{equation}
 one has
 \begin{equation}\label{two_dim_esimates}
 \int_{\p D} \phi(\nu_D)\d\cH^{n-1} \le 2 \int_K  \phi(\nu_K)\d\cH^{n-1} + 5\bound_2\delta r^{n-1}.  
 \end{equation} 
 \begin{equation}\label{one_dim_esimates_C}
 \int_{\p C} \phi(\nu_C)\d\cH^{n-1} \le 2 \int_K \phi(\nu_K)\d\cH^{n-1} + 5\bound_2\delta r^{n-1} 
 \end{equation}
 and
 \begin{equation}\label{one_dim_esimates}
 \int_{G_1\cap \p C} \phi(\nu_C)\d\cH^{n-1} \le \int_K \phi(\nu_K)\d\cH^{n-1} + 3\bound_2\delta r^{n-1},
 \end{equation} 
\end{itemize}
\end{lemma}

\begin{proof}
Without loss of generality we assume that $\nu=\e_n,$ $x_0=0$ and  $G_1$ lies above $\Gamma$. Since $\Gamma$ is a Lipschitz graph, $f\in \Lip(V)$ such that $\Gamma={\rm graph}(f),$ where $V=[-\frac r2,\frac r2]^{n-1}\subset\R^{n-1}.$ By (a1), $\|f\|_\infty\le r/2$ and hence $\Gamma$ intersects only the  lateral sides of $Q.$
Let 
$$
\epsilon:=\frac{\delta}{4(1+\Lip(f))}.
$$
Let $V''\strictlyincluded V'\strictlyincluded V$ be any $(n-1)$-dimensional cubes in $\R^{n-1}$ such that 
\begin{equation}\label{good_cubes_for08172}
\cH^{n-1}(V\setminus V'')<\epsilon r^{n-1}. 
\end{equation}
For $\gamma\in(0,\epsilon r)$ let $g\in \Lip_c(V;[0,\gamma])$ be such that $g\equiv\gamma$ in $V'',$ $\supp(g)=\cl{V'}$ and $\|g\|_\infty\le 1.$ Let $C$ be the open set enclosed by the graphs of $f$ and $f+g$ and let $D$ be the open set enclosed by the graphs of $f+g$ and $f-g.$ Since both $\p C$ and $\p D$ consist of the union of two Lipschitz graphs, they have finite perimeter.

We claim that $C$ and $D$ satisfy the assertions of the lemma.

(i) Since $\|f\pm g\|_\infty<3r/4$ (by (a1) and choice of $\gamma$) and $g=0$ on $V\setminus V',$ we have $C\subset G_1$ and $C,D\strictlyincluded Q_r.$ Since $V'$ is an $(n-1)$-dimensional cube, by the area formula,
$$
\cH^{n-1}(\p C\setminus \p^*C) =\cH^{n-1}(\p D\setminus \p^*D) \le (1+\Lip(f)) \cH^{n-1}(\cl{V'}\setminus V')=0.
$$

(ii) By  (a4),
\begin{align*}
\cH^{n-1}(K \setminus \cl{C}) \le & \cH^{n-1}(\Gamma\cap K \setminus \cl{C}) + \cH^{n-1}(K \setminus \Gamma)
< \cH^{n-1}(\Gamma\setminus \cl{C}) +\delta r^{n-1}.
\end{align*}
Moreover, by contruction,
$$
\Gamma\setminus \cl{C}=\Gamma\setminus\p C=\Gamma\setminus\cl{D}=f(V\setminus \cl{V'}),
$$ 
and hence by the area formula and \eqref{good_cubes_for08172},
\begin{equation}\label{bad_part_ingamma}
\cH^{n-1}(\Gamma\setminus \cl{C}) \le \int_{V\setminus V'} \sqrt{1+|\nabla f|^2}\d x' \le (1 + \Lip(f)) \cH^{n-1}(V\setminus V') < \frac{\delta}{4}\, r^{n-1}. 
\end{equation}
Thus, $\cH^{n-1}(K \setminus C)<\frac54\,\delta r^{n-1}.$
Similarly, $\cH^{n-1}(\Gamma\setminus \cl{D}) =\cH^{n-1}(\Gamma\setminus D)<\frac{1}{4}\delta r^{n-1}.$

(iii) By the Fubini's theorem, the choice of $\gamma$ and also the area formula,
\begin{align*}
|C| = & \int_{V'} (f+g -f)\d x\le \gamma \cH^{n-1}(V') <
\epsilon r \int_V \sqrt{1+|\nabla f|^2}\d x' =\epsilon r \cH^{n-1}(\Gamma) 
\end{align*}
and 
\begin{align*}
|D| = & \int_{V'} (f+g -(f-g))\d x\le 2\gamma \cH^{n-1}(V') <
2\epsilon r \int_V \sqrt{1+|\nabla f|^2}\d x' =2\epsilon r \cH^{n-1}(\Gamma) 
\end{align*}
Hence, by (a3), $|C| < \frac{\delta(1+\delta)}{4}r^n$ and $|C| < \frac{\delta(1+\delta)}{2}r^n.$

(iv) By (a3),
$$
\cH^{n-1}(\p C\cap \Gamma) < \cH^{n-1}(\Gamma) \le (1+\delta)r^{n-1}.
$$
Moreover, by \eqref{bad_part_ingamma},
$$
\cH^{n-1}(K\cap \Gamma) - \cH^{n-1}(K\cap \p C\cap \Gamma) =
\cH^{n-1}(K\cap\Gamma\setminus \p C) \le \cH^{n-1}(\Gamma\setminus \p C) =\cH^{n-1}(\Gamma\setminus C) < \frac{\delta}{4}r^{n-1}.
$$
Hence, again by (a3),
$$
\cH^{n-1}(K\cap \p C\cap \Gamma)  \ge \cH^{n-1}(K\cap\Gamma)-\frac{\delta}{4}r^{n-1}>(1-\tfrac54\delta)r^{n-1}.
$$

(v)  By the definition of $C,$ the area formula, the convexity of $\phi$, the definition of $g$, \eqref{fjskiii098} and \eqref{good_cubes_for08172},
\begin{align*}
\int_{G_1\cap \p C}\phi(\nu_C)\d\cH^{n-1} = & \int_{G_1\cap {\rm graph}(f+g)} \phi(\nu_C)\d\cH^{n-1}=\int_{V'} \phi(-\nabla(f+g),1)\d\cH^{n-1}\\
\le & \int_{V'} \phi(-\nabla f,1)\d\cH^{n-1} + \int_{V'} \phi(-\nabla g,0)\d\cH^{n-1}\\
\le& \int_{V} \phi(-\nabla f,1)\d\cH^{n-1} + \int_{V'\setminus V''} \phi(-\nabla g,0)\d\cH^{n-1}\\
\le & \int_\Gamma \phi(\nu_\Gamma)\d\cH^{n-1} + \bound_2\|\nabla g\|_\infty \cH^{n-1}(V'\setminus V'')\\
\le & \int_\Gamma \phi(\nu_\Gamma)\d\cH^{n-1} + \frac{\bound_2 \delta}{4}\,r^{n-1}.
\end{align*}
Moreover, by (a3),
\begin{equation*} 
\cH^{n-1}(\Gamma\setminus K) = \cH^{n-1}(\Gamma) - \cH^{n-1}(\Gamma\cap K) \le 2\delta r^{n-1}, 
\end{equation*}
and hence by \eqref{fjskiii098},
\begin{equation}\label{gamma_les_K}
\int_\Gamma \phi(\nu_\Gamma)\d\cH^{n-1} \le 
\int_{K\cap\Gamma} \phi(\nu_K)\d\cH^{n-1} + 
\bound_2\cH^{n-1}(\Gamma\setminus K) \le 
\int_K \phi(\nu_K)\d\cH^{n-1} + \frac{9\bound_2}{4}\,\delta r^{n-1}. 
\end{equation}
Thus, \eqref{one_dim_esimates} follows. Since $\p C\cap \p G_1=\Gamma,$ the proof of \eqref{one_dim_esimates_C} follows from \eqref{gamma_les_K} and \eqref{one_dim_esimates}. Similarly,
\begin{align*}
\int_{\p D}\phi(\nu_D)\d\cH^{n-1} = & \int_{V'}\Big[\phi(-\nabla (f+g),1)\d\cH^{n-1} + \phi(-\nabla(f-g),1)\Big]\d\cH^{n-1}\\
\le & 2 \int_{V'} \phi(-\nabla f,1)\d\cH^{n-1} 
+ 2\int_{V'}\phi(-\nabla g,0)\d\cH^{n-1}\\
\le& 2\int_\Gamma\phi(\nu_\Gamma)\d\cH^{n-1} +\frac{\bound_2}{2}\,\delta r^{n-1}\\
\le & 2\int_K\phi(\nu_K)\d\cH^{n-1} +\frac{9\bound_2}{2}\,\delta r^{n-1}.
\end{align*} 
\end{proof}

The following result will be used in the proof of Proposition \ref{prop:estimate_inner_jump} with $K=A_k^{(1)}\cap J_{u_k}$ and allows to replace $u_k$ with $v_k,$ whose jump set is a reduced boundary of an open set of finite perimeter (see Corollary \ref{cor:functions_with_good_cracks} below). Recall that this property is important to obtain the surface tension $2\varphi$ in the ``interior'' jump energy in the functional $\cS.$

\begin{lemma}\label{lem:opening_cracks}
Let $U\subset \R^n$ be an open set, $K\subset U$ be a $\cH^{n-1}$-rectifiable set and $\delta>0.$ There exists an at  most countable family $\{C_i\}_{i\ge1}$ of open sets of finite perimeter such that 
\begin{itemize}[left=15pt]
\item[\rm(i)] $C_i\strictlyincluded U$ and $\cH^{n-1}(\p C_i\setminus \p^* C_i)=0$; 

\item[\rm(ii)]  $\cH^{n-1}(K \setminus \bigcup_i C_i) < \delta $ and $|\bigcup_i C_i|< \delta;$ 

\item[\rm(iii)] for any norm $\phi$ in $\R^n$ satisfying \eqref{fjskiii098}
$$
\sum\limits_{i\ge1} \int_{\p C_i} \phi(\nu_{C_i})\,\d\cH^{n-1} < 2\int_K \phi(\nu_{K}^{})\,\d\cH^{n-1} + \delta.
$$
\end{itemize}
\end{lemma}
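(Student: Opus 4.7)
If $\cH^{n-1}(K) = +\infty$ the bound in (iii) is vacuous and the empty family works, so I assume $\cH^{n-1}(K) < +\infty$. Fix auxiliary parameters $\eta \in (0, 1/4)$ and $r_{\max} \in (0, 1)$, both to be chosen small at the end in terms of $\delta$, $\bound_2$ and $\cH^{n-1}(K)$.

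First, I exploit the $\cH^{n-1}$-rectifiability of $K$. By the structure theorem for rectifiable sets, $K$ is covered up to an $\cH^{n-1}$-null set by a countable union of Lipschitz graphs; moreover, by \cite[Theorem 2.63]{AFP:2000}, at $\cH^{n-1}$-a.e.\ $x_0 \in K$ there is a unique generalized unit normal $\nu(x_0)$, the density $\theta(K, x_0) = 1$, and the rescalings $\sigma_{x_0, r}(K)$ concentrate on the tangent hyperplane orthogonal to $\nu(x_0)$ as $r \to 0^+$. At such a good point, and for all sufficiently small $r$ with $Q_{r, \nu(x_0)}(x_0) \strictlyincluded U$ and $r \leq r_{\max}$, I would construct a Lipschitz graph $\Gamma = \Gamma_{x_0, r}$ over the hyperplane orthogonal to $\nu(x_0)$ whose Lipschitz constant is at most $1/\sqrt{n-1}$ (so that $\Gamma$ is trapped in the slab $\{|(x-x_0)\cdot \nu(x_0)| \leq r/2\}$), in such a way that (a1)--(a4) of Lemma \ref{lem:creating_hole} are satisfied with parameter $\eta$ in place of $\delta$.

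Second, the collection of these admissible oriented cubes constitutes a fine cover of $K$ up to an $\cH^{n-1}$-null set. By the Besicovitch--Vitali covering theorem applied to the Radon measure $\cH^{n-1} \res K$ (see, e.g., \cite[Theorem 2.19]{AFP:2000}), I extract an at most countable pairwise disjoint subfamily $\{Q_i\}_{i \geq 1}$ with $Q_i = Q_{r_i, \nu_i}(x_i) \strictlyincluded U$ and $\cH^{n-1}(K \setminus \bigcup_i Q_i) = 0$. In each $Q_i$ I apply Lemma \ref{lem:creating_hole} to the localized data $(K \cap Q_i, \Gamma_i)$, and I take $C_i$ to be the set there called $D$, so that the factor $2$ in estimate \eqref{two_dim_esimates} is available. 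Property (i) of the present lemma then follows at once from Lemma \ref{lem:creating_hole}(i) and from the fact that the $C_i$ inherit the disjointness of the cubes.

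For (ii) and (iii), the key summability bound is
$$
(1 - \eta)\sum_i r_i^{n-1} \leq \sum_i \cH^{n-1}(K \cap \Gamma_i) \leq \cH^{n-1}(K) < +\infty,
$$
coming from Lemma \ref{lem:creating_hole}(a3) together with the disjointness of the cubes. Summing the per-cube estimates of Lemma \ref{lem:creating_hole} yields
$$
\cH^{n-1}\Bigl(K \setminus \bigcup_i C_i\Bigr) \leq \frac{2\eta\,\cH^{n-1}(K)}{1-\eta}, \qquad \Bigl|\bigcup_i C_i\Bigr| \leq \frac{\eta\, r_{\max}\,\cH^{n-1}(K)}{1-\eta},
$$
$$
\sum_{i \geq 1} \int_{\partial C_i} \phi(\nu_{C_i}) \,\d\cH^{n-1} \leq 2 \int_K \phi(\nu_K) \,\d\cH^{n-1} + \frac{5\,\bound_2\,\eta\,\cH^{n-1}(K)}{1 - \eta}.
$$
Choosing $\eta$ and $r_{\max}$ small enough makes the three error terms simultaneously smaller than $\delta$, giving (ii) and (iii). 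The main obstacle is the geometric step of producing, at $\cH^{n-1}$-a.e.\ $x_0 \in K$ and at arbitrarily small scales, an oriented cube together with a single Lipschitz graph satisfying all of (a1)--(a4): this requires combining the existence of the approximate tangent hyperplane with unit density, the local single-graph approximation coming from the structure of rectifiable sets, and a quantitative control on the Lipschitz constant at small scales ensuring that the graph fits inside the slab demanded by (a1). Once this is secured, the rest is a careful bookkeeping of the parameters in a standard Vitali-type covering argument.
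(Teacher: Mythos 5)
Your proof is correct but follows a genuinely different route from the paper's. The paper does \emph{not} invoke Lemma \ref{lem:creating_hole} here: it first proves a stand-alone ``Claim'' constructing, for $K$ a Lipschitz graph over a bounded open set $V\subset\R^{n-1}$, an explicit set $C$ bounded between the graphs of $f-g$ and $f+g$ with the required estimates, and then covers $K$ by a countable pairwise disjoint family $\{\Gamma_i\}$ of Lipschitz graphs (up to an $\cH^{n-1}$-null set), shrinks each to a relatively open $\Gamma_i'$ by regularity of the Radon measure $\cH^{n-1}\res\Gamma_i$, and applies the Claim with the summable parameter $\delta/(2^{i+1}(1+\bound_2))$; the geometric series absorbs the error, so no bound on $\sum_i r_i^{n-1}$ (nor the assumption $\cH^{n-1}(K)<\infty$) is ever needed. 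You instead take a Vitali cover of $K$ by oriented cubes, apply Lemma \ref{lem:creating_hole} (choosing the set $D$) in each, and sum; your arithmetic is right, and the disjointness bound $(1-\eta)\sum_i r_i^{n-1}\le\cH^{n-1}(K)$ is exactly the right control. The advantage of your route is that it reuses Lemma \ref{lem:creating_hole}; the price is the finiteness assumption you correctly split off, plus the geometric step you flag but only sketch, namely producing at $\cH^{n-1}$-a.e.\ $x_0\in K$ an oriented cube and a single Lipschitz graph $\Gamma$ verifying (a1)--(a4) at all small scales. To close this cleanly you should invoke the $C^1$-rectifiability of $K$ (\cite[Theorem 2.76]{AFP:2000}) rather than mere Lipschitz rectifiability, so that the local Lipschitz constant of the covering surfaces relative to the tangent hyperplane $\nu_K(x_0)^\perp$ tends to zero with the scale, which is precisely what makes the slab condition in (a1) and the measure bounds in (a3) accessible for small $r$.
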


\begin{proof}
First we consider a special case.

{\it Claim.} Let $K={\rm graph}(f)$ for some $f\in \Lip(V),$ where $V\subset\R^{n-1}$ is a bounded open set. Let $V''\strictlyincluded V'\strictlyincluded V$ be smooth open sets such that 
\begin{equation}
\Big(1 + \frac{1}{\bound_1}\Big)\,\int_{V\setminus V''} \phi(-\nabla f,1)\d x' + \cH^{n-1}(V\setminus V'') < \frac{\delta }{2 + 2\bound_2}. 
\label{ohohohoa} 
\end{equation}
For $\gamma\in\big(0,\frac{\delta }{4[1 + \cH^{n-1}(V')]}\big)$ let $g\in \Lip(V;[0,\gamma])$ be such that $\supp(g)=\cl{V'},$  $g\equiv \gamma$ in $V''$ and $\|\nabla g\|_{L^\infty(V)}\le 1.$ Then $g=0$ on $\p V'.$ Moreover, taking $\gamma$ small enough we assume that the graphs of $f\pm g\big|_{V'}$ are compactly contained in $U.$ Let $C$ be the bounded  open set whose boundary consists of the graphs of $f-g:V'\to\R$ and $f+g:V'\to\R.$ Then $C\strictlyincluded U,$
and by the area formula, the triangle inequality for $\phi,$ \eqref{fjskiii098}, \eqref{ohohohoa} and the inequality $\|\nabla g\|_\infty\le1$
$$
\begin{aligned}
\int_{\p C} \phi(\nu_C)\d\cH^{n-1}= &\int_{V'} \Big(\phi(-\nabla(f+g),1) + \phi(-\nabla (f-g),1) \Big)\d x'\\
\le & 2\int_{V'} \phi(-\nabla f,1)\d x' + 2\int_{V'} \phi(-\nabla g,0)\d x'\\
\le & 2\int_{V} \phi(-\nabla f,1)\d x'+  2\int_{V'\setminus V''} \phi(\nabla g,0)\d x'\\
\le & 2\int_K \phi(\nu_K)\d\cH^{n-1} + \delta .
\end{aligned} 
$$
Moreover, by \eqref{fjskiii098} and \eqref{ohohohoa},
$$
\cH^{n-1}(K\setminus C) = \int_{V\setminus V'} \sqrt{1 + |\nabla f|^2}\d x' \le \frac{1}{\bound_1}\,\int_{V\setminus V''} \phi(-\nabla f,1)\d x'< \delta . 
$$
Finally, since $0\le g\le \frac{\delta }{4[1+\cH^{n-1}(V')]},$
$$
|C| =\int_{V'} [f+g - (f-g)]\d x' \le 2\|g\|_\infty\cH^{n-1}(V') <\delta .
$$
The equality 
$\cH^{n-1}(\p C\setminus \p^*C)=0$ follows from the smoothness of $V'$.

Now we prove the lemma. By the countable $\cH^{n-1}$-rectifiability of $K,$ there exists an at most countable family $\{\Gamma_i\}$ of Lipschitz graphs such that $\Gamma_i\subset U,$ $\Gamma_i\cap \Gamma_j=\emptyset$
for $i\ne j,$ and $\cH^{n-1}(K \setminus \bigcup_i \Gamma_i)=0.$ Since $\cH^{n-1}\res \Gamma_i$ is Radon, by the regularity of Radon measures, for each $i$ there exists a relatively open subset $\Gamma_i'$ of $\Gamma_i$ such that $\Gamma_i'\cap K \subset \Gamma_i \cap K $ and
\begin{equation}\label{almost_good_coverchuk}
\cH^{n-1}(\Gamma_i' \setminus K )<\frac{\delta}{2^{i+2}(1+\bound_2)},\quad i\ge1. 
\end{equation}
For shortness, we assume $\Gamma_i=\Gamma_i'$.
Then applying the claim above with $\delta:=\frac{\delta}{2^{i+1}(1+\bound_2)}$ and $\Gamma=\Gamma_i$ we find an open set $C_i\strictlyincluded U $ such that 
\begin{equation}\label{saidhaidab1}
|C_i|< \frac{\delta}{2^{i+1}(1+\bound_2)},\qquad 
\cH^{n-1}(\Gamma_i\setminus C_i) < \frac{\delta}{2^{i+1} (1+\bound_2)} 
\end{equation}
and 
\begin{equation}\label{c_ning_defeee}
\int_{\p C_i} \phi(\nu_{C_i}^{})\d\cH^{n-1} \le 2\int_{\Gamma_i} \phi(\nu_{\Gamma_i})\d\cH^{n-1} + \frac{\delta}{2^{i+1}  (1+\bound_2)}. 
\end{equation}
Thus, by the pairwise disjointness of $\{\Gamma_i\},$
\begin{align*}
\cH^{n-1}\Big(K\setminus \bigcup_j C_j\Big) \le & \cH^{n-1}\Big(\bigcup_i \Big(\Gamma_i\setminus \bigcup_j C_j\Big)\Big) = \sum_i \cH^{n-1}\Big(\Gamma_i\setminus \bigcup_j C_j\Big)\\
\le & \sum_i \cH^{n-1}(\Gamma_i\setminus C_i) < \delta 
\end{align*}
and by \eqref{almost_good_coverchuk} and \eqref{c_ning_defeee},
$$
\begin{aligned}
\sum\limits_i \int_{\p C_i} \phi(\nu_{C_i}^{})\d\cH^{n-1} \le & 2\sum\limits_i \int_{\Gamma_i} \phi(\nu_{\Gamma_i})\d\cH^{n-1} + \frac{\delta}{2}\\
\le & 2\sum\limits_i \int_{\Gamma_i\cap  K} \phi(\nu_{K})\d\cH^{n-1} +2 \sum\limits_i \int_{\Gamma_i\setminus K} \phi(\nu_{\Gamma_i})\d\cH^{n-1}+ \frac{\delta}{2}\\
\le & 2\int_{\cup_i \Gamma_i \cap K} \phi(\nu_{K})\d\cH^{n-1} + 
\sum\limits_i \frac{2\bound_2\delta}{2^{i+2}(1+\bound_2)} + \frac{\delta}{2}\\
= & 2\int_K \phi(\nu_{K})\d\cH^{n-1} + \delta.
\end{aligned}
$$
Finally, by the estimate for $|C_i|$ in \eqref{saidhaidab1}
$$
\Big|\bigcup_i C_i\Big| \le \sum\limits_{i} |C_i| <\delta.
$$
\end{proof}

\begin{corollary}\label{cor:functions_with_good_cracks}
Let $U\strictlyincluded \Omega$ be an open set, $(A,u)\in\admissible$ and $\delta>0.$ Then there exists an open set $G\strictlyincluded U$ of finite perimeter such that  
\begin{itemize}[left=15pt]
 \item[\rm(i)] the configuration $(B,v)$ with $B:=A\setminus G$ and $v:=u\chi_{B\cup S}$ belongs to $\admissible;$ 
  
 \item[\rm(ii)] $|G|<\delta;$

 \item[\rm(iii)] $\cH^{n-1}(U\cap B^{(1)} \cap J_v)<\delta;$
 
 \item[\rm(iv)] for any norm $\phi$ in $\R^n$ satisfying \eqref{fjskiii098}
 \begin{align*}
 & \int_{U\cap\p^*A} \phi(\nu_A)\d\cH^{n-1} +  
 2\int_{U\cap A^{(1)}\cap J_u} \phi(\nu_{J_u})\d\cH^{n-1}\ge 
 \int_{U\cap\p^*B} \phi(\nu_B)\d\cH^{n-1} - \delta.  
 \end{align*}
\end{itemize}

\end{corollary}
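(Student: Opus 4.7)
The plan is to apply Lemma \ref{lem:opening_cracks} directly to the portion of the jump set of $u$ that lives in $U$, and then to truncate the resulting countable open family to a finite one so as to preserve the compact containment in $U$. Set
\[
K:=U\cap A^{(1)}\cap J_u,
\]
which is $\cH^{n-1}$-rectifiable and of finite $\cH^{n-1}$-measure, fix an auxiliary parameter $\eta\in(0,\delta/4)$, and apply Lemma \ref{lem:opening_cracks} to $K$ inside $U$ with parameter $\eta$. This yields an at most countable family $\{C_i\}_{i\ge 1}$ of open sets of finite perimeter with $C_i\strictlyincluded U$, $\cH^{n-1}(\p C_i\setminus\p^*C_i)=0$, $|\bigcup_i C_i|<\eta$, $\cH^{n-1}(K\setminus\bigcup_i C_i)<\eta$, and
\[
\sum_{i\ge 1}\int_{\p C_i}\phi(\nu_{C_i})\,\d\cH^{n-1} < 2\int_K \phi(\nu_{K})\,\d\cH^{n-1}+\eta
\]
for every norm $\phi$ satisfying \eqref{fjskiii098}.

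Since $\{K\setminus\bigcup_{i\le N}C_i\}_N$ is a decreasing sequence of sets with $\cH^{n-1}(K)<+\infty$, by monotone continuity $\cH^{n-1}(K\setminus\bigcup_{i\le N}C_i)\downarrow \cH^{n-1}(K\setminus\bigcup_i C_i)<\eta$ as $N\to\infty$; similarly the tail $\sum_{i>N}\int_{\p C_i}\phi(\nu_{C_i})\,\d\cH^{n-1}$ vanishes as $N\to\infty$. I choose $N$ large enough that both quantities are below $\eta$ and set
\[
G:=\bigcup_{i=1}^N C_i,\qquad B:=A\setminus G,\qquad v:=u\chi_{B\cup S}.
\]
Being a finite union of sets compactly contained in $U$, the set $G$ is open, compactly contained in $U$, of finite perimeter, and satisfies $\cH^{n-1}(\p G\setminus \p^*G)=0$. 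The pair $(B,v)$ belongs to $\admissible$: $B\in BV(\Omega;\{0,1\})$ as a difference of finite-perimeter sets, and $v$ inherits its $GSBD^2\cap H^1_{\loc}(S)$ regularity from $u$ on $\Int{B\cup\Sigma\cup S}\subset \Int{A\cup\Sigma\cup S}$ by restriction combined with the fact that multiplication by the indicator of a set of finite perimeter preserves the class.

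Property (ii) is immediate from $|G|\le|\bigcup_i C_i|<\eta<\delta$. For property (iii), every $x\in B^{(1)}$ satisfies $x\in A^{(1)}\cap G^{(0)}$, so $x\notin \p^*A\cup\p^*G$; hence the only potential jumps of $v$ at such $x$ come from jumps of $u$, giving
\[
U\cap B^{(1)}\cap J_v\subset U\cap A^{(1)}\cap G^{(0)}\cap J_u \subset K\setminus G,
\]
whose $\cH^{n-1}$-measure is less than $2\eta<\delta$ by the choice of $N$.

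For the surface estimate (iv), the standard decomposition of reduced boundaries of set-theoretic differences yields, up to a $\cH^{n-1}$-null set,
\[
U\cap\p^*B \subset (U\cap\p^*A\cap G^{(0)})\cup (U\cap A^{(1)}\cap \p^*G),
\]
with $\nu_B=\nu_A$ on the first piece and $\nu_B=-\nu_G$ on the second. Integrating $\phi(\nu_B)$, bounding the contribution over $U\cap A^{(1)}\cap \p^*G$ by $\sum_{i\le N}\int_{\p C_i}\phi(\nu_{C_i})\,\d\cH^{n-1}$, and applying Lemma \ref{lem:opening_cracks}\,(iii) to this partial sum, one obtains
\[
\int_{U\cap\p^*B}\phi(\nu_B)\,\d\cH^{n-1}\le \int_{U\cap\p^*A}\phi(\nu_A)\,\d\cH^{n-1} + 2\int_K \phi(\nu_K)\,\d\cH^{n-1}+2\eta,
\]
which reduces to (iv) after enforcing $\eta\le\delta/2$. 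The main technical point is the passage from the countable family in Lemma \ref{lem:opening_cracks} to a finite subfamily that still captures $K$ up to the prescribed $\cH^{n-1}$-error while remaining compactly inside $U$; this is safe because both $\cH^{n-1}(K)$ and the perimeter series are finite, allowing the monotone-convergence truncation argument above.
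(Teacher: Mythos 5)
Your proof is correct and applies the same key lemma (Lemma \ref{lem:opening_cracks}), but you obtain the compact containment $G\strictlyincluded U$ by a different device. The paper first shrinks $U$ to an open $U'\strictlyincluded U$ capturing the jump set up to an $\epsilon$-error, applies the lemma in $U'$, and then takes the full countable union $G=\bigcup_{i\ge 1}C_i$, which is compactly contained in $U$ simply because $\overline{\bigcup_i C_i}\subset\overline{U'}\subset U$. You instead apply the lemma directly in $U$ and then truncate the countable family to a finite one by continuity from above of $\cH^{n-1}$ on the decreasing sets $K\setminus\bigcup_{i\le N}C_i$ (which have finite measure), and rely on the fact that a finite union of compactly contained open sets is again compactly contained. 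Both routes work equally well; yours avoids the auxiliary shrinking of the domain at the cost of a truncation argument, and gives a slightly cleaner error bookkeeping since no initial loss of $K$ is incurred. Two small remarks on your writeup: the claim $\cH^{n-1}(\p G\setminus\p^*G)=0$ for the finite union $G$ is not justified and can in fact fail (two $C_i$ may abut along a set of positive $\cH^{n-1}$-measure without overlapping, producing topological boundary for the union that is not essential boundary), but this property is not part of the corollary's conclusion, so it is harmless. Also, your stated decomposition of $U\cap\p^*B$ omits the piece $\p^*A\cap\p^*G\cap\{x:\nu_A(x)=-\nu_G(x)\}$, where $\nu_B=\nu_A$; since that piece is already absorbed into $\int_{U\cap\p^*A}\phi(\nu_A)\,\d\cH^{n-1}$, your final estimate in (iv) remains valid, but it would be cleaner to argue, as the paper does, via the simple inclusion $\p^*B\setminus\p^*A\subset\p^*G$.
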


\begin{proof}
Let   
$
\epsilon:=\frac{\delta}{8}.
$
Since $\cH^{n-1}(U\cap A^{(1)}\cap J_u)<+\infty,$ there exists an open set $U'\strictlyincluded U$ such that 
\begin{equation}\label{jumps_inside_omega918}
\cH^{n-1}((U\setminus U')\cap A^{(1)}\cap J_u)< \epsilon. 
\end{equation}
By Lemma \ref{lem:opening_cracks} applied with $U',$ $K:=U'\cap A^{(1)}\cap J_u$ and $\epsilon,$ we find an at most countable family $\{C_i\}_{i\ge1}$ of open sets of finite perimeter such that

\begin{itemize}
\item[($a_1$)] $C_i\strictlyincluded U'$ and $\cH^{n-1}(\p C_i\setminus \p^*C_i)=0$; 

\item[($a_2$)]  $\cH^{n-1}([U' \cap K]\setminus \bigcup_i C_i) < \epsilon$ and $|\bigcup_i C_i|< \epsilon;$ 

\item[($a_3$)]  
$$
\sum\limits_{i\ge1} \int_{\p C_i} \phi(\nu_{C_i})\,\d\cH^{n-1} < 2\int_{U' \cap K} \phi(\nu_{K}^{})\,\d\cH^{n-1} + \epsilon.  
$$
\end{itemize} 
Define 
$$
G:=\bigcup_{i\ge1} C_i.
$$
We claim that $G$ satisfies the assertion of the lemma. Indeed, (i) is obvious and (ii) follows  from ($a_2$). By construction, $B^{(1)}\cap J_v =_{\cH^{n-1}} B^{(1)}\cap J_u$ and hence, by \eqref{jumps_inside_omega918} and ($a_2$) we have
$$
\cH^{n-1}(U\cap B^{(1)}\cap J_v) \le \cH^{n-1}((U\setminus U')\cap A^{(1)}\cap J_u) + \cH^{n-1}(U'\cap A^{(1)}\cap J_u\setminus G) <2\epsilon .
$$
Finally, since $\p^*B\setminus \p^*A\subset \p^*G,$ by ($a_3$)
\begin{align*}
\int_{U\cap \p^*B} \phi(\nu_B)\d\cH^{n-1} = &
\int_{U\cap \p^*B\cap \p^*A} \phi(\nu_B)\d\cH^{n-1} +
\int_{U\cap \p^*B\setminus \p^*A} \phi(\nu_B)\d\cH^{n-1}\\
\le & \int_{U\cap \p^*A} \phi(\nu_A)\d\cH^{n-1}
+\sum\limits_{i\ge1} \int_{\p C_i} \phi(\nu_{C_i})\,\d\cH^{n-1}\\
\le & \int_{U\cap \p^*A} \phi(\nu_A)\d\cH^{n-1} + 
2\int_{U'\cap K} \phi(\nu_{J_u}^{})\,\d\cH^{n-1}   
+\epsilon.
\end{align*} 
\end{proof}

The next lemma is a counterpart of Lemma \ref{lem:opening_cracks},  related to ``opening'' cracks along $\Sigma.$ Notice that in this case the opening should not go beyond $\Omega,$ and thus  we need to replace the jump of $u$ only from one side (see Corollary \ref{cor:functions_with_cracks_Sigma}). This is the reason for having $\varphi$ (without factor $2$) in the jump energy along $\Sigma$ in the functional $\cS.$

\begin{lemma}\label{lem:holes_near_Sigma}
Let $U\strictlyincluded \Ins{\Omega}$ be an open set, $\delta\in(0,1)$ and  $K\subset U\cap\Sigma$ be any $\cH^{n-1}$-measurable set. Then there exist an open set $C\subset U\cap \Omega$ of finite perimeter such that 
\begin{itemize}[left=15pt]
 \item[\rm(i)] $C\strictlyincluded U$ and $\cH^{n-1}(\p C\setminus \p^*C)=0;$
 
 \item[\rm(ii)] $\cH^{n-1}(K\setminus \p C)=\cH^{n-1}(K\setminus \cl{C})<\delta$ and $|C|<\delta;$
 
 \item[\rm(iii)] $\cH^{n-1}(U\cap \Sigma \cap\p C \setminus K)<\delta;$
 
 \item[\rm(iv)] for any norm $\phi$ in $\R^n$ satisfying \eqref{fjskiii098}
 $$
 \int_{\Omega\cap \p C} \phi(\nu_C)\d\cH^{n-1} \le \int_K\phi(\nu_\Sigma) \d\cH^{n-1} +\delta, 
 $$
 and 
 $$
 \int_{\p C} \phi(\nu_C)\d\cH^{n-1} \le 2\int_K\phi(\nu_\Sigma) \d\cH^{n-1} +\delta .
 $$ 
\end{itemize}
\end{lemma}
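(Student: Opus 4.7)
The idea is to construct $C$ as an at most countable disjoint union of one-sided ``pockets'' attached to $\Sigma$ from the $\Omega$-side, one pocket per Lipschitz graph in a rectifiable decomposition of $\Sigma$. This parallels the construction of Lemmas~\ref{lem:creating_hole}--\ref{lem:opening_cracks} but lifting only into $\Omega$; that one-sidedness is exactly what produces the coefficient $1$ (instead of $2$) in the first inequality of (iv).

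\emph{Step 1 (covering of $K$).} By hypothesis (H0), $\Sigma$ is a Lipschitz $(n-1)$-manifold, hence $\cH^{n-1}$-rectifiable. Decompose $\Sigma$ up to $\cH^{n-1}$-null sets into a pairwise disjoint countable family of Lipschitz graphs $\Gamma_i = \{(y',f_i(y')) : y' \in V_i\}$ in local orthonormal coordinates chosen so that the $\e_n$-axis is transverse to $\Gamma_i$ and $\Omega$ lies on the side $\{y_n > f_i(y')\}$ locally near $\Gamma_i$. Fix $\epsilon := \delta/4$. Using that $\cH^{n-1}\res\Sigma$ is a Radon measure on $\R^n$ and the regularity of Radon measures restricted to $K$, shrink each $\Gamma_i$ to a relatively open Lipschitz sub-graph such that
$$
\cH^{n-1}\Big(K \setminus \bigcup_i \Gamma_i\Big) = 0,\qquad \cH^{n-1}(\Gamma_i \setminus K) < \frac{\epsilon}{2^{i+2}(1+\bound_2)}.
$$

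\emph{Step 2 (construction of the pockets).} For each $i$, pick smooth open sets $V_i'' \strictlyincluded V_i' \strictlyincluded V_i$ with
$$
(1+\Lip f_i)\,\cH^{n-1}(V_i\setminus V_i'') < \frac{\epsilon}{2^{i+2}(1+\bound_2)},
$$
and choose $g_i \in \Lip_c(V_i;[0,\gamma_i])$ with $\supp g_i = \cl{V_i'}$, $g_i\equiv \gamma_i$ on $V_i''$, $\|\nabla g_i\|_\infty\le 1$, where $\gamma_i>0$ will be specified. Define the open pocket
$$
C_i := \{(y',t): y'\in V_i',\ f_i(y') < t < f_i(y')+g_i(y')\},
$$
lying entirely on the $\Omega$-side of $\Gamma_i$. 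Choose $\gamma_i$ small enough (using $U\strictlyincluded\Ins{\Omega}$ and the local Lipschitz structure of $\Sigma$) so that $C_i \strictlyincluded U \cap \Omega$ and $|C_i| < \epsilon/2^{i+2}$. Since $\p C_i$ consists of two Lipschitz graphs over $V_i'$, we have $\cH^{n-1}(\p C_i \setminus \p^*C_i) = 0$. Set $C := \bigcup_i C_i$; it is open, and finite perimeter will follow from the summable bounds below.

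\emph{Step 3 (verification of (i)--(iv)).} (i) is immediate. For (ii): $|C| \le \sum_i|C_i| < \epsilon < \delta$, and
$$
\cH^{n-1}(K\setminus \cl C) \le \sum_i \cH^{n-1}(\Gamma_i \cap K \setminus \cl{C_i}) \le \sum_i (1+\Lip f_i)\cH^{n-1}(V_i\setminus V_i'') < \epsilon.
$$
For (iii): $U\cap\Sigma\cap\p C_i \subset \Gamma_i$, so $\cH^{n-1}(U\cap\Sigma\cap \p C\setminus K) \le \sum_i \cH^{n-1}(\Gamma_i\setminus K) < \epsilon$. For (iv), the $\Omega$-part of $\p C_i$ is the graph of $f_i+g_i$ over $V_i'$, so by the area formula, convexity of $\phi$, triangle inequality, \eqref{fjskiii098}, and the bounds on $\nabla g_i$ and $\cH^{n-1}(V_i\setminus V_i'')$,
\begin{align*}
\int_{\Omega\cap \p C_i}\phi(\nu_{C_i})\,\d\cH^{n-1}
&\le \int_{V_i'}\phi(-\nabla f_i,1)\,\d y' + \int_{V_i'\setminus V_i''}\phi(-\nabla g_i,0)\,\d y' \\
&\le \int_{\Gamma_i}\phi(\nu_\Sigma)\,\d\cH^{n-1} + \bound_2\,\cH^{n-1}(V_i\setminus V_i'')\\
&\le \int_{\Gamma_i\cap K}\phi(\nu_\Sigma)\,\d\cH^{n-1} + \bound_2\cH^{n-1}(\Gamma_i\setminus K) + \frac{\epsilon}{2^{i+2}}.
\end{align*}
Summing over $i$ and using disjointness of the $\Gamma_i$ and $\sum\epsilon/2^{i+2}(1+\bound_2)\cdot(1+\bound_2) < \epsilon$ yields the first inequality of (iv). The second inequality follows from the first by adding $\int_{\Sigma\cap \p C_i}\phi(\nu_\Sigma)\,\d\cH^{n-1} \le \int_{\Gamma_i}\phi(\nu_\Sigma)\,\d\cH^{n-1}$ (since $\Sigma\cap\p C_i\subset\Gamma_i$) and the same bookkeeping.

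\textbf{Main obstacle.} The delicate point is Step~2: choosing $\gamma_i$ so that \emph{every} pocket $C_i$ is simultaneously contained in $U\cap\Omega$, i.e., the ``lifted'' graph of $f_i+g_i$ neither protrudes out of $U$ nor re-crosses $\Sigma$ into $S$ elsewhere. This requires the local straightening of $\Sigma$ from (H0) together with the positive distance $\dist(\cl{V_i'}, \p\Omega\setminus \Gamma_i)$, shrinking $\gamma_i$ sufficiently in each chart. All other estimates are routine summability.
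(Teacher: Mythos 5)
Your construction agrees with the paper's at the core: both build a one-sided ``pocket'' over a Lipschitz piece of $\Sigma$ lifting into $\Omega$, with $\p C_i$ consisting of a bottom face on $\Sigma$ and a top face in $\Omega$; that is exactly what yields the coefficient $1$ in the first inequality of (iv). The difference is the covering argument. You run the analogue of Lemma~\ref{lem:opening_cracks}: decompose $\Sigma$ (and thereby $K$) into a \emph{countable} pairwise-disjoint family of Lipschitz graphs $\Gamma_i$, shrink them by outer regularity to match $K$ in measure, and build a pocket over each. The paper instead takes a \emph{finite} Vitali-type cover of $K$ by pairwise disjoint cubes $Q_{r_j,\nu_j}(x_j)$ centered at $\cH^{n-1}$-density-$1$ points of $K\cap\Sigma$ (its Step~2, conditions $(\rm a_1)$--$(\rm a_4)$), does the pocket construction inside each cube (its Step~1), and leaves a leftover of measure $<\epsilon$. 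The finite cover buys a cleaner verification of item (i): the paper explicitly invokes ``pairwise disjointness of $\cl{C_j}$'' to conclude $\cH^{n-1}(\p C\setminus\p^*C)=0$, and disjointness of the $m$ closures is immediate once the $m$ cubes have disjoint closures. In your countable version this is the point that needs a bit more care than you give it. You flag the issue of keeping each $C_i$ inside $U\cap\Omega$, which is in fact harmless since $\gamma_i$ may be chosen per index; what you do not address is keeping the countably many $\cl{C_i}$ pairwise disjoint so that $\p C=\bigsqcup_i\p C_i$ and (i) follows, since disjoint $\Gamma_i$'s need not have disjoint closures and nearby graphs can have overlapping pockets for fixed heights. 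This is repairable: choose $\gamma_i$ inductively small enough that $\cl{C_i}$ avoids the compact set $\cl{C_1}\cup\dots\cup\cl{C_{i-1}}$, possible because $\Gamma_i$ is at positive distance from it. With that addition your proof is correct; the paper's finite-cover route avoids the induction entirely and also gives the cleaner estimate in (iii) directly from the density conditions $(\rm a_3)$, at the (mild) cost of having to set up the Vitali cover.
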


\begin{proof}
Let 
$$
\epsilon:= \frac{\delta}{8(1+\bound_2)(1 + \cH^{n-1}(\Sigma))}.
$$
We divide the proof into two steps.

{\it Step 1.} Let $Q_r(x_0)\subset U$ be a cube centered at $x\in\Sigma$ such that $\Sigma\cap Q_r(x_0) = {\rm graph}(f)$ for some Lipschitz function $f:V\to\R$ and a cube $V\subset \R^{n-1},$ and assume that $S\cap Q_r(x_0)$ is a subgraph of $f.$ 
Let $V''\strictlyincluded V'\strictlyincluded V$ be open sets such that 
$$
\cH^{n-1}(V\setminus V'') < \frac{\cH^{n-1}(Q_r(x_0)\cap\Sigma)}{1 + \Lip(f)}\,\epsilon
$$ 
and for $\gamma\in(0,\frac{\cH^{n-1}(Q_r(x_0)\cap\Sigma)}{1+\cH^{n-1}(V)}\,\epsilon)$ let $g\in \Lip_c(V;[0,\gamma])$ be such that $g\equiv \gamma$ in $V'',$ $\supp(g)=V'$ and $\Lip(g)<1.$ We may assume that $\gamma$ is so small that the set open set $C,$ whose boundary lies on the graphs of $f$ and $f+g,$ is compactly contained in $Q_r(x_0)$ and $C\cap S=\emptyset.$
Then 
\begin{align*}
\int_{\Omega\cap \p C}  \phi(\nu_C)\d\cH^{n-1} = & \int_{V'} \phi(-\nabla(f+g),1)\d\cH^{n-1} \le 
\int_{V'} \Big(\phi(-\nabla f,1) + \phi(-\nabla g,0)\Big)\d\cH^{n-1} \\
\le & \int_{V} \phi(-\nabla f,1)\d\cH^{n-1} + \bound_2\Lip(f)\cH^{n-1}(V'\setminus V'')\\
< & \int_{Q_r(x_0)\cap\Sigma}\phi(\nu_\Sigma)\d\cH^{n-1} + \bound_2\cH^{n-1}(Q_r(x_0)\cap\Sigma)\epsilon.
\end{align*}
Similarly, 
\begin{align*}
\int_{\p C}  \phi(\nu_C)\d\cH^{n-1} = & \int_{V'} [\phi(-\nabla(f+g),1) + \phi(-\nabla f,1)]\d\cH^{n-1} \\
\le &  
\int_{V'} \Big(2\phi(-\nabla f,1) + \phi(-\nabla g,0)\Big)\d\cH^{n-1} \\
< & 2\int_{Q_r(x_0)\cap\Sigma}\phi(\nu_\Sigma)\d\cH^{n-1} + \bound_2\cH^{n-1}(Q_r(x_0)\cap\Sigma)\epsilon.
\end{align*}
Also by the Fubini's theorem,
$$
|C| = \int_{V'} g \d x'\le \gamma \cH^{n-1}(V') < \cH^{n-1}(Q_r(x_0)\cap\Sigma)\,\epsilon.
$$
Finally, 
$$
\begin{aligned}
\cH^{n-1}(Q_r(x_0)\cap\Sigma\setminus \p C) = & \cH^{n-1}(Q_r(x_0)\cap\Sigma\setminus \cl{C}) 
=\int_{V\setminus V'}\sqrt{1+|\nabla f|^2}\d \cH^{n-1}\\
\le & (1+\Lip(f)) \cH^{n-1}(V\setminus V') < \cH^{n-1}(Q_r(x_0)\cap\Sigma)\,\epsilon.
\end{aligned}
$$

{\it Step 2.} Since $\Sigma$ is Lipschitz and $K$ is $\cH^{n-1}$-rectifiable, we can find a finite  family $Q_{r_1,\nu_1}(x_1),\ldots, Q_{r_m,\nu_m}(x_m) \subset U$ of pairwise disjoint cubes centered at $K$ such that 
\begin{itemize}
 \item[($\rm a_1$)] for each $j,$ $\Sigma\cap Q_{r_j,\nu_j}(x_j)$ is a graph of a Lipschitz function in $\nu_j$ direction;

 \item[($\rm a_2$)] $\theta(K,x_j)=\theta(\Sigma,x_j)=1,$ and the unit normals $\nu_K(x_j)$ and $\nu_{\Sigma}(x_j)$ exist and coincide with $\nu_j;$
 
 \item[($\rm a_3$)] $(1-\epsilon)r_j^{n-1} <\cH^{n-1}(Q_{r_j,\nu_j}(x_j)\cap \Sigma\cap K)\le \cH^{n-1}(Q_{r_j,\nu_j}(x_j)\cap \Sigma)<(1+\epsilon)r_j^{n-1};$
 
 \item[($\rm a_4$)] $\cH^{n-1}\Big(K\setminus \bigcup_{j=1}^m Q_{r_j,\nu_j}(x_j)\Big)<\epsilon.$
\end{itemize}

Note that by ($\rm a_3$),
\begin{equation}\label{K_out_sigma89}
\cH^{n-1}(Q_{r_j,\nu_j}(x_j)\cap \Sigma\setminus K) <2\epsilon r_j^{n-1} < \frac{2\epsilon}{1-\epsilon} \cH^{n-1}(Q_{r_j,\nu_j}(x_j)\cap \Sigma). 
\end{equation}
By Step 1, for each $j$ we can contruct an open set $C_j\strictlyincluded Q_{r_j,\nu_j}(x_j)$ satisfying $C_j\cap\substrate =\emptyset,$
\begin{align}\label{est_Cjjjj9099}
\int_{\Omega\cap \p C_j} \phi(\nu_{C_j})\d\cH^{n-1} < \int_{Q_{r_j,\nu_j}(x_j)\cap\Sigma}\phi(\nu_\Sigma)\d\cH^{n-1} + \bound_2\cH^{n-1}(Q_{r_j,\nu_j}(x_j)\cap\Sigma)\epsilon 
\end{align}
and 
\begin{align*} 
\int_{\p C_j} \phi(\nu_{C_j})\d\cH^{n-1} < 2 \int_{Q_{r_j,\nu_j}(x_j)\cap\Sigma}\phi(\nu_\Sigma)\d\cH^{n-1} + \bound_2\cH^{n-1}(Q_{r_j,\nu_j}(x_j)\cap\Sigma)\epsilon.
\end{align*}
Moreover,
\begin{equation}\label{vol_estCjjjj}
|C_j| < \cH^{n-1}(Q_{r_j,\nu_j}(x_j)\cap\Sigma)\epsilon  
\end{equation}
and
\begin{align}\label{out_jshsh00}
\cH^{n-1}(Q_{r_j,\nu_j}(x_j)\cap\Sigma\setminus \p C_j) < \cH^{n-1}(Q_{r_j,\nu_j}(x_j)\cap\Sigma)\,\epsilon.
\end{align}

We claim that $C= \bigcup_{i=1}^m C_j$ satisfies the assertions of the lemma.

(i) By construction, $C\strictlyincluded U$ and since each $C_j$ is almost Lipschitz, $\cH^{n-1}(\p C_j\setminus \p^*C_j)=0.$ Hence, by the pairwise disjointness of $\cl{C_j},$ $\cH^{n-1}(\p C\setminus \p^*C)=0$ and (i) follows.

(ii) By ($\rm a_4$) and \eqref{out_jshsh00},
\begin{align*}
\cH^{n-1}(K\setminus \p C) \le & \cH^{n-1}\Big(K\setminus \bigcup_{j=1}^m Q_{r_j,\nu_j}(x_j)\Big) + 
\sum\limits_{j=1}^m \cH^{n-1}( Q_{r_j,\nu_j}(x_j)\cap \Sigma  \setminus \p C_j) \\
< & \epsilon +  \sum\limits_{j=1}^m \cH^{n-1}( Q_{r_j,\nu_j}(x_j) \cap\Sigma)\,\epsilon \le
(1+\cH^{n-1}(\Sigma))\epsilon <\delta.
\end{align*}
Moreover, by \eqref{vol_estCjjjj},
$$
|C| \le \sum\limits_{j=1}^n |C_j| \le \cH^{n-1}(\Sigma)\epsilon <\delta.
$$

(iii) By \eqref{K_out_sigma89},
\begin{align*}
\cH^{n-1}( U\cap\Sigma\cap \p C \setminus K) = & \sum\limits_{j=1}^m \cH^{n-1}(Q_{r_j,\nu_j}(x_j)\cap \Sigma\cap \p C_j \setminus K) \\
\le & \sum\limits_{j=1}^m \cH^{n-1}(Q_{r_j,\nu_j}(x_j)\cap \Sigma\setminus K) <\frac{2\epsilon}{1-\epsilon}\cH^{n-1}(\Sigma) <\delta.
\end{align*}

(iv) Since $\p^* C\subset \cup_j \p^* C_j,$ by \eqref{est_Cjjjj9099} we have
$$
\int_{\Omega\cap\p C} \phi(\nu_C)\d\cH^{n-1} 
\le \sum\limits_{j=1}^m \int_{Q_{r_j,\nu_j}(x_j)\cap\Sigma}\phi(\nu_\Sigma)\d\cH^{n-1} + \bound_2\cH^{n-1}(\Sigma)\epsilon
$$
Moreover, by \eqref{fjskiii098},
$$
\int_{Q_{r_j,\nu_j}(x_j)\cap\Sigma}\phi(\nu_\Sigma)\d\cH^{n-1} \le 
\int_{Q_{r_j,\nu_j}(x_j)\cap K} \phi(\nu_\Sigma)\d\cH^{n-1} + \bound_2 \cH^{n-1}(Q_{r_j,\nu_j}(x_j)\cap \Sigma\setminus K),
$$
and thus by \eqref{K_out_sigma89},
$$
\int_{\Omega\cap\p C} \phi(\nu_C)\d\cH^{n-1}  
\le 
\int_K \phi(\nu_\Sigma)\d\cH^{n-1} + \Big(\tfrac{\bound_2}{1-\epsilon} + \bound_2) \cH^{n-1}(\Sigma)\epsilon < \int_K \phi(\nu_\Sigma)\d\cH^{n-1} +\delta.
$$
Finally, since $\Sigma\cap \p C\subset K\cup (\Sigma\cap \p C\setminus K),$  
\begin{align*}
\int_{\p C} \phi(\nu_C)\d\cH^{n-1} = & \int_{\Omega\cap\p C} \phi(\nu_C)\d\cH^{n-1} + \int_{\Sigma\cap \p C} \phi(\nu_\Sigma)\d\cH^{n-1}\\
\le & 2\int_K\phi(\nu_\Sigma)\d\cH^{n-1} + 3\bound_2\cH^{n-1}(\Sigma) \epsilon + 
\bound_2\cH^{n-1}(\Sigma\cap \p C\setminus K)\\
< & 2\int_K\phi(\nu_\Sigma)\d\cH^{n-1} + 7\bound_2\cH^{n-1}(\Sigma) \epsilon 
< 2\int_K \phi(\nu_\Sigma)\d\cH^{n-1} +\delta. 
\end{align*}
\end{proof}

\begin{corollary}\label{cor:functions_with_cracks_Sigma}
Let $U\strictlyincluded \Ins{\Omega}$ be an open set, $(A,u)\in\admissible$ and $\delta>0.$ Then there exists an open set $G\subset\Omega$ of finite perimeter such that  
\begin{itemize}[left=15pt]
 \item[\rm (i)] $G\strictlyincluded U$ and $|G|<\delta;$; 

 \item[\rm (ii)] the configuration $(B,v)$ with $B:=A\setminus G$ and $v:=u\chi_{B\cup S}$ belongs to $\admissible;$

 \item[\rm (iii)] 
 $$
 \cH^{n-1}(\Sigma\cap \p^*G\setminus (\p^*A\cap J_u)) + \cH^{n-1}(U\cap \Sigma\cap J_u\cap \p^*A\setminus \p^*G) < \delta
 $$ 
 and 
 $$
 \cH^{n-1}(U\cap B^{(1)}\cap J_v)<\delta + 
 \cH^{n-1}(U\cap \Sigma\cap J_v\cap \p^*B) < \delta;
 $$
 
 \item[\rm (iv)] for any norm $\phi$ in $\R^n$ satisfying \eqref{fjskiii098} 
 \begin{multline}
\int_{U\cap\Omega\cap \p^*A} \phi(\nu_A)\d\cH^{n-1} + 2\int_{U\cap A^{(1)}\cap J_u} \phi(\nu_\Sigma)\d\cH^{n-1}  +
\int_{U\cap \Sigma\cap\p^*A\cap J_u}  \phi(\nu_\Sigma)\d\cH^{n-1} \\
  \ge  \int_{U\cap\Omega\cap \p^*B} \phi(\nu_B)\d\cH^{n-1} -\delta 
 \ge \int_{U\cap\p^*G} \phi(\nu_G)\d\cH^{n-1} -\delta  
  \label{matmat343}
 \end{multline}
 and 
 \begin{multline}
 \int_{U\cap\Omega\cap \p^*A} \phi(\nu_A)\d\cH^{n-1} +  2 
\int_{U\cap A^{(1)}\cap J_u} \phi(\nu_\Sigma)\d\cH^{n-1}   +   
 2 \int_{U\cap \Sigma\cap\p^*A\cap J_u} \phi(\nu_\Sigma)\d\cH^{n-1} \\
  \ge \int_{U\cap\Omega\cap \p^*B} \phi(\nu_B)\d\cH^{n-1} 
 +\int_{\Sigma\cap \p^*G} \phi(\nu_\Sigma)\d\cH^{n-1} -\delta 
 \ge \int_{\p^*G} \phi(\nu_G)\d\cH^{n-1} 
 \label{matmat347}
 \end{multline}
\end{itemize}

\end{corollary}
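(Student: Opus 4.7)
The plan is to construct $G$ as the union $G:=G^{\rm int}\cup G^\Sigma$, where $G^{\rm int}$ is produced by Lemma~\ref{lem:opening_cracks} to cover the interior portion $A^{(1)}\cap J_u$ of the jump set, and $G^\Sigma$ is produced by Lemma~\ref{lem:holes_near_Sigma} to cover the substrate portion $\Sigma\cap\p^*A\cap J_u$. Setting $B:=A\setminus G$ and $v:=u\chi_{B\cup S}$ will then give the desired approximation. This is the natural extension of the strategy used in Corollary~\ref{cor:functions_with_good_cracks}, with the crucial additional feature, guaranteed by Lemma~\ref{lem:holes_near_Sigma}, that $G^\Sigma\subset\Omega$, so the substrate opening stays on the crystal side of $\Sigma$.

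Concretely, I fix $\epsilon:=\delta/C$ for a large absolute constant $C$ and choose an intermediate open set $U'\strictlyincluded U$ such that
\[\cH^{n-1}\big((U\setminus U')\cap[(A^{(1)}\cap J_u)\cup(\Sigma\cap\p^*A\cap J_u)]\big)<\epsilon.\]
Applying Lemma~\ref{lem:opening_cracks} with $K_1:=U'\cap A^{(1)}\cap J_u$ and parameter $\epsilon$ yields a countable family $\{C_i\}$ of open sets of finite perimeter compactly contained in $U'$, with small total volume and $\sum_i\int_{\p C_i}\phi(\nu_{C_i})\,\d\cH^{n-1}\le 2\int_{K_1}\phi(\nu_{J_u})\,\d\cH^{n-1}+\epsilon$; I set $G^{\rm int}:=\bigcup_i C_i$. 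Applying Lemma~\ref{lem:holes_near_Sigma} with $K_2:=U'\cap\Sigma\cap\p^*A\cap J_u$ and parameter $\epsilon$ produces an open set $G^\Sigma\strictlyincluded U'\cap\Omega$ of small volume, satisfying the one-sided bound $\int_{\Omega\cap\p G^\Sigma}\phi(\nu_{G^\Sigma})\,\d\cH^{n-1}\le\int_{K_2}\phi(\nu_\Sigma)\,\d\cH^{n-1}+\epsilon$, the two-sided bound $\int_{\p G^\Sigma}\phi(\nu_{G^\Sigma})\,\d\cH^{n-1}\le 2\int_{K_2}\phi(\nu_\Sigma)\,\d\cH^{n-1}+\epsilon$, and small residuals on both $K_2\setminus\p G^\Sigma$ and $\Sigma\cap\p G^\Sigma\setminus K_2$.

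Verifying the claims is then essentially bookkeeping: (i) follows from $|G|\le|G^{\rm int}|+|G^\Sigma|<2\epsilon$ and compact containment; (ii) follows since $v=u$ on $S$ (preserving the $H^1_\loc(S)$ property) while Remark~\ref{rem:ext_u_out_A}(ii) permits redefining $v$ on $\Omega\setminus B$ by a generic constant to land in $GSBD^2(\Ins{\Omega})$; (iii) combines the small residuals — smallness of $K_1\setminus G^{\rm int}$ controls $\cH^{n-1}(U\cap B^{(1)}\cap J_v)$, while the $\Sigma$-residuals control both the symmetric-difference bound on $\Sigma\cap\p^*G$ versus $K_2$ and the bound on $\cH^{n-1}(U\cap\Sigma\cap J_v\cap\p^*B)$. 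For (iv), the decompositions $\p^*B\subset(\p^*A\setminus\cl{G})\cup(A^{(1)}\cap\p^*G)$ and $\p^*G\subset\bigcup_i\p^*C_i\cup\p^*G^\Sigma$ (modulo $\cH^{n-1}$-null sets) reduce the surface-energy estimate to summing the two lemma bounds: the $\{C_i\}$-contribution yields the $2\int_{A^{(1)}\cap J_u}\phi$ term, while the one-sided (resp. two-sided) bound on $G^\Sigma$ gives the coefficient $1$ in \eqref{matmat343} (resp. $2$ in \eqref{matmat347}) in front of $\int_{\Sigma\cap\p^*A\cap J_u}\phi(\nu_\Sigma)$.

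The main technical obstacle is the partition of $\p^*G$ into its $\Omega$-portion and its $\Sigma$-portion: in \eqref{matmat343} only the $\Omega$-portion of $\p^*G^\Sigma$ contributes, so one must argue carefully that $\Sigma\cap\p^*G\subset_{\cH^{n-1}}\Sigma\cap\p^*G^\Sigma$ and that this $\Sigma$-portion is, up to the small residual, contained in the original jump set $K_2$. This is exactly what Lemma~\ref{lem:holes_near_Sigma}(iii) provides, while $G^{\rm int}$ stays away from $\Sigma$ since $\cH^{n-1}(A^{(1)}\cap\Sigma)=0$; combining these two observations cleanly separates the factor-$1$ estimate on $\Omega\cap\p^*G$ from the factor-$2$ estimate on the full $\p^*G$.
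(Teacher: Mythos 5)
Your overall strategy matches the paper's: cover the interior jump $A^{(1)}\cap J_u$ by holes from Lemma~\ref{lem:opening_cracks} (the paper in fact invokes Corollary~\ref{cor:functions_with_good_cracks}, which is Lemma~\ref{lem:opening_cracks} repackaged) and cover the delamination set $\Sigma\cap\p^*A\cap J_u$ by the one-sided holes from Lemma~\ref{lem:holes_near_Sigma}, then take their union. Using a single intermediate open set $U'$ rather than two with disjoint closures, as the paper does, is a cosmetic simplification that slightly complicates the perimeter bookkeeping (overlaps can only decrease $\p^*G$, so the upper bounds in (iv) survive), and is acceptable.

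However, there is a genuine gap in the step where you claim ``$G^{\rm int}$ stays away from $\Sigma$ since $\cH^{n-1}(A^{(1)}\cap\Sigma)=0$.'' This inference is wrong: the fact that the \emph{target} set $K_1:=U'\cap A^{(1)}\cap J_u$ is $\cH^{n-1}$-disjoint from $\Sigma$ does not constrain where the \emph{covering} open slabs $C_i$ produced by Lemma~\ref{lem:opening_cracks} live. You apply that lemma on $U'\strictlyincluded U\strictlyincluded\Ins{\Omega}$, so its conclusion only gives $C_i\strictlyincluded U'$, and $U'$ may meet $\Sigma$ (and even $S$). If some $C_i$ has $\cH^{n-1}$-positive boundary on $\Sigma$, the clean separation you rely on — $\Sigma\cap\p^*G=_{\cH^{n-1}}\Sigma\cap\p^*G^\Sigma$ — fails, and the factor-$1$ estimate on $\Omega\cap\p^*G$ in \eqref{matmat343} no longer follows from simply adding the interior bound to the one-sided $\Sigma$-bound. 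The correct fix, which is exactly what the paper does, is to choose the auxiliary open set for the interior part to be compactly contained in $\Omega$ (e.g.\ $U'\strictlyincluded\Omega\cap U$) before applying Lemma~\ref{lem:opening_cracks}; this forces $\cl{G^{\rm int}}\cap\Sigma=\emptyset$, after which the rest of your bookkeeping goes through.
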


\begin{proof}
The last inequalities in \eqref{matmat343} and \eqref{matmat347} follow from the definition of $B.$
To prove the first inequalities, let $\epsilon:=\frac{\delta}{16(1+\bound_2)}$ and $U'\strictlyincluded \Omega\cap U$ be any open set such that
\begin{equation*}
\cH^{n-1}(\Omega\cap U\cap J_u\setminus U')<\epsilon.
\end{equation*}
By Corollary \ref{cor:functions_with_good_cracks} applied with $U',$ $(A,u)\in \admissible$ and $\epsilon,$ we find an open set $D'\strictlyincluded U'$ of finte perimeter such that
\begin{itemize}
 \item[($\rm a_1$)] the configuration $(B',v')$ with $B':=A\setminus D'$ and $v':=u\chi_{S\cup B'}$ belongs to $\admissible$;
 
 \item[($\rm a_2$)] $|D'|<\epsilon;$
 \item[($\rm a_3$)] $\cH^{n-1}(U'\cap [B']^{(1)} \cap J_{v'})<\epsilon;$
 \item[($\rm a_4$)] 
 $$
 \int_{U'\cap \p^*A}\phi(\nu_A)\d\cH^{n-1} + 2\int_{U'\cap A^{(1)}\cap J_u}\phi(\nu_{J_u})\d\cH^{n-1}
 \ge \int_{U'\cap \p^*B'}\phi(\nu_{B'})\d\cH^{n-1} -\epsilon.
 $$
\end{itemize}
Next, choose another open set $U''\strictlyincluded U$ such that $\cl{U'}\cap \cl{U''}=\emptyset$ and
\begin{equation*}
\cH^{n-1}((U\setminus U'')\cap \Sigma\cap \p^*A \cap J_u)< \epsilon. 
\end{equation*}
By Lemma \ref{lem:holes_near_Sigma} applied with $U'',$ $\epsilon$ and $K:=U''\cap \Sigma\cap\p^*A\cap J_u,$ we find an open set $C'\subset U''\cap \Omega$ of finite perimeter such that
\begin{itemize}
\item[($\rm b_1$)] $C'\strictlyincluded U'$ and $\cH^{n-1}(\p C'\setminus \p^*C')=0$; 

\item[($\rm b_2$)]  $\cH^{n-1}(K\setminus \p C') = \cH^{n-1}(K\setminus \cl{C'}) < \epsilon$ and $|C'|< \epsilon;$ 

\item[($\rm b_3$)]  $\cH^{n-1}(\Sigma\cap\p C'\setminus K) <\epsilon;$

\item[($\rm b_4$)]
 $$
 \int_{\Omega\cap \p C'} \phi(\nu_{C'})\d\cH^{n-1} \le \int_K \phi(\nu_\Sigma) \d\cH^{n-1} +\epsilon, 
 $$
 and 
 $$
 \int_{\p C'} \phi(\nu_{C'})\d\cH^{n-1} \le 2\int_K\phi(\nu_\Sigma) \d\cH^{n-1} +\epsilon.
 $$ 
\end{itemize}
Now, the set
$$
G:=C'\cup D'
$$
satisfies all assertions of the lemma. Indeed, the assertions (i)-(iii) follow  from ($\rm a_1$)-($\rm a_3$) and ($\rm b_1$)-($\rm b_3$), whereas (iv) follows from the inclusion
$\p^*B\setminus \p^*A\subset\Omega\cap  \p C'\cup \p D'$ and conditions ($\rm a_4$)  and ($\rm b_4$).
\end{proof}

\section{$\tau_\admissible$-lower semicontinuity}\label{sec:lower_semicontos}
 
In this section we prove Theorem \ref{teo:lower_semicontinuity}, following the arguments of \cite[Proposition 4.1]{HP:2020_arma}, and in particular using density estimates for certain Radon measures associated with $\cF.$
We start with a lower bound for the localized surface energy.

\begin{proposition}\label{prop:estimate_inner_jump}
Let $\delta\in(0,1),$ $Q_{r,\nu}(x_0)\strictlyincluded \Int{\Omega\cup\Sigma \cup\substrate},$ $r>0,$ $\nu\in\S^{n-1},$ be a cube and $\Gamma\subset Q_{r,\nu}(x_0)$ be an $(n-1)$-dimensional Lipschitz graph separating $Q_{r,\nu}(x_0)$ into two connected components such that 
\begin{itemize}[left=20pt]
\item[\rm(a1)] $x_0\in\Gamma,$ $\nu_\Gamma(x_0)=\nu$ and 
 $$
 |\nu_\Gamma(x)-\nu|<\delta\quad \text{and}\quad 
 |(x-x_0)\cdot \nu|<\tfrac{\delta r}{2}
 \quad\text{for all $x\in \Gamma;$}
 $$

\item[\rm(a2)] $\cH^{n-1}(Q_{r,\nu}(x_0)\cap \Gamma) <(1+\delta)r^{n-1}.$
\end{itemize}
Assume that a sequence $\{(A_k,u_k)\}\subset \admissible $ and a configuration $(A,u)\in\admissible$ satisfy 

\begin{itemize}
\item[\rm(a3)] $u_k=\xi$ for some $\xi\in (0,1)^n\setminus \Xi_{\{(A_k,u_k)\}}$  (see Remark \ref{rem:ext_u_out_A}) and 
$$
M:=\sup_{k\ge1} \cF(A_k,u_k)<+\infty;
$$
 
\item[\rm(a4)] $A_k\to A$ in $L^1(\R^n);$

\item[\rm(a5)] $\cH^{n-1}(Q_{r,\nu}(x_0)\cap \p^*(A\cup S))<\delta r^{n-1}$ and $|(A\cup S)\cap Q_{r,\nu}(x_0)|>(1-\delta)r^n;$

\item[\rm(a6)] either 
$$
u_k\to u\quad\text{a.e.\  in $Q_{r,\nu}(x_0)$}
$$
and  
$$
K:=Q_{r,\nu}(x_0)\cap J_u
$$ 
or there exists a set of finite perimeter $E\subset Q_{r,\nu}(x_0)$ such that 
$$
u_k\to u\quad \text{a.e.\  in $Q_{r,\nu}(x_0)\setminus E$}\qquad \text{and}\qquad  
|u_k|\to +\infty \quad \text{a.e.\  in $Q_{r,\nu}(x_0)\cap E,$}
$$
and 
$$
K:=Q_{r,\nu}(x_0)\cap\p^*E
$$ 
(see Figure \ref{fig:setE_for_lemma}).

\begin{figure}[htp]
\includegraphics[width=0.9\textwidth]{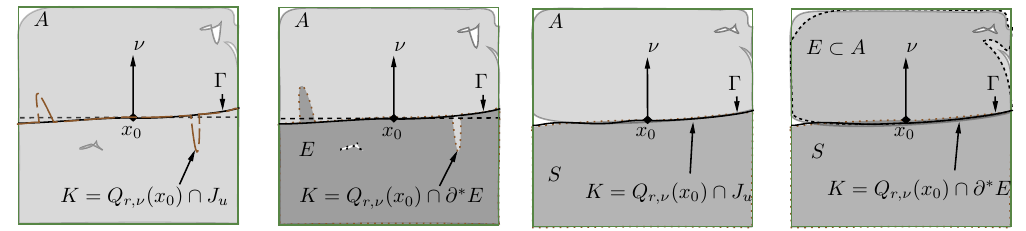} 
\caption{{\small Set $K$ in Proposition \ref{prop:estimate_inner_jump}.}} \label{fig:setE_for_lemma}
\end{figure}

\item[\rm(a7)] the set $K$ satisfies
\begin{itemize}
 \item[\rm(a7.1)] $\nu_K(x_0)=\nu$ and $\theta(K,x_0)=\theta(\Gamma\cap K,x_0)=1;$ 
 
 \item[\rm(a7.2)] $\cH^{n-1}(K\cap\Gamma)>(1-\delta)r^{n-1};$
 
 \item[\rm(a7.3)] $\cH^{n-1}(K\setminus\Gamma)<\delta r^{n-1}.$
\end{itemize}  

\end{itemize}
\noindent
We also denote by $\phi$ a norm in $\R^n$ satisfying \eqref{fjskiii098}.
Let $C,D\strictlyincluded Q_{r,\nu}(x_0)$ be given by Lemma \ref{lem:creating_hole} applied with $\delta,$ $\Gamma$ and $K.$ Then there exist $c'=c_{\bound_2}'>0$ and $k_\delta':=k_\delta'(\bound_2)>0$ such that for any $k>k_\delta':$

\begin{itemize}[left=17pt]
\item[\rm(i)]  if $Q_{r,\nu}(x_0)\strictlyincluded \Omega,$ then 
\begin{align}\label{interior_estimates_CO}
\int_{D\cap\p^*A_k} \phi(\nu_{A_k})\d\cH^{n-1} +   2\int_{D \cap A_k^{(1)} \cap J_{u_k}}  \phi(\nu_{J_{u_k}}) \d\cH^{n-1}
\ge   2\int_K\phi(\nu_K)\d\cH^{n-1} - c'\delta r^{n-1} \nonumber \\
\ge   \int_{\p D} \phi(\nu_D)\d\cH^{n-1} - (c'+5\bound_2)\delta r^{n-1};
\end{align}

\item[\rm(ii)] if $x_0\in \Sigma$ and $\Gamma = Q_{r,\nu}(x_0)\cap \Sigma,$ then 
\begin{align}\label{Sigma_2estimates_CO}
\int_{C\cap\p^*A_k} \phi(\nu_{A_k})\d\cH^{n-1} +   2\int_{C \cap A_k^{(1)} \cap J_{u_k}}   \phi(\nu_{J_{u_k}}) & \d\cH^{n-1}
+ 2\int_{\Sigma\cap\p C\cap \p^*A_k\cap J_{u_k}} \phi(\nu_\Gamma)\d\cH^{n-1} \nonumber \\
\ge & 2\int_K\phi(\nu_K)\d\cH^{n-1} - c'\delta r^{n-1} \nonumber \\
\ge  & \int_{\p C} \phi(\nu_C)\d\cH^{n-1} - (c'+ 5\bound_2)\delta r^{n-1}.
\end{align}
\end{itemize}
\end{proposition}

The proof of this proposition is left after the proof of Theorem \ref{teo:lower_semicontinuity}.  Actually, in the proof of lower semicontinuity we only use the following corollary of Proposition \ref{prop:estimate_inner_jump}; the assertions involving the sets $C,$ $D$ and $E$ will used in the proof of compactness.

\begin{corollary}\label{cor:further_estimates}
Under assumptions of Proposition \ref{prop:estimate_inner_jump}, together with 
\begin{itemize}[left=20pt]
\item[\rm(a6)] $u_k\to u\quad\text{a.e.\  in $Q_{r,\nu}(x_0)$}$
and  
$$
K:=Q_{r,\nu}(x_0)\cap J_u,
$$ 
\end{itemize}
there exist $c'=c_{\bound_2}'>0$ and $k_\delta':=k_\delta'(\bound_2)>0$ such that for any $k>k_\delta':$

\begin{itemize}[left=17pt]
\item[\rm(i)]  if $Q_{r,\nu}(x_0)\strictlyincluded \Omega,$ then 
\begin{align*}
\int_{Q_{r,\nu}(x_0) \cap\p^*A_k} \phi(\nu_{A_k})\d\cH^{n-1} +   2\int_{Q_{r,\nu}(x_0) \cap A_k^{(1)} \cap J_{u_k}}   \phi(\nu_{J_{u_k}}) \d\cH^{n-1}
\ge  2\int_K\phi(\nu_K)\d\cH^{n-1} - c'\delta r^{n-1} ;
\end{align*}

\item[\rm(ii)] if $x_0\in \Sigma$ and $\Gamma = Q_{r,\nu}(x_0)\cap \Sigma,$ then 
\begin{multline*}\int_{Q_{r,\nu}(x_0)\cap\p^*A_k} \phi(\nu_{A_k})\d\cH^{n-1} +   2\int_{Q_{r,\nu}(x_0) \cap A_k^{(1)} \cap J_{u_k}} \phi(\nu_{J_{u_k}}) \d\cH^{n-1} \\ 
+ 2\int_{Q_{r,\nu}(x_0)\cap \Sigma\cap \p^*A_k\cap J_{u_k}} \phi(\nu_\Gamma)\d\cH^{n-1} 
\ge2\int_K\phi(\nu_K)\d\cH^{n-1} - c'\delta r^{n-1} .
\end{multline*}
\end{itemize}

\end{corollary}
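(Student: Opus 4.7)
The plan is to observe that the corollary is an immediate weakening of Proposition \ref{prop:estimate_inner_jump} under the specific choice (a6) where $K = Q_{r,\nu}(x_0) \cap J_u$. The proposition yields the same quantitative estimates but with the integration restricted to the auxiliary open sets $C, D \subset\subset Q_{r,\nu}(x_0)$ produced by Lemma \ref{lem:creating_hole}. Since the corollary merely replaces these smaller integration domains by the whole cube $Q_{r,\nu}(x_0)$, all that is needed is a monotonicity argument using the non-negativity of $\phi$.

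More precisely, I would first invoke Proposition \ref{prop:estimate_inner_jump} with the chosen $\delta$, $\Gamma$, and $K = Q_{r,\nu}(x_0)\cap J_u$, obtaining the constants $c' = c'_{\bound_2}$ and $k'_\delta = k'_\delta(\bound_2)$ as well as the sets $C$ and $D$ from Lemma \ref{lem:creating_hole}. For case (i), where $Q_{r,\nu}(x_0) \subset\subset \Omega$, the inclusion $D \subset Q_{r,\nu}(x_0)$ and the fact that $\phi \ge \bound_1 > 0$ gives
\[
\int_{D\cap\p^*A_k} \phi(\nu_{A_k})\d\cH^{n-1} \le \int_{Q_{r,\nu}(x_0)\cap\p^*A_k} \phi(\nu_{A_k})\d\cH^{n-1},
\]
and analogously for the jump integral. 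Substituting into the first inequality of \eqref{interior_estimates_CO} produces the estimate claimed in (i).

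For case (ii), where $x_0 \in \Sigma$ and $\Gamma = Q_{r,\nu}(x_0)\cap\Sigma$, I would argue similarly, using $C \subset\subset Q_{r,\nu}(x_0)$ to bound the bulk terms, plus the inclusion $\Sigma \cap \p C \subset Q_{r,\nu}(x_0) \cap \Sigma$ (which follows from $\overline{C} \subset Q_{r,\nu}(x_0)$) to handle the boundary term
\[
\int_{\Sigma\cap\p C\cap \p^*A_k\cap J_{u_k}} \phi(\nu_\Gamma)\d\cH^{n-1} \le \int_{Q_{r,\nu}(x_0)\cap\Sigma\cap \p^*A_k\cap J_{u_k}} \phi(\nu_\Gamma)\d\cH^{n-1}.
\]
Summing these three monotonicity inequalities and comparing with the first inequality of \eqref{Sigma_2estimates_CO} yields (ii). There is no real obstacle here: the only subtle point is verifying the set-theoretic inclusion $\Sigma \cap \p C \subset Q_{r,\nu}(x_0) \cap \Sigma$, which is immediate from $C \subset\subset Q_{r,\nu}(x_0)$, and the constants $c'$ and $k'_\delta$ can simply be inherited verbatim from the proposition since monotonicity introduces no additional error.
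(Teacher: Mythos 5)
Your proposal is correct and matches the paper's (implicit) treatment: the paper states the corollary without a separate proof, treating it as an immediate consequence of Proposition \ref{prop:estimate_inner_jump} via exactly the monotonicity you invoke, namely that $D,C\strictlyincluded Q_{r,\nu}(x_0)$ (hence $\Sigma\cap\p C\subset Q_{r,\nu}(x_0)\cap\Sigma$) together with $\phi\ge 0$ lets one enlarge the integration domains on the left-hand sides of \eqref{interior_estimates_CO}--\eqref{Sigma_2estimates_CO} without decreasing them, and inherit $c'$ and $k_\delta'$ verbatim.
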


\begin{proof}[\textbf{Proof of Theorem \ref{teo:lower_semicontinuity}}]
In view of Remark \ref{rem:ext_u_out_A}, we may assume that $u_k=\xi$ for some $\xi\in (0,1)^n\setminus \Xi_{\{(A_k,u_k)\}}.$
Moreover, there is no loss of generality in assuming $liminf$ in \eqref{eq:lsc_functional} is a finite limit. Thus,
$$
M:=\sup\limits_{k\ge1} \,\cF(A_k,u_k)<+\infty.
$$
In particular, $\{(A_k,u_k)\}$ satisfies the assumptions (a3) and (a4) of Proposition \ref{prop:estimate_inner_jump}.

Let  
\begin{align*}
\mu_k(B):=  & \int_{B\cap \Omega\cap \p^*A_k} \varphi(x,\nu_{A_k})\,\d \cH^{n-1} + 2\int_{B\cap A_k^{(1)}\cap J_{u_k}} \varphi(x,\nu_{J_{u_k}}) \,\d\cH^{n-1} \nonumber \\
& +  \int_{B\cap \Sigma\cap \p^*A_k\setminus J_{u_k}} [\beta + \varphi(x,\nu_\Sigma)]\,\d\cH^{n-1} + 2\int_{B\cap \Sigma\cap \p^*A_k\cap J_{u_k}} \varphi(x,\nu_\Sigma)\,\d\cH^{n-1} \nonumber\\
& + \int_{B\cap \Sigma\setminus \p^*A_k} \varphi(x,\nu_\Sigma)\,\d\cH^{n-1} + \int_{B\cap (A\cup \substrate)} W(x,\str{u_k}-\bM_0)\d x
\end{align*}
and 
\begin{align*}
\mu(B):=  & \int_{B\cap \Omega\cap \p^*A} \varphi(x,\nu_A)\,\d \cH^{n-1} + 2\int_{B\cap A^{(1)} \cap J_u} \varphi(x,\nu_{J_u}) \,\d\cH^{n-1} \nonumber \\
& +  \int_{B\cap \Sigma\cap \p^*A\setminus J_u} [\beta + \varphi(x,\nu_\Sigma)]\,\d\cH^{n-1} + 2\int_{B\cap \Sigma\cap \p^*A\cap  J_u} \varphi(x,\nu_\Sigma)\,\d\cH^{n-1} \nonumber\\
& + \int_{B\cap \Sigma\setminus \p^*A} \varphi(x,\nu_\Sigma)\,\d\cH^{n-1}+\int_{B\cap (A\cup \substrate)} W(x,\str{u}-\bM_0)\d x
\end{align*}
be positive Radon measures in $\R^n.$
Notice that 
\begin{equation}\label{sankdnaida}
\mu_k(\R^n) = \cF(A_k,u_k) + \int_\Sigma \varphi(x,\nu_\Sigma)\,\d\cH^{n-1} 
\end{equation}
and
\begin{equation}\label{sandada1}
\mu(\R^n) = \cF(A,u) + \int_\Sigma \varphi(x,\nu_\Sigma)\,\d\cH^{n-1}. 
\end{equation}
In particular, 
$$
\sup\limits_{k\ge1} \mu_k(\R^n) \le M + \int_\Sigma \varphi(x,\nu_\Sigma)\,\d\cH^{n-1},
$$
and thus, there exist a positive Radon measure $\mu_0$ in $\R^n$ and a not relabelled subsequence $\{\mu_k\}$ such that $\mu_k\wk^* \mu_0.$ 
Note that if we show
\begin{equation}\label{compar_mu0_vs_mu}
\mu_0\ge\mu,
\end{equation}
then \eqref{eq:lsc_functional} directly follows from \eqref{compar_mu0_vs_mu}, \eqref{sankdnaida} and \eqref{sandada1}.
Therefore, the remaining part of the proof is devoted to the proof \eqref{compar_mu0_vs_mu}. Since $\mu,\mu_0\ge0$ and the support of $\mu$ is explicit, to establish \eqref{compar_mu0_vs_mu} it suffices to prove  the following density estimates:
\begin{subequations}
\begin{align}
& \frac{\d\mu_0}{\d\cH^{n-1}\res[ \Omega\cap \p^*A ]}\,(x) \ge \varphi(x,\nu_A(x))\quad\text{$\cH^{n-1}$-a.e.\  $x\in (\Omega\cap\p^*A)\cup (\Sigma\setminus \p^*A),$} \label{at_essential_boundary}\\
& \frac{\d\mu_0}{\d\cH^{n-1}\res [A^{(1)}\cap J_u]}\,(x) \ge 2\varphi(x,\nu_{J_u}(x)) \quad\text{$\cH^{n-1}$-a.e.\  $x\in A^{(1)}\cap J_u,$} \label{at_internal_crack}\\
& \frac{\d\mu_0}{\d\cH^{n-1}\res [\Sigma\cap \p^*A\cap J_u]}\,(x) \ge 2\varphi(x,\nu_\Sigma(x)) \quad\text{$\cH^{n-1}$-a.e.\  $x\in \Sigma\cap \p^*A\cap J_u,$} \label{at_delamination}\\
& \frac{\d\mu_0}{\d\cH^{n-1}\res [\Sigma\cap \p^*A]}\,(x) \ge \beta(x) + \varphi(x,\nu_\Sigma(x))\quad\text{$\cH^{n-1}$-a.e.\  $x\in \Sigma\cap\p^*A,$} \label{at_contact_boundary}\\
& \frac{\d\mu_0}{\d\cH^{n-1}\res [\Sigma\setminus  \p^*A]}\,(x) \ge \varphi(x,\nu_\Sigma(x))\quad\text{$\cH^{n-1}$-a.e.\  $x\in \Sigma\setminus\p^*A,$} \label{at_exposed}\\
&\frac{\d\mu_0}{\d\cL^n\res [A\cup \substrate]}\,(x) \ge W(x,\str{u}(x)-\bM_0(x))\quad\text{$\cL^n$-a.e.\  $x\in A\cup \substrate.$} \label{at_bulk}
\end{align}
\end{subequations}
\smallskip 

{\it Proofs of \eqref{at_essential_boundary},  \eqref{at_contact_boundary} and \eqref{at_exposed}.} 
By assumptions (H1)-(H3), the capillary functional
$$
\mathcal{C}(E;U) = \int_{U\cap \p^*E} \varphi(x,\nu_E)\d \cH^{n-1} + \int_{U\cap \Sigma\cap \p^*E} [\beta + \varphi(x,\nu_\Sigma)]\,\d\cH^{n-1} + \int_{U\cap \Sigma\setminus \p^*E} \varphi(x,\nu_\Sigma)\,\d\cH^{n-1}
$$ 
is $L^1(U)$-lower semicontinuous in any open set $U\subset\R^n$ (see e.g., \cite[Theorem 3.4]{ADT:2017}). As $A_k\to A$ and $\mu_k\wk^*\mu_0,$ for any ball  $B_r(x_0)$ with $\mu_0(\p B_r(x_0))=0$ we have
\begin{align*}
\mu_0(B_r(x_0)) = &  \lim\limits_{k\to+\infty} \mu_k(B_r(x_0))
\ge \liminf\limits_{k\to+\infty} \mathcal{C}(A_k, B_r(x_0)) \ge \mathcal{C}(A, B_r(x_0)).
\end{align*}
This inequality and the Besicovitch differentiation theorem imply  \eqref{at_essential_boundary},  \eqref{at_contact_boundary} and \eqref{at_exposed}.  
\smallskip

{\it Proof of \eqref{at_internal_crack}.} Fix $\epsilon\in(0,2^{-10})$ and let $K:=A^{(1)}\cap J_u.$ By the $\cH^{n-1}$-rectifiability of $K$, there exists an at most countable family $\{\Gamma_l\}$ of $(n-1)$-dimensional $C^1$-graphs such that 
$$
\cH^{n-1}\Big(K \setminus \bigcup_{l\ge1} \Gamma_l \Big)=0.
$$
Let $x_0\in L$ be such that 

\begin{itemize}
 \item[($\rm a_1$)]  $x_0\in\Gamma_l$ for some $l\ge1$ so that the generalized unit normal $\nu_0:=\nu_K(x_0)$ to $L$ at $x_0 $ exists and equals to $\nu_{\Gamma_l}(x_0)$;
 
 \item[($\rm a_2$)]  $\theta(K,x_0)=\theta(\Gamma_l\cap K,x_0)=1;$ 
 
 \item[($\rm a_3$)] $\frac{\d\mu_0}{\d\cH^{n-1}\res K}(x_0)$ exists;
 
 \item[($\rm a_4$)]  $\lim\limits_{r\to0} \frac{1}{r^{n-1}} \int_{Q_{r,\nu_0}(x_0)\cap K}\varphi(x_0,\nu_K)\d\cH^{n-1} =\varphi(x_0,\nu_0).$
\end{itemize}
By the $\cH^{n-1}$-rectifiability of $K,$ \cite[Theorem 2.63]{AFP:2000} and the Besicovitch differentiation theorem, the set of $x_0\in K$ for which at least one of these conditions fails is $\cH^{n-1}$-negligible.
Since $\varphi$ is uniformly continuous in $\cl{\Omega},$ there exists $r_{1,\epsilon}>0$ such that 
\begin{equation}\label{unif_phi_contiu}
|\varphi(x,\nu) - \varphi(y,\nu)|<\epsilon \quad\text{whenever $|x-y|<r_{1,\epsilon}$ and $\nu\in\S^{n-1}.$} 
\end{equation}
Decreasing $r_{1,\epsilon}$ if necessary, we assume that $Q_{r_{1,\epsilon},\nu_0}(x_0)\strictlyincluded \Omega.$ Then for any $r\in(0,r_{1,\epsilon})$ 
\begin{align}
\mu_k(Q_{r,\nu_0}(x_0)) \ge \alpha_k(Q_{r,\nu_0}(x_0))
- \epsilon \Big( \cH^{n-1}(Q_{r,\nu_0}(x_0) \cap \p^*A_k) + 2\cH^{n-1}(Q_{r,\nu_0}(x_0)\cap A_k^{(1)}\cap J_{u_k})\Big), 
\label{lower_estimate_muk_alha_k}
\end{align}
where 
$$
\alpha_k(U):=\int_{U\cap\p^*A_k} \phi(\nu_{A_k})\d\cH^{n-1} + 2\int_{U\cap A_k^{(1)}\cap J_{u_k}} \phi(\nu_{J_{u_k}})\d\cH^{n-1}
$$
and $\phi(\nu):=\varphi(x_0,\nu).$ By assumption \eqref{finsler_norm} and the nonnegativity of the summands of $\mu_k,$ we have an a priori bound
$$
\cH^{n-1}(Q_{r,\nu_0}(x_0) \cap \p^*A_k) + 2\cH^{n-1}(Q_{r,\nu_0}(x_0)\cap A_k^{(1)}\cap J_{u_k}) \le \frac{\mu(Q_{r,\nu_0}(x_0))}{\bound_1},
$$
and thus inserting this inequality in \eqref{lower_estimate_muk_alha_k} we get
\begin{equation}\label{mu_k_below_estima}
\Big(1 + \frac{\epsilon}{\bound_1}\Big) \mu_k(Q_{r,\nu_0}(x_0)) \ge \alpha_k(Q_{r,\nu_0}(x_0)).
\end{equation}

Now we estimate $\alpha_k$ from below using Corollary \ref{cor:further_estimates} (a). Since $\Gamma_l$ is a $C^1$-graph, by ($a_1$) there exists $r_{2,\epsilon}\in(0,r_{1,\epsilon})$ such that 
\begin{itemize}  
\item  $\Gamma_l$ divides the cube $Q_{r_{2,\epsilon},\nu_0}(x_0)$ into two connected components; 
 
\item  $|\nu_{\Gamma_l}(x) - \nu_0|<\epsilon$ for any $x\in Q_{r_{2,\epsilon},\nu_0}(x_0)\cap\Gamma_l;$
 
\item  $|(x-x_0)\cdot \nu_0| <\epsilon r/2$ for any $r\in (0,r_{2,\epsilon})$ and $x\in Q_{r,\nu_0}(x_0)\cap \Gamma_l;$ 
 
\item  $\cH^{n-1}(Q_{r,\nu_0}(x_0)\cap \Gamma_l) <(1+\epsilon)r^{n-1}$ for all $r\in(0,r_{2,\epsilon}).$
\end{itemize}
In particular, for any $r\in(0,r_{2,\epsilon})$ the cube $Q_{r,\nu_0}(x_0)$ and the $C^1$-graph $\Gamma:=Q_{r,\nu_0}(x_0)\cap \Gamma_l$ satisfy the assumptions (a1)-(a2) of Proposition \ref{prop:estimate_inner_jump}. As we mentioned in the beginning of the proof, $\{(A_k,u_k)\}$ satisfies the assumptions (a3)-(a4) of Proposition \ref{prop:estimate_inner_jump}.
Moreover, by assumptions $x_0\in A^{(1)}$ and ($a_2$), there exists $r_{3,\epsilon}\in(0,r_{2,\epsilon})$ such that

\begin{itemize}

\item  $P(A,Q_{r,\nu_0}(x_0))< \epsilon r^{n-1}$ and $|A\cap Q_{r,\nu_0}(x_0)|>(1-\epsilon)r^{n-1}$ for all $r\in(0,r_{3,\epsilon});$
  
\item  $\cH^{n-1}(Q_{r,\nu_0}(x_0)\cap K\cap \Gamma_l)>(1-\epsilon)r^{n-1}$  for any $r\in(0,r_{3,\epsilon});$ 
 
\item  $\cH^{n-1}(Q_{r,\nu_0}(x_0)\cap K\setminus\Gamma_l)<\delta r^{n-1}$ for any $r\in(0,r_{3,\epsilon}).$ 
 \end{itemize}
Thus, assumptions (a5)-(a7) of Proposition \ref{prop:estimate_inner_jump} also hold. Therefore, by Corollary \ref{cor:further_estimates} (i), there exist  $k_\epsilon>0$ and $c'>0$ such that
$$
\alpha_k(Q_{r,\nu_0}(x_0)) \ge 2\int_{Q_{r,\nu_0}(x_0)\cap K} \phi(\nu_K)\d\cH^{n-1} -c'\epsilon r^{n-1} \quad \text{for all $k>k_\epsilon.$}
$$
This and \eqref{mu_k_below_estima} yield
$$
\Big(1 + \frac{\epsilon}{\bound_1}\Big) \mu_k(Q_{r,\nu_0}(x_0)) \ge 2\int_{Q_{r,\nu_0}(x_0)\cap K}\phi(\nu_K)\d\cH^{n-1} -c'\epsilon r^{n-1}.
$$
Now letting $k\to+\infty,$ for a.e.\  $r\in(0,r_{3,\epsilon})$ we get
$$
\Big(1 + \frac{\epsilon}{\bound_1}\Big) \mu_0(Q_{r,\nu_0}(x_0)) \ge 2\int_{Q_{r,\nu_0}(x_0)\cap K}\phi(\nu_K)\d\cH^{n-1} -c'\epsilon r^{n-1}.
$$
Hence, by ($a_3$) and ($a_4$),
$$
\Big(1 + \frac{\epsilon}{\bound_1}\Big) \frac{\d\mu_0}{\d\cH^{n-1}\res K} (x_0) = \Big(1 + \frac{\epsilon}{\bound_1}\Big) \lim\limits_{r\to0^+}\frac{\mu_0(Q_{r,\nu}(x_0))}{r^{n-1}} 
\ge 2\varphi(x_0,\nu_0) - c'\epsilon.
$$
Now letting $\epsilon\to0$ we obtain  \eqref{at_internal_crack}.
\smallskip 

{\it Proof of \eqref{at_delamination}.} Let $\epsilon\in(0,2^{-10})$ and  $L:=\Sigma\cap \p^*A\cap J_u.$ Since $\Sigma$ is Lipschitz, $L$ is $\cH^{n-1}$-rectifiable.

Let $x_0\in L$ be such that 
\begin{itemize}
 \item[($\rm b_1$)] $\nu_0:=\nu_\Sigma(x_0)$ exist and equals to $\nu_L(x_0);$ 
 
 \item[($\rm b_2$)] $\theta(L,x_0)=\theta(\Sigma,x_0)=\theta(\p^*A,x_0) = 1;$ 
 
 \item[($\rm b_3$)] $\frac{\d\mu_0}{\d\cH^{n-1}\res L}(x_0)$ exists.
 
 \item[($\rm b_4$)]  $\lim\limits_{r\to0} \frac{1}{r^{n-1}} \int_{Q_{r,\nu_0}(x_0)\cap L}\varphi(x_0,\nu_{J_u})\d\cH^{n-1} =\varphi(x_0,\nu_0).$
 
\end{itemize}
By the lipschitzianity of $\Sigma,$ $\cH^{n-1}$-rectifiablity of $\p^*A,$ \cite[Theorem 2.63]{AFP:2000} and Besicovitch differentiation theorem, the set of $x_0\in L$ for which at least one of these conditions fails is $\cH^{n-1}$-negligible.   

Let $r_{1,\epsilon}>0$ be such that \eqref{unif_phi_contiu} holds and $Q_{r_{1,\epsilon},\nu_0}(x_0)\strictlyincluded \Ins{\Omega}.$ Then as in \eqref{mu_k_below_estima}
\begin{align*}
\Big(1+\frac{\epsilon}{\bound_1}\Big)\mu_k(Q_{r,\nu_0}(x_0)) \ge \gamma_k(Q_{r,\nu_0}(x_0)) 
\end{align*}
for any $r\in(0,r_{1,\epsilon}),$ where 
$$
\gamma_k(U):= \int_{U\cap\Omega\cap\p^*A_k} \phi(\nu_{A_k})\d\cH^{n-1} +2\int_{U \cap A_k^{(1)} \cap J_{u_k}} \phi(\nu_{J_{u_k}}) \d\cH^{n-1} + 2 \int_{U\cap\Sigma\cap\p^*A_k\cap J_{u_k}} \phi(\nu_{\Sigma})\d\cH^{n-1}.
$$
Since $\Sigma$ is Lipschitz continuous, by ($\rm b_1$) and ($\rm b_2$) there exists $r_{2,\epsilon}\in(0,r_{1,\epsilon})$ such that 
\begin{itemize}
 \item $\Sigma$ divides the cube $Q_{r_{2,\epsilon},\nu_0}(x_0)$ into two connected components; 
 
 \item  $|\nu_{\Sigma}(x) - \nu_\Sigma(x_0)|<\epsilon$ for any $x\in Q_{r_{2,\epsilon},\nu_0}(x_0)\cap\Sigma;$
 
 \item  $|(x-x_0)\cdot \nu_0| <\epsilon r/2$ for any $r\in (0,r_{2,\epsilon})$ and $x\in Q_{r,\nu_0}(x_0)\cap \Sigma;$ 
 
 \item  $\cH^{n-1}(Q_{r,\nu_0}(x_0)\cap \Sigma) <(1+\epsilon)r^{n-1}$ for all $r\in(0,r_{2,\epsilon}).$
\end{itemize}
Moreover, since $x_0\in \Sigma\cap\p^*A$ and $\theta(L,x_0)=\theta(\p^*A,x_0)=1,$  there exists $r_{3,\epsilon}\in(0,r_{2,\epsilon})$ such that 
\begin{itemize}
 \item $\cH^{n-1}(Q_{r,\nu_0}(x_0)\cap\Sigma\cap\p^*A)>(1-\epsilon)r^{n-1}$ and $\cH^{n-1}(Q_{r,\nu_0}(x_0)\cap\p^*A\setminus \Sigma)<\delta r^{n-1}.$ 
\end{itemize}
Thus, applying Corollary \ref{cor:further_estimates} (b) we find $k_\epsilon''>0$ and $c''>0$ such that 
$$
\gamma_k(Q_{r,\nu_0}(x_0)) \ge 2 \int_{Q_{r,\nu_0}(x_0)\cap L} \phi(\nu_\Sigma)\d\cH^{n-1} - c''\delta r^{n-1}\quad \text{for all $k>k_\epsilon''.$}
$$
Therefore,
$$
\Big(1+\frac{\epsilon}{b_1}\Big)\mu_k(Q_{r,\nu_0}(x_0)) \ge 2 \int_{Q_{r,\nu_0}(x_0)\cap L} \phi(\nu_\Sigma)\d\cH^{n-1} - c''\delta r^{n-1} 
$$
and hence  by ($\rm b_3$) and ($\rm b_4$),
$$
 \frac{\d\mu_0}{\d\cH^{n-1}\res L} (x_0) \ge 2\varphi(x_0,\nu_0).
$$

{\it Proof of \eqref{at_bulk}.} By the nonnegativity of $\mu_k$ and our assumption $u_k=\xi$ on $\Omega\setminus A_k,$
\begin{align}\label{est_belowww}
\mu_k(B_r(x))\ge & \int_{B_r(x)\cap (A_k\cup \substrate)} W(y,\str{u_k} - \bM_0)\d y\nonumber \\
= & \int_{B_r(x)\cap (\Omega\cup \substrate)} W(y,\str{u_k} - \bM_0)\d y - \int_{B_r(x)\cap (\Omega\setminus A_k)} W(y,-\bM_0)\d y.
\end{align}
Since $\mu_k\wk^*\mu_0,$ $\str{u_k}\wk \str{u}$ in $L^2(\Omega\cup\substrate)$ (see \eqref{strain_converges}) and $A_k\to A$ in $L^1(\R^n),$  letting $k\to+\infty$ in \eqref{est_belowww} in any ball $B_r(x)$ with $\mu_0(\p B_r(x))=0$  we have
\begin{align*}
\mu_0(B_r(x))= & \lim\limits_{k\to+\infty} \mu_k(B_r(x))\\
\ge & \int_{B_r(x)\cap (\Omega\cup \substrate)} W(y,\str{u} - \bM_0)\d y - \int_{B_r(x)\cap (\Omega\setminus A)} W(y,-\bM_0)\d y\\
=& \int_{B_r(x)\cap (A\cup \substrate)} W(y,\str{u} - \bM_0)\d y,
\end{align*}
where in the inequality we used the convexity and lower semicontinuity of $W(y,\cdot),$ while in the last equality we used $u=\xi$ in $\Omega\setminus A.$ Now \eqref{at_bulk} follows from the Besicovitch differentiation theorem.
\end{proof}

\begin{remark}
According to the proof of Theorem \ref{teo:lower_semicontinuity}, both $\cS$ and $\cW$ are $\tau_\admissible$-lower semicontinuous in $\admissible.$
\end{remark}

Now we prove bounds \eqref{interior_estimates_CO}-\eqref{Sigma_2estimates_CO}.

\begin{proof}[\textbf{Proof of Proposition \ref{prop:estimate_inner_jump}}]
We prove only (i) in the case $K=Q_{r,\nu}(x_0)\cap \p^*E$ (that is, when $|u_k|\to +\infty$ a.e.\  in $Q_{r,\nu}(x_0)\cap E$). The case $K=Q_{r,\nu}(x_0)\cap J_u$ as well as the assertion (ii) can be handled by similar arguments.
The last inequality in \eqref{interior_estimates_CO} directly follows from \eqref{two_dim_esimates}-\eqref{one_dim_esimates}.
Therefore, we establish only the first estimate.
Without loss of generality, we assume $x_0=0,$ $r=1$ and $\nu=\e_n.$
Note that by (a1),  we have $\Gamma\subset (-\frac12,\frac12)^{n-1}\times(-\frac{\delta}{2},\frac{\delta}{2}),$ and
by (a3) and a priori estimates in Remark \ref{rem:apriori_bounds_seq},
\begin{equation}\label{ebergy_bounded_uni_qaraya00}
M_1:=\sup\limits_{k\ge1}\Big(\int_{\Omega\cup S}|\str{u_k}|^2\d x + 
\cH^{n-1}(J_{u_k})\Big)<+\infty.  
\end{equation}
For any open set $G\subset Q_1$ define
$$
\alpha_k(G):=\int_{G\cap\p^*A_k} \phi(\nu_{A_k})\d\cH^{n-1} + 2\int_{G\cap A_k^{(1)} \cap J_{u_k}} \phi(\nu_{J_{u_k}})\d\cH^{n-1}.
$$

{\it Step 1.} Let 
$$
\Upsilon:=\{\xi\in\S^{n-1}:\,\, |\xi\cdot e_n| \ge 2\delta \}.
$$
Then by (a1) for any $\xi\in\Upsilon$ and $x\in Q_1\cap\Gamma$
$$
|\xi \cdot \nu_\Gamma(x)| \ge |\xi \cdot \e_n| -|\xi\cdot (\nu_\Gamma(x) - \e_n)| >\delta ,
$$
and hence $Q_1\cap \Gamma$ is a graph also in $\xi$-direction, i.e., for any $y\in\Pi_\xi$ the line $\pi_\xi^{-1}(y)$ intersects $Q_1\cap \Gamma$ at most at one point.

{\it Step 2.} Let $D$ be given by Lemma \ref{lem:creating_hole} and let $U\strictlyincluded D$ be any open set such that $U\cap \Gamma\cap K\ne\emptyset.$ 
Let also $(B_k^U,v_k^U)$ be given by Corollary \ref{cor:functions_with_good_cracks} applied with $U,$ $\delta  =\frac{|U|}{k}$ and $(A_k,u_k)$. 
Then for all $k:$
\begin{itemize}
 \item[($\rm a_1$)] $B_k^U\subset A_k,$ $A_k\setminus B_k^U\strictlyincluded U$ and $|A_k\setminus B_k^U| <1/k;$
 
 \item[($\rm a_2$)] $v_k^U = u_k$ in $B_k^U\cup S;$
 
 \item[($\rm a_3$)] $\cH^{n-1}(U\cap [B_k^U]^{(1)}\cap J_{v_k^U})<1/k;$
 
 \item[($a_4$)]  $\alpha_k(U) + |U|/k \ge \Lambda_k(U),$ where 
\end{itemize}
$$
\Lambda_k(U):= \int_{U\cap\p^*B_k^U} \phi(\nu_{B_k^U})\d\cH^{n-1}. 
$$
By ($\rm a_1$), $B_k^U\to A$ in $L^1(\R^n)$  and by ($\rm a_1$), ($\rm a_2$) and also (a6),
\begin{equation}\label{bad_conver_vk_U}
v_k^U\to u\,\,\,\,\text{a.e.\  in $U\setminus E$}\qquad\text{and}\qquad |v_k^U|\to+\infty\,\,\,\, \text{a.e.\  in $U\cap E.$}  
\end{equation}
Moreover, by \eqref{ebergy_bounded_uni_qaraya00} and ($\rm a_2$),
$$
\sup_{k\ge1} \Big(\int_U|\str{v_k^U}|^2\d x + \cH^{n-1}(U\cap J_{v_k^U})\Big) <+\infty.
$$

We claim that 
\begin{align}\label{eq:Step1}
\liminf\limits_{k\to+\infty}  \, \Lambda_k(U) \ge  & \frac{2}{\phi^o(\xi) }\,
\int_{U\cap \Gamma}|\nu_\Gamma\cdot\xi| \d\cH^{n-1} 
 -
 2\bound_2P(A,U) - 2\bound_2\cH^{n-1}(U \cap [\Gamma\setminus \p^*E]).
\end{align}
for $\cH^{n-1}$-a.e.\  $\xi\in \Upsilon.$

To prove \eqref{eq:Step1} we study some properties of one-dimensional slices $[\hat v_k^U]_y^\xi$ of $v_k^U.$ We closely follow the arguments of \cite[pp. 11-13]{ChC:2020_jems}; see also \cite{ChC:2020_arxiv}. Let $k_j:=k_j^U$ be such that  
\begin{equation*}
\liminf\limits_{k\to+\infty} \int_{U\cap J_{v_k^U}} \phi(\nu_{J_{v_k^U}}^{})\d\cH^{n-1} = \lim\limits_{j\to+\infty} \int_{U\cap J_{v_{k_j}^U}} \phi(\nu_{J_{v_{k_j}^U}}^{})\d\cH^{n-1}.
\end{equation*}
Applying \eqref{estimate_I} and \eqref{estimate_II} with $v=v_{k_j}^U$ and \eqref{jump_estimate} with $L =J_{v_{k_j}^U}$ as well as using  \eqref{ebergy_bounded_uni_qaraya00}
we find 
\begin{equation}
\liminf\limits_{j\to+\infty} \int_{\Pi_\xi} \Big[\cH^0(J_{[\hat v_{k_j}^U]_y^\xi}) + \kappa I_{y,\xi}^U(v_{k_j}^U) + \kappa II_{y,\xi}^U(v_{k_j}^U) \Big]\d\cH^{n-1}(y) <+\infty 
\label{set_of_xis} 
\end{equation}
for any $\kappa>0$ and $\cH^{n-1}$-a.e.\  $\xi\in\Upsilon.$  
Moreover, by \cite[Lemma 2.7]{ChC:2020_jems} and \eqref{bad_conver_vk_U},
\begin{equation}\label{inf_strain1098}
|v_k^U\cdot\xi| \to+\infty\quad\text{a.e.\  in $U\cap E$} 
\end{equation}
for $\cH^{n-1}$-a.e.\  $\xi\in\Upsilon.$  
Fix any $\xi\in\Upsilon$ satisfying \eqref{set_of_xis} and \eqref{inf_strain1098} and consider the one-dimensional slices $[\hat v_{k_j}^U]_y^\xi$ and $\hat u_y^\xi.$ 
In view of \eqref{set_of_xis} and  Fatou's lemma, for $\cH^{n-1}$-a.e.\  $y\in \pi_\xi(U)$  
$$
\liminf\limits_{k\to+\infty} \Big[\cH^0(J_{[\hat v_{k_j}^U]_y^\xi}) + \kappa I_{y,\xi}^U(v_{k_j}^U) +\kappa  II_{y,\xi}^U(v_{k_j}^U) \Big] < +\infty.
$$
Thus, for $\cH^{n-1}$-a.e.\  $y\in \pi_\xi(U)$ there exists a subsequence $\{k_j^y\}\subset\{k_j\}$ (depending also on $\kappa>0$) such that 
\begin{multline}
\liminf\limits_{j\to+\infty} \Big[\cH^0(J_{[\hat v_{k_j}^U]_y^\xi}) + \kappa I_{y,\xi}^U(v_{k_j}^U) + \kappa II_{y,\xi}^U(v_{k_j}^U) \Big] \\
= \lim\limits_{j\to+\infty} \Big[\cH^0(J_{[\hat v_{k_j^y}^U]_y^\xi}) + \kappa I_{y,\xi}^U(v_{k_j^y}^U) + \kappa II_{y,\xi}^U(v_{k_j^y}^U) \Big], \label{mana_liminf_ana_liminf} 
\end{multline}
and by \eqref{bad_conver_vk_U} and \eqref{inf_strain1098},
\begin{equation} \label{conv_one_dim_slices}
[\hat v_{k_j^y}^U]_y^\xi \to \hat u_y^\xi\quad\text{$\cL^1$-a.e.\  in $[U\setminus E]_y^\xi$}\quad \text{and}\quad
\Big|[\hat v_{k_j^y}^U]_y^\xi\Big| \to +\infty\quad\text{$\cL^1$-a.e.\  in $[U\cap E]_y^\xi.$}
\end{equation}
For $\tau(t)=\arctan(t),$ set $f_j:=\tau\circ [\hat v_{k_j^y}^U]_y^\xi.$ Then $f_j\in SBV_\loc^2(U_y^\xi)$ and $J_{[\hat v_{k_j^y}^U]_y^\xi}=J_{f_j}.$ By \eqref{mana_liminf_ana_liminf}, \eqref{conv_one_dim_slices}  and \cite[Proposition 4.2]{Ambrosio:1989}, we find a not relabelled  subsequence $\{v_{k_j^y}^U\}$ such that
\begin{equation*}
f_j\to f_0 \quad \text{$\cL^1$-a.e.\  in $U_y^\xi$ as $j\to+\infty.$}
\end{equation*}
 By \eqref{conv_one_dim_slices},
$$
\begin{cases}
f_0=\tau\circ \hat u_y^U  &  \text{in $[U\setminus E]_y^\xi,$}\\
|f_0| = \pi/2  &  \text{in $[U\cap E]_y^\xi.$}
\end{cases}
$$
Moreover, by \cite[Proposition 4.2]{Ambrosio:1989},
\begin{equation}
\liminf\limits_{j\to+\infty} \cH^0\Big(J_{[\hat v_{k_j^y}^U]_y^\xi}\Big) =
\liminf\limits_{j\to+\infty} \cH^0(J_{f_j}) \ge  \cH^0 (J_{f_0}).
\label{lsc_0_dimenaina}
\end{equation}
Thus, $\cH^0(U_y^\xi\cap J_{f_0} )<+\infty$ and hence  $[U\cap E]_y^\xi$ consists of finitely many segments in each of which either $f_0\equiv\pi/2$  or $f_0\equiv-\pi/2.$

By \eqref{mana_liminf_ana_liminf},
$\cH^0(J_{f_j})$ is uniformly bounded and hence, there exists a further not relabelled subsequence and $N_y \in\N_0$ such that 
\begin{equation*}
\cH^0(J_{f_j}) = N_y\quad\text{and}\quad J_{f_j} = \{t_j^1,\ldots,t_j^{N_y}\}\subset U_y^\xi\quad\text{for all $j.$} 
\end{equation*} 
Then points of $J_{f_j}$ converges to $M_y\le N_y$ points $t^1<\ldots<t^{M_y}.$ 
Since $II_{y,\xi}^U(v_{k_j^y}^U)$ is uniformly bounded, the precise representatives of $f_j$ uniformly bounded in $W_\loc^{1,1}(t^l,t^{l+1})$ so that $f_j\to f_0$ locally uniformly in $(t^l,t^{l+1})$ and $J_{f_0}\subset \{t^1,\ldots,t^{M_y}\}.$ Repeating the arguments of \cite[Section 1]{ChC:2020_jems} we can show that $t^1:=U_y^\xi\cap [\p^* E]_y^\xi \in J_{f_0}.$ 

Let us estimate the $\cH^{n-1}$-measures of the sets 
\begin{align*}
Y_0:= & \{y\in \Pi_\xi\cap \pi_\xi(U\cap K):\,\, N_y=0\},\\
Y_1:= & \{y\in \Pi_\xi\cap \pi_\xi(U\cap K):\,\, N_y=1\},\\
Y_2:= & \{y\in \Pi_\xi\cap \pi_\xi(U\cap K):\,\, N_y\ge2\}.
\end{align*}
By \eqref{lsc_0_dimenaina}, $\cH^0(J_{f_0})=0$ for any $y\in Y_0.$ Hence, $U\cap \pi_\xi^{-1}(y)\cap (\p^*E \cup J_u)=\emptyset$ and therefore,
$
Y_0\subset \pi_\xi(U\cap \Gamma\setminus \p^*E).
$
Then by the $1$-Lipschitz continuity of the projection $\pi_\xi,$
\begin{equation}\label{est_Y0_9142}
\cH^{n-1}(Y_0) \le \cH^{n-1}(\pi_\xi(U \cap [\Gamma\setminus \p^*E])) 
\le \cH^{n-1}(U \cap [\Gamma\setminus \p^*E]).  
\end{equation}
Now consider any $y\in Y_1.$ By definition, $\pi_\xi^{-1}(y)$ intersects $U\cap J_{v_{k_j^y}^U}$ just once, and thus by the construction of  $(B_k^U,v_k^U)$ (see the proof of Corollary \ref{cor:functions_with_good_cracks}), either
$
y\in \pi_\xi\big(U\cap [B_{k_j^y}^U]^{(1)}\cap J_{u_{k_j^y}}\cap J_{v_{k_j^y}^U}\big)
$
or 
$
y\in \pi_\xi(U\setminus B_{k_j^y}^U).
$
If $y\in \pi_\xi(U\setminus B_{k_j^y}^U),$ then $t_j^1$ divides the line $U\cap \pi_\xi^{-1}(y)$ into two parts: one is a subset of $U\cap B_{k_j^y}^U$ and the other is that of $U\setminus B_{k_j^y}^U.$ Since $B_{k_j^y}^U \to A$ and $t^1=U_y^\xi\cap [\p^* E]_y^\xi \in J_{f_0},$ it follows that $t^1\in \p^*A$ and divides $U\cap \pi_\xi^{-1}(y)$ into two parts one belonging to $U\cap A$ the other to $U\setminus A.$ In particular, $y\in \pi_\xi(U\cap \p^*A).$
Hence,
$$
y\in\Big[U\cap [B_{k_j^y}^U]^{(1)}\cap J_{u_{k_j^y}}\cap J_{v_{k_j^y}^U})\Big]_y^\xi \cup \Big[U\cap \p^*A\Big]_y^\xi \quad \text{for all $j.$ }
$$
Thus,
\begin{align*}
\cH^{n-1}(Y_1) =  & \int_{Y_1} \cH^0\left(
\bigcap_j\Big( \Big[U\cap [B_{k_j^y}^U]^{(1)}\cap J_{u_{k_j^y}}\cap J_{v_{k_j^y}^U})\Big]_y^\xi \cup \Big[U\setminus B_{k_j^y}^U\Big]_y^\xi \Big)\right)\, \d\cH^{n-1}(y)\\
\le &
\int_{Y_1} \lim \limits_{j\to+\infty} \cH^0\Big(\Big[U\cap [B_{k_j^y}^U]^{(1)}\cap J_{u_{k_j^y}}\cap J_{v_{k_j^y}^U})\Big]_y^\xi\Big) \,\d\cH^{n-1}(y)\\
& +
\int_{Y_1} \cH^0\Big(\Big[U\cap\p^*A\Big]_y^\xi \Big)\, \d\cH^{n-1}(y).
\end{align*}
By the choice of $\{k_j^y\}$, the Fatou's lemma, the second equality in \eqref{jump_estimate} and ($\rm a_3$),
\begin{multline*}
\int_{Y_1} \lim\limits_{j\to+\infty} \cH^0\Big(\Big[U\cap [B_{k_j^y}^U]^{(1)}\cap J_{u_{k_j^y}}\cap J_{v_{k_j^y}^U})\Big]_y^\xi\Big) \,\d\cH^{n-1}(y)\\
\le  \liminf\limits_{k\to+\infty} \cH^{n-1}\Big(U\cap [B_k^U]^{(1)} \cap J_{u_k}\cap J_{v_k^U})\Big)=0.
\end{multline*}
Similarly, 
\begin{align*}
\int_{Y_1} \cH^0\Big(\Big[U\cap\p^*A\Big]_y^\xi \Big)\, \d\cH^{n-1}(y) 
\le & P(A,U).
\end{align*}
Hence,
\begin{equation}\label{est_Y1_442}
\cH^{n-1}(Y_1) \le P(A,U).
\end{equation}

Now using $\Pi_\xi\cap \pi_\xi(U) = Y_0\cup Y_1\cup Y_2,$ from \eqref{est_Y0_9142} and \eqref{est_Y1_442} we obtain
$$
\cH^{n-1}([\Pi_\xi\cap \pi_\xi(U)]\setminus Y_2) \le P(A,U) + \cH^{n-1}(U \cap [\Gamma\setminus \p^*E]).
$$
Moreover, let 
$$
X:=\{y\in \Pi_\xi\cap \pi_\xi(U):\,\,\text{$\pi_\xi^{-1}(y)\cap \Gamma\cap \p^*E$ is a singleton}\}.
$$
Then as above 
\begin{align*}
\cH^{n-1}(Y_2\setminus X) \le & \cH^{n-1}([\Pi_\xi\cap \pi_\xi(U)]\setminus X )\le \cH^{n-1}([U\cap (\Gamma\cup \p^*A)]\setminus \p^*E) \\
\le &
\cH^{n-1}(U \cap [\Gamma\setminus \p^*E]) + P(A,U),
\end{align*}
and therefore, 
\begin{equation}\label{yahudlar7465}
\cH^{n-1}([\Pi_\xi\cap \pi_\xi(U)] \setminus [Y_2\cap X ]) \le 2P(A,U) + 2\cH^{n-1}(U \cap [\Gamma\setminus \p^*E]).
\end{equation}
By the definition of $X $ and $Y_2,$ for any $y\in Y_2\cap X$ we have $\cH^0(J_{f_0})=1$ and $N_y\ge2.$ Therefore,
we can improve \eqref{lsc_0_dimenaina} as 
\begin{equation*}
\lim\limits_{j\to+\infty} \cH^0\Big(J_{[\hat v_{k_j^y}^U]_y^\xi}\Big) \ge  2= 2\cH^0([U\cap \Gamma]_y^\xi).
\end{equation*}
For such $y,$ from  \eqref{mana_liminf_ana_liminf}  we get
$$
\liminf\limits_{j\to+\infty} \Big[\cH^0(J_{[\hat v_{k_j}^U]_y^\xi}) + \kappa I_{y,\xi}^U(v_{k_j}^U) + \kappa II_{y,\xi}^U(v_{k_j}^U) \Big] \ge 
2\cH^0([U\cap \Gamma]_y^\xi)
$$
Now integrating both sides  over $X \cap Y_2$ and using \eqref{set_of_xis} and the Fatou's lemma we obtain
$$ 
\liminf\limits_{k\to+\infty} \int_{\Pi_\xi} \Big[\cH^0(J_{[\hat v_k^U]_y^\xi}) + \kappa I_{y,\xi}^U(v_k^U) + \kappa II_{y,\xi}^U(v_{k}^U) \Big]\d\cH^{n-1}(y) \ge 2\int_{X \cap Y_2}\cH^0([U\cap \Gamma]_y^\xi)\d\cH^{n-1}(y).
$$ 
By the definition of $\Upsilon,$ $\cH^0([U\cap \Gamma]_y^\xi)=1$ for all $y\in\Pi_\xi\cap  \pi_\xi(U).$ Thus, by \eqref{yahudlar7465},
\begin{align*}
\int_{X\cap Y_2}\cH^0([U\cap \Gamma]_y^\xi)\d\cH^{n-1}(y)
\ge  \int_{\Pi_\xi\cap \pi_\xi(U)} \cH^0([U\cap \Gamma]_y^\xi)\d\cH^{n-1}(y)
  - P(A,U)- \cH^{n-1}(U \cap [\Gamma\setminus \p^*E]).
\end{align*}
Hence, 
\begin{multline*}
\liminf\limits_{k\to+\infty} \int_{\Pi_\xi}  \Big[\cH^0(J_{[\hat v_k^U]_y^\xi}) + \kappa I_{y,\xi}^U(v_k^U) + \kappa II_{y,\xi}^U(v_{k}^U) \Big]\d\cH^{n-1}(y)\\
\ge 2 \int_{\Pi_\xi\cap \pi_\xi(U)} \cH^0([U\cap \Gamma]_y^\xi)\d\cH^{n-1}(y) 
- 2P(A,U) - 2\cH^{n-1}(U \cap [\Gamma\setminus \p^*E]).
\end{multline*} 
This inequality, \eqref{jump_estimate}, \eqref{estimate_I}, \eqref{estimate_II} as well as \eqref{ebergy_bounded_uni_qaraya00} yield
\begin{multline}\label{olimlar465}
\liminf\limits_{k\to+\infty} \int_{U\cap J_{v_k^U}} |\nu_{J_{v_k^U}}\cdot \xi| \d\cH^{n-1}   +  (M_1+|U|)\kappa  \\
 \ge  2 \int_{U\cap \Gamma}|\nu_\Gamma\cdot\xi| \d\cH^{n-1}
-2P(A,U)- 2\cH^{n-1}(U \cap [\Gamma\setminus \p^*E]) 
\end{multline} 
Let $\phi^o$ be the dual norm to $\phi,$ i.e., 
$$
\phi^o(\xi) = \sup\limits_{\phi(\nu)=1} |\xi\cdot\nu|.
$$
Then 
$
|\xi\cdot \nu| \le \phi^o(\xi)\phi(\nu)
$
and hence, from  \eqref{olimlar465}  and the arbitrariness of $\kappa$ we get
\begin{multline}\label{istatat898}
\phi^o(\xi) \liminf\limits_{k\to+\infty} \int_{U\cap J_{v_k^U}} \phi(\nu_{J_{v_k^U}})\d\cH^{n-1} \\
\ge 2
\int_{U\cap \Gamma}|\nu_\Gamma\cdot\xi| \d\cH^{n-1}
  -2P(A,U)- 2\cH^{n-1}(U \cap [\Gamma\setminus \p^*E]).
\end{multline}
Now using $\phi^o(\xi)\ge 1/\bound_2,$ from \eqref{istatat898} we get
\eqref{eq:Step1}.

{\it Step 3.} Now we prove \eqref{interior_estimates_CO}.  

{\it Substep 3.1.} Let 
$$
\S_{\phi^o}^{n-1}:=\{\xi\in\R^n:\,\, \phi^o(\xi)=1\}.
$$
Since $\S_{\phi^o}^{n-1}$ is compact, 
$$
\phi(\eta) =\max\limits_{i\ge1}  \eta\cdot \xi_i 
$$
for any countable set 
$\{\xi_j\}_j\subset \S_{\phi^o}^{n-1}$ dense in $\S_{\phi^o}^{n-1}.$ 

Fix any such dense set $\{\xi_j\}_j\subset \S_{\phi^o}^{n-1}$  that if $\xi=\xi_j/|\xi_j|\in\Upsilon$, then \eqref{set_of_xis} and \eqref{inf_strain1098} hold with $\xi.$ By \cite[Lemma 6]{DBD:1983}, there exists a finite family  $U_1,\ldots,U_m$ of disjoint open set compactly contained in $D$ such that
\begin{equation}\label{asdgar987}
2\int_{D\cap \Gamma} \phi(\nu_{\Gamma}) \d\cH^{n-1}  \le   \sum\limits_{j=1}^m  2\int_{U_j\cap \Gamma} |\nu_{\Gamma}\cdot \xi_j| \d\cH^{n-1} + \delta . 
\end{equation}
Recalling the definition of $(B_k^U,u_k^U)$ from Step 2, let us define 
$$
B_k=\bigcap\limits_{j=1}^m B_k^{U_j}\quad\text{and}\quad v_k:=u_k\chi_{B_k\cup S}.
$$
Then by ($\rm a_2$), $B_k\subset A_k,$ $A_k\setminus B_k\strictlyincluded D$ and
 $$
 |A_k\setminus B_k| \le \sum\limits_{j=1}^m |U_j\cap (A_k\setminus B_k^{U_j})| \le \sum\limits_{j=1}^m \frac{|U_j|}{k} \le \frac{|D|}{k}.
 $$ 
Let $\Lambda_k(D)$ be defined as in ($\rm a_4$) of Step 2 with $(B_k,v_k)$ in place of  $(B_k^U,v_k^U)$.  Then by the definition of $(B_k,v_k)$  and $\alpha_k(D)$ as well as by ($a_4$),
\begin{align*}
\alpha_k(D) - \Lambda_k(D) = \sum\limits_{j=1}^m\big( \alpha_k(U_j) - \Lambda_k(U_j) \big) \ge  -\sum\limits_{j=1}^m \frac{|U_j|}{k} \ge- \frac{|D|}{k}. 
\end{align*}
Thus, 
\begin{equation}\label{lower_estimate_for_alpha}
\alpha_k(D) \ge \Lambda_k(D) - \frac{|D|}{k}. 
\end{equation}

{\it Substep 3.2.} Now we estimate $\Lambda_k(D)$ from below.
 Note that if $\xi_j/|\xi_j| \in \Upsilon,$ then since $\phi^o(\xi_j)=1,$ by \eqref{eq:Step1},
\begin{align}\label{is0r98}
2 \int_{U_j\cap \Gamma}|\nu_\Gamma\cdot\xi_j| &\d\cH^{n-1} \le   \liminf\limits_{k\to+\infty} \Lambda_k(U_j) 
+
2\bound_2P(A,U_j)+2\bound_2\cH^{n-1}(U_j \cap [\Gamma\setminus \p^*E]).
\end{align}

Now assume that $\xi:=\xi_j/|\xi_j|\notin\Upsilon.$ Then by the definition of $\Upsilon$ and (a3),
$$
|\nu_\Gamma(x)\cdot\xi| \le |(\nu_\Gamma(x) - \e_n)\cdot\xi| + |\e_n\cdot\xi| <3\delta  
$$
for any $x\in U_j\cap \Gamma.$ Thus,
\begin{equation}\label{ahmoqlar987}
2\int_{U_j\cap \Gamma} |\nu_{\Gamma}\cdot \xi_j| \d\cH^{n-1} \le 6\delta  \cH^{n-1}(U_j\cap\Gamma). 
\end{equation}
Now by \eqref{asdgar987}, \eqref{is0r98} and \eqref{ahmoqlar987},
\begin{multline*}
2\int_{D\cap \Gamma} \phi(\nu_{\Gamma}) \d\cH^{n-1}  \le  \delta  +
\sum\limits_{j=1,\,j\in \Upsilon}^m \liminf\limits_{k\to+\infty}\,\Lambda_k(U_j) + 
6\delta  \sum\limits_{j=1,\,j\notin\Upsilon}^m \cH^{n-1}(U_j\cap\Gamma) \\
 + 2\bound_2\sum\limits_{j=1}^n \Big[P(A,U_j)+2\cH^{n-1}(U_j \cap [\Gamma\setminus \p^*E])\Big].
\end{multline*}
Since set function $Q\mapsto \Lambda_k(Q)$ is additive and non-increasing and the family $\{U_j\}$ is pairwise disjoint,
$$
\sum\limits_{j=1,\,j\in \Upsilon}^m \liminf\limits_{k\to+\infty}\,\Lambda_k(U_j) \le \liminf\limits_{k\to+\infty} 
\Lambda_k(\cup_j U_j) \le \liminf\limits_{k\to+\infty} 
\Lambda_k(D).
$$
Moreover, by (a2),
$$
\sum\limits_{j=1,\,j\notin\Upsilon}^m \cH^{n-1}(U_j\cap\Gamma) \le \cH^{n-1}(Q_1\cap \Gamma) <1+\delta , 
$$
and by (a2), (a5), (a7.2)  and (a7.3),
\begin{align*}
\sum\limits_{j=1}^n \Big[ P(A,U_j)+\cH^{n-1}(Q_1 \cap [\Gamma\setminus \p^*E]) \Big]
\le 
P(A,Q_1) + \cH^{n-1}(Q_1 \cap \Gamma) - \cH^{n-1}(Q_1\cap \Gamma\cap\p^*E)
\le 6\delta.
\end{align*}
Then
\begin{align*}
2\int_{D\cap \Gamma} \phi(\nu_{\Gamma}) \d\cH^{n-1}  \le & \delta  +  \liminf\limits_{k\to+\infty} 
\Lambda_k(D) + 6\delta (1+\delta ) + 6\bound_2\delta, 
\end{align*}
and hence,
\begin{align}\label{Step1122}
\liminf\limits_{k\to+\infty} 
\Lambda_k(D) \ge 2\int_{D\cap \Gamma} \phi(\nu_{\Gamma}) \d\cH^{n-1} - c_0\delta , 
\end{align}
where 
$$
c_0:=13+  6\bound_2
$$
depends only on $\bound_2.$
 
{\it Substep 3.3.}  In view of  \eqref{lower_estimate_for_alpha} and \eqref{Step1122}, there exist $k_0:=k_0(\delta,\bound_2)>0$ such that
\begin{equation}\label{shahahah}
\Lambda_k(D) \ge 2\int_{D\cap \Gamma} \phi(\nu_{\Gamma}) \d\cH^{n-1} - 2c_0\delta  \quad\text{for all $k>k_0.$}
\end{equation}
 Since $|D|<|Q| = 1,$ one has $|D|/k < c_0\delta$ provided $k>\frac{1}{c_0\delta}.$ Let
$$
k_\delta':=\max\Big\{k_0,\frac{1}{c_0\delta}\Big\}.
$$
Observe that
\begin{align*}
\int_{D\cap \Gamma} \phi(\nu_{\Gamma}) \d\cH^{n-1} 
\ge  \int_{D\cap \Gamma\cap \p^*E} \phi(\nu_{\Gamma}) \d\cH^{n-1}.
\end{align*}
Moreover, by (a7.3),
$$
\int_{D\cap \p^*E\setminus \Gamma} \phi(\nu_E)\d\cH^{n-1} \le \bound_2\delta
$$
and by Lemma \ref{lem:creating_hole} (ii),
$$
\cH^{n-1}(Q_1\cap \p^*E \setminus D) < 2\delta,
$$
and therefore,
$$
\int_{D\cap \Gamma} \phi(\nu_{\Gamma}) \d\cH^{n-1} \ge 
\int_{Q_1\cap\p^*E} \phi(\nu_E)\d\cH^{n-1} - 3\bound_2\delta\qquad \text{for all $k>k_\delta'$}.
$$
Combining these estimates with \eqref{lower_estimate_for_alpha} and \eqref{shahahah} we deduce
$$
\alpha_k(D) \ge 2\int_K \phi(\nu_K)\d\cH^{n-1}  - (2c_0+6\bound_2)\delta.
$$
Hence,
$
c':=c_{\bound_2}'=(2c_0+6\bound_2)
$
satisfies the assertion.
\end{proof}

\subsection{Lower semicontinuity of $\cF_p$ and $\cF_{\rm Dir}$}

We conclude this section by showing that the functionals $\cF_p$ and $\cF_{\rm Dir}$ in Theorems \ref{teo:elastic_plow} and \ref{teo:dirichlet_plow}, respectively, are lower semicontinuous with respect to the $\tau$-convergence defined in \eqref{tau_convergence_CACA}. 
Indeed, the proof of the $\tau$-lower semicontinuity of $\cS$ in $\admissible_p$ and $\admissible_{\rm Dir}$ is exactly the same as the $\tau_\admissible$-lower semicontinuity of  $\cS$ in $\admissible$ (see the proof of Theorem \ref{teo:lower_semicontinuity}). To prove the $\tau$-lower semicontinuity of $\cW_p$ and $\cW_{\rm Dir}$ we notice that according to the proof of the density estimate \eqref{at_bulk}, we only need the convexity of $W_p(x,\cdot)$ and the weak convergence of $\str{u_k}$ to $\str{u}$ in $L^p(\Ins{\Omega});$ the first condition is already stated in the assumption (a1) of $W_p,$ while the second condition follows from the lower bound in (a2) and the compactness  result \cite[Theorem 1.1]{ChC:2020_jems}.

\section{Compactness in $\admissible$}\label{sec:compactoser}

 In this section we prove Theorem \ref{teo:compactness}. Note that if $\{(A_k,u_k)\}$ is an energy-equibounded sequence, then by a priori estimates (see Remark \ref{rem:apriori_bounds_seq}), we can find a set of finite perimeter $A\subset\Omega$ such that, up to a subsequence,  $A_k\to A$ in $L^1(\R^n).$ Moreover, since each connected component $S_i$ of $S$ is Lipschitz, the convergence of $u_k$ in $S_i$ can be obtained  by adding rigid displacements in $S_i.$ However, since the rigid displacements for $S_i$ may differ from those for $S_j,$ $j\ne i,$ we need to create extra jumps for the  resulting displacement field. Hence, as in \cite{HP:2020_arma} we need to partition $A_k$ to compensate those jumps. The following proposition provides such a partition up to some error.

 \begin{proposition}\label{prop:pass_to_good_seq_compacte}
Let $(A_k,u_k),(A,u)\in\admissible$ be admissible configurations, $S^i$ for $i\in\{1,\ldots,m\}$ be a nonempty union of some connected components of $S$ such that $S^i\cap S^j =  \emptyset$ and $S = \bigcup_{i=1}^m S^i,$ $\{a_k^1\},\ldots,\{a_k^m\}$ be sequences of rigid displacements, $u^1,\ldots,u^m\in GSBD^2(\Ins{\Omega})$ and  
$F^1,\ldots,F^m\subset A$ be pairwise disjoint sets of finite perimeter. Assume that 
\begin{itemize}[left=12pt]
\item  $\sup_k\cF(A_k,u_k)<+\infty$ and $A_k\to A$ in $L^1(\R^n);$
 
\item for any $i\in\{1,\ldots,m\}$  one has $u_k - a_k^i \to u^i$ a.e.\ in $S^i \cup F^i$  and 
$|u_k - a_k^i |\to +\infty$ a.e.\ $(S\setminus S^i) \cup (A\setminus F^i).$
\end{itemize}
Then for any $\delta\in(0,\frac18\,\min_{i\ne j}\{1,\dist(S^i,S^j)\})$ there exist a (not relabelled) subsequence $\{(A_k,u_k)\},$ $k_\delta>0,$ $s_\delta\in(0,\delta)$ 
and a sequence $\{G_k^\delta\} \subset BV(\Omega;\{0,1\})$ 
such that
\begin{subequations}
\begin{align}
& \cH^{n-1}([A_k\setminus A]^{(1)}\cap \{\dist(\cdot, S) = s_\delta\}) < c^*\delta, \label{good_cut_sk}\\
& \cH^{n-1}(\{\dist(\cdot, S) < s_\delta\} \cap \p^*A)<c^*\delta, \label{good_cut_skper}\\
& |G_k^\delta| < c^*\sqrt{\delta} \sum_{0\le i\le m} P(F^i), \label{vol_est_Gk}\\
& P(G_k^\delta) \le c^*\sum_{0\le i\le m} P(F^i), \label{per_est_Gk}
\end{align}
\end{subequations}
and the sequence $\{(B_k^\delta,v_k^\delta)\},$ defined as 
\begin{equation}\label{def_BBkdelta}
B_k^\delta:=A_k\setminus G_k^\delta 
\end{equation}
and
\begin{equation}\label{def_vvkdelta}
v_k^\delta:=
\begin{cases}
u_k - a_k^i & \text{in $S^i \cup [F^i\setminus G_k^\delta] \cup [R_\delta^i \cap (B_k^\delta\setminus A)]$ for $i=1,\ldots,m,$} \\
u_0 & \text{in $B_k^\delta \cap F^0,$} \\
\xi &\text{in $(\Omega\setminus B_k^\delta)\cup (B_k^\delta \setminus [A\cup \bigcup_{i=1}^m R_\delta^i]),$}
\end{cases}
\end{equation}
where $\xi\in(0,1)^n,$
$$
R_\delta^i:=\{x\in\Omega:\,\, \dist(x, S^i)< s_\delta\},\qquad F^0:=A\setminus \bigcup_{i=1}^m F^i,
$$
satisfies 
\begin{equation}\label{error_estimate_Bv}
\cS(A_k,u_k) \ge  \cS(B_k^\delta,v_k^\delta)- c^*
  \sqrt{\delta} \Big[1+ P(A_k) + \cH^{n-1}(J_{u_k}) +  \sum\limits_{i=0}^m \cH^{n-1}(\p^*F^i)\Big]
\end{equation}
for all $k>k_\delta.$ Here constant $c^*>0$ depends only on $n,$ $\bound_1$ and $\bound_2.$
\end{proposition}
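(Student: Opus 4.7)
The strategy is to construct $G_k^\delta$ as the union of two pieces: a thin tubular layer inside $A_k\setminus A$ of width $s_\delta$ around $S$, which absorbs the large jumps that $u_k$ develops near $\Sigma$ (since $|a_k^i-a_k^j|\to\infty$ for $i\ne j$), plus a finite family of small ``voids'' covering $\bigcup_{i\ge 0}\p^*F^i$ inside $\Omega$, which absorb the analogous large jumps across the phase interfaces. On $A_k\setminus G_k^\delta$ each connected piece then ``sees'' a unique $a_k^i$, so the definition \eqref{def_vvkdelta} of $v_k^\delta$ is consistent; on the hanging phase $F^0$ we set $v_k^\delta=u_0$ to eliminate the elastic energy of a region disconnected from any substrate component, and the generic $\xi\in(0,1)^n$ is chosen so that the new jump set is concentrated on $\p^* G_k^\delta$ (in the spirit of Remark~\ref{rem:ext_u_out_A}).

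First I would choose $s_\delta$. Using the coarea formula applied to the Lipschitz function $\dist(\cdot, S)$, together with the uniform bounds $\sup_k [P(A_k) + \cH^{n-1}(J_{u_k})] < +\infty$ from Remark~\ref{rem:apriori_bounds_seq} and $|A_k\Delta A|\to 0$, I can select, after passing to a subsequence and applying a Fatou/Chebyshev argument, some $s_\delta\in(0,\tfrac18\min_{i\ne j}\{1,\dist(S^i,S^j)\})$ for which \eqref{good_cut_sk}--\eqref{good_cut_skper} hold; this fixes the strips $R_\delta^i$ as pairwise disjoint. Next, for each $i\in\{0,\dots,m\}$ the $\cH^{n-1}$-rectifiability of $\p^*F^i$ and uniform continuity of $\varphi$ let me cover $\p^*F^i\cap(\Omega\setminus\bigcup_i R_\delta^i)$, up to an $\cH^{n-1}$-error of order $\delta$, by a finite family of pairwise disjoint oriented cubes satisfying the flatness and density hypotheses (a1)--(a2) and (a5)--(a7) of Proposition~\ref{prop:estimate_inner_jump} with parameter $\delta$, whose total $(n-1)$-measure is controlled by $(1+\delta)\sum_i P(F^i)$. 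The cubes are split into interior ones (lying in $\Omega$) and $\Sigma$-centred ones.

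The key point is that, on each such cube, the hypotheses of Proposition~\ref{prop:estimate_inner_jump} are met in the ``divergence'' alternative of (a6) with $K=\p^*F^i$: indeed, $u_k-a_k^i\to u^i$ a.e.\ on $F^i$, while $|u_k-a_k^j|\to +\infty$ a.e.\ on $F^j$ for $j\ne i$ (take $E=F^j$ in (a6)). Applying Lemma~\ref{lem:creating_hole} then produces open sets $D_\ell$ (interior cubes) or $C_\ell$ ($\Sigma$-cubes), and one sets
\[
G_k^\delta:=\Big(\bigcup_\ell D_\ell\Big)\cup\Big(\bigcup_\ell C_\ell\Big)\cup \big([A_k\setminus A]\cap\{\dist(\cdot,S)<s_\delta\}\big),
\]
which satisfies \eqref{vol_est_Gk}--\eqref{per_est_Gk} thanks to parts (iii)--(v) of Lemma~\ref{lem:creating_hole}, the $\sqrt\delta$ in \eqref{vol_est_Gk} arising from choosing cube radii $r_\ell$ of order $\sqrt\delta$ so that $|D_\ell|\lesssim\delta\, r_\ell^n$ aggregates to $\sqrt\delta\,\sum_i P(F^i)$. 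The surface estimate \eqref{error_estimate_Bv} then follows by summing the local lower bounds \eqref{interior_estimates_CO}--\eqref{Sigma_2estimates_CO} of Proposition~\ref{prop:estimate_inner_jump} over all cubes and combining with \eqref{good_cut_sk}--\eqref{good_cut_skper}, using that outside $G_k^\delta$ the surface energies of $(B_k^\delta,v_k^\delta)$ and $(A_k,u_k)$ agree by construction.

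The main obstacle is the simultaneous bookkeeping of voids and phases: since $A_k\not\subset A$ in general, one must verify that $v_k^\delta=u_k-a_k^i$ is a consistent, admissible choice on $[A_k\setminus A]\cap R_\delta^i$, which is why the strip appears in \eqref{def_vvkdelta}; that the voids $C_\ell,D_\ell$ from different cubes combine without interfering (ensured by their pairwise disjoint compact containment in the chosen cubes); that the $\Sigma$-voids do not leak into $S$ (guaranteed by the constructions in Corollary~\ref{cor:functions_with_cracks_Sigma} and Lemma~\ref{lem:holes_near_Sigma}); and that the resulting $J_{v_k^\delta}$ is concentrated on $\p^*G_k^\delta$ modulo negligible sets. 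The central technical trade-off is the balance between the $\sqrt\delta$ volume bound \eqref{vol_est_Gk} and the $\delta$ flatness parameter feeding Proposition~\ref{prop:estimate_inner_jump}, resolved by the scaling $r_\ell\sim\sqrt\delta$.
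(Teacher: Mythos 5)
Your overall blueprint matches the paper's (pick $s_\delta$ by coarea, cover phase boundaries by fine cubes, invoke Lemma~\ref{lem:creating_hole} to carve out holes $G_k^\delta$, reassign $v_k^\delta$ piece by piece), but there are two genuine gaps. The central one: you claim to cover all of $\bigcup_{i\ge0}\p^*F^i$ by cubes satisfying hypotheses (a5)--(a7) of Proposition~\ref{prop:estimate_inner_jump}. But (a5) demands $|(A\cup S)\cap Q_{r,\nu}(x_0)|>(1-\delta)r^n$, and this fails at $\cH^{n-1}$-a.e.\ point of $\p^*F^i\cap\p^*A\cap\Omega$, where $A$ has density $1/2$. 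The part of the phase interface that sits on $\p^*A$ therefore cannot be treated with Proposition~\ref{prop:estimate_inner_jump}; the paper introduces a third, separate construction there — thin slabs $D_k^j$ built via Proposition~\ref{prop:estimate_red_boundary} and the choice $t_{k,j}^\delta\in(\sqrt\delta,2\sqrt\delta)$ — and it is precisely this slab construction that forces the $\sqrt\delta$ scaling in \eqref{vol_est_Gk} and \eqref{error_estimate_Bv}. Your attribution of the $\sqrt\delta$ to a choice of cube radius $r_\ell\sim\sqrt\delta$ is a red herring; the $r_\ell$'s in the Vitali cover are not fixed and aggregating $\delta r_\ell^n$ gives a much smaller error. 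Without handling $\p^*F^i\cap\p^*A$, the surface lower bound \eqref{error_estimate_Bv} does not follow. (Relatedly, you say the $\Sigma$-cubes cover part of $\p^*F^i\cap(\Omega\setminus\bigcup_iR_\delta^i)$, but that set is bounded away from $\Sigma$; the $\Sigma$-cubes must in fact be centered on $\Sigma\cap\p^*A$ where two different $F^j,S^i$ meet, which is disjoint from your covering set.)

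The second gap is your decision to include $[A_k\setminus A]\cap\{\dist(\cdot,S)<s_\delta\}$ inside $G_k^\delta$. This breaks \eqref{per_est_Gk}: $P\bigl([A_k\setminus A]\cap\{\dist<s_\delta\}\bigr)$ involves $P(A_k)$, which is uniformly bounded by the energy but is \emph{not} controlled by $\sum_iP(F^i)$ as the statement requires. It also changes the trace of $B_k^\delta$ on $\Sigma$: the chunk $\Sigma\cap(A_k\setminus A)^{(1)}$ moves from $\Sigma\cap\p^*A_k$ (paying $\beta+\varphi$ or $2\varphi$) to $\Sigma\setminus\p^*B_k^\delta$ (paying $\varphi$), an increase of order $\bound_2\cH^{n-1}(\Sigma\cap(A_k\setminus A)^{(1)})$ that is not controlled by $\delta$ since $L^1$-convergence of $A_k$ does not control traces. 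The paper deliberately keeps this strip inside $B_k^\delta$ and only modifies the displacement on $R_\delta^i\cap(B_k^\delta\setminus A)$, paying only the cut surface $\{\dist(\cdot,S)=s_\delta\}$ and the fringe of $\p^*A$ inside the strip, both of which \eqref{good_cut_sk}--\eqref{good_cut_skper} guarantee are of order $\delta$.
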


\begin{figure}[htp!]
\includegraphics[width=0.96\textwidth]{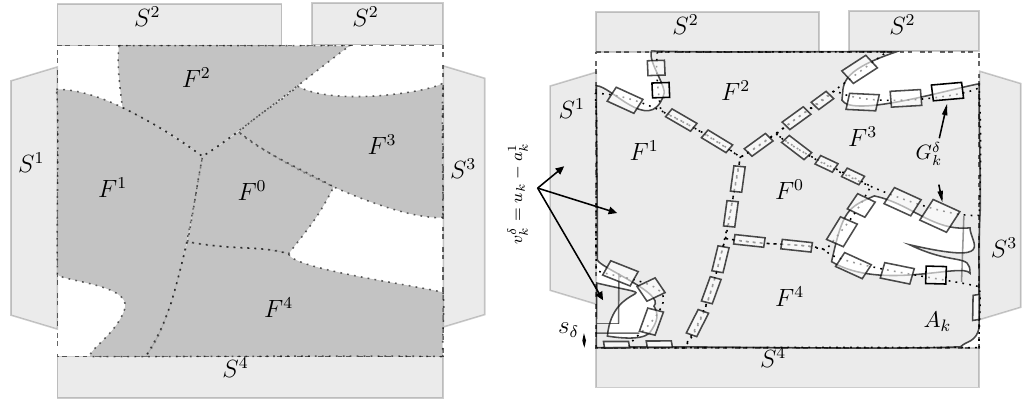}
\caption{{\small The partition of $A = \bigcup_{i\ge0} F^i$ and the construction of $B_k^\delta:=A_k\setminus G_k^\delta$ in Proposition \ref{prop:pass_to_good_seq_compacte}. The set $G_{\delta,k}$ is a finite union of holes along the boundaries $F^i\cup \bigcup_{j\ne i} S^j$ in which $u_k - a_k^i$ converges. Note that the sets $\{F^i\setminus G_k^\delta\}_{i=0}^m$ partition  $B_k^\delta.$ Since $F^0$ is a ``hanging'' component of $A,$ i.e., not linked to the substrate, it is reasonable to assume that the elastic energy in $F^0$ is $0.$ Then we define the displacement fields $v_k^\delta$ as follows: in $S^i\cup (F^i\setminus G_k^\delta)$ for $i=1,\ldots,m$ we set $v_k^\delta:=u_k - a_k^i$ and in $F^0\setminus G_k^\delta$ we write $v_k^\delta:=u_0.$ Finally, since $A_k\setminus A$ may  present large trace portions along $\p S$ on which $v_k^\delta$ forms a jump, we need to change the values of $v_k^\delta$ in $R_\delta^i\setminus A$ near $S^i.$}}
\label{fig:construction_Gkdelta}
\end{figure}

We postpone the proof of Proposition \ref{prop:pass_to_good_seq_compacte} after the proof of Theorem \ref{teo:compactness}.

\begin{proof}[\textbf{Proof of Theorem \ref{teo:compactness}}]
Since $S$ is Lipschitz open set with finitely many connected components, applying the Poincar\'e-Korn inequality and the Rellich-Kondrachov compactness theorem we find a not relabelled subsequence $\{(A_k,u_k)\},$ a partition $\{S^i\}_{i=1}^m$ of $\substrate$ and $m$ sequences $\{a_k^1\},\ldots,\{a_k^m\}$ of rigid displacements such that 
\begin{itemize}
 \item[($\rm a_1$)] each $S^i$ is the union of some connected components of $\substrate$ and $S=\bigcup_{i=1}^m S^i;$
 
 \item[($\rm a_2$)] for each $i\in\{1,\ldots,m\}$ there exists $w^i\in H^1(S^i)$ such that $u_k - a_k^i$ converges to $w^i$ weakly in $H^1(S^i)$ and a.e.\ in $S^i;$
 
 \item[($\rm a_3$)] if $i\ne j,$  then $|a_k^i - a_k^j|\to+\infty$ a.e.\  in $\R^n.$
\end{itemize}
We may also assume $A_k\to A$ in $L^1(\R^n)$ for some $A\in BV(\Omega;\{0,1\}).$ 
Since $\str{v} =\str{(v + a)}$ for any rigid displacement $a$, by Remark \ref{rem:apriori_bounds_seq} we have
$$
\sup_{k\ge1} \Big(P(A_k) + \cH^{n-1}(J_{(u_k - a_k^i)\chi_{A_k\cup S}}) + \int_{A_k \cup S} |\str{(u_k-a_k^i)}|^2\d x\Big)<+\infty 
$$
for any $i.$ Hence, by \cite[Theorem 1.1]{ChC:2020_jems} there exist a not relabelled subsequence $\{(A_k,u_k)\}$ such that for each $i$ the set 
$$
F_i:=\{x\in \Omega:\,\,\limsup\limits_{k\to+\infty}|(u_k(x) - a_k^i(x))\chi_{A_k}(x)|=+\infty \}
$$
has finite perimeter and there exists a function $u^i\in GSBD^2(\Ins{\Omega})$ such that 
\begin{align*}
u_k-a_k^i \to u^i \quad\text{a.e.\  in $S^i\cup F^i,$} 
\end{align*}
where 
$$
F^i:=A\setminus F_i.
$$
By assumption ($\rm a_3$), the sets $F^1,\ldots,F^m$ are pairwise disjoint
(see Figure \ref{fig:construction_Gkdelta}).

Let $\delta_0 := 2^{-10}\min_{i\ne j}\{1,\dist(S^i,S^j)\}$ and consider any sequence $\delta_l\searrow0$ with $\delta_1< \delta_0.$  By Proposition \ref{prop:pass_to_good_seq_compacte}, for any $l\ge1$ there exists a subsequence $\{(A_{k,l},u_{k,l})\}_k \subset \{(A_{k,l-1},u_{k,l-1})\}_k,$ real numbers $k_{\delta_l}>0$  and $s_{\delta_l}\in (0,\delta_l)$ and a sequence $\{G_k^{\delta_l}\}_k$ of sets of finite perimeter satisfying \eqref{good_cut_sk}-\eqref{per_est_Gk} with $\delta = \delta_l$ such that the sequence $\{(B_k^{\delta_l}, v_k^{\delta_l})\}_k$ defined as \eqref{def_BBkdelta}-\eqref{def_vvkdelta} satisfies
\begin{align}
\cS(A_{k,l},u_{k,l}) \ge  \cS(B_k^{\delta_l},v_k^{\delta_l}) 
- c^*
 \sqrt{\delta_l} \Big[1+ P(A_{k,l}) + \cH^{n-1}(J_{u_{k,l}}) +  \sum\limits_{i=0}^m \cH^{n-1}(\p^*F^i)\Big] \label{surface_bahooo}
\end{align}
for all $k>k_{\delta_l}.$ Here we set $(A_{k,0},u_{k,0}) = (A_k,u_k).$ By \eqref{per_est_Gk}, we may also assume that $G_k^{\delta_l} \to G^{\delta_l}$ in $L^1(\R^n)$ as $k\to+\infty,$ and therefore, $B_k^{\delta_l} \to A\setminus G^{\delta_l}.$
Moreover, setting $v_k^{\delta_l} = \xi$ in $\Omega\setminus B_k^{\delta_l}$ and $B_k^{\delta_l}\setminus [\cup_i R_{\delta_l}^i\cup A]$ for some $\xi\in (0,1)^n\setminus \Xi_{\{B_k^{\delta_l},u_k^{\delta_l}\}_{k,l}}$ (see Remark \ref{rem:ext_u_out_A}),  by the choice of $a_k^i,$ we have
$
v_k^{\delta_l} \to v^{\delta_l}
$
a.e.\  in $\Omega\cup S,$ where 
$$
v^{\delta_l}: = \sum\limits_{i=1}^m u^i\chi_{S^i\cup (F^i\setminus G^{\delta_l})} + u_0\chi_{F^0\setminus G^{1/l}} + \xi \chi_{(\Omega\setminus A)\cup G^{1/l}}.
$$
By \eqref{vol_est_Gk}-\eqref{per_est_Gk},
$$
|G^{\delta_l}| \le c^*\sqrt{\delta_l}  \sum_{i=0}^m P(F^i),\qquad 
P(G^{\delta_l}) \le c^*\sum_{i=0}^m P(F^i),
$$
and hence $G^{\delta_l} \to \emptyset$ in $L^1(\R^n)$ as $l\to+\infty.$ Therefore, $v^{\delta_l} \to u$ a.e.\  in $\Omega\cup S$ as $l\to+\infty,$ where
$$
u: = \sum\limits_{i=1}^m u^i\chi_{S^i\cup F^i} + u_0\chi_{F^0} + \xi \chi_{\Omega\setminus A}.
$$
By the nonnegativity and invariance w.r.t. rigid displacements of the elastic energy density, we have also
\begin{equation}
 \cW(A_{k,l},u_{k,l}) \ge \cW(B_k^{\delta_l},v_k^{\delta_l}). 
\label{elastic_bahooo}
\end{equation}
For each $l\ge1$ let us choose $k_l > k_{\delta_l}$  and consider the sequences $\{(A_{k_l,l},u_{k_l,l})\}_l$ and let $(B_l,v_l):=(B_{k_l}^l,u_{k_l}^l).$ We may also assume that $l\mapsto k_l$ is strictly increasing. By construction and the definition of $u,$ one readily check that 
$(B_l,v_l) \overset{\tau_\admissible}{\longrightarrow} (A,u).$ Moreover, by construction and \eqref{vol_est_Gk}, $|A_{k_l,l}\Delta B_l| = |G_{k_l}^{\delta_l}|\to 0.$
Finally, from \eqref{surface_bahooo} and \eqref{elastic_bahooo} we immediately get 
$$
\liminf\limits_{l\to+\infty} \cF(A_{k_l,l},u_{k_l,l}) \ge 
\liminf\limits_{l\to+\infty} \cF(B_l,u_l). 
$$
Thus, the subsequence $\{(A_{k_l,l},u_{k_l,l})\}_l,$ the sequence $\{(B_l,u_l)\}$ and the configuration $(A,u)$ satisfy the assertions of Theorem \ref{teo:compactness}.
\end{proof}

Note that by construction $|B_l|\le |A_{k_l}|$ and hence, in general our technique does not imply the compactness of energy-equibounded sequences $\{(A_k,u_k)\}$ satisfying a volume constraint.

\subsection{Proof of Proposition \ref{prop:pass_to_good_seq_compacte}}

We start with the following estimates near the points of reduced boundary of $A$ (in Proposition \ref{prop:pass_to_good_seq_compacte}).

\begin{proposition}\label{prop:estimate_red_boundary}
Let $\delta\in(0,1/8),$ $U\subset \R^n$ be an open set, $E_k,E\in BV(U;\{0,1\}),$ and $Q_{r,\nu}(x_0)\strictlyincluded U,$ $r>0,$ $\nu\in\S^{n-1},$ be a cube such that 
\begin{itemize}
\item[\rm(a1)] $x_0\in \p^*E,$ $\nu_E(x_0)=\nu$ and
$$
1-\delta < \frac{1}{\phi(\nu)r^{n-1}} \int_{Q_{r,\nu}(x_0)\cap \p^*E}\phi(\nu_E)\d\cH^{n-1} < 1+\delta;
$$

\item[\rm(a2)] 
$$
\Big(\frac12-\delta\Big)r^n <|E\cap Q_{r,\nu}^-(x_0)|, |E\cap Q_{r,\nu}^+(x_0)| < \Big(\frac12+\delta\Big)r^n,
$$ 
where 
$
Q_{r,\nu}^\pm(x_0) =\{x\in Q_{r,\nu}(x_0):\,\, (x-x_0)\cdot\nu \gtrless0\};
$

\item[\rm(a3)] $E_k\to E$ in $L^1(U).$
\end{itemize}
\noindent
We also denote by $\phi$ a norm in $\R^n$ satisfying \eqref{fjskiii098}. 
Then there exists $k_\delta>0$ such that for any $k>k_\delta$ there is $t_k^\delta\in(\sqrt{\delta},2\sqrt{\delta})$ such that $\cH^{n-1}(T_{t_k^\delta r}\cap \p^* E_k) = 0$ and
$$
\cH^{n-1}(T_{t_k^\delta r}\cap E_k^{(1)}) + \cH^{n-1}(T_{-t_k^\delta r}\cap(Q_1^-\setminus  E^{(1)})) + \cH^{n-1}(T_{-t_k^\delta r}\cap (E_k^{(1)}\Delta E^{(1)})) <4\sqrt{\delta}r^{n-1},
$$
where
$$
T_t:=\{x\in Q_{r,\nu}(x_0):\,\, (x-x_0)\cdot\nu =t\},\quad t\in(-r,r),
$$
and  the set 
$$
D_k^\delta:=Q_{r,\nu}(x_0)\cap \{|(x-x_0)\cdot\nu|<t_k^\delta r\}
$$
satisfies  
\begin{align*}
\int_{D_k^\delta\cap \p^*E_k} \phi(\nu_{E_k})\d\cH^{n-1} 
\ge & \phi(\nu)\cH^{n-1}(T_{-t_k^\delta r})-  (4n +12) \bound_2\sqrt{\delta}r^{n-1}
\end{align*}
(see Figure \ref{fig:min_planes}).
\end{proposition}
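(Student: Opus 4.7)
The plan has two main steps: first, select a good slicing level $t_k^\delta$ via Fubini-type averaging on the scalar function $g(t):=\cH^{n-1}(T_t\cap E^{(1)})$, and then apply the Gauss--Green identity to extract the desired weighted perimeter lower bound, using a dual-pairing element $\xi^*$ to produce the sharp weight $\phi(\nu)$.

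For the first step, hypothesis (a2) gives the integral identities $\int_0^{r/2} g(t)\,\d t = |E\cap Q_{r,\nu}^+(x_0)|$ and $\int_{-r/2}^0 (r^{n-1}-g(t))\,\d t = |Q_{r,\nu}^-(x_0)\setminus E^{(1)}|$, both of order $\delta r^n$. Applying Chebyshev's inequality on $(\sqrt{\delta}r,2\sqrt{\delta}r)$ and on its mirror interval, a majority of levels $\tau$ simultaneously satisfy $g(\tau)\le \sqrt{\delta}\,r^{n-1}$ and $r^{n-1}-g(-\tau)\le \sqrt{\delta}\,r^{n-1}$. Since $|E_k\Delta E\cap Q_{r,\nu}(x_0)|\to 0$ (by (a3)) and $\int_{-r/2}^{r/2}\cH^{n-1}(T_t\cap(E_k\Delta E))\,\d t$ equals this volume, a further majority of levels satisfies $\cH^{n-1}(T_{\pm\tau}\cap(E_k\Delta E))\le \sqrt{\delta}\,r^{n-1}$ for $k$ large. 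Intersecting these good sets with the co-countable set of levels where $\cH^{n-1}(T_\tau\cap\partial^*E_k)=0$ produces $t_k^\delta=\tau/r\in(\sqrt{\delta},2\sqrt{\delta})$; the three claimed slice inequalities then follow from triangle inequalities decomposing $E_k^{(1)}$ through $E^{(1)}$.

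For the second step, by convex duality I choose $\xi^*\in\R^n$ with $\phi^o(\xi^*)\le 1$ and $\xi^*\cdot\nu=\phi(\nu)$, so that $\phi(\nu_{E_k})\geq \xi^*\cdot\nu_{E_k}$ $\cH^{n-1}$-a.e.\ on $\partial^*E_k$. Because $\cH^{n-1}(T_{\pm t_k^\delta r}\cap\partial^*E_k)=0$, the Gauss--Green identity applied to the set of finite perimeter $E_k\cap D_k^\delta$ with the constant divergence-free field $\xi^*$ yields
\begin{equation*}
\int_{D_k^\delta\cap \partial^*E_k}\!\! \xi^*\cdot\nu_{E_k}\,\d\cH^{n-1}
= \phi(\nu)\bigl[\cH^{n-1}(E_k^{(1)}\cap T_{-t_k^\delta r}) - \cH^{n-1}(E_k^{(1)}\cap T_{t_k^\delta r})\bigr] - L_k,
\end{equation*}
where $L_k$ collects the contribution from the lateral part of $\partial Q_{r,\nu}(x_0)\cap D_k^\delta$ (which may be ensured to have $\cH^{n-1}$-negligible intersection with $\partial^*E_k$ by a standard perturbation of the cube). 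The lateral surface area is at most $4(n-1)\cdot 2t_k^\delta r\cdot r^{n-2}\le 16(n-1)\sqrt{\delta}\,r^{n-1}$, and the inequality $|\xi^*|\le \bound_2\phi^o(\xi^*)\le \bound_2$ gives $|L_k|\le 16(n-1)\bound_2\sqrt{\delta}\,r^{n-1}$. Combining with the slice bounds from Step~1 and $\phi(\nu)\le \bound_2$ produces the claimed lower bound (possibly after slight adjustment of the numerical constants).

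The main difficulty is obtaining the sharp weight $\phi(\nu)$ on the right-hand side rather than the weaker factor $1/\phi^o(\nu)$ that a naive slice-counting projection argument would give. This requires the choice of the dual pairing element $\xi^*$ attaining $\phi(\nu)=\sup_{\phi^o(\xi)\le 1}\nu\cdot\xi$, together with the Gauss--Green identity in place of a pure coarea estimate. Careful bookkeeping of the three slice-error contributions and of the lateral-boundary term is then what yields the stated constant $(4n+12)\bound_2$.
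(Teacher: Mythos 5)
Your proof is correct and follows essentially the same approach as the paper's. The first step (selecting $t_k^\delta$ by Fubini/Chebyshev averaging on the slices of $Q_1^+\cap E$, $Q_1^-\setminus E$, and $E_k\Delta E$) is identical to the paper's use of the coarea formula. For the second step, the paper cites the anisotropic local minimality of half-spaces to get the lower bound $\int_{\{x\cdot\e_n>-t_k^\delta r\}\cap\p^*(D_k^\delta\cap E_k)}\phi(\nu)\,\d\cH^{n-1}\ge\phi(\e_n)\cH^{n-1}(E_k^{(1)}\cap T_{-t_k^\delta r})$, whereas you reprove this directly via the Gauss--Green identity with the calibrating constant field $\xi^*$ chosen by convex duality so that $\phi^o(\xi^*)\le 1$ and $\xi^*\cdot\nu=\phi(\nu)$. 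That calibration argument is precisely the standard proof of the half-space minimality the paper invokes, so the two proofs are morally the same; yours is merely self-contained at that step. The numerical constant you obtain does not exactly reproduce $(4n+12)\bound_2$ (your lateral-area count has a spurious factor of 2), but a careful tracking of the paper's own intermediate estimates does not reproduce it exactly either; only the linear-in-$n$ order matters in the later applications, and you correctly flag the constants as adjustable.
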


\noindent
In the proof of Proposition \ref{prop:pass_to_good_seq_compacte} we apply this proposition with $U=\Omega,$ $E_k:=A_k$ and $E = A.$
\begin{figure}[htp]
\includegraphics[width=0.45\textwidth]{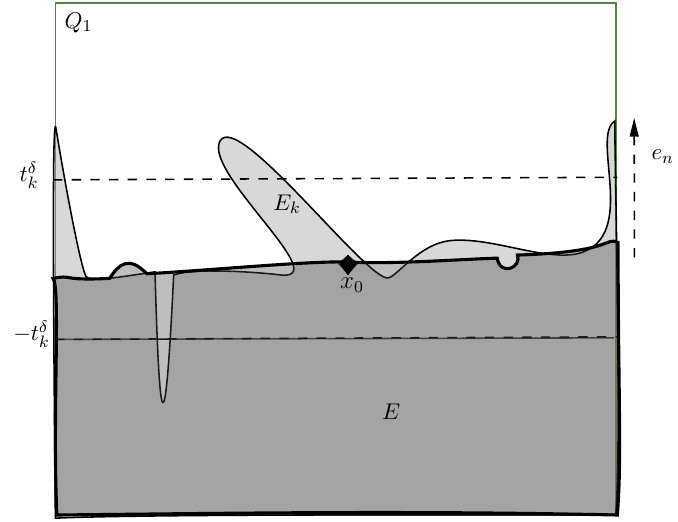} 
\caption{{\small The sets $E_k$ and $E$ in Proposition \ref{prop:estimate_red_boundary}.}}
\label{fig:min_planes}
\end{figure}

\begin{proof}
Without loss of generality we assume that $x_0=0,$ $\nu=\e_n$ and $r=1.$  
By (a2),
$$
|Q_1^+\cap E| \le |E| - |E\cap Q_1^-|<2\delta,
$$
and hence by (a3), there exists $k_\delta>0$ such that
\begin{equation}\label{dadblad}
|Q_1^+\cap E_k| < 2\delta \quad \text{and}\quad |E_k\Delta E|<\delta \quad\text{for all $k>k_\delta.$ }
\end{equation}
Again by (a2),
$$
|Q_1^-\setminus E| \le |Q_1^-| - |Q_1^-\cap E|<\delta,
$$
thus by \eqref{dadblad} and the coarea formula,
\begin{multline*}
4\delta >   |Q_1^+\cap E_k| + |Q_1^-\setminus E| + |E_k\Delta E|
= \int_0^{1/2} \Big[\cH^{n-1}(T_t\cap E_k^{(1)}) + \cH^{n-1}(T_{-t}\cap [Q_1^-\setminus E^{(1)}])\\
 + \cH^{n-1}(T_t\cap [E_k^{(1)}\Delta E^{(1)}]) + \cH^{n-1}(T_{-t}\cap [E_k^{(1)}\Delta E^{(1)}])\Big]\d t.
\end{multline*}
In particular, there exists $t_k^\delta\in(\sqrt{\delta},2\sqrt{\delta})$ such that
\begin{equation}\label{choice_tk}
\cH^{n-1}(T_{t_k^\delta}\cap E_k^{(1)}) + \cH^{n-1}(T_{-t_k^\delta}\cap(Q_1^-\setminus  E^{(1)})) + \cH^{n-1}(T_{-t_k^\delta}\cap (E_k^{(1)}\Delta E^{(1)})) <4\sqrt{\delta}. 
\end{equation}

Set
$$
D_k^\delta:= \big(-1/2,1/2\big)^{n-1}\times (-t_k^\delta,t_k^\delta) 
$$
(see Figure \ref{fig:min_planes}).
Note that 
\begin{multline*}
\int_{D_k^\delta \cap \p^* E_k} \phi(\nu_{E_k})\d\cH^{n-1} =
\int_{\{x\cdot \e_n>-t_k^\delta\}\cap\p^*(D_k^\delta\cap E_k)} \phi(\nu_{D_k^\delta\cap E_k})\d\cH^{n-1} \\
- \int_{\p^*E_k \cap \cl{D_k^\delta}\cap \p Q_1}\phi(\nu_{Q_1})\cH^{n-1} -
\int_{E_k^{(1)}\cap T_{t_k^\delta}}\phi(\e_n)\cH^{n-1}.
\end{multline*}
By the choice \eqref{choice_tk} of $t_k^\delta,$
$$
\int_{E_k^{(1)}\cap T_{t_k^\delta}}\phi(\e_n)\cH^{n-1} \le \bound_2 \cH^{n-1}(E_k^{(1)}\cap T_{t_k^\delta}) < 4\bound_2\sqrt{\delta} 
$$
and 
$$
\int_{\p^*E_k \cap \cl{D_k^\delta}\cap \p Q_1}\phi(\nu_{Q_1})\cH^{n-1} \le 
\bound_2\cH^{n-1}(\p D_k^\delta \cap \p Q_1) < 4(n-1)\bound_2\sqrt{\delta},
$$
where $2(n-1)$ is the perimeter of $(-1/2,1/2)^{n-1}.$
Moreover, by the anisotropic (local) minimality of half-spaces (see e.g. \cite[Example 2.4]{BNH:2017}),
$$
\int_{\{x\cdot \e_n>-t_k^\delta\}\cap \p^*(D_k\cap E_k)} \phi(\nu_{D_k\cap E_k})\d\cH^{n-1} \ge \phi(\e_n)\cH^{n-1}(E_k^{(1)}\cap T_{-t_k^\delta}), 
$$ 
and hence, by \eqref{choice_tk} (we can replace $E_k$ with 
$E$)
$$
\int_{\{x\cdot \e_n>-t_k^\delta\}\cap \p^*(D_k\cap E_k)} \phi(\nu_{D_k\cap E_k})\d\cH^{n-1} \ge \phi(\e_n)\cH^{n-1}(E^{(1)}\cap T_{-t_k^\delta}) - 4\bound_2\sqrt{\delta}. 
$$
Again by \eqref{choice_tk}
$$
\cH^{n-1}(E^{(1)}\cap T_{-t_k^\delta}) = 
\cH^{n-1}(T_{-t_k^\delta}) - \cH^{n-1}((Q_1^- \setminus E^{(1)})\cap T_{-t_k^\delta}) >\cH^{n-1}(T_{-t_k^\delta}) -4\sqrt{\delta} 
$$
and therefore,
$$
\int_{D_k^\delta\cap\p^*E_k} \phi(\nu_{E_k})\d\cH^{n-1} \ge \phi(\e_n)\cH^{n-1}(T_{-t_k^\delta}) - 4(n+3)\bound_2\sqrt{\delta}.
$$
\end{proof}

Now applying Proposition \ref{prop:estimate_inner_jump} and \ref{prop:estimate_red_boundary} we construct the set $G_k^\delta$ in Proposition \ref{prop:pass_to_good_seq_compacte}.

\begin{proof}[\textbf{Proof of Proposition \ref{prop:pass_to_good_seq_compacte}}] 
Without loss of generality we assume $u_k=\xi$ in $\Omega\setminus A_k$ for some $\xi\in (0,1)^n\setminus \Xi_{\{(A_k,u_k)\}\}}$ (see Remark \ref{rem:ext_u_out_A}).

By the uniform continuity of $\varphi,$ there exists  $r_\delta\in(0,1)$ such that 
\begin{equation}\label{jaaryon_phi_cont1818}
|\varphi(x,\nu) - \varphi(y,\nu)|<\delta\quad\text{for all $x,y\in\cl{\Omega}$ with $|x-y|<r_\delta.$} 
\end{equation}
Let
\begin{align*}
\tilde K_1:= & \Sigma \cap \p^*A \cap \bigcup_{i=1}^m \Big( \p S^i \cap \bigcup_{j\ne i}\p^*F^j\Big),\\
\tilde K_2:= & \Omega\cap A^{(1)}\cap \bigcup_{i=0}^m \p^*F^i,\\ 
\tilde K_3:= & \Omega\cap \p^*A \cap \bigcup_{i=0}^m \p^*F^i. 
\end{align*}
Since these sets are $\cH^{n-1}$-rectifiable and pairwise disjoint, (by a simple covering argument) we can find open sets $U_1\strictlyincluded\Ins{\Omega}$ and $U_2,U_3\strictlyincluded \Omega$ with disjoint closures such that
\begin{equation}\label{four_disjoint_sets}
\sum\limits_{i=1}^3 \cH^{n-1}\big(\tilde K_i \setminus U_i\big)
+
\sum\limits_{i=1}^3 \cH^{n-1}\Big(\tilde K_i \cap \bigcup_{j\ne i} U_j\Big) 
< \delta.
\end{equation}
Set 
\begin{align*}
K_i:= U_i\cap \tilde K_i,\quad i=1,2,3. 
\end{align*}
Note that around $\cH^{n-1}$-a.e.\  point of $\cup_i K_i$ there exist $j\in\{1,\ldots,m\}$ and a cube $Q$ such that $\cup_i K_i$ ``roughly divides'' $Q$ into two parts in one $u_k-a_k^j$ converges and in the other either $u_k$ is constant or $|u_k-a_k^j| \to+\infty.$
For convenience of the reader we divide the construction of $G_k^\delta$ into smaller steps.

{\it Step 1.} Using the $\cH^{n-1}$-rectifiability of $K_i,$ $\p^*A,$ $\p^*F^i,$ the lipschitzianity of $\Sigma$ and the Borel regularity of corresponding unit normals we construct a fine cover of $\cup_iK_i$ as follows.
\smallskip

{\it Substep 1.1: fine cover for $K_1.$}  For $\cH^{n-1}$-a.e.\  $x\in K_1$ there exist $i_x,j_x\in\{1,\ldots,m\}$ with $i_x\ne j_x$ and $r_x>0$ such that $x\in (\p S^{i_x}\setminus \p^*F^{i_x})\cap \p^*F^{j_x}$ and:
\begin{itemize}
\item[($\rm a_{1.1}$)] $r_x<\frac14\min\{r_\delta,\dist(x,\p U_1)\},$ where $r_\delta$ is defined in \eqref{jaaryon_phi_cont1818};

\item[($\rm a_{1.2}$)] 
$\theta(\Sigma,x) = \theta(K_1,x) =
\theta(\p^*F^{j_x},x) = 
\theta(\p^*A,x)=1$ and $\nu_{\Sigma}(x),$ $\nu_{K_1}(x),$ $\nu_{F^{j_x}}(x)$ and $\nu_A(x)$ exist and are parallel each other. For shortness, we set $\nu_x:= \nu_\Sigma(x);$
 
 \item[($\rm a_{1.3}$)] $\Gamma_x:=Q_{r_x,\nu_x}(x)\cap\Sigma $ separates $Q_{r_x,\nu_x}(x)$ into two connected components;
 
 \item[($\rm a_{1.4}$)] for any $r\in(0,r_x)$
 \begin{subequations}
 \begin{align}
 & |\nu_{\Gamma_x}(y) - \nu_x|<\delta \quad \text{and}\quad |(y-x)\cdot \nu_x|<\tfrac{\delta r}{2}\quad\text{for all $y\in \Gamma_x,$} \label{good_Gamma_K1}\\
 & (1 - \delta)r^{n-1} < 
 \cH^{n-1}(Q_r \cap  \Gamma_x \cap \p^*F^{j_x})
 \le
 \cH^{n-1}(Q_r\cap \Gamma_x) <(1+\delta)r^{n-1}, \label{lowe_estima_K1}\\
 & \cH^{n-1}\Big(\Big[Q_r \cap \bigcup_{j=0}^m \p^*F^j\Big]\setminus \Gamma_x\Big) +
 \cH^{n-1}(Q_r \cap [\p^*F^{j_x}\Delta \Gamma_x])
 < \delta r^{n-1}, \label{error_esto_K1} \\
 & |(F^{j_x}\cup S)\cap Q_r| \ge (1-\delta)r^n, \label{vol_full_K1}
 \end{align} 
 \end{subequations}
 where $Q_r:=Q_{r,\nu_x}(x).$
\end{itemize}
Removing an $\cH^{n-1}$-negligible set from $K_1$ if necessary we assume that for all points $x\in K_1$ there exist $r_x$ and $i_x,j_x$ satisfying ($a_{1.1}$)-($a_{1.4}$). 

Let us show that for any $x\in K_1$ and $r\in (0,r_x)$, the cube $Q_{r,\nu_x}(x)$, the sequence $\{(A_k,u_k-a_k^{j_x})\},$ the configuration $(A,u^{j_x})$, conditions ($\rm a_{1.1}$)-($\rm a_{1.4}$), the sets $E:=Q_{r_x,\nu_x}(x)\setminus F^{j_x}$ and $K:=Q_{r_x,\nu_x}(x)\cap\p^* F^{j_x}$ satisfy all assumptions of Proposition \ref{prop:estimate_inner_jump}. Indeed, conditions for $\Gamma$ follow from ($\rm a_{1.3}$), \eqref{good_Gamma_K1} and \eqref{lowe_estima_K1}, while conditions (a3)-(a4) for $\{(A_k,u_k)\}$ follows from  our assumption in the beginning of the proof and the assumption of Proposition \ref{prop:pass_to_good_seq_compacte}. The definition of $F^{j_x}$ implies condition (a6) with $E:=Q_{r_x,\nu_x}(x)\setminus F^{j_x}$ and $K:=Q_{r_x,\nu_x}(x)\cap\p^* F^{j_x}.$ Finally, the estimates \eqref{lowe_estima_K1} and \eqref{error_esto_K1} together with ($\rm a_{1.2}$) yield that $A\cup S$ and $K$ satisfy conditions (a5) and (a7), respectively.
\smallskip

{\it Substep 1.2: fine cover for $K_2.$} For $\cH^{n-1}$-a.e.\  $x\in K_3$ there exist $r_x>0,$ $i_x,j_x\in \{0,\ldots,m\}$ with $i_x\ne j_x$ and an $(n-1)$-dimensional $C^1$-graph $\Gamma_x$ containing $x$ such that 
\begin{itemize}
\item[($\rm a_{2.1}$)] $r_x<\frac14\min\{r_\delta,\dist(x,\p U_2)\}.$

\item[($\rm a_{2.2}$)] $\theta(K_2,x) = \theta(\p^*F^{i_x},x) = \theta(\p^*F^{j_x},x) = \theta(K_2\cap \p^* F^{i_x}\cap \p^* F^{j_x}\cap \Gamma_x, x) = 1$ and unit normals $\nu_{K_2},$ $\nu_{F^{i_x}}(x)$ and $\nu_{F^{j_x}}(x)$ exist and is parallel to $\nu_x:=\nu_{\Gamma_x}(x);$

 \item[($\rm a_{2.3}$)] $\Gamma_x$ separates $Q_{r_x,\nu_x}(x)$ into two connected components;
 
 \item[($\rm a_{2.4}$)] for any $r\in (0,r_x)$
 \begin{subequations}
 \begin{align}
 & |\nu_{\Gamma_x}(y) - \nu_x|<\delta \quad \text{and}\quad |(y-x)\cdot \nu_x|<\tfrac{\delta r}{2}\quad\text{for all $y\in \Gamma_x\cap Q_r$,}  \\
 & (1-\delta) r^{n-1} < \cH^{n-1}(Q_r\cap  \Gamma_x\cap  K_2\cap \p^*F^{i_x} \cap \p^*F^{j_x}) \nonumber \\
 &\hspace*{5cm} \le  
 \cH^{n-1}(Q_r \cap \Gamma_x)<(1+\delta)r^{n-1},\label{lowe_denssos_K2}\\
 & \cH^{n-1}(Q_r \cap [\Gamma_x\Delta (\p^*F^{i_x}\cap \p^*F^{j_x})]) +
 \cH^{n-1}\Big(\Big[Q_r \cap \bigcup_{j=0}^{N_2}\p^*F^j\Big] \setminus \Gamma_x\Big) < \delta r^{n-1},\label{error_estoss_K2}\\
 & \Big(\tfrac12-\delta\Big)r^n \le 
 |F^{i_x}\cap Q_r^-|, |F^{j_x}\cap Q_r^+| \le \Big(\tfrac12 +\delta\Big)r^n,\label{volume_densos1092}
 \end{align}
 \end{subequations}
 where $Q_r:=Q_{r,\nu_x}(x)$ and 
 $Q_r^\pm:=\{y\in Q_r:\,\, (y-x)\cdot \nu_x \gtrless 0\}.$ Here the volume density estimates \eqref{volume_densos1092} follow from the definition of the reduced boundary.
\end{itemize}
Removing an $\cH^{n-1}$-negligible set from $K_2$ if necessary we assume that for all points $x\in K_2$  there exist $r_x$ and $i_x,j_x$ satisfying ($\rm a_{2.1}$)-($\rm a_{2.4}$). Then using $A=\cup_{j=0}^{N_2} F^j$ and $\p^*A\subset \cup_{j=0}^{N_2}\p^*F^j$ as in Substep 1.1. one can check that for any $x\in K_2$ and $r\in (0,r_x)$, the cube $Q_{r,\nu_x}(x)$, the sequence $\{(A_k,u_k-a_k^{i_x})\},$ the configuration $(A,u^{i_x})$ and the sets $E:=Q_{r_x,\nu_x}(x)\setminus F^{i_x}$ and $K= Q_{r_x,\nu_x}(x)\cap \p^* F^{i_x}$ satisfy all conditions of Proposition \ref{prop:estimate_inner_jump}. 
\smallskip

{\it Substep 1.3: fine cover for $K_3.$} For $\cH^{n-1}$-a.e.\  $x\in K_3$ there exist $r_x>0,$ $i_x\in \{0,\ldots,m\}$ and an $(n-1)$-dimensional $C^1$-graph $\Gamma_x$ containing $x$ such that 
\begin{itemize}
\item[($\rm a_{3.1}$)]  $r_x<\frac{1}{4}\min\{r_\delta,\dist(x,\p U_4)\};$

\item[($\rm a_{3.2}$)] $\theta(K_3,x) = \theta(\p^*F^{i_x},x) = \theta(\p^*A,x) = \theta(K_3\cap \Gamma_x\cap \p^*A \cap \p^*F^{i_x},x) = 1$ and the unit normals $\nu_{K_3}(x),$ $\nu_A(x)$ and $\nu_{F^{i_x}}(x)$ exist and coincide with $\nu_x:=\nu_{\Gamma_x}(x);$

\item[($\rm a_{3.3}$)] $\Gamma_x$ separates $Q_{r_x,\nu_x}(x)$ into two connected components;

\item[($\rm a_{3.4}$)] for any $r\in(0,r_x)$ 
\begin{subequations}
\begin{align}
& |\nu_{\Gamma_x}(y) - \nu_x|<\delta \quad \text{and}\quad |(y-x)\cdot \nu_x|<\tfrac{\delta r}{2}\quad\text{for all $y\in \Gamma_x\cap Q_r,$} \label{good_surfaces_K3}\\
& (1-\delta)r^{n-1}<\cH^{n-1}(Q_r \cap\Gamma_x \cap K_3\cap \p^*F^{i_x}\cap \p^*A)\nonumber \\
&\hspace*{6cm}\le \cH^{n-1}(Q_r\cap \Gamma_x) < (1+\delta)r^{n-1}, \label{lower_estos_K3}\\
& \cH^{n-1}(Q_r\cap [\Gamma_x\Delta (\p^*F^{i_x}\cap \p^*A)]) + \cH^{n-1}\Big(\Big[Q_r\cap  \bigcup_{j=0}^{N_3} \p^*F^{i_x}\Big]\setminus \Gamma_x\Big) < \delta r^{n-1}, \label{error_estos_K3}\\
& (1-\delta)r^{n-1} < \frac{1}{\varphi(x,\nu_x)}\,\int_{Q_r \cap \p^*F^{i_x}} \varphi(x,\nu_{F^{i_x}}(y))\d\cH^{n-1}(y) \\
&\hspace*{3cm} \le \frac{1}{\varphi(x,\nu_x)}\,\int_{Q_r \cap \p^*A} \varphi(x,\nu_{A}(y))\d\cH^{n-1}(y)
< (1+\delta)r^{n-1}, \label{lebesgue_K3} \\
& \Big(\tfrac12-\delta\Big)r^n < |Q_r^- \cap F^{i_x}| \le |Q_r^- \cap A| < \Big(\tfrac12+\delta\Big)r^n, \label{volume_conve_K3}\\
& |Q_r^+ \cap A| < \delta r^n,
\end{align}
\end{subequations}
where $Q_r:=Q_{r,\nu_x}(x).$ 
\end{itemize}
Removing an $\cH^{n-1}$-negligible set from $K_3$ if necessary we assume that for all points $x\in K_3$ there exists $r_x>0$ and $i_x$ satisfying ($\rm a_{3.1}$)-($\rm a_{3.4}$). Then for any $x\in K_3$ and $r\in (0,r_x)$ the set $U=U_3,$ the cube $Q_{r,\nu_x}(x),$ the sequence $E_k:=Q_{r,\nu_x}(x)\cap A_k,$ the set $E:=Q_{r,\nu_x}(x)\cap A$ and conditions ($\rm a_{3.1}$)-($\rm a_{3.4}$) satisfy all assumptions of Proposition \ref{prop:estimate_red_boundary}. Indeed, conditions (a1)-(a2) are given in \eqref{lebesgue_K3} and \eqref{volume_conve_K3}, whereas (a3) follows from the assumption $A_K\to A$ in $L^1(\R^n)$ as $k\to+\infty.$ 
\smallskip

\begin{figure}[htp!]
\includegraphics[width=0.97\textwidth]{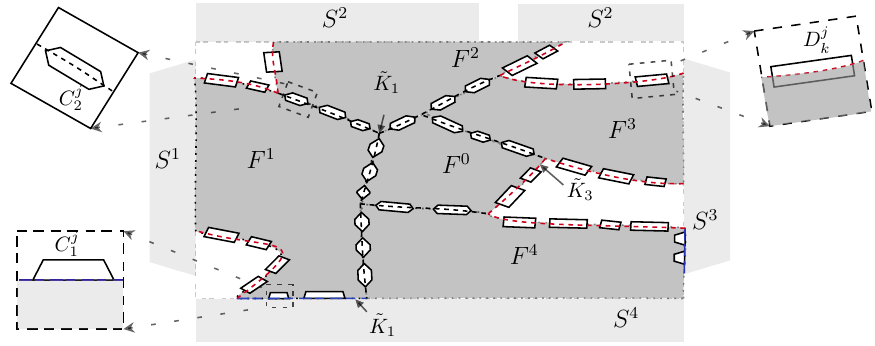}
\caption{{\small Construction of holes $C_1^j,$ $C_2^j$ and $D_k^j.$}}
\label{fig:set_Gdelta}
\end{figure}

{\it Step 2.} Now we extract finitely many covering cubes still covering $\cup_iK_i$ up to some error of order $O(\sqrt\delta),$ and create ``holes'' inside those cubes (i.e., the sets $C_1^j,$ $C_2^j$ and $D_k^j$ in Figure \ref{fig:set_Gdelta}). By Step 1, for each $i\in \{1,2,3\}$ the collection $\{\cl{Q_{r,\nu_x}(x)}:\,\,x\in K_i,\,\, r\in(0,r_x)\}$ of cubes provides a fine cover for $K_i,$ and hence by the Vitali covering lemma, we can extract an at most countable pairwise disjoint family $\{Q_{r_j^i,\nu_{x_j^i}}(x_j^i),\,x_j^i\in K_i\}$ such that
$$
\cH^{n-1}\Big(K_i\setminus \bigcup_{j} Q_{r_j^i,\nu_{x_j^i}}(x_j^i)\Big)=0. 
$$
Since $\cH^{n-1}(K_i)<+\infty$,  there exists $N_i\ge1$ such that  
\begin{equation}\label{almost_cover_Ki}
\cH^{n-1}\Big(K_i\setminus \bigcup_{j > N_i} Q_{r_j^i,\nu_{x_j^i}}(x_j^i)\Big) < \delta.
\end{equation}
Moreover, decreasing $r_j$ a bit necessary, we assume that $\cl{Q_{r_j^i,\nu_{x_j^i}}(x_j^i)}\cap \cl{ Q_{r_{j'}^i,\nu_{x_{j'}^i}}(x_{j'}^i)}=\emptyset$ for all $1\le j< j' \le N_i.$ 
Since $\cl{U_i}\cap \cl{U_j} =\emptyset$ for $i\ne j,$ cubes belonging to the union of $\cG_i:=\{Q_{r_j^i,\nu_{x_j^i}}(x_j^i)\}_{j=1}^{N_i},$ $i=1,2,3,$ have disjoint closures. When no confusion arises, we drop the dependence of $x_j^i$ and $r_j^i$ on $i.$
\smallskip

{\it Substep 2.1: definition of $C_1^j.$} Let $Q_{r_j,x_j}(x_j) \in \cG_1$ for some $j\in\{1,\ldots,N_1\}.$ By Substep 1.1 $x_j\in K_1\cap \p S^{l_j}\cap \p^*F^{h_j}$ for some $l_j,h_j\in\{1,\ldots,m\}$ with $l_j\ne h_j.$ Applying
Proposition \ref{prop:estimate_inner_jump} (ii) with $Q_{r_j,\nu_{x_j}}(x_j)\strictlyincluded \Ins{\Omega},$ $\Gamma_{x_j}:=Q_{r_j,\nu_{x_j}}(x_j)\cap \Sigma,$ $\{(A_k,u_k-a_k^{h_j})\},$ $(A,u^{h_j}),$ $E:=Q_{r_j,\nu_{x_j}}(x_j)\setminus F^{h_j},$ $K:=Q_{r_j,\nu_{x_j}}(x_j)\cap \p^* F^{h_j}$ and $\phi(\cdot)=\varphi(x_j,\cdot)$ we find an open set $C_1^j\subset \Omega\cap Q_{r_j,\nu_{x_j}}(x_j)$ of finite perimeter (given by Lemma \ref{lem:creating_hole}) and $k_\delta^{1,j} > 0$ such that 
\begin{align}
\int_{C_1^j\cap\p^*A_k} \phi(\nu_{A_k})\d\cH^{n-1} +  2\int_{C_1^j \cap A_k^{(1)} \cap J_{u_k}}  \phi(\nu_{J_{u_k}}) &\d\cH^{n-1}   + 2 \int_{\Sigma\cap \p^* C_1^j \cap \p^*A_k \cap J_{u_k}} \phi(\nu_{J_{u_k}}) \d\cH^{n-1}\nonumber \\
\ge & 2\int_{Q_{r_j,\nu_j}(x_j)\cap \p^*F^{h_j}} \phi(\nu_{F^{h_j}})\d\cH^{n-1} - c'\delta r_j^{n-1}\nonumber \\
\ge & 
\int_{\p^* C_1^j} \phi(\nu_{C_1^j})\d\cH^{n-1} - (c' + 5\bound_2)\delta r_j^{n-1} \label{estimate_Cj1}
\end{align}
for all $k>k_\delta^{1,j}$ and for some $c'>0$ (depending only on $\bound_2$).

Let us estimate the perimeter and the volume of $\cup_j C_1^j.$ By \eqref{lowe_estima_K1},
\begin{equation}\label{rjn_1_estimates}
r_j^{n-1} \le \frac{1}{1-\delta}\cH^{n-1}(Q_{r_j,\nu_j}\cap \Sigma\cap \p^*F^{h_j}) 
\end{equation}
and hence  by \eqref{finsler_norm} and \eqref{one_dim_esimates_C},
\begin{align*}
\bound_1\cH^{n-1}(\p^*C_1^j) \le & \int_{\p^*C_1^j}\phi(\nu_{C_1^j})\d\cH^{n-1} \le 2 \int_{Q_{r_j,\nu_j}(x_j)\cap \p^*F^{h_j}} \phi(\nu_{F^{h_j}})\d\cH^{n-1} + 5\bound_2\delta r_j^{n-1} \\
\le & 2\bound_2 \cH^{n-1}(Q_{r_j,\nu_j}(x_j)\cap \p^*F^{h_j}) + 5\bound_2\delta r_j^{n-1} 
\end{align*}
so that
\begin{equation}
\cH^{n-1}(\p^*C_1^j) \le
\frac{3\bound_2}{\bound_1} \cH^{n-1}(Q_{r_j,\nu_j}(x_j) \cap \p^*F^{h_j}).
\label{per_C1j_ooopopo}
\end{equation}
Moreover,
\begin{equation*}
|C_1^j|\le \delta r_j^n < \delta r_j^{n-1} \le 2\delta  \cH^{n-1}(Q_{r_j,\nu_j}(x_j)\cap \Sigma\cap \p^*F^{h_j})
\end{equation*}
and therefore,
\begin{equation}\label{tot_volest_K1}
\Big|\bigcup_{j=1}^{N_i} C_1^j\Big| \le 2\delta \sum\limits_{h=1}^{m}\cH^{n-1}(\p^*F^h).
\end{equation}

Let us estimate the error in covering $K_1$ by $\{C_1^j\}.$ Fix some $j\in\{1,\ldots,N_1\}.$ By the definition of $K_1,$ the error estimate \eqref{error_esto_K1} and Lemma \ref{lem:creating_hole} (ii),
\begin{multline*}
\cH^{n-1}((Q_{r_j,\nu_{x_j}}(x_j)\cap K_1)\setminus \cl{C_1^j}) \le \cH^{n-1}\Big(\Big[Q_{r_j,\nu_{x_j}}(x_j)\cap \bigcup_{j=0}^{N_1}\p^*F^j\Big]\setminus \Gamma_{x_j}\Big) + 
\cH^{n-1}(\Gamma_{x_j}\setminus \p^*F^{h_j})\\
+ \cH^{n-1}([Q_{r_j,\nu_{x_j}}(x_j)\cap\p^*F^{h_j}] \setminus \cl{C_1^j}) < 3\delta r_j^{n-1}.
\end{multline*}
Thus, by \eqref{rjn_1_estimates}  and the choice $\delta<1/8,$
\begin{equation}\label{lkp89888}
\cH^{n-1}((Q_{r_j,\nu_{x_j}}(x_j)\cap K_1)\setminus \cl{C_1^j}) < 4\delta\,\cH^{n-1}(Q_{r_j,\nu_{x_j}}(x_j)\cap \Sigma\cap \p^*F^{h_j}). 
\end{equation}
From \eqref{almost_cover_Ki} and \eqref{lkp89888} it follows that
\begin{align*}
\cH^{n-1}\Big(K_1\setminus \bigcup_{j=1}^{N_1}\cl{C_1^j}\Big) = & \cH^{n-1}\Big(K_1\setminus \bigcup_{j>N_1}Q_{r_j,\nu_j}(x_j)\Big) + \sum\limits_{j=1}^{N_1} \cH^{n-1}([Q_{r_j,\nu_{x_j}}(x_j) \cap K_1]\setminus \cl{C_1^j})\\
< & \delta + 
4\delta\sum_{j=1}^{N_1}\cH^{n-1}(Q_{r_j,\nu_{x_j}}(x_j)\cap \Sigma\cap \p^*F^{h_j})
\end{align*}
so that  by the disjointness of $\{F^h\},$
\begin{equation}\label{outside_Cj1}
\cH^{n-1}\Big(K_1\setminus \bigcup_{j=1}^{N_1}\cl{C_1^j}\Big) < \delta + 4\delta\sum\limits_{h=1}^{m}\cH^{n-1}(\p^*F^h).
\end{equation}

{\it Substep 2.2: construction of $C_2^j$.} Let $Q_{r_j,\nu_j}(x_j) \in \cG_2$ for some $j\in \{1,\ldots,N_2\}$ so that there exist $l_j,h_j\in\{0,\ldots,m\}$ with $l_j\ne h_j\ne 0$ such that $x_j\in \p^*F^{l_j}\cap \p^*F^{h_j}.$  As in Substep 2.1 applying Proposition \ref{prop:estimate_inner_jump} with $Q_{r_j,\nu_{x_j}}(x_j)\strictlyincluded \Omega,$ $\Gamma_{x_j},$ $\{(A_k,u_k-a_k^{h_j})\},$ $(A,u^{h_j}),$ $E:=Q_{r_j,\nu_j}(x_j)\setminus F^{h_j},$ $K:=Q_{r_j,\nu_j}(x_j)\cap\p^* F^{h_j}$ and $\phi(\cdot)=\varphi(x_j,\cdot)$ we find an open set $C_2^j\strictlyincluded Q_{r_j,\nu_{x_j}}(x_j)$ of finite perimeter (given by Lemma \ref{lem:creating_hole}) and $k_\delta^{2,j}>0$ such that  
\begin{align}\label{casasasasas}
\int_{C_2^j\cap\p^*A_k} \phi(\nu_{A_k})\d\cH^{n-1}  +   2\int_{C_2^j \cap A_k^{(1)} \cap J_{u_k}}   \phi(\nu_{J_{u_k}}) \d\cH^{n-1}  
\ge \int_{\p C_2^j} \phi(\nu_{C_2^j})\d\cH^{n-1} -c' \delta r_j^{n-1} 
\end{align}
for all $k>k_\delta^{2,j},$ where $c'$ depends only on $\bound_2.$ As in Substep 2.1,
by \eqref{lowe_denssos_K2} we have
\begin{equation}\label{rj2_esimates}
r_j^{n-1} \le \frac{1}{1-\delta}\cH^{n-1}(Q_{r_j,\nu_j}(x_j)\cap \p^*F^{l_j}\cap \p^*F^{h_j}) 
\end{equation}
while by \eqref{finsler_norm} and \eqref{two_dim_esimates}, we have
\begin{equation}\label{per_Cj2_opop}
\cH^{n-1}(\p^*C_2^j) \le \frac{2\bound_2}{\bound_1}\cH^{n-1}(Q_{r_j,\nu_j}(x_j)\cap \p^*F^{h_j}) + \frac{5\bound_2}{\bound_1}\,\delta r_j^{n-1} \le 
\frac{3\bound_2}{\bound_1} \cH^{n-1}(Q_{r_j,\nu_j}(x_j) \cap \p^*F^{h_j}) 
\end{equation}
and 
\begin{equation}\label{tot_volest_K3}
\Big|\bigcup_{j=1}^{N_2} C_2^j\Big| \le 2\delta \sum_{h=0}^{m}\cH^{n-1}(\p^*F^h).
\end{equation} 
Moreover, 
\begin{equation}\label{outside_Dj2}
\cH^{n-1}\Big(K_2\setminus \bigcup_{j=1}^{N_2}\cl{C_2^j}\Big)< \delta + 4\delta\sum_{h=0}^{m}\cH^{n-1}(\p^*F^h) 
\end{equation}

{\it Substep 2.3: construction of $D_k^j$.} Let $Q_{r_j,\nu_j}(x_j) \in\cG_3$ for some $j\in \{1,\ldots,N_3\}$ and let $x_j\in \p^*F^{h_j}\cap\p^*A$ for some $h_j\in\{0,\ldots,m\}.$ Using Proposition \ref{prop:estimate_red_boundary} applied with $U:=Q_{r_j,\nu_{x_j}}(x_j),$ $E_k:=Q_{r_j,\nu_{x_j}}(x_j) \cap A_k,$ $E:=Q_{r_j,\nu_{x_j}}(x_j)\cap A$ and $\phi(\cdot)=\varphi(x_j,\cdot)$ we find $k_\delta^{3,j} > 0$ such that for any $k > k_\delta^{3,j}$ there exists $t_{k,j}^\delta\in(\sqrt{\delta},2\sqrt{\delta})$ such that $\cH^{n-1}(\p^*A_k\cap T_{t_{k,j}^\delta,r_j}^j) = 0$ and
\begin{multline}\label{stupid_tdelta}
\cH^{n-1}( T_{t_{k,j}^\delta r_j}^j\cap A_k^{(1)}) + \cH^{n-1}( T_{-t_{k,j}^\delta r_j}^j\cap  [Q_{r_j,\nu_j}^-(x_j)\setminus  A^{(1)}]) \\
  + \cH^{n-1}( T_{-t_{k,j}^\delta r_j}^j\cap [A_k^{(1)}\Delta A^{(1)}]) <4\sqrt{\delta}r_j^{n-1},
\end{multline}
where 
$$
 T_t^j:=\{x\in Q_{r_j,\nu_j}(x_j):\,\, (x-x_j)\cdot \nu_j=t\},\quad t\in(-r_j,r_j),
$$
and the set 
$$
D_k^j:=\{x\in Q_{r_j,\nu_{x_j}}(x_j):\,\,|(x-x_j)\cdot\nu_{x_j}|<t_{k,j}^\delta r_j\}
$$
satisfy 
\begin{align}\label{yalililalala}
\int_{D_k^j\cap \p^*A_k} \phi(\nu_{A_k})\d\cH^{n-1} 
\ge \phi(\nu_j)\cH^{n-1}( T_{-t_{k,j}^\delta r_j}^j )
- c'\sqrt{\delta}r_j^{n-1}
\end{align}
for some $c'>0$ depending only on $\bound_2$ and $n.$ Note that by \eqref{lower_estos_K3},
\begin{equation}\label{rsdadad}
r_j^{n-1} \le \frac{1}{1-\delta}\cH^{n-1}(Q_{r_j,\nu_j}(x_j) \cap \p^*F^{h_j}\cap \p^*A) 
\end{equation}
and hence by the choice of $t_{k,j}^\delta$ and \eqref{rsdadad},
$$
|D_k^j|=2t_{k,j}^\delta r^n\le \frac{4\sqrt{\delta}}{1-\delta} \cH^{n-1}(Q_{r_j,\nu_j}(x_j)\cap \p^*F^{h_j}) 
$$
so that
\begin{equation}\label{tot_volest_K4}
\Big|\bigcup_{j=1}^{N_3} D_k^j\Big| \le 5\sqrt{\delta}\sum\limits_{h=1}^m\cH^{n-1}(\p^*F^h). 
\end{equation}
Moreover, by the definition of $D_k^j,$ \eqref{stupid_tdelta}, \eqref{rsdadad} and the equality $\cH^{n-1}( T_{\pm t_{k,j}^\delta r_j}^j ) = r_j^{n-1},$ we have
\begin{equation}\label{per_Dkj_opopa}
\cH^{n-1}(\p^*D_k^j) \le (2+4\sqrt{\delta})r_j^{n-1}\le 4\cH^{n-1}(Q_{r_j,\nu_j}(x_j) \cap \p^*F^{h_j}). 
\end{equation}

Let us estimate the error in covering $K_3$ with $\{D_k^j\}.$ Fox some $j\in\{1,\ldots,N_3\}.$ Recalling the definition of $\Gamma_{x_j}$ in Substep 1.3 and in view of \eqref{good_surfaces_K3}, we have $Q_{r_j,\nu_j}(x_j)\cap \Gamma_{x_j}\subset D_k^j$ and hence  by \eqref{error_estos_K3} and \eqref{rsdadad},
\begin{align*}
\cH^{n-1}([K_3\cap  &  Q_{r_j,\nu_j}(x_j)]   \setminus D_k^j)\\
\le & 
\cH^{n-1}\Big(\Big[Q_{r_j,\nu_j}(x_j) \cap \bigcup_{j=1}^{N_3}\Big]\setminus \Gamma_{x_j}\Big)
+ 
\cH^{n-1}([Q_{r_j,\nu_j}(x_j) \cap \Gamma_{x_j}]\setminus [\p^*A\cap \p^*F^{h_j}])\\
\le & \delta r_j^{n-1} \le 2\delta \cH^{n-1}(Q_{r_j,\nu_j}(x_j) \cap \p^*F^{h_j}).
\end{align*}
Hence, by \eqref{almost_cover_Ki},
\begin{align*}
\cH^{n-1}\Big(K_3\setminus\bigcup_{j=1}^{N_3} D_k^j\Big) = & \cH^{n-1}\Big(K_3\setminus\bigcup_{j>N_3} Q_{r_j,\nu_j}(x_j)\Big) + \sum\limits_{j=1}^{N_3} \cH^{n-1}\Big([K_3\cap Q_{r_j,\nu_j}(x_j)]\setminus D_k^j\Big) 
\end{align*}
so that
\begin{equation}\label{outside_Dkjdelta}
\cH^{n-1}\Big(K_3\setminus \bigcup_{j=1}^{N_3}\cl{D_k^j}\Big)< \delta + 2\delta \sum\limits_{h=1}^m \cH^{n-1}(\p^*F^h).
\end{equation}

{\it Step 3: Definition of $G_k^\delta$.}  
Let $k_\delta^i:=\max_{j=1,\ldots,N_i} k_\delta^{i,j},$ $i=1,2,3,$ and for each $k>k_\delta:=\max\{k_\delta^1,k_\delta^2,k_\delta^3\}$ let us define
$$
G_k^\delta:=\bigcup_{j=1}^{N_1} C_1^j \cup \bigcup_{j=1}^{N_2} C_2^j\cup \bigcup\limits_{j=1}^{N_3} D_k^j, 
$$
obviously, $G_k^\delta$ is open. 
By \eqref{tot_volest_K1},  \eqref{tot_volest_K3} and \eqref{tot_volest_K4} as well as the inclusion $\p^*A\subset \cup_j\p^*F^j,$ we get
$$
|G_k^\delta| \le \Big|\bigcup_{j=1}^{N_1} C_1^j\Big| + \Big|\bigcup_{j=1}^{N_2} C_2^j\Big| + \Big|\bigcup\limits_{j=1}^{N_3} D_k^j\Big|
\le 8\sqrt\delta \sum\limits_{h=0}^m \cH^{n-1}(\p^*F^h).
$$
Moreover, summing the estimates  
\eqref{per_C1j_ooopopo}, \eqref{per_Cj2_opop} and
\eqref{per_Dkj_opopa}
and using the disjointness of the closures of $C_1^j,$ $C_2^j$ and $D_k^j$ (because so are the containing cubes) we get
\begin{align*}
P(G_k^\delta) \le  \sum_{j=1}^{N_1} P(C_1^j) + \sum_{j=1}^{N_2} P(C_2^j) +  \sum\limits_{j=1}^{N_3} P(D_k^j)  
\le \Big(4 + \frac{3\bound_2}{\bound_1}\Big)\sum\limits_{h=0}^m \cH^{n-1}(\p^*F^h).
\end{align*}

{\it Step 4: Definition of $s_\delta.$} Since $A_k\to A$ in $L^1(\R^n),$ by the coarea formula applied with the 1-Lipschitz function $f(x)=\dist(x,S),$ we have
$$
0=\lim\limits_{k\to+\infty} |A_k\Delta A| = \int_0^\infty \cH^{n-1}(\{x\in A_k\Delta A:\,\,\dist(x,S) = s\}) \d s
$$
and thus, passing to a not relabelled subsequence if necessary, we may assume
\begin{equation*}
\lim\limits_{k\to+\infty} \cH^{n-1}(\{x\in A_k\Delta A:\,\,\dist(x,S) = s\}) = 0 \quad \text{for a.e.\  $s>0.$ }
\end{equation*}
In particular, there exists $s_\delta\in (0,\delta)$ such that
\begin{equation*}
\cH^{n-1}([A_k\Delta A] \cap \{\dist(\cdot, S) = s_\delta\}\}) < \delta \quad \text{and} \quad 
\cH^{n-1}(\{0 < \dist(\cdot, S) < s_\delta\}\} \cap  \p^*A) < \delta,
\end{equation*}

{\it Step 5: Proof of \eqref{error_estimate_Bv}.} Let $B_k^\delta$ and $v_k^\delta$ be given by \eqref{def_BBkdelta} and \eqref{def_vvkdelta}.
As in the proof of lower semicontinuity, given $(B,v)\in\admissible$ and a Borel set $D\subset\R^n,$ let us introduce
\begin{align*}
\mu_{B,v}(D) := & \int_{D\cap \p^*B}\varphi(x,\nu_{B})\d\cH^{n-1} + 2\int_{D\cap B^{(1)}\cap J_v} \varphi(x,\nu_{J_v})\d\cH^{n-1}\\
&+ 2 \int_{D\cap \Sigma\cap \p^*B\cap J_v} \varphi(x,\nu_{\Sigma})\d\cH^{n-1} + 
\int_{D\cap \Sigma\cap \p^*B\setminus J_v} [\beta + \varphi(x,\nu_{\Sigma})]\d\cH^{n-1}\\
& + \int_{D\cap \Sigma\setminus \p^*B} \varphi(x,\nu_{\Sigma})\d\cH^{n-1}.
\end{align*}
Since
$
\mu_{B,v}(\R^n) = \cS(B,v) + \int_{\Sigma}\varphi(x,\nu_\Sigma)\d\cH^{n-1}, 
$
we have
$$
\cS(A_k,u_k) - \cS(B_k^\delta,v_k^\delta) = 
\mu_{A_k,u_k}(\R^n) - \mu_{B_k^\delta,u_k^\delta}(\R^n).
$$
By construction,
\begin{align*}
& [\Omega\cap \p^*A_k] \setminus \cl{G_k^\delta} = [\Omega\cap (\p^*B_k^\delta] \setminus \cl{G_k^\delta},\quad
[\Sigma \cap \p^*A_k] \setminus \cl{G_k^\delta} = \Sigma \cap \p^*B_k^\delta,\\
& [\Sigma\cap \p^*A_k \cap J_{u_k}]\setminus \cl{G_k^\delta} = \Sigma\cap \p^*B_k^\delta \cap J_{v_k^\delta},
\quad
\Sigma\setminus \Big(\p^*A_k\cup \bigcup_{j=1}^{N_1} \p^*C_1^j) = \Sigma\setminus \p^*B_k^\delta,\\
& [A^{(1)}\cap A_k^{(1)}\cap J_{u_k} ]\setminus \cl{G_k^\delta} =  A^{(1)}\cap B_k^{(1)}\cap J_{v_k^\delta},\\
& [A^{(1)}\cap J_{v_k^\delta}]\setminus J_{u_k} = \bigcup_{j=1}^m \p^*F^j \setminus G_k^\delta,
\quad 
J_{v_k^\delta}\cap \p^*A\subset \bigcup_{j=0}^m \p^*F^j \setminus G_k^\delta,\\
& [A_k^{(1)}\setminus A^{(1)}]\cap J_{v_k^\delta} 
\subseteq ([R_\delta\setminus A]^{(1)}\cap J_{u_k}) \cup ([A_k \setminus A]^{(1)}\cap \p R_\delta)\cup (R_\delta \cap\p^*A),
\end{align*}
and hence
\begin{multline}\label{juhuggza}
\cS(A_k,u_k) - \cS(B_k^\delta,u_k^\delta) \ge
\mu_{A_k,u_k}(\cl{G_k^\delta}) - \mu_{B_k^\delta,u_k^\delta}(\cl{G_k^\delta})  
- 2\int_{R_\delta\cap \p^*A} \varphi(x,\nu_A)\d\cH^{n-1} \\
- 2
\int_{J_{v_k^\delta}\cap [B_k^\delta]^{(1)}\cap \bigcup_{i=0}^m\p^*F^j} \varphi(x,\nu_{J_{v_k^\delta}})\d\cH^{n-1} - 2 \int_{[A_k\setminus A]^{(1)}\cap \p R_\delta} \varphi(x,\nu_{R_\delta})\d \cH^{n-1}.
\end{multline}
By \eqref{finsler_norm}, the definition of $\tilde K_j$ and $U_j,$ the construction of $C_1^j,C_2^j,D_k^j$, the choice of $s_\delta$ and the error estimates \eqref{outside_Cj1}, \eqref{outside_Dj2}, \eqref{outside_Dkjdelta} and \eqref{four_disjoint_sets}, we have
\begin{multline}
\int_{R_\delta\cap \p^*A} \varphi(x,\nu_A)\d\cH^{n-1} +
\int_{J_{v_k^\delta}\cap [B_k^\delta]^{(1)}\cap \bigcup_{i=0}^m\p^*F^j} \varphi(x,\nu_{J_{v_k^\delta}})\d\cH^{n-1}\\
+ \int_{[A_k\setminus A]^{(1)}\cap \p R_\delta} \varphi(x,\nu_{R_\delta})\d \cH^{n-1} \le c_4^*\delta\Big(1 + \sum\limits_{h=0}^m\cH^{n-1}(\p^*F^h)\Big).
\label{first_error0901}
\end{multline}
Furthermore, from the additivity of the set-function $\alpha_{B,v}$ and disjointness of the closures of $C_1^j,$ $C_2^j$ and $D_k^j$ we obtain
\begin{align}
\mu_{A_k,u_k}(\cl{G_k^\delta}) - \mu_{B_k^\delta,u_k^\delta}(\cl{G_k^\delta})  
= & \sum\limits_{j=1}^{N_1} \Big[\mu_{A_k,u_k}(\cl{C_1^j}) - \mu_{B_k^\delta,u_k^\delta}(\cl{C_j^1})\Big]
+  
\sum\limits_{j=1}^{N_2} \Big[\mu_{A_k,u_k}(\cl{C_2^j}) - \mu_{B_k^\delta,u_k^\delta}(\cl{C_2^1})\Big] \nonumber \\
& + \sum\limits_{j=1}^{N_3} \Big[\mu_{A_k,u_k}(\cl{D_k^j}) - \mu_{B_k^\delta,u_k^\delta}(\cl{D_k^1})\Big]:=I_1+I_2+I_3.
\label{don_oomar_imala}
\end{align}

{\it Substep 5.1: A lower estimate for $I_1.$} 
Let 
\begin{multline*}
\alpha_k^{1,j}:= \int_{C_1^j\cap\p^*A_k} \varphi(x,\nu_{A_k})\d\cH^{n-1} +  2\int_{C_1^j \cap A_k^{(1)} \cap J_{u_k}}  \varphi(x,\nu_{J_{u_k}}) \d\cH^{n-1}\\
+ 2 \int_{\Sigma\cap \p^* C_1^j \cap \p^*A_k \cap J_{u_k}} \varphi(x,\nu_{J_{u_k}}) \d\cH^{n-1}.
\end{multline*}
By \eqref{jaaryon_phi_cont1818} and \eqref{estimate_Cj1},
\begin{equation}\label{saoiudd_K1}
\alpha_k^{1,j}\ge \int_{\p^* C_1^j} \varphi(x,\nu_{C_1^j})\d\cH^{n-1} - \delta \cH^{n-1}(\cl{C_1^j}\cap [J_{u_k}\cup \p^*A_k]) - \delta\cH^{n-1}(\p^* C_1^j) - c'\delta r_j^{n-1}.  
\end{equation}
Since $|\beta(x)|\le \phi(x,\nu_\Sigma)$ (see \eqref{hyp:bound_anis}), by the definition of $\mu_{A_k,u_k},$ $(B_k^\delta,v_k^\delta)$ and $\mu_{B_k^\delta,v_k^\delta},$ we have
$$
\mu_{A_k,u_k}(\cl{C_j^1}) \ge \alpha_k^{1,j}
\quad \text{and}\quad
\int_{\p^* C_1^j} \varphi(x,\nu_{C_1^j})\d\cH^{n-1} = \mu_{B_k^\delta}(\cl{C_1^j}).
$$
Therefore, from \eqref{saoiudd_K1} and \eqref{rjn_1_estimates} we get
\begin{multline*}
\mu_{A_k,u_k}(\cl{C_j^1})  - \mu_{B_k^\delta}(\cl{C_1^j}) 
\ge 
- \delta \cH^{n-1}(Q_{r_j,\nu_j}(x_j) \cap [J_{u_k}\cup \p^*A_k]) \\
- \delta\cH^{n-1}(\p^* C_1^j) - \frac{c'\delta}{1-\delta} \cH^{n-1}(Q_{r_j,\nu_j}\cap \Sigma\cap \p^*F^{h_j}).
\end{multline*}
Summing these estimates in $j$ and using the disjointness of $\{Q_{r_j,\nu_j}(x_j)\}$ and the perimeter estimate \eqref{per_C1j_ooopopo} for $C_1^j$ we obtain
\begin{align}\label{estimatos_I1osos}
I_1 \ge - c_1^*\delta \Big(\cH^{n-1}(J_{u_k}) + \cH^{n-1}(\p^*A_k) + \sum\limits_{h=0}^m \cH^{n-1}(\p^*F^h)\Big)
\end{align}
for all $k>k_\delta^1=\max_jk_\delta^{1,j}$ and for some $c_1^*$ depending only on $\bound_1$ and $\bound_2.$

{\it Substep 5.2: A lower estimate for $I_2.$} Let 
$$
\alpha_k^{2,j}:= \int_{C_2^j\cap\p^*A_k} \varphi(x,\nu_{A_k})\d\cH^{n-1}  +   2\int_{C_2^j \cap A_k^{(1)} \cap J_{u_k}}   \varphi(x,\nu_{J_{u_k}}) \d\cH^{n-1}.
$$
By \eqref{jaaryon_phi_cont1818} and \eqref{casasasasas},

\begin{align}\label{aser_saeras}
\alpha_k^{2,j}\ge & \int_{\p^* C_2^j} \varphi(x,\nu_{C_2^j})\d\cH^{n-1} - \delta\, \cH^{n-1}(Q_{r_j,\nu_{x_j}}(x_j) \cap [\p^*A_k\cup J_{u_k}] - \delta \cH^{n-1}(\p^*C_2^j) - c'\delta r_j^{n-1}
\end{align}
for all $k>k_\delta^{2,j}.$
Since $\cl{C_2^j}\cap\Sigma=\emptyset,$ from the definition of $\mu_{A_k,u_k},$ $(B_k^\delta,v_k^\delta)$ and $\mu_{B_k^\delta,v_k^\delta}$ we have 
$$
\mu_{A_k,u_k}(\cl{C_2^j}) = \alpha_k^{2,j}\quad\text{and}\quad 
\int_{\p^* C_2^j} \varphi(x,\nu_{C_2^j})\d\cH^{n-1} = \mu_{B_k^\delta,v_k^\delta}(\cl{C_2^j}),
$$
and thus, using \eqref{rj2_esimates} and \eqref{per_Cj2_opop} in \eqref{aser_saeras} we obtain
\begin{multline*}
\mu_{A_k,u_k}(\cl{C_2^j}) - 
 \mu_{B_k^\delta,v_k^\delta}(\cl{C_2^j}) \\
 \ge 
 -c_2^*\delta \Big( \cH^{n-1}(Q_{r_j,\nu_{x_j}}(x_j) \cap [\p^*A_k\cup J_{u_k}] + \cH^{n-1}(Q_{r_j,\nu_{x_j}}(x_j) \cap \p^*F^{l_j}\cap \p^*F^{h_j})\Big)
\end{multline*}
for some constant $c_2^*>0$ depending only on $\bound_1,\bound_2.$
Summing these estimates we get
\begin{align}\label{estimos_I2oplas}
I_2 \ge   - c_2^*\delta \Big(\cH^{n-1}(J_{u_k}) + \cH^{n-1}(\p^*A_k) + \sum_{h=0}^{m}\cH^{n-1}(\p^*F^h)\Big)
\end{align}
for all $k>k_\delta^2=\max_jk_\delta^{2,j}.$

{\it Substep 5.3: A lower estimate for $I_3.$} Let
\begin{align*}
\alpha_k^{3,j}:= \int_{D_k^j\cap \p^*A_k} \varphi(x,\nu_{A_k})\d\cH^{n-1}.
\end{align*}
Since 
$\cH^{n-1}( T_{-t_{k,j}^\delta r_j}) r_j^{n-1},$ using \eqref{jaaryon_phi_cont1818} and \eqref{yalililalala} we get 
\begin{equation}\label{caloasasa}
\alpha_k^{3,j} \ge \int_{Q_{r_j,\nu_{x_j}}(x_j)\cap T_{-t_{k,j}^\delta r}} \varphi(x,\nu_{x_j})\d\cH^{n-1}
- (\delta + c'\sqrt{\delta})r_j^{n-1} 
\end{equation}
for all $k>k_\delta^{3,j}.$ 
Moreover, by the choice of $t_{k,j}^\delta,$ \eqref{stupid_tdelta} and \eqref{finsler_norm},
\begin{align*}
\mu_{B_k^\delta,u_k^\delta} (\cl{D_k^j}) \le & \int_{Q_{r_j,\nu_{x_j}}(x_j)\cap T_{-t_{k,j}^\delta r_j}} \varphi(x,\nu_{x_j})\d\cH^{n-1}
 +
\int_{Q_{r_j,\nu_{x_j}}(x_j)\cap A_k^{(1)}\cap T_{t_{k,j}^\delta r_j}} \varphi(x,\nu_{x_j})\d\cH^{n-1} \\
& +
\int_{\p^*D_k^j\setminus [T_{-t_{k,j}^\delta r_j}\cup T_{t_{k,j}^\delta r_j}]} \varphi(x,\nu_{D_k^j})\d\cH^{n-1}\\
\le & \int_{Q_{r_j,\nu_{x_j}}(x_j)\cap T_{-t_{k,j}^\delta r_j}} \varphi(x,\nu_{x_j})\d\cH^{n-1}
+ 4\bound_2\sqrt{\delta}r_j^{n-1} + 
2\bound_2t_{k,j}^\delta r_j^{n-1}.
.
\end{align*}
Now using $t_{k,j}^\delta \le 2\sqrt\delta$ and \eqref{rsdadad} in this estimate and combining with \eqref{caloasasa} and obvious inequality  $
\mu_{A_k,u_k}(\cl{D_k^j}) \ge \alpha_k^{3,j}
$ (recall that $\cl{D_k^j}\cap\Sigma=\emptyset$) we get 
$$
\mu_{A_k,u_k}(\cl{D_k^j})  - \mu_{B_k^\delta,u_k^\delta} (\cl{D_k^j}) \ge - c_3^*\sqrt{\delta}\cH^{n-1}( Q_{r_j,\nu_j}(x_j)\cap \p^*F^{h_j}) 
$$
for some $c_3^*$ depending only on $n$ and $\bound_1,\bound_2.$
Summing these inequalities in $j$ gives
\begin{align}\label{estimaso_I3sla}
I_3\ge c_3^*\sqrt{\delta}\sum\limits_{h=0}^m \cH^{n-1}(  \p^*F^h)
\end{align}
for all $k>k_\delta^3=\max_jk_\delta^{3,j}.$

Including \eqref{estimatos_I1osos}, \eqref{estimos_I2oplas} and \eqref{estimaso_I3sla} in \eqref{don_oomar_imala} and using \eqref{first_error0901} in \eqref{juhuggza} we obtain
\begin{align*}
\cS(A_k,u_k) - \cS(B_k^\delta,u_k^\delta) \ge 
-c^* \sqrt\delta \Big(1 + \cH^{n-1}(J_{u_k}) + \cH^{n-1}(\p^*A_k) + \sum\limits_{i=0}^m\cH^{n-1}(\p^*F^i) \Big)
\end{align*}
for all $k>k_\delta=\max\{k_\delta^1,k_\delta^2,k_\delta^3\}$. 
Finally, since the elastic energy density is nonnegative  and invariant w.r.t. to additive piecewise rigid displacements,
$$
\cW(A_k,u_k) \ge \cW(B_k^\delta,v_k^\delta),
$$
and hence \eqref{error_estimate_Bv} follows.
\end{proof}

 Theorems \ref{teo:lower_semicontinuity} and \ref{teo:compactness} together with Proposition \ref{prop:fusco} imply that the minimum problem \eqref{min_prob_globals} is solvable.

\begin{proof}[\textbf{Proof of Theorem \ref{teo:global_existence}}]
Fix any $\lambda>0$ and let $\{(A_k,u_k)\}\subset\admissible$ be a minimizing sequence for $\cF^\lambda.$ Then $\sup_k \cF(A_k,u_k)<+\infty,$ and hence  by Theorem \ref{teo:compactness}, there exists a not relabelled subsequence $\{(A_k,u_k)\},$ a sequence $\{(B_k,v_k)\}\subset\admissible$ and $(A,u)\in\admissible$ such that $(B_k,v_k)\overset{\tau_\admissible}{\to} (A,u),$ $|A_k\Delta B_k|\to0$ and
\begin{equation}\label{shapatella1818}
\liminf\limits_{k\to+\infty}\,\cF(A_k,u_k) \ge \liminf\limits_{k\to+\infty}\cF(B_k,v_k) \ge \cF(A,u). 
\end{equation}
Since the map $E\mapsto ||E| - \fm|$ is $L^1(\R^n)$-continuous, from \eqref{shapatella1818} it follows that 
$$
\liminf\limits_{k\to+\infty}\,\cF^\lambda(A_k,u_k) \ge \liminf\limits_{k\to+\infty} \cF^\lambda(B_k,v_k) \ge \cF^\lambda(B,v). 
$$
Hence, $(B,v)$ is a minimizer of $\cF^\lambda.$
By Proposition \ref{prop:fusco}, there exists $\lambda_0>0$ such that for $\lambda>\lambda_0$ every minimizer $(A,u)$ of $\cF^\lambda$ satisfies the  volume constraint $|A|=\fm.$ Thus,
$(A,u)$ solves also the problem \eqref{min_prob_globals}.
Conversely, if $(A,u)$ solves \eqref{min_prob_globals}, then for $\lambda>\lambda_0,$
$$
\begin{aligned}
\min_{(B,v)\in\admissible,\,|B|=\fm}\cF(B,v) = & \cF(A,u) = \cF^\lambda(A,u) \ge 
\min_{(B,v)\in\admissible}\cF^\lambda(B,v)\\
= & \min_{(B,v)\in\admissible,\,|B|=\fm}\cF^\lambda(B,v) =\min_{(B,v)\in\admissible,\,|B|=\fm}\cF(B,v),
\end{aligned}
$$
and hence  $(A,u)$ is a minimizer of $\cF^\lambda.$
\end{proof}

\subsection{Compactness in $\admissible_p$ and $\admissible_{\rm Dir}$}\label{subsec:extension_CLS}

Recalling the definition of $\tau$-convergence in \eqref{tau_convergence_CACA} and using \eqref{lower_boundas_papap} and the compactness result \cite[Theorem 1.1]{ChC:2020_jems}, we can also solve the  $\tau$-compactness issue of energy-equibounded sequences in $\admissible_p$ and $\admissible_{\rm Dir}:$
\begin{itemize}[left=0pt]
 \item[--] if $\{(A_k,u_k)\}\subset \admissible_p$ is arbitrary sequence with $\sup_k \cF_p(A_k,u_k) <+\infty,$ then repeating the same arguments in the proof of Proposition  \ref{prop:pass_to_good_seq_compacte} we construct a not relabelled subsequence, the set $G_k^\delta,$ and numbers $s_\delta$ and $k_\delta$ satisfying \eqref{good_cut_sk}-\eqref{per_est_Gk} such that the configuration $(B_k^\delta,v_k^\delta)\in\admissible_p$ given by \eqref{def_BBkdelta} and \eqref{def_vvkdelta} satisfies
 $$
 \cS(A_k,u_k) - \cS(B_k^\delta,u_k^\delta) \ge 
-c^* \sqrt\delta \Big(1 + \cH^{n-1}(J_{u_k}) + \cH^{n-1}(\p^*A_k) + \sum\limits_{i=0}^m\cH^{n-1}(\p^*F^i) \Big).
 $$
 Then by \eqref{lower_boundas_papap},
 $$
 \cW(A_k,u_k) \ge \cW(B_k^\delta,u_k) + \int_{G_k^\delta}W_p(x,\str{v_k^\delta})\d x \ge 
 \cW(B_k^\delta,u_k) - \int_{G_k^\delta}|f|\d x.
 $$
 Since $f\in L^1(\Omega\cup S),$ by \eqref{vol_est_Gk} and the absolute continuity of the Lebesgue integral, we have
 \begin{equation}\label{gododosasas}
  \cW(A_k,u_k) \ge \cW(B_k^\delta,u_k) + o_\delta,  
 \end{equation}
 where $o_\delta\to0$ as $\delta\to0.$ Now the proof of the compactness in $\admissible_p$ runs exactly the same as Theorem \ref{teo:compactness} using \eqref{gododosasas} in place of  \eqref{elastic_bahooo};
 
 \item[--] if $\{(A_k,u_k)\}\subset \admissible_p$ is arbitrary sequence with $\sup_k \cF_{\rm Dir}(A_k,u_k) <+\infty,$ then  
 by \cite[Theorem 1.1]{ChC:2020_jems}, in the proof of Theorem \ref{teo:compactness} we will have only two sets $F^0$ and $F^1$ partitioning $A:$ the sequence $u_k$ converges a.e.\  in $F^1$  (up to a subsequence) and $|u_k|\to+\infty$ a.e.\  in $F^0.$ In particular, due to the Dirichlet condition for $u_k$ in $S,$ we do not need to add any rigid displacements, and then the proofs run as in $\admissible_p$.
\end{itemize}

The $\tau$-compactness in $\admissible_p$ (resp. $\admissible_{\rm Dir}$) and the $\tau$-lower semicontinuity of $\cF_p$  (resp. $\cF_{\rm Dir}$) imply that for any $\lambda>0$ there exists a minimizer of $\cF_p^\lambda$  (resp. $\cF_{\rm Dir}^\lambda$). Now obverving that the proof of Proposition \ref{prop:fusco} works also in $\admissible_p$ and $\admissible_{\rm Dir}$ (see Remark \ref{rem:extension_fuscortr}) we conclude that both minimum problems \eqref{min_prob_plow} and \eqref{min_prob_dir_plow}  admit  a solution.

\section{Decay estimates}\label{sec:decay_estimates} 

This section is devoted to the proof of the following density estimates for minimizers of $\cF$. 

\begin{theorem}[\textbf{Density estimates}]\label{teo:density_estimates}
There exist $\varsigma_*=\varsigma_*(\bound_3,\bound_4)\in(0,1)$ and $R_*=R_*(\bound_1,\bound_2,\bound_3,\bound_4)>0,$ where $\bound_i$ are given by \eqref{finsler_norm} and \eqref{hyp:elastic}, with the following property. Let $(A,u)\in\admissible$ be any minimizer of $\cF$ in $\admissible$ such that $\Omega\cap\p^*A\subset_{\cH^{n-1}} J_u$ and $\int_{\Omega\setminus A}|\str{u}|\d x=0,$ and let 
\begin{equation}\label{def_Ju_starrr}
J_u^* :=\{x\in J_u:\,\,\theta(J_u,x)=1\}.  
\end{equation}
Then for any $x\in\Omega$ and $r\in(0,\min\{1,\dist(x,\p \Omega)\})$
\begin{equation}\label{min_seq_density_up}
\frac{\cH^{n-1}(Q_r(x)\cap J_u)}{r^{n-1}} \le \frac{4n\bound_2 + \lambda_0}{\bound_1}. 
\end{equation}
Moreover, if $x\in \Omega\cap \cl{J_u^*}$ and $r\in(0,R_*)$ with $Q_r(x)\subset \Omega,$ then  
\begin{equation}\label{min_seq_density_low}
\frac{\cH^{n-1}(Q_r(x)\cap J_u)}{r^{n-1}} \ge \varsigma_*. 
\end{equation} 
In particular, 
\begin{equation}\label{essental_closed_jumpoas}
\cH^{n-1}(\Omega\cap [\cl{J_u^*}\setminus J_u^*])=0. 
\end{equation} 
\end{theorem}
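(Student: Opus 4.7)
The plan is to prove \eqref{min_seq_density_up}, \eqref{min_seq_density_low}, and \eqref{essental_closed_jumpoas} separately: the first and third by rather standard arguments, the second via a decay lemma following the strategy outlined in the introduction.

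\textbf{Upper density bound.} For \eqref{min_seq_density_up} I would use a direct comparison. Fix $x\in\Omega$ and $r\in(0,\min\{1,\dist(x,\p\Omega)\})$, so $Q_r(x)\strictlyincluded\Omega$. Consider the competitor $(B,v)\in\admissible$ defined by $B:=A\cup Q_r(x)$ and $v:=u\chi_{\Omega\setminus Q_r(x)}+u_0\chi_{Q_r(x)}$. Since $\str v-\bM_0=0$ in $Q_r(x)\cap\Omega$ and $W\ge0$, we have $\cW(B,v)\le\cW(A,u)$. Outside $Q_r(x)$ the configurations agree, whereas inside $Q_r(x)$ the set $B$ has no essential boundary and $v$ is smooth, so all new surface contributions are concentrated on $\p Q_r(x)$ and, bounding any weight by $2\bound_2$, are at most $2\bound_2\,\cH^{n-1}(\p Q_r(x))=4n\bound_2 r^{n-1}$. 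Since $\Omega\cap\p^*A\subset_{\cH^{n-1}} J_u$, the portion of $\cS(A,u)$ localized in $Q_r(x)$ is at least $\bound_1\cH^{n-1}(Q_r(x)\cap J_u)$ by \eqref{finsler_norm}. Combining these estimates with the minimality of $(A,u)$ for $\cF^{\lambda_0}$ (Theorem \ref{teo:global_existence}) and the crude volume bound $\bigl||B|-\fm\bigr|-\bigl||A|-\fm\bigr|\le r^n\le r^{n-1}$ yields \eqref{min_seq_density_up}.

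\textbf{Lower density bound.} For \eqref{min_seq_density_low} I would establish a decay lemma: there exist $\tau,\varsigma_*\in(0,1)$ and $R_*>0$ such that for every cube $Q_r(x)\subset\Omega$ with $r<R_*$,
\begin{equation}\label{plansketchdecay}
\frac{\cH^{n-1}(Q_r(x)\cap J_u)}{r^{n-1}}\le\varsigma_*\quad\Longrightarrow\quad\frac{\cH^{n-1}(Q_{\tau r}(x)\cap J_u)}{(\tau r)^{n-1}}\le\tfrac12\,\frac{\cH^{n-1}(Q_r(x)\cap J_u)}{r^{n-1}}.
\end{equation}
Iterating \eqref{plansketchdecay} at a point $x$ violating \eqref{min_seq_density_low} forces $\lim_{\rho\to0^+}\rho^{-(n-1)}\cH^{n-1}(Q_\rho(x)\cap J_u)=0$, which contradicts $x\in\cl{J_u^*}$: taking $J_u^*\ni x_k\to x$, the definition of $J_u^*$ together with \eqref{min_seq_density_up} provides a uniform positive lower bound on $\cH^{n-1}(Q_\rho(x_k)\cap J_u)/\rho^{n-1}$, and a diagonal argument transfers it to $x$. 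The decay \eqref{plansketchdecay} would be proved by contradiction, blowing up a counterexample sequence of minimizers of a suitable localized version of $\cF^{\lambda_0}$. The relative isoperimetric inequality applied to $A$ in $Q_r(x)$ produces a dichotomy: either $A$ dominates the cube ($|Q_r(x)\setminus A|\lesssim \varsigma_*^{n/(n-1)}r^n$) or the void $\Omega\setminus A$ does. In the \emph{crystal-dominated} regime the hypotheses $\Omega\cap\p^*A\subset_{\cH^{n-1}} J_u$ and $\int_{\Omega\setminus A}|\str u|\,\d x=0$ allow us to view $u$ as a $GSBD^2$ function on $Q_r(x)$ with small jump set; then \cite[Theorem 3]{CCI:2017} provides an $H^1$-approximation on $Q_{\tau r}(x)$ whose harmonic replacement has vanishing jump contribution, giving the decay. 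In the \emph{void-dominated} regime, removing $Q_{\tau r}(x)$ from $A$ (and setting $v:=u_0$ there as in the upper-bound step) creates surface contribution bounded via the isoperimetric inequality by $\cH^{n-1}(\p(A\cap Q_{\tau r}(x)))\lesssim\varsigma_*^{(n-1)/n}(\tau r)^{n-1}$, beating the original jump energy when $\varsigma_*$ is small.

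\textbf{Main obstacle.} The principal difficulty lies in the proof of \eqref{plansketchdecay}: one must combine the relative isoperimetric estimate with the GSBD approximation of \cite{CCI:2017} while respecting the different weights $\varphi$ on $\Omega\cap\p^*A$ and $2\varphi$ on $A^{(1)}\cap J_u$, a complication absent in the pure Griffith setting of \cite{ChC:2019_arxiv,ChC:2020_jems} where $A=\Omega$. It is precisely the choice $\Omega\cap\p^*A\subset_{\cH^{n-1}} J_u$ (guaranteed for the ``canonical'' minimizer of Theorem \ref{teo:regularity_of_minimizers}) that resolves this mismatch, by collapsing both surface contributions into a single jump of $u$ with weight at least $\bound_1$.

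\textbf{Essential closedness.} For \eqref{essental_closed_jumpoas}, the $\cH^{n-1}$-rectifiability of $J_u$ yields $\theta(J_u,y)=1$ for $\cH^{n-1}$-a.e.\ $y\in J_u$, whence $\cH^{n-1}(J_u\setminus J_u^*)=0$. It remains to prove $\cH^{n-1}((\Omega\cap\cl{J_u^*})\setminus J_u)=0$. By \eqref{min_seq_density_low}, every $y\in\Omega\cap\cl{J_u^*}$ satisfies $\cH^{n-1}(Q_r(y)\cap J_u)\ge\varsigma_*\,r^{n-1}$ for all small $r$; a standard Vitali-type covering argument, as in the De Giorgi--Carriero--Leaci proof of essential closedness for Mumford--Shah minimizers, converts this uniform lower bound into $\cH^{n-1}((\Omega\cap\cl{J_u^*})\setminus J_u)\le c\,\cH^{n-1}(J_u\cap N_\delta)\to0$ as $\delta\to0^+$, where $N_\delta$ is a $\delta$-neighborhood of $\cl{J_u^*}\setminus J_u$. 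This completes the proof.
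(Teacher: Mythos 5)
Your upper density bound, the overall blow-up strategy for the decay lemma, and the essential closedness argument are all sound (the upper bound uses the competitor $A\cup Q_r(x)$, whereas the paper removes the cube, i.e.\ uses $A\setminus\cl{Q_r}(x)$; both work, and yours even produces exactly the constant $4n\bound_2+\lambda_0$ in the statement). However, there are two real problems with the lower bound argument.

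First, the decay lemma \eqref{plansketchdecay} is formulated for the jump density $\cH^{n-1}(Q_r(x)\cap J_u)/r^{n-1}$ alone, and this quantity does not admit a geometric decay provable by blow-up: in the contradiction argument the jump density tends to $0$ on \emph{both} sides of the implication, so no contradiction arises in the limit. What actually decays is the full localized energy $\cF(A,u;Q_\rho)$, including the elastic term, exactly as in Proposition \ref{prop:functional_decay}. The jump density decay does \emph{not} follow one-to-one from that (the elastic part can dominate $\cS$, so $\cF$-decay does not control $\cH^{n-1}(J_u\cap Q_{\tau\rho})$ by $\frac12\cH^{n-1}(J_u\cap Q_\rho)$ in a single step); one must iterate the $\cF$-decay to get $\cF(A,u;Q_{\tau^m\rho})\le\tau^{m(n-1/2)}\cF(A,u;Q_\rho)$, and only then conclude $\cH^{n-1}(Q_{\tau^m\rho}\cap J_u)/(\tau^m\rho)^{n-1}\le\tau^{m/2}\cF(A,u;Q_\rho)/(\bound_1\rho^{n-1})\to 0$, using $\bound_1\cH^{n-1}(Q\cap J_u)\le\cS(A,u;Q)\le\cF(A,u;Q)$ and the upper bound on $\cF(A,u;Q_\rho)/\rho^{n-1}$.

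Second, the justification you offer for the ``uniform positive lower bound on $\cH^{n-1}(Q_\rho(x_k)\cap J_u)/\rho^{n-1}$'' is incorrect: the definition of $J_u^*$ gives $\theta(J_u,x_k)=1$ only as a pointwise limit, with rate depending on $x_k$, so no uniformity, and the upper bound \eqref{min_seq_density_up} is irrelevant to a lower bound. The correct reasoning applies the decay-iteration \emph{at} $x_k\in J_u^*$: if the jump density at $x_k$ were below $\varsigma_*$ at some scale, iterating would force $\theta(J_u,x_k)=0$, contradicting $\theta(J_u,x_k)=1$; this gives the lower density bound at every $x_k\in J_u^*$ and \emph{every} scale $\rho\in(0,R_*)$, after which passing to $x\in\cl{J_u^*}$ is a monotonicity/inclusion-of-cubes argument, not a diagonal one. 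This is precisely what the paper does (Proposition \ref{prop:lower_density_cG} establishes $\cF(A,u;Q_\rho(x))\ge\bound_1\varsigma\rho^{n-1}$ at $J_u^*$ via iteration, then extends to $\cl{J_u^*}$ by continuity of the Borel measure $\cF(A,u;\cdot)$, and the theorem finishes with a \emph{single} additional application of the decay lemma combined with the upper and lower bounds on $\cF$).
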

 
Since $J_u$ is $\cH^{n-1}$-rectifiable, by the rectifiability criterion in \cite[Theorem 2.63]{AFP:2000}, $\cH^{n-1}(J_u\setminus J_u^*)=0.$ Thus, if we remove a $\cH^{n-1}$-negligible set from $J_u,$ then \eqref{essental_closed_jumpoas} implies that the jump set of $u$ is essentially closed in $\Omega$.
 
To prove Theorem \ref{teo:density_estimates} we follow the arguments of \cite[Section 3]{HP:2021_arxiv}.  First, we introduce the local version $\cF(\cdot;\openset):\admissible\to\R$ of $\cF$ in open sets  $\openset\subset\Omega$ as
\begin{equation}\label{local_cF}
\cF(A,u;\openset):=\cS(A,u;\openset) + 
\cW(A,u; \openset), 
\end{equation}
where $\cS(\cdot;\openset)$ and $\cW(\cdot; \openset)$ are the local versions of the surface and the elastic energies, i.e.,
$$
\cS(A,u;\openset):= \int_{\openset \cap \p^*A} \varphi(y,\nu_A)d\cH^{n-1} 
+ 2\int_{\openset \cap A^{(1)}\cap J_u} \varphi(y,\nu_A)\,\d\cH^{n-1}  
$$
and 
$$
\cW(A,u; \openset) = \int_{\openset\cap A} \C(y)\str{u}:\str{u}\, \d y. 
$$
Next, we introduce the notion of quasi-minimizers.
\begin{definition}[\textbf{$\Theta$-minimizers}]
Given $\Theta\ge 0,$ the configuration $(A,u)\in\admissible $
is a local {\it $\Theta$-minimizer} of $\cF:\admissible \to\R$ in $\openset$ if
\begin{equation*} 
\cF(A,u;\openset) \le \cF(B,v;\openset) + \Theta|A\Delta B|
\end{equation*}
whenever $(B,v)\in\admissible $ with $A\Delta B\strictlyincluded \openset$ and $\supp(u-v)\strictlyincluded \openset.$
\end{definition}

For any $(A,u)\in\admissible $ and any open set $\openset \strictlyincluded \Omega$ let 
\begin{equation}\label{minimal_with_dirixle}
\Phi(A,u;\openset):= 
\inf\Big\{ 
\cF(B,v;\openset):\,\, (B,v)\in\admissible,
B\Delta A\strictlyincluded \openset,\,\, 
\supp (u-v)\strictlyincluded \openset 
\Big\},
\end{equation}
 and let 
\begin{equation}\label{deviation}
\Psi(A,u;\openset): = \cF(A,u;\openset) - \Phi(A,u;\openset) 
\end{equation}
be the \emph{deviation of $(A,u)$ from minimality} in $\openset.$ 

The following proposition is a generalization to our setting of 
\cite[Theorem 4]{CCI:2017} established for the Griffith model. 

\begin{proposition}\label{prop:conti_prop_3.4}
Let $Q_R(x_0)\strictlyincluded\Omega.$ Consider sequences of  Finsler norms $\{\varphi_h\}$ and ellipticity tensors $\{\C_h\}$ such that $\{\C_h\}$ is equicontinuous in $\cl{Q_R(x_0)}$ and there exist $d_3,d_4,d_5>0$ with  
\begin{equation}\label{elastic_bound_1}
d_3M:M\le \C_h(x)M:M \le d_4M:M\quad\text{for all $(x,M)\in\overline{Q_R(x_0)}\times\mtwo$} 
\end{equation}
and 
\begin{equation}\label{anisotropic_bound_1}
\inf\limits_{(x,\nu)\in \cl{Q_R(x_0)}\times \S^{n-1}}\,\phi_h(x,\nu) \ge d_5 \sup\limits_{(x,\nu)\in \cl{Q_R(x_0)}\times \S^{n-1}}\,\phi_h(x,\nu).
\end{equation}
Let us define
$\cF_h$ and $\Psi_h$ in $\admissible$ as in \eqref{local_cF} and \eqref{deviation} respectively with  $\varphi_h$ and $\C_h,$ in places of $\varphi$ and $\C.$
Let also $\{(A_h,u_h)\}\subset\admissible$ be such that
\begin{subequations}
\begin{align}
& \int_{Q_R(x_0)\setminus A_h} |\str{u_h}|\d x=0, \label{zero_eelastic_energy}\\
& M:=\sup\limits_{h\ge1} \cF_h(A_h,u_h;Q_R(x_0))<\infty,  \label{energy_bound_uniform}\\
& \lim\limits_{h\to\infty} \Psi_h(A_h,u_h;Q_R(x_0)) =0,   \label{seq_almost_min}\\
& \lim\limits_{h\to\infty} \cH^{n-1} (Q_R(x_0)\cap J_{u_h}) = 0,   \label{set_has_no_boundary}\\
& Q_R(x_0)\cap \p^*A_h\subset_{\cH^{n-1}} J_{u_h}. \label{red_bound_jumpset} 
\end{align}
\end{subequations}
Then there exist $u\in H^1(Q_R(x_0) ),$ an elasticity tensor $\C\in C^{0}(\overline{Q_R(x_0)};\mtwo)$ and sequences $\{a_j\}$ of rigid displacements and subsequences  $\{(A_{h_j},u_{h_j})\}$, $\{\varphi_{h_j}\}$ and $\{\C_{h_j}\}$  such that 
\begin{itemize}
\item[\rm(i)] $\C_{h_j}\to \C$ uniformly in $\overline{Q_R(x_0)}$
and 
$$
w_j:=u_{h_j} - a_j \to u \,\,\text{a.e.\  in $Q_R(x_0)$}\quad \text{and}
\quad \str{w_j}\wk\str{u}\,\,\text{in $L^2(Q_R(x_0))$}
$$
as $j\to \infty;$

\item[\rm(ii)] for all $v \in u+H_0^1 (Q_R(x_0))$ 
\begin{equation}\label{local_minimal_ekanku}
\int_{Q_R(x_0)} \C(y)\str{u}:\str{u}\,\d y\le 
\int_{Q_R(x_0)} \C(y)\str{v}:\str{v}\,\d y; 
\end{equation}

\item[\rm(iii)]  for any $r\in(0,R]$ 
\begin{equation}\label{functional_conv_o}
\lim\limits_{j\to\infty} \cF_h(A_{h_j},u_{h_j};Q_r(x_0)) = \int_{Q_r(x_0)} \C(x)\str{u}:\str{u}\,\d x. 
\end{equation}
\end{itemize}
\end{proposition}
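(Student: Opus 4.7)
The strategy is to reduce the problem to a Dirichlet-type pure elasticity problem in $Q_R(x_0)$, by exploiting the assumptions \eqref{zero_eelastic_energy}--\eqref{red_bound_jumpset} which force the limit to be Sobolev.

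First I would set up compactness. Since \eqref{red_bound_jumpset} implies that assigning an arbitrary constant value of $u_h$ on $Q_R(x_0)\setminus A_h$ does not create new jumps (modulo $\cH^{n-1}$-null sets), extend $u_h$ by $0$ on $Q_R(x_0)\setminus A_h$ to obtain $\hat u_h\in GSBD^2(Q_R(x_0))$ with $\str{\hat u_h}=\chi_{A_h}\str{u_h}$ (by \eqref{zero_eelastic_energy}) and $\cH^{n-1}(J_{\hat u_h}\cap Q_R(x_0))\to 0$ (by \eqref{set_has_no_boundary}). The bounds \eqref{elastic_bound_1}, \eqref{anisotropic_bound_1}, \eqref{energy_bound_uniform} give a uniform $L^2$-bound on $\str{\hat u_h}$. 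The GSBD compactness result \cite[Theorem~1.1]{ChC:2020_jems} then yields a subsequence, rigid displacements $\{a_j\}$, an exceptional set $E\subset Q_R(x_0)$ of finite perimeter, and $u\in GSBD^2(Q_R(x_0))$ with $\hat u_j-a_j\to u$ a.e.\ on $Q_R(x_0)\setminus E$. Lower semicontinuity of the jump set combined with $\cH^{n-1}(J_{\hat u_j})\to 0$ forces $P(E,Q_R(x_0))=0$, so $E$ is trivial; after adjusting the $a_j$ on the full connected component we may take $E=\emptyset$, yielding $w_j:=u_{h_j}-a_j\to u$ a.e.\ and $\str{w_j}\wk\str{u}$ in $L^2$. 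The same lower-semicontinuity gives $\cH^{n-1}(J_u)=0$, whence $u\in H^1(Q_R(x_0);\R^n)$. Arzel\`a--Ascoli applied to the equicontinuous, uniformly bounded family $\{\C_{h_j}\}$ produces (along a further subsequence) a uniform limit $\C\in C^0(\overline{Q_R(x_0)};\mtwo)$ satisfying \eqref{elastic_bound_1}.

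Next, for (ii), I would construct a competitor for each $(A_{h_j},u_{h_j})$ via a cutoff. Given $v\in u+H^1_0(Q_R(x_0))$ and $\rho\in(0,R)$, pick $\eta\in C_c^\infty(Q_R(x_0);[0,1])$ with $\eta\equiv 1$ on $Q_\rho(x_0)$ and define
\begin{equation*}
B_j\cap Q_R(x_0):=Q_R(x_0),\quad v_j:=\eta(v+a_j)+(1-\eta)u_{h_j}\ \text{in } Q_R(x_0),
\end{equation*}
and $B_j=A_{h_j}$, $v_j=u_{h_j}$ outside. Then $(B_j,v_j)\in\admissible$, $B_j\Delta A_{h_j}\strictlyincluded Q_R(x_0)$, $\supp(v_j-u_{h_j})\strictlyincluded Q_R(x_0)$, and since $v_j$ is Sobolev on $Q_R(x_0)$, $\cS(B_j,v_j;Q_R(x_0))=0$. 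By the almost-minimality hypothesis \eqref{seq_almost_min},
\begin{equation*}
\cF_{h_j}(A_{h_j},u_{h_j};Q_R(x_0))\leq\int_{Q_R(x_0)}\C_{h_j}\str{v_j}:\str{v_j}\,dx+\Psi_{h_j}(A_{h_j},u_{h_j};Q_R(x_0)).
\end{equation*}
Since $\str{v_j}=\str{v}$ on $Q_\rho(x_0)$ and $\str{v_j}=\str{v}+(1-\eta)\str{(w_j-v)}+\nabla\eta\odot(v-w_j)$ on the annulus (after cancelling $a_j$), passing to the limit in $j$ using $\C_{h_j}\to\C$ uniformly, $\str{w_j}\wk\str{u}$ weakly, and strong $L^2$-convergence of $w_j\to u$ on the annulus, then letting $\rho\nearrow R$, yields \eqref{local_minimal_ekanku}. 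For (iii), the $\liminf$ inequality comes from convexity of $M\mapsto \C(x)M:M$ and the weak strain convergence (dropping the nonnegative surface term); the matching $\limsup$ follows by repeating the competitor construction with $v=u$ localized to $Q_r(x_0)$ (handling $r=R$ by approximating from below).

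The main obstacle is ensuring that the cutoff error $\nabla\eta\odot(v-w_j)$ has vanishing $L^2$-contribution as $j\to\infty$, which requires strong $L^2$-convergence of $w_j\to u$ on the annulus $Q_R(x_0)\setminus\overline{Q_\rho(x_0)}$. This does not follow directly from \cite[Theorem~1.1]{ChC:2020_jems}, which only yields a.e.\ and weak-strain convergence. The remedy is to apply a Korn--Poincar\'e inequality for $GSBD$-functions with small jump set (as in \cite{CCF:2016}, and in the form developed in \cite{ChC:2020_arxiv}) on the Lipschitz annular domain, using that $\cH^{n-1}(J_{w_j}\cap(Q_R(x_0)\setminus\overline{Q_\rho(x_0)}))\to0$: this gives a uniform $H^1$-type bound for $w_j$ on the annulus, so that a.e.\ convergence upgrades to strong $L^2$-convergence via dominated convergence / Rellich compactness. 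This is precisely the point where the vanishing of the jump set \eqref{set_has_no_boundary} is essential.
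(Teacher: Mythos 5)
Your proposal takes a genuinely different route from the paper in the minimality step, and there are real gaps in it. The paper's engine for (ii) and (iii) is the Chambolle--Conti--Iurlano approximation theorem \cite[Theorem~3]{CCI:2017}: it replaces $u_{h}$ by a function $\tilde v_h$ that is \emph{smooth} (in particular Sobolev) on an inner cube, agrees with $u_h$ outside a slightly smaller cube, introduces only $O(\delta_h)$ extra jump, and --- crucially --- satisfies the one-sided weighted elastic estimate (the paper's condition $({\rm a}_4)$, eq.~\eqref{eta_constant}). You bypass this entirely by using $u_{h_j}$ itself in the cut-off competitor $v_j=\eta(v+a_j)+(1-\eta)u_{h_j}$. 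This creates the following problems.

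First, the claim that $\cS(B_j,v_j;Q_R(x_0))=0$ is false. You set $B_j\cap Q_R=Q_R$, so the jump set of $v_j$ (which contains $J_{u_{h_j}}\cap\{\eta<1\}$) is counted as interior crack with weight $2$; this surface energy is bounded by $2\sup\varphi_{h_j}\cdot\cH^{n-1}(Q_R\cap J_{u_{h_j}})\le 2M/d_5$, which is bounded but not obviously tending to $0$ --- precisely the issue the paper treats separately, via the relative isoperimetric inequality, in the proof of \eqref{surface_jugoldi}. If instead you keep $B_j=A_{h_j}$, the surface energy does not increase; either way, the statement as written is wrong and the correction is nontrivial.

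Second, and more seriously, the strong $L^2$ convergence of $w_j$ on the annulus is not obtained as you claim. The Korn--Poincar\'e inequality for $GSBD$ functions with small jump set \cite[Proposition~2]{CCF:2016} yields an $L^2$ (or $L^{2^*}$) bound on $w_j$ away from an exceptional set $\omega_j$ of small but nonzero measure; it does \emph{not} give a uniform $H^1$ bound, so Rellich compactness is not available, and an $L^2$ bound plus a.e.\ convergence does not imply strong $L^2$ convergence (you would need equi-integrability of $|w_j|^2$). This is exactly what the paper's detour through \cite[Theorem~3]{CCI:2017} buys: the approximants $\tilde v_h$ \emph{are} $H^1_{\loc}$ by construction, so the classical Poincar\'e--Korn inequality plus Rellich gives strong $L^2_{\loc}$ convergence of $\tilde v_h$.

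Third, your cut-off annulus $\{0<\eta<1\}\subset Q_R\setminus\overline{Q_\rho}$ is not contained in $\{u=v\}$. The paper first reduces to competitors $v$ with $\supp(u-v)\strictlyincluded Q_r$ and then chooses the cut-off $\psi$ with $\{0<\psi<1\}\subset\{u=v\}$; this alignment is what makes the cut-off error $\nabla\psi\odot(v-\tilde v_h)$ vanish in the limit (since $\tilde v_h\to u=v$ strongly on the annulus). In your set-up the error term converges to $\nabla\eta\odot(v-u)\ne 0$, and you are forced to kill it by sending $\rho\to R$, which requires a Hardy-type estimate for $v-u\in H^1_0(Q_R)$ --- a step your plan neither mentions nor performs, and which is unnecessary once the cut-off is aligned with $\{u=v\}$.

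Finally, for (iii) you also rely on the vanishing of $\cS_h(A_h;Q_r)$ for $r<R$, but this is not immediate from \eqref{set_has_no_boundary} because the surface density $\varphi_h$ may blow up; the paper establishes it via a separate argument using the relative isoperimetric inequality and the coarea formula (paragraph around \eqref{area_plus_relisop}--\eqref{surface_jugoldi}), which your plan does not address.

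Your first part (compactness via \cite{CCF:2016} and \cite{ChC:2020_jems}) is in the same spirit as the paper's, though the paper is more precise: it applies the Korn--Poincar\'e inequality \emph{first} to pin down the rigid displacements $a_h$ and deduce that the exceptional set has measure zero; simply ``adjusting $a_j$'' after the fact is not self-justifying.

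In short, the proposal is a different route, but the approximation theorem \cite[Theorem~3]{CCI:2017} is not an optional convenience: it is what delivers the Sobolev regularity (hence strong $L^2$ convergence) and the one-sided elastic estimate $({\rm a}_4)$ that your plan is implicitly relying on.
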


\begin{proof}
Without loss of generality, we assume $R=1$ and $x_0=0.$ 
Also by \eqref{set_has_no_boundary}, we may assume $\cH^{n-1} (Q_1\cap J_{u_h})<1/4$ for any $h.$
Let
$$
b_h':=\inf\limits_{(x,\nu)\in \cl{Q_R(x:0)}\times \S^{n-1}}\,\phi_h(x,\nu),\qquad b_h'':= \sup\limits_{(x,\nu)\in \cl{Q_R(x:0)}\times \S^{n-1}}\,\phi_h(x,\nu) 
$$
so that by \eqref{anisotropic_bound_1},
\begin{equation}\label{new_anis_b}
d_5 b_h'' \le b_h'\le b_h''\qquad \text{for any $h.$}
\end{equation}
By \cite[Proposition 2]{CCF:2016} and \eqref{elastic_bound_1}, there exist a constant $c_o$ (depending only on $n$ and $d_3$) and sequences $\{\omega_h\}$ of a measurable subsets of $Q_1$ with $|\omega_h|\le c_o\cH^{n-1} (Q_1\cap J_{u_h})$ and $\{a_h\}$ of  rigid displacements such that 
\begin{equation}\label{L2norm_chegarara}
\int_{Q_1\setminus \omega_h} |u_h - a_h|^2\,\d x \le c_o\int_{Q_1}\C_h(x)\str{u_h}:\str{u_h}\,\d x.  
\end{equation}
By \eqref{zero_eelastic_energy} and \eqref{energy_bound_uniform}, $\|(u_h - a_h) \chi_{Q_1\setminus \omega_h}\|_{L^2(Q_1)} \le Mc_o,$ and thus there exist  $u\in L^2(Q_1)$ and a not relablled subsequence such that
$
(u_h - a_h) \chi_{Q_1\setminus \omega_h} \wk  \tilde u 
$
in $L^2(Q_1).$  Since $|\omega_h|\to0,$ the set  
$$
F:=\{y\in Q_1:\,\,\limsup\limits_{h\to\infty}|u_h(y) - a_h(y)|=+\infty\}
$$
satisfies $|F|=0.$  Furthermore, by \eqref{zero_eelastic_energy}, \eqref{elastic_bound_1} and \eqref{energy_bound_uniform} as well as the equality $J_{u_h}=J_{u_h - a_h},$ %
$$
\sup\limits_{h\ge1} \int_{Q_1} |\str{(u_h-a_h)}|^2\,\d x +\cH^{n-1} (Q_1\cap J_{u_h-a_h}) < \frac{M}{d_3} + \frac{1}{4},
$$
and hence  by \cite[Theorem 1.1]{ChC:2020_jems}, there  exist  a not relabelled subsequence $\{u_h-a_h\}$
and $u\in GSBD^2(Q_1)$  such that 
\begin{align}
& u_h-a_h \to  u\qquad \text{a.e.\  in $Q_1$} \label{safafaess1}\\
& \str{(u_h-a_h)} \wk \str{u}\qquad \text{in $L^2(Q_1;\mtwo),$} \label{safafaess2}\\
& \cH^{n-1} (Q_1 \cap J_u) \le
\liminf\limits_{h\to+\infty} \cH^{n-1} (J_{u_h}) =0.\label{safafaess3} 
\end{align}
Since the weak limit and the pointwise limit coincide (see e.g., \cite[page 266]{DiBen:2002_book}), $\tilde u = u$ a.e.\  in $Q_1.$ Moreover,   \eqref{L2norm_chegarara}, \eqref{safafaess1} and the Fatou's Lemma imply $u\in L^2(Q_1)$ and also by \eqref{safafaess3}, one has $\cH^{n-1} (J_u)=0.$ Thus, by Lemma \ref{lem:gsbd_is_sobolev}, $u\in H^1(Q_1 )$.
Since our elastic energy is invariant under additive rigid displacements, without loss of generality, we may assume $a_h=0$ for any $h\ge1.$

Next we prove \eqref{local_minimal_ekanku}. Let $v\in H^1(Q_1 )$ be such that $\supp(u-v)\strictlyincluded Q_r$ for some $r\in(0,1).$ Fix $r''<r'<r$ and let $\psi\in C_c^1 (Q_r;[0,1])$ be a cut-off function with 
$\{0<\psi<1\}\subset \{u=v\}\cap Q_{r'}$ and $\supp(u-v)\subseteq \{\psi\equiv 1\}\subseteq Q_{r''}.$ 
By \eqref{set_has_no_boundary} and \cite[Theorem 3]{CCI:2017}, there exist  a positive constant $c>0$ (depending only on $n,$ $d_3$ and $d_4$), a function $\tilde v_h\in GSBD^2(Q_1),$ $r_h\in(r-\delta_h,r)$ with
\begin{equation}\label{defi_delta_h}
\delta_h:=\sqrt[2n]{\cH^{n-1} (J_{u_h})},
\end{equation}
and  a Lebesgue measurable set $\tilde \omega_h\subset Q_{r_h}$ such that 
\begin{itemize}
\item[($\rm a_1$)] $\tilde v_h\in C^\infty(Q_{r-\delta_h} ),$ $\tilde v_h = u_h$ in $Q_1\setminus Q_{r_h},$ and  
$$
\cH^{n-1} (J_{u_h}\cap \p Q_{r_h}) = \cH^{n-1} (J_{\tilde v_h}\cap \p Q_{r_h}) = 0;
$$

\item[($\rm a_2$)] $\cH^{n-1}(J_{\tilde v_h}\setminus J_{u_h})<c \delta_h \cH^{n-1}(J_{u_h} \cap (Q_r\setminus Q_{r-\delta_h}));$

\item[($\rm a_3$)]  $|\tilde \omega_h|\le c\delta_h^2\cH^{n-1} (Q_{r_h}\cap J_{u_h})$ and by \eqref{elastic_bound_1},
\begin{equation}\label{sdfsdgs}
\int_{Q_r\setminus \tilde \omega_h} |\tilde v_h - u_h|^2\d x \le c\delta_h^4 \int_{Q_r} \C_h(x)\str{u_h}:\str{u_h}\d x; 
\end{equation}

\item[($\rm a_4$)] if $\eta\in \Lip(Q_1;[0,1]),$ then 
\begin{align}\label{eta_constant}
\int_{Q_r} \eta \C_h(x)\str{\tilde v_h}:  \str{\tilde v_h}\d x \le
\int_{Q_r} \eta \C_h(x)\str{u_h}:\str{u_h} \d x
+
c\delta_h^s[1+\Lip(\eta)]\int_{Q_r} \C_h(x)\str{u_h}:\str{u_h}\d x 
\end{align}
for some $s\in(0,1)$ independent of $h.$ 
\end{itemize}
By ($\rm a_1$), $\tilde v_h\in H^1 (Q_{r'} )$ and $\supp(\tilde v_h - u_h)\strictlyincluded Q_r$ for all sufficiently  large $h.$
By \eqref{safafaess1}, \eqref{sdfsdgs} and the relation $\delta_h^{2n}=\cH^{n-1}(Q_1\cap J_{u_h})\to0,$ we have also
$\tilde v_h\to u$ a.e.\  in $Q_1.$  Let us define
\begin{equation}\label{wdsaw}
v_h:=(1-\psi) \tilde v_h+ \psi v. 
\end{equation}
Then $(A_h,v_h)$ is an admissible configuration for $\Phi_h(A_h,u_h;Q_1)$ in  \eqref{minimal_with_dirixle}. Therefore from \eqref{seq_almost_min} and the definition of deviation it follows that 
\begin{equation}\label{kdlfjakd}
\cF_h(A_h,u_h;Q_1) \le \cF_h(A_h,v_h;Q_1) + o(1), 
\end{equation}
where $o(1)\to0$ as $h\to\infty.$ Note that 
by ($\rm a_1$), ($\rm a_2$),  \eqref{new_anis_b} and \eqref{energy_bound_uniform},
\begin{align*}
\cS(A_h,v_h;Q_1) -\cS(A_h,u_h;Q_1)
= & \int_{A_h^{(1)}\cap J_{\tilde v_h}} \phi_h(x,\nu_{J_{\tilde v_h}}^{})\d\cH^{n-1} -  \int_{A_h^{(1)}\cap J_{u_h}}   \phi_h(x,\nu_{J_{u_h}}^{})\d\cH^{n-1}\\
\le & \int_{A_h^{(1)}\cap (J_{\tilde v_h}\setminus J_{u_h})\cap (Q_r\setminus Q_{r-\delta_h})} \phi_h(x,\nu_{J_{\tilde v_h}}^{})\d\cH^{n-1} \\
\le & b_h'' \cH^{n-1}(J_{\tilde v_h}\setminus J_{u_h}) \le  cb_h''\delta_h \cH^{n-1}(J_{u_h}\cap (Q_r\setminus Q_{r-\delta_h})) \\
\le & \frac{c\delta_h}{d_5} \cS(A_h,u_h;Q_1) \le \frac{Mc \delta_h}{d_5}.
\end{align*}
This estimate, \eqref{kdlfjakd} and the definition of localized elastic energy imply
\begin{equation}\label{almost_minimality_of_u_h}
\int_{A_h\cap Q_1} \C_h(x)\str{u_h}:\str{u_h}\d\cH^{n-1} \le 
\int_{A_h\cap Q_1} \C_h(x)\str{v_h}:\str{v_h}\d\cH^{n-1} + o(1)
\end{equation}
as $h\to+\infty.$

Next we estimate the integral in the right-hand-side of \eqref{almost_minimality_of_u_h}. By \eqref{wdsaw},
$$
\str{v_h}= (1-\psi)\str{\tilde v_h} + \psi \str{v} + \nabla \psi\odot (v- \tilde v_h),
$$
where $X\odot Y = (X\otimes Y +Y\otimes X)/2.$ Since $\tilde v_h\to u$ a.e.\  in $Q_r$ and $u=v$ in $Q_r\setminus Q_{r'},$ one has $v_h\to v$ a.e.\  in $Q_1.$

We claim that $\tilde v_h\to u$ strongly in $L_\loc^2(Q_r).$ Indeed, fix any $\rho\in(0,r).$ By ($\rm a_1$), $\tilde v_h\in H^1 (Q_\rho).$ Moreover, by \eqref{elastic_bound_1},  \eqref{energy_bound_uniform}  and  \eqref{eta_constant} (applied with $\eta=1$),
$$
d_3\int_{Q_\rho} |\str{\tilde v_h}|^2\d x\le d_3\int_{Q_r} |\str{\tilde v_h}|^2\d x \le C \int_{Q_r} \C_h(x) \str{u_h}:\str{u_h}\d x\le CM
$$
for some constant $C>0$ independent of $h.$  Furthermore, by the Poincar\'e-Korn inequality, for each $h$ there exist a rigid displacement $e_h$ (possibly depending also on $\rho$) such that
$$
\|\tilde v_h - e_h\|_{H^1(Q_\rho)} \le \int_{Q_\rho} |\str{\tilde v_h}|^2 \d x \le \frac{MCC'}{d_3},
$$
and hence the Rellich-Kondrachov Theorem implies the existence of $w\in H^1(Q_\rho )$ and not relabelled subsequence such that
$\tilde v_h - e_h \to w$ in $L^2(Q_\rho ).$ Since $\tilde v_h\to u$ a.e.\  in $Q_1,$ we have $e_h\to u-w,$ and thus $e:=u-w$ is also a rigid displacement. Then
$$
\begin{aligned}
\limsup\limits_{h\to\infty} \|\tilde v_h - u\|_{L^2(Q_\rho)}^{}
\le 
\limsup\limits_{h\to\infty} \|\tilde v_h -e_h - w\|_{L^2(Q_\rho)}^{} 
+
\limsup\limits_{h\to\infty} \|e_h + (w-u)\|_{L^2(Q_\rho)}^{} =0, 
\end{aligned}
$$
and the claim follows.

Since $u=v$ out of $\{\psi=1\},$ the claim implies 
$\tilde v_h\to v$  strongly in $L^2(\{0<\psi<1\}),$ and hence,
\begin{align}\label{sgsgsg}
\lim\limits_{h\to\infty} \int_{Q_r} |\nabla \psi\odot (v- \tilde v_h)\big|_{A_h}|^2 \d x \le & 
\liminf\limits_{h\to\infty}  \int_{\{0<\psi <1\}} |\nabla \psi\odot (v- \tilde v_h)|^2 \d x= 0. 
\end{align}
Thus, by definition \eqref{wdsaw} of $v_h,$
\begin{align}\label{rtetyeye}
\int_{Q_r\cap A_h}\C_h \str{v_h}:\str{v_h} & \d x =   \int_{Q_r\cap A_h} (1-\psi)^2\C_h \str{\tilde v_h}:\str{\tilde v_h}\d x
+ \int_{Q_r\cap A_h}\psi^2\C_h \str{v}:\str{v}\d x\nonumber \\
& + \int_{Q_r\cap A_h} \C_h (\nabla \psi\odot (v- \tilde v_h)) :( \nabla \psi\odot (v- \tilde v_h))\d x\nonumber \\
& + \int_{Q_r\cap A_h} (1-\psi) \C_h \str{\tilde v_h}:(\nabla \psi\odot (v- \tilde v_h))\d x\nonumber \\
& + 
\int_{Q_r} \psi \C_h\str{v}:
(\nabla \psi\odot (v- \tilde v_h))\d x\nonumber \\
=& \int_{Q_r\cap A_h} (1-\psi)^2\C_h \str{\tilde v_h}:\str{\tilde v_h}\d x
+ \int_{Q_r\cap A_h}\psi^2\C_h \str{v}:\str{v}\d x +
o(1)\nonumber\\
\le & 
\int_{Q_r\cap A_h} (1-\psi)^2\C_h \str{u_h}:\str{u_h}\d x 
+ \int_{Q_r\cap A_h}\psi^2\C_h \str{v}:\str{v}\d x + o(1), 
\end{align}
where in the second equality we use \eqref{energy_bound_uniform}, \eqref{eta_constant} with $\eta\equiv 1,$ \eqref{sgsgsg}, \eqref{elastic_bound_1} and   the H\"older inequality, while in the last inequality we use \eqref{eta_constant} with $\eta=(1-\psi)^2$  and \eqref{set_has_no_boundary}. 
Now  combining \eqref{rtetyeye} with \eqref{almost_minimality_of_u_h}   we get
\begin{align}\label{djksdfkj}
\int_{Q_r}(2\psi -\psi^2)\C_h \str{u_h}:\str{u_h}\d x \le 
\int_{Q_r}\psi^2\C_h \str{v}:\str{v}\d x + o(1). 
\end{align}
Since $\{\C_h\}$ is equibounded (see \eqref{elastic_bound_1}) and equicontinuous,  by the Arzela-Ascoli Theorem, there exist  a (not relabelled) subsequence and an elasticity tensor $\C\in C^0(Q_1;\mtwo)$ such that  $\C_h\to \C$ uniformly in $Q_1.$ Hence, 
letting $h\to\infty$ in \eqref{djksdfkj}, and using \eqref{safafaess2} and the convexity of the elastic energy, we obtain
\begin{equation}\label{shapatoyoqcha}
\int_{Q_r}(2\psi -\psi^2) \C(y)\str{u}:\str{u}\,\d y \le 
\int_{Q_r} \psi^2 \C(y)\str{v}:\str{v}\,\d y.
\end{equation}
By the choice of $\psi,$ \eqref{shapatoyoqcha} implies 
\begin{equation}\label{ajsdhjss}
\int_{Q_{r''}} \C(y)\str{u}:\str{u}\,\d y \le 
\int_{Q_r} \C(y)\str{v}:\str{v}\,\d y. 
\end{equation}
Since $r''$ is arbitrary, letting $r''\nearrow r$ we deduce that \eqref{ajsdhjss} holds also with $r''=r.$   Since $\supp(u-v)\strictlyincluded Q_r$, this implies \eqref{local_minimal_ekanku}. 

It remains to prove \eqref{functional_conv_o}.
Taking $v=u$ in \eqref{djksdfkj} and using $0\le \psi\le1$ and $\psi\big|_{Q_{r''}}=1$
we have
\begin{align*} 
\int_{Q_{r''}}\C \str{u}:\str{u}dx\le
\liminf\limits_{h\to\infty} \int_{Q_{r''}}\C_h \str{u_h}:\str{u_h}\d x
\le
\limsup\limits_{h\to\infty} \int_{Q_{r''}}\C_h \str{u_h}:\str{u_h}\d x \le
\int_{Q_r}\C \str{u}:\str{u}\d x. 
\end{align*}
Since $r''$ is arbitrary, letting $r''\nearrow r $ gives
\begin{equation}\label{elastic_energy_convergence} 
\lim\limits_{h\to\infty} \int_{Q_r}\C_h \str{u_h}:\str{u_h}\d x =
\int_{Q_r}\C \str{u}:\str{u}\d x.  
\end{equation}

In view of \eqref{elastic_energy_convergence},  to prove \eqref{functional_conv_o} it suffices to establish
\begin{equation}\label{surface_jugoldi}
\lim\limits_{h\to\infty}  \cS_h(A_h;Q_r) =0 \quad\text{for any $r\in(0,1).$ }
\end{equation}
By \eqref{red_bound_jumpset}, $Q_1\cap \p^*A_h\subset J_{u_h}$ up to an $\cH^{n-1}$-negligible set. Thus, by \eqref{set_has_no_boundary} and the relative isoperimetric inequality, up to a subsequence, either
\begin{equation}\label{set_disappear}
\lim\limits_{h\to\infty} |Q_1\cap A_h| =0 
\end{equation}
or 
\begin{equation}\label{complement_disappear}
\lim\limits_{h\to\infty} |Q_1\setminus A_h| =0. 
\end{equation}
We claim that there exists a not relabelled subsequence $\{A_h\}$ such that for a.e.\  $t\in(0,1)$ 
\begin{equation}\label{set_in_cube_bound0}
\lim\limits_{h\to\infty}   \int_{A_h \cap \p Q_t} \phi_h(x,\nu_{Q_t})\d \cH^{n-1} =0
\end{equation}
if \eqref{set_disappear} holds, and 
\begin{equation}\label{compl_in_cube_bound0}
\lim\limits_{h\to\infty}   \int_{(Q_1\setminus A_h) \cap \p Q_t} \phi_h(x,\nu_{Q_t})\d \cH^{n-1} =0
\end{equation}
if \eqref{complement_disappear} holds. 

We establish only \eqref{set_in_cube_bound0}, the proof of \eqref{compl_in_cube_bound0} being similar. By the coarea formula  (applied with $f(x)=\max\{|x_1|,\ldots,|x_n|\}$),
$$
\lim\limits_{h\to\infty} |Q_1\cap A_h| =\lim\limits_{h\to\infty} \int_0^1\cH^{n-1}(A_h\cap \p Q_t)\d t =0, 
$$
thus, passing to further not relabelled subsequence, $\lim\limits_{h\to\infty} \cH^{n-1}(A_h\cap \p Q_t)=0$ for a.e.\  $t\in(0,1).$  In particular, if $\sup_h b_h''<+\infty,$ then 
\begin{equation}
\limsup\limits_{h\to+\infty} \int_{A_h \cap \p Q_t} \phi_h(x,\nu_{Q_t})\d \cH^{n-1} \le \limsup\limits_{h\to+\infty} \,\, b_h''\cH^{n-1}(A_h \cap \p Q_t)=0.  
\label{surface_esimation} 
\end{equation}
On the other hand, if $b_h''\to+\infty$ (up to a subsequence), then by the coarea formula  and the relative isoperimetric inequality in $Q_1,$
\begin{equation}\label{area_plus_relisop}
b_h''\int_0^1 \cH^{n-1}(A_h\cap\p Q_t)\d t =b_h''|A_h\cap Q_1| \le b_h'' c_n P(A_h, Q_1)^{\frac{n}{n-1}},
\end{equation}
where $c_n>0$ is the relative isoperimetric constant for cubes.
By \eqref{anisotropic_bound_1},
$$
P(A_h, Q_1) \le \frac{1}{a_h}\,\cS(A_h,Q_1) \le \frac{1}{d_5 b_h''}\,\cS(A_h,Q_1) \le \frac{\cF_h(A_h,u_h,Q_1)}{d_5b_h''},
$$
hence by \eqref{area_plus_relisop},
$$
b_h''\int_0^1 \cH^{n-1}(A_h\cap\p Q_t)\d t  \le c_n\,
\Big[\frac{M}{d_5}\Big]^{\frac{n}{n-1}}\,[b_h'']^{-\frac{1}{n-1}}.
$$
This and \eqref{energy_bound_uniform} imply 
$$
\lim\limits_{h\to+\infty} b_h''\int_0^1 \cH^{n-1}(A_h\cap\p Q_t)\d t =0.
$$
In particular, 
$$
\lim\limits_{h\to+\infty} \,b_h'' \cH^{n-1}(A_h \cap \p Q_t)=0\quad\text{for a.e.\  $t\in(0,1).$ }
$$
Now the proof of \eqref{set_in_cube_bound0} follows as  in \eqref{surface_esimation}.

Now we prove \eqref{surface_jugoldi} assuming \eqref{set_disappear}. Given $t\in(r,1)$ for which \eqref{set_in_cube_bound0} holds, define $E_h:=A_h\setminus Q_t.$ Then $(E_h,u_h)$  is an admissible configuration in \eqref{minimal_with_dirixle},  and
thus, 
\begin{equation}\label{tsfde}
\cF_h(A_h,u_h; Q_1) \le \Phi_h(A_h,u_h;Q_1) +o(1) \le \cF_h(E_h,u_h; Q_1) + o(1), 
\end{equation}
where in the first inequality we use \eqref{seq_almost_min} and in the second the definition of $\Phi_h.$
From the definition of $E_h$ and \eqref{tsfde} it follows that
\begin{equation*}
\cS_h(A_h; Q_t) \le \int_{A_h \cap \p Q_t} \phi_h(x,\nu_{Q_t})\d \cH^{n-1} + o(1). 
\end{equation*}
This and \eqref{set_in_cube_bound0} imply \eqref{surface_jugoldi}.

Now suppose that \eqref{complement_disappear} holds. Let $\delta_h$ be defined as in \eqref{defi_delta_h}, and let $\psi,$ $r''< r'<r$ and $v_h$ be as in \eqref{wdsaw} with $v=u$.
Fix any $t\in(r,1)$ for which \eqref{compl_in_cube_bound0} holds, and set $E_h:=A_h\cup \overline{Q_t}.$ Then for sufficiently large $h$ that $(E_h,v_h)$ is an admissible configuration for $\Phi_h(A_h,u_h;Q_1)$ in \eqref{minimal_with_dirixle}. Thus, by \eqref{seq_almost_min},
\begin{equation*} 
\cF_h(A_h,u_h;Q_1) \le \cF_h(E_h,v_h;Q_1) + o(1). 
\end{equation*}
By the definition of $\cF_h$, as in the proof of \eqref{djksdfkj} we can establish
\begin{multline*} 
\cS_h(A_h;Q_t) + \int_{Q_r}(2\psi -\psi^2)\C_h \str{u_h}  :\str{u_h}\d x \\
\le \int_{Q_r}\psi^2\C_h \str{u}:\str{u}\d x + \int_{(Q_1\setminus A_h) \cap \p Q_t} \phi_h(x,\nu_{Q_t})\d \cH^{n-1} + o(1).  
\end{multline*}
Then as in \eqref{shapatoyoqcha}, letting $h\to\infty$  we obtain
\begin{align}\label{shakar_bola}
& \limsup\limits_{h\to\infty} \cS_h(A_h;Q_t) +  \int_{Q_r}(2\psi -\psi^2)\C \str{u}:\str{u}dx
\le 
\int_{Q_r}\psi^2\C \str{u}:\str{u}dx. 
\end{align}
Since $\psi=1$ in $Q_{r''}$ and $|\psi|\le1,$   from \eqref{shakar_bola} it follows that 
\begin{equation*} 
\limsup\limits_{h\to\infty} \cS_h(A_h;Q_t) +    \int_{Q_{r''}} \C \str{u}:\str{u}dx \le \int_{Q_r} \C \str{u}:\str{u}dx. 
\end{equation*}
Now letting $r''\to r$ we get \eqref{surface_jugoldi}.
\end{proof}

Recall that by \cite[Theorem 6.2.1]{Morrey:2008} 
if the elasticity tensor $\C$ is constant and satisfies \eqref{hyp:elastic}, then there exists $C_{\bound_3,\bound_4}>0$ such that every  local minimizer $u\in H^1(Q_1(x_0) )$ of the functional 
\begin{equation}\label{almost_laplacian}
v\in H^1(Q_1(x_0);\R^n)\mapsto \int_{Q_1(x_0)} \C\str{v}:\str{v}\d x 
\end{equation}
is analytic in $Q_1(x_0)$ and satisfies 
\begin{equation}\label{c_gamma}
\int_{Q_r(x_0)}\C\str{u}:\str{u}\,\d x \le 
C_{\bound_3,\bound_4}\, r^n \int_{Q_1(x)} \C\str{u}:\str{u}\,\d x \quad\text{for any $r\in(0,1/2).$}
\end{equation}

Using Proposition \ref{prop:conti_prop_3.4} and repeating similar arguments in \cite{ChC:2019_arxiv} we get the following  decay property of the functional $\cF$.

\begin{proposition}\label{prop:functional_decay}
Assume (H1)-(H3).  Let
\begin{equation}\label{def_tau0}
\tau_0:=\tau_0(\bound_3,\bound_4):= (1 + C_{\bound_3,\bound_4})^{-2},
\end{equation}
where $C_{\bound_3,\bound_4}>0$ is given in \eqref{c_gamma}.
For any $\tau\in(0,\tau_0)$ there exist
$\varsigma = \varsigma(\tau)\in(0,1),$ $\vartheta:=\vartheta(\tau)
>0$ and $R:=R(\tau)>0$ such that if $(A,u)\in\admissible $ satisfies
\begin{align*}
& Q_\rho(x)\cap \p^*A\subseteq J_u,\\
& \int_{Q_\rho(x)\setminus A} |\str{u}|\d x=0,\\
& \cH^{n-1} (Q_\rho(x)\cap J_u)<\varsigma  \rho^{n-1},\\
& \cF(A,u;Q_\rho(x)) \le (1+\vartheta)\Phi(A,u;Q_\rho(x)) 
\end{align*}
for some $Q_\rho(x)\strictlyincluded \Omega$ with $0<\rho<R,$ then 
$$
\cF(A,u;Q_{\tau \rho}(x)) \le \tau^{n-1/2}\,\cF(A,u;Q_\rho(x)).
$$
\end{proposition}

\begin{proof}
 Assume by contradiction that there exist $\tau\in(0,\tau_0)$, positive real numbers $\varsigma_h,\vartheta_h,\rho_h\to0,$ cubes $Q_{\rho_h}(x_h)\strictlyincluded \Omega$, and admissible configurations $(A_h,u_h)\in \admissible$ such that
 \begin{subequations}
\begin{align}
& Q_{\rho_h}(x_h)\cap \p^*A_h\subseteq J_{u_h},  \label{cntr_a0}\\
& \int_{Q_{\rho_h}(x_h)\setminus A_h} |\str{u_h}|\d x=0,\\
& \cH^{n-1} (Q_{\rho_h}(x_h)\cap J_{u_h}) \le \varsigma_h \rho_h^{n-1}, \\
& \cF(A_h,u_h;Q_{\rho_h}(x_h)) \le (1 + \vartheta_h)\Phi(A_h,u_h;Q_{\rho_h}(x_h)), \label{cntr_a1} 
\end{align}
\end{subequations}
but 
\begin{equation}
\cF(A_h,u_h;Q_{\tau\rho_h}(x_h))>\tau^{n-1/2} \cF(A_h,u_h;Q_{\rho_h}(x_h))  \label{contradiction_ass3} 
\end{equation}
for any $h.$ Note that $\cF(A_h,u_h;Q_{\rho_h}(x_h))>0.$

Let us define the rescaled energy $\cF_h(\cdot;Q_1)$ as in \eqref{local_cF} with
$$
\phi_h(y,\nu):= \frac{(\rho_h/2)^{n-1}\varphi(x_h+\frac12\rho_hy,\nu)}{ \cF(A_h,u_h;Q_{\rho_h}(x_h))}
$$
in place of $\varphi(y,\nu)$ and 
$$
 \C_h(y):=\C(x_h+\rho_hy)
$$
in place of  $\C(y)$, for $y\in Q_1$. In view of \eqref{cntr_a0}-\eqref{cntr_a1},   for
$$
E_h:= \sigma_{x_h,\rho_h}(A_h)
$$
(see definition of blow-up map $\sigma_{x,r}$ at \eqref{blow_ups}) and
$$
v_h(y):=\frac{(\rho_h/2)^{\frac{n-2}{2}}\,  u_h(x_h+\frac12\rho_hy)}{\sqrt{\cF(A_h,u_h;Q_{\rho_h}(x_h))}} 
$$
we have 
\begin{align*}
& \cF_h(E_h,v_h;Q_1)=1,\\
& Q_1\cap \p^*E_h \subset_{\cH^{n-1}} J_{v_h},\\
& \int_{Q_1\setminus E_h}|\str{v_h}|\d x=0,\\
& \cH^{n-1} (Q_1\cap \p J_{v_h}) < 2^{n-1} \varsigma_h,\\
& \Psi_h(E_h,v_h;Q_1) \le \vartheta_h \Phi_h(E_h,v_h;Q_1) \le \vartheta_h \cF_h(E_h,v_h;Q_1)=\vartheta_h,
\end{align*}
where $\Phi_h$ and $\Psi_h$ are defined as in \eqref{minimal_with_dirixle} and \eqref{deviation} (with $\varphi_h$ and $\C_h$ in places of $\varphi$ and $\C, $ respectively). By the boundednes of $\Omega,$ there exists $x_0\in \cl{\Omega}$ such that, up to extracting a subsequence, $x_h\to x_0$ as $h\to+\infty.$ In particular, $x_h+ \rho_h y\to x_0$ for every $y\in \cl{Q_1}.$ Then the uniform continuity of $\C$ implies that $\C_h\to \C_0:=\C(x_0)$ uniformly in $ \cl{Q_1}.$ 
Also by \eqref{finsler_norm}, $\phi_h$ satisfies \eqref{anisotropic_bound_1} with $d_5:=\bound_1/\bound_2.$ Thus,  by Proposition \ref{prop:conti_prop_3.4}, there exist $v\in H^1(Q_1 )$ and infinitesimal rigid displacements $a_h$ such that, up to a subsequence,
$$
w_h:=v_h - a_h \to v\quad\text{a.e. in $Q_1,$}
$$ 
$\str{w_h} \wk \str{v}$ in $L^2(Q_1)$ as $h\to+\infty,$
and  
\begin{equation}\label{decay_limit}
\lim\limits_{h\to+\infty} \cF_h(E_h,v_h;Q_r) =\lim\limits_{h\to+\infty} \cF_h(E_h,w_h;Q_r) = \int_{Q_r} \C_0 \str{v}:\str{v}\d x
\end{equation}
for any $r\in(0,1].$
In particular, 
from \eqref{contradiction_ass3} and \eqref{decay_limit} it follows that
\begin{align*}
\int_{Q_\tau} \C_0 \str{v}:\str{v}\d x =\lim\limits_{h\to+\infty}\cF(E_h,v_h;Q_{\tau})
\geq  \tau^{n-1/2} \lim\limits_{h\to+\infty}  \cF(E_h,v_h;Q_{1})
= \tau^{n-1/2} \int_{Q_1} \C_0 \str{v}:\str{v}\d x.
\end{align*}
Since $ \cF_h(E_h,v_h;Q_1)=1,$ by \eqref{decay_limit},  $\int_{Q_1} \C_0\str{v}:\str{v}dx =1.$ Moreover, as $\C_0$ is constant and $v$ is a local minimizer of \eqref{almost_laplacian},  applying \eqref{c_gamma} with $r:=\tau$ and $R:=1$ we get
\begin{align*}
C_{\bound_3,\bound_4}\,\tau^n =   C_{\bound_3,\bound_4}\,\tau^n\int_{Q_1} \C_0 \str{v}:\str{v}\d x \ge \int_{Q_{\tau}}  \C_0 \str{v}:\str{v}\d x
\ge   \tau^{n-1/2}  \int_{Q_1} \C_0 \str{v}:\str{v}\d x = \tau^{n-1/2},
\end{align*}
which contradicts to the assumption $\tau<\tau_0.$
\end{proof}

Proposition \ref{prop:functional_decay} together with the arguments of \cite[Section 4.3]{P:2012} imply the following lower bound for $\cF$.

\begin{proposition} 
\label{prop:lower_density_cG}
Given $\tau\in (0,\tau_0)$, let 
$\varsigma:=\varsigma(\tau)\in(0,1),$ $\vartheta=\vartheta(\tau)>0$ and $R:=R(\tau)>0$ 
be as in Proposition \ref{prop:functional_decay} and for $\Theta>0$ let
$$
R_0:=R_0(\Theta,\tau,\bound_1):= \min\left\{R(\tau),\frac{\bound_1 n \omega_n^{1/n} \vartheta}{\Theta(2+\vartheta)}\right\}.
$$
Let $(A,u)\in\admissible $ be a $\Theta$-minimizer of $\cF$ in $Q_{r_0}(x_0)$ such that 
$\Omega\cap\p^*A\subset_{\cH^{n-1}} J_u$ and $\int_{\Omega\setminus A}|\str{u}|\d x=0.$ Then for any $x\in Q_{r_0}(x_0)\cap \cl{J_u^*},$ where $J_u^*$ is given by \eqref{def_Ju_starrr}, and any cube $Q_\rho(x)\subset Q_{r_0}(x_0)$ with $\rho\in(0,R_0)$ one has 
\begin{equation}\label{lower_dens_functiona}
\cF(A,u;Q_\rho(x)) \ge \bound_1\varsigma \rho^{n-1}. 
\end{equation}
\end{proposition}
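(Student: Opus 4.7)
The plan is to prove \eqref{lower_dens_functiona} by contradiction, iterating the decay estimate of Proposition \ref{prop:functional_decay} at successive scales $\rho_k := \tau^k \rho_0$, with the $\Theta$-minimality handling the possibility that the ``almost minimality'' hypothesis of that decay fails. First I would reduce to showing the bound at $x \in J_u^*$ rather than $\overline{J_u^*}$: for $x \in \overline{J_u^*} \setminus J_u^*$ with $Q_\rho(x) \subset Q_{r_0}(x_0)$, choose $\{x_j\} \subset J_u^*$ with $x_j \to x$; then for every $\epsilon > 0$ and $j$ large, $Q_{\rho-\epsilon}(x_j) \subset Q_\rho(x) \subset Q_{r_0}(x_0)$, so by the set-monotonicity of $\cF(A,u;\cdot)$ one has $\cF(A,u;Q_\rho(x)) \ge \cF(A,u;Q_{\rho-\epsilon}(x_j)) \ge \bound_1 \varsigma (\rho-\epsilon)^{n-1}$, and letting $\epsilon \to 0$ closes the reduction. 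So fix $x \in J_u^*$ and suppose for contradiction that $\cF(A,u;Q_{\rho_0}(x)) < \bound_1 \varsigma \rho_0^{n-1}$ for some $\rho_0 \in (0,R_0)$ with $Q_{\rho_0}(x) \subset Q_{r_0}(x_0)$.

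The core of the proof is to show inductively that $\cF(A,u;Q_{\rho_k}(x)) < \bound_1 \varsigma \rho_k^{n-1}$ for every $k \ge 0$. At each scale, by $\Theta$-minimality the deviation satisfies $\Psi(A,u;Q_{\rho_k}(x)) \le \Theta \rho_k^n$, and the inductive step splits into two cases. \textbf{Case (a):} If $\cF \le (1+\vartheta)\Phi$ on $Q_{\rho_k}(x)$, then the hypotheses of Proposition \ref{prop:functional_decay} are met --- the inclusion $Q_{\rho_k}(x)\cap\p^*A \subset J_u$ and the vanishing of $|\str{u}|$ on the void are inherited from the global assumptions, while the jump bound $\cH^{n-1}(Q_{\rho_k}(x) \cap J_u) < \varsigma \rho_k^{n-1}$ follows from the lower bound $\cS \ge \bound_1 \cH^{n-1}(J_u \cap Q_{\rho_k}(x))$ combined with the inductive hypothesis. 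The decay then yields $\cF(Q_{\rho_{k+1}}) \le \tau^{n-1/2} \cF(Q_{\rho_k}) < \tau^{1/2} \bound_1 \varsigma \rho_{k+1}^{n-1} < \bound_1 \varsigma \rho_{k+1}^{n-1}$. \textbf{Case (b):} If $\cF > (1+\vartheta)\Phi$, then $\vartheta \Phi < \Psi \le \Theta \rho_k^n$ forces $\Phi < \Theta \rho_k^n/\vartheta$ and hence $\cF < \Theta \rho_k^n (1+\vartheta)/\vartheta$; together with the perimeter bound $\bound_1 P(A, Q_{\rho_k}(x)) \le \cF$ and the relative isoperimetric inequality in $Q_{\rho_k}(x)$, this forces one of $|A \cap Q_{\rho_k}(x)|$ or $|Q_{\rho_k}(x) \setminus A|$ to be of order strictly higher than $\rho_k^n$. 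Testing $\Theta$-minimality against the corresponding competitor $A \setminus Q_{\rho_k(1-\epsilon)}(x)$ or $A \cup Q_{\rho_k(1-\epsilon)}(x)$ (for a.e.~$\epsilon$ so that no boundary mass is created) yields a refined estimate on $\cF(Q_{\rho_k}(x))$ of order $\rho_k^{n+1/(n-1)}$; the specific constant $\bound_1 n \omega_n^{1/n} \vartheta /(\Theta(2+\vartheta))$ appearing in $R_0$ is calibrated so that, for $\rho_k < R_0$, this refined bound is strictly below $\bound_1 \varsigma \rho_{k+1}^{n-1}$, and monotonicity gives $\cF(Q_{\rho_{k+1}}(x)) < \bound_1 \varsigma \rho_{k+1}^{n-1}$.

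Once the iteration is established, case (a) contributes a factor $\tau^{1/2}$ each time it applies, so $\cF(A,u;Q_{\rho_k}(x))/\rho_k^{n-1} \to 0$ as $k \to \infty$ regardless of how many times we pass through case (b). On the other hand, since $x \in J_u^*$ and $B_{\rho_k/2}(x) \subset Q_{\rho_k}(x)$,
\[
\liminf_{k \to \infty} \frac{\cH^{n-1}(Q_{\rho_k}(x) \cap J_u)}{\rho_k^{n-1}} \ge \lim_{k \to \infty} \frac{\cH^{n-1}(B_{\rho_k/2}(x) \cap J_u)}{\rho_k^{n-1}} = \frac{\omega_{n-1}}{2^{n-1}} > 0,
\]
which, combined with $\cF \ge \bound_1 \cH^{n-1}(J_u \cap Q_{\rho_k}(x))$, contradicts the decay. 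This gives \eqref{lower_dens_functiona}.

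The delicate part of the argument is case (b): Proposition \ref{prop:functional_decay} is not directly applicable, and one must invoke a concrete competitor construction driven by the relative isoperimetric inequality. The form of $R_0$, with the isoperimetric constant $n \omega_n^{1/n}$ explicitly present, is dictated by the need to balance the $\Theta$-minimality penalty $\Theta \rho_k^n$ against the perimeter-volume scaling $\bound_1 P \ge \bound_1 n \omega_n^{1/n} |A \cap Q_{\rho_k}|^{(n-1)/n}$, so that the isoperimetric competitor propagates the inductive bound without relying on the decay lemma. Verifying this balance — in particular, controlling the boundary contributions $\cH^{n-1}(A^{(1)} \cap \p Q_{\rho_k(1-\epsilon)}(x))$ and the elastic tail for the ``fill'' variant — is the main technical hurdle.
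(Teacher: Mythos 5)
Your overall scaffolding---contradiction at $x\in J_u^*$, iteration of Proposition~\ref{prop:functional_decay} down the scales $\tau^m\rho$, extension to $\overline{J_u^*}$ by continuity of the Borel measure $\cF(A,u;\cdot)$---matches the paper. The genuine gap is the case split: you distinguish (a) the case where $(1+\vartheta)$-almost-minimality holds from (b) the case where it fails, and invoke an unproven isoperimetric competitor construction to handle (b). The paper's proof shows that there is no case (b). The point is not to estimate $\Theta|A\Delta B|$ crudely by $\Theta\rho^n$ (which would only give $\Psi\le\Theta\rho^n$) but rather, for any competitor $(B,v)$ with $A\Delta B\strictlyincluded Q_r(x)$, to write
\begin{align*}
\Theta|A\Delta B| \le \Theta\,|A\Delta B|^{1/n}\,|A\Delta B|^{(n-1)/n} \le \frac{\Theta r}{\bound_1 n\omega_n^{1/n}}\Big(\cF(A,u;Q_r(x))+\cF(B,v;Q_r(x))\Big),
\end{align*}
where the last step applies the isoperimetric inequality $n\omega_n^{1/n}|A\Delta B|^{(n-1)/n}\le P(A\Delta B,\R^n)\le P(A,Q_r)+P(B,Q_r)$ and then bounds each perimeter by $\cF/\bound_1$ via \eqref{finsler_norm}. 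Plugging this back into the $\Theta$-minimality inequality and rearranging gives $\cF(A,u;Q_r(x))\le\frac{1+\alpha}{1-\alpha}\,\cF(B,v;Q_r(x))$ with $\alpha:=\frac{\Theta r}{\bound_1 n\omega_n^{1/n}}$; the threshold $R_0$ is chosen exactly so that $\alpha\le\vartheta/(2+\vartheta)$, i.e.\ $\frac{1+\alpha}{1-\alpha}\le 1+\vartheta$, whenever $r<R_0$. Hence $(1+\vartheta)$-almost-minimality holds \emph{unconditionally} on every $Q_r(x)\subset Q_{r_0}(x_0)$ with $r<R_0$, and Proposition~\ref{prop:functional_decay} applies at every step with no dichotomy.

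Beyond being unnecessary, your case (b) as sketched does not close with the prescribed $R_0$: the cut/fill competitor estimate you describe would need smallness of $\rho$ in terms of $\varsigma$, and scales like $\Theta^{-(n-1)/n}$ via the relative isoperimetric inequality, whereas $R_0=R_0(\Theta,\tau,\bound_1)$ does not depend on $\varsigma$ and carries the linear factor $\Theta^{-1}$, so the constants cannot be matched. Separately, the remark that $\cF/\rho_k^{n-1}\to 0$ ``regardless of how many times we pass through case (b)'' is incorrect logic (if case (b) occurred cofinitely often no geometric decay would accumulate), but it is also not what is needed: the contradiction comes from the \emph{persistent} bound $\cH^{n-1}(Q_{\tau^m\rho}(x)\cap J_u)<\varsigma(\tau^m\rho)^{n-1}$ with $\varsigma<1$ fixed, which already contradicts $\theta(J_u,x)=1$ without any decay of the ratio.
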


\begin{proof}
Let $(C,w),(D,v)\in\admissible $ and $\openset\subset\Omega$ be such that $C\Delta D\strictlyincluded \openset.$ 
By the isoperimetric inequality, the inclusion $\p^*(C\Delta D) \subset \openset\cap (\p^*C\cup\p^*D),$ \eqref{finsler_norm}, the definition of $\cS(\cdot;\openset)$ and the nonnegativity of $\cW(\cdot;\openset)$ one has
\begin{align}\label{relate_isopp}
n\omega_n^{1/n}\,|C\Delta D|^{\frac{n-1}{n}} \le  & P(C\Delta D) \le P(C,\openset) + P(D,\openset)\nonumber\\
\le &\frac{\cS(C,w,\openset) + \cS(D,v,\openset)}{\bound_1} \le \frac{\cF(C,w;\openset) + \cF(D,v;\openset)}{\bound_1},
\end{align}
From \eqref{relate_isopp} and the $\Theta$-minimality of $(A,u)$ in $Q_{r_0}(x_0)$ we deduce 
\begin{align}\label{almost_min_dan_ketamiz}
\!\cF(A,u;Q_r(x)) \le & \cF(B,v;Q_r(x)) + \Theta|A\Delta B|^\frac{1}{n}|A\Delta B|^\frac{n-1}{n}\nonumber \\
\le & \cF(B,v;Q_r(x)) + \frac{\Theta r}{ \bound_1 n\omega_n^{1/n}}\Big(\cF(A,u;Q_r(x)) + \cF(B,v;Q_r(x))\Big)  
\end{align}
for any $Q_r(x)\subset Q_{r_0}(x_0)$ and $(B,v)\in \admissible $ with $A\Delta B\strictlyincluded Q_r(x)$ and $\supp(u-v)\strictlyincluded Q_r(x)$, where in the last inequality we used the inequality $|A\Delta B| \le |Q_r|= r^n.$
By the choice of $R_0,$ if $r\in(0,R_0),$ then $\frac{\Theta r}{\bound_1 n\omega_n^{1/n}} \le \frac{\vartheta}{2+\vartheta},$ and thus, by \eqref{almost_min_dan_ketamiz} 
\begin{equation*}
\cF(A,u;Q_r(x)) \le (1+ \vartheta)\cF(B,v;Q_r(x)).
\end{equation*}
By the arbitrariness of $(B,v),$ this inequality is equivalent to
\begin{equation}\label{almost_minimal_mish}
\cF(A,u;Q_r(x)) \le (1+ \vartheta)\Phi(A,u;Q_r(x)). 
\end{equation}

Now we prove \eqref{lower_dens_functiona}. Fix any $x\in J_u^*;$ for simplicity we suppose that $x=0.$  By contradiction, assume that 
\begin{equation*} 
\cF(A,u;Q_\rho)<  \bound_1\varsigma \rho^{n-1}
\end{equation*}
for some $Q_\rho\strictlyincluded Q_{r_0}(x_0)$ with $\rho\in(0,R_0).$
Then by the nonnegativity of the elastic energy and 
\eqref{finsler_norm} one has
\begin{equation*} 
 \bound_1\varsigma \rho^{n-1} > \cS(A,u;Q_\rho) \ge \bound_1\cH^{n-1} (Q_\rho\cap J_u) 
\end{equation*}
so that 
\begin{equation*} 
\cH^{n-1} (Q_\rho\cap J_u)< \varsigma \rho^{n-1}.
\end{equation*}
By Proposition \ref{prop:functional_decay}  and the definition \eqref{def_tau0} of $\tau_0,$  we have also
$$
\cF(A,u;Q_{\tau \rho}) \le \tau^{n-1/2} \cF(A,u;Q_\rho) < \bound_1\varsigma (\tau\rho)^{n-1} 
$$
so that
$$
\cH^{n-1} (Q_{\tau\rho}\cap J_u)< \varsigma (\tau\rho)^{n-1}. 
$$
Then by induction,  
$$
\cH^{n-1} (Q_{\tau^m\rho}\cap J_u)< \varsigma (\tau^m\rho)^{n-1}  
\quad\text{for any $m\ge1.$}
$$
However, by the definition of $J_u^*,$
$$
1 = \lim\limits_{m\to+\infty} 
\frac{\cH^{n-1} (Q_{\tau^m\rho}\cap J_u)}{(\tau^m\rho)^{n-1}} 
\le \frac{2\bound_1\varsigma }{2\bound_1} = \varsigma<1, 
$$
a contradiction. Hence, \eqref{lower_dens_functiona} holds for any $x\in J_u^*.$  Note that the map $\cF(A,u;\cdot),$ defined for open sets $\openset\strictlyincluded Q_{r_0}(x_0)$ extends to a positive Borel measure in $Q_{r_0}(x_0),$ and therefore, by the continuity of Borel measures, \eqref{lower_dens_functiona} extends also for $x\in Q_{r_0}(x_0)\cap \overline{J_u^*}$.
\end{proof}

Now we are ready to prove \eqref{min_seq_density_up} and \eqref{min_seq_density_low}.

\begin{proof}[\textbf{Proof of Theorem \ref{teo:density_estimates}}]
Let $(A,u)$ be a minimizer of $\cF$  such that $\Omega\cap\p^*A\subset J_u$ and $\int_{\Omega\setminus A}|Eu|\d x=0$ and let $\lambda_0>0$ be given by Theorem \ref{teo:global_existence}.   Since $(A,u)$ is also a minimizer of $\cF^{\lambda_0},$ for any open set $\openset\subset\Omega$ and $(B,v)\in\admissible $ with $A\Delta B\strictlyincluded \openset$ and $\supp(u-v) \strictlyincluded \openset$ we have 
$$
\cF(A,u;\openset) \le \cF(B,v;\openset) + \lambda_0\big||A| - |B|\big| \le \cF(B,v;\openset) + \lambda_0\big|A\Delta B\big|. 
$$
Hence, $(A,u)$ is $\lambda_0$-minimizer of $\cF(\cdot;\Omega)$ in $\Omega.$  

Let us prove \eqref{min_seq_density_up}. Fix  $x\in \Omega$ and let $r_x:=\min\{1,\dist(x,\p \Omega)\}.$ Then by the $\lambda_0$-minimality of $(A,u),$
for any $r\in (0,r_x)$ and $\rho\in(r,r_x)$ 
\begin{equation}\label{trtrt00}
\cF(A,u;Q_\rho(x)) \le\cF(A\setminus \cl{Q_r},u;Q_\rho(x)) + \lambda_0|Q_r(x)\cap A|,   
\end{equation} 
where for shortness  $Q_r:=Q_r(x)$.  Since $\cF(A,u;Q_\rho(x)\setminus \cl{Q_r(x)}) = \cF(A\setminus \cl{Q_r(x)},u;Q_\rho(x)\setminus \cl{Q_r(x)}),$ from \eqref{trtrt00} and the  nonnegativity of $\cF$ we get
$$
\cF(A,u; Q_r(x)) \le \int_{\p Q_r(x)} \varphi(x,\nu_{Q_r(x)})d\cH^{n-1}  +  \lambda_0  r^n.
$$
By \eqref{finsler_norm},
$$
\int_{\p Q_r(x)} \varphi(x,\nu_{Q_r(x)})d\cH^{n-1} \le \bound_2\cH^{n-1}(\p Q_r(x)) = 2n\bound_2r^n  
$$
thus, using $r\le1$ we obtain
\begin{equation}\label{upper_bound_f}
\cF(A,u;\overline{Q_r(x)}) \le (2n\bound_2 + \lambda_0)r^{n-1}. 
\end{equation}
Since $\cW(A,u;Q_r(x))\ge0$, by \eqref{finsler_norm} and inserting the equality $Q_r(x)\cap J_u=(Q_r(x)\cap\p^*A)\cup (Q_r(x)\cap A^{(1)}\cap J_u)$ in \eqref{upper_bound_f}, we get also
$$
\cF(A,u;Q_r(x)) \ge \cS(A,u;Q_r(x)) \ge \bound_1\cH^{n-1}(Q_r(x)\cap J_u). 
$$
Therefore, 
$$
\cH^{n-1}(Q_r(x)\cap J_u) \le \frac{2n\bound_2 + \lambda_0}{\bound_1}\,r^{n-1}.
$$

Next we prove \eqref{min_seq_density_low}. Fix $x\in \cl{J_u^*}.$
For $\tau_0$ given in \eqref{def_tau0},  let $\varsigma_o=\varsigma(\tau_0/2)\in(0,1)$ and $R_o=R_0(\tau_0/2,\bound_1,\bound_2,\lambda_0)>0$ be as in Proposition \ref{prop:lower_density_cG}. By \eqref{lower_dens_functiona} ,
\begin{equation}\label{low_bound_f}
\cF(A,u; Q_{\gamma r}(x)) \ge \bound_1\varsigma_o  (\gamma r)^{n-1}  
\end{equation}
for any $\gamma\in(0,1)$ and $r\in(0,R_o)$ with $Q_r(x)\subset\Omega.$ Let 
$$
\varsigma_*:=\varsigma(\tau_*),\quad 
\vartheta_*:=\vartheta(\tau^*)\quad\text{and}\quad 
R_*:=\min\{R(\tau_*),R_o\}
$$ 
be given by Proposition \ref{prop:functional_decay} for
\begin{equation}\label{tau_startaa}
2\tau_*:=\min\Big\{\frac{\tau_0}{2},\Big(\frac{\bound_1\varsigma_o }{2n\bound_2+\lambda_0}\Big)^2\Big\} 
\end{equation}
By contradiction, if $\cH^{n-1} (Q_r(x)\cap J_u)<\varsigma_* r^{n-1},$ then applying \eqref{almost_minimal_mish} with $\tau=\tau_*$ we get
$$
\cF(A,u;Q_r(x)) \le (1 + \vartheta_*) \Phi(A,u;Q_r(x)).
$$
Hence, by Proposition \ref{prop:functional_decay},
$$
\cF(A,u;Q_{\tau_* r}(x)) \le \tau_*^{n-1/2} \cF(A,u;Q_r(x)) 
$$
so that by \eqref{low_bound_f} and \eqref{upper_bound_f},
$$
\tau_*^{1/2} \ge \frac{\bound_1\varsigma_o }{2n\bound_2 + \lambda_0}, 
$$
which contradicts to \eqref{tau_startaa}.  

Finally, \eqref{essental_closed_jumpoas} follows from the density estimates together with a covering argument.
\end{proof}

From Theorem \ref{teo:density_estimates} we get a partial regularity of minimizers of $\cF$.

\begin{proof}[\textbf{Proof of Theorem \ref{teo:regularity_of_minimizers}}]
(i)-(iii). Let $(\tilde A,\tilde u)\in\admissible$ be a minimizer of $\cF$ and let 
$$
A':=\tilde A^{(1)},\qquad u':=\tilde u\chi_{A'\cup S} + \xi'\chi_{\Omega\setminus A'},
$$
where $\xi'\in(0,1)^n$ is chosen such that $\Omega\cap \p^*A'\subset J_{u'}.$ By \cite[Chapter 15]{Maggi:2012},  $\p A' = \cl{\p^*A'}.$ Clearly, $(A',u')$ is a minimizer of $\cF,$ and by Theorem \ref{teo:density_estimates}, $\cH^{n-1}(\cl{J_{u'}^*}\setminus J_{u'}^*)=0.$ Since $J_{u'}$ is rectifiable,
by \cite[Theorem 2.63]{AFP:2000}, $\cH^{n-1}(J_{u'}\setminus J_{u'}^*)=0$  and hence observing $\Omega\cap \p A' = \Omega\cap \cl{\p^*A'} \subset \cl{J_{u'}}$ we deduce
$$
\cH^{n-1}(A'\setminus \Int{A'}) \le \cH^{n-1}(\p A') \le \cH^{n-1}(\p\Omega) + \cH^{n-1}(J_{u'}) <+\infty.
$$

Now let 
$$
A:=\Int{A'}\quad\text{and}\quad u:=\tilde u\chi_{A\cup S} + \xi'\chi_{\Omega\setminus A}.
$$
Since $|A\Delta A'| \le |\p A'| = 0,$ one has $u = u'$ a.e. in $\Omega\cup S$ and hence  $(A,u)$ is also a minimizer of $\cF.$ Moreover,
$$
\cH^{n-1}(\tilde A^{(1)} \setminus A) \le \cH^{n-1}(\p A') < +\infty,\qquad \cH^{n-1}(J_u\setminus J_{u}^*) = \cH^{n-1}(J_{u'}\setminus J_{u'}^*) =0,
$$
and 
$$
\cH^{n-1}(\cl{J_u^*}\setminus J_{u}^*) = \cH^{n-1}(\cl{J_{u'}^*}\setminus J_{u'}^*) =0.
$$
Thus, (i) follows. The assertions (ii) and (iii) directly follow from the minimality of $(A,u)$ and Theorem \ref{teo:density_estimates}.

(iv)  Finally, assuming that $E\subset A$ is a connected component of (the open set) $A$ with $\cH^{n-1}(\p^* E\cap \Sigma\setminus J_u)=0$ we prove
\begin{equation} \label{u87i32}
|E|\ge \omega_n \,\Big(\frac{\bound_1n}{\lambda_0}\Big)^n\qquad \text{and} \qquad
u = u_0 +a
\end{equation}
for some rigid displacement $a.$
Indeed, for $v:=u\chi_{A\cup\substrate\setminus E} + u_0\chi_E$  we have
$$
\cS(A,u) \ge \cS(A,v)
$$
and 
\begin{equation}\label{smaller_somssj}
\cW(A,u) \ge \cW(A,v).
\end{equation}
In \eqref{smaller_somssj} the equality holds if and only of $\cW(E,u)=0.$   Therefore, by the minimality of $(A,u),$ it follows that $u=u_0$ in $E$ (up to an additive rigid displacement).

To prove the first assertion in \eqref{u87i32} consider the competitor $(A\setminus E,u)\in\admissible.$ Since $(A,u)$ solves \eqref{min_prob_globals_uncons},
$\cF^{\lambda_0}(A,u) \le \cF^{\lambda_0}(A\setminus E,u)$ so that using $u=u_0+a$ in $E$ and the additivity of the surface energy, we get
$$
\int_{\p^*E} \varphi(x,\nu_E)\d\cH^{n-1} \le \lambda_0 |E|.
$$
Using \eqref{finsler_norm} and the isoperimetric inequality in this estimate we obtain
$$
\lambda_0 |E| \ge \bound_1 P(E) \ge \bound_1 n\omega_n^{1/n} |E|^{\frac{n-1}{n}}.
$$
Hence, the first assertion in \eqref{u87i32} follows.
\end{proof}

\appendix 
\section{}

\subsection{Equivalence of volume-constrained and uncontrained penalized minimum  problems}

The following proposition can be seen  an  extension of \cite[Theorem 1.1]{EF:2011}.

\begin{proposition}\label{prop:fusco}
Assume  (H1)-(H3). There exists $\lambda_0>0$ (possibly depending on $\bound_1,$ $\bound_2$ and $\Omega$) with the following property:  $(A,u)\in \admissible$ is a solution of \eqref{min_prob_globals} if and only if $(A,u)$ is also a solution to \eqref{min_prob_globals_uncons} for all $\lambda\ge \lambda_0.$ 
\end{proposition}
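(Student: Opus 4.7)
The plan is to prove the following key claim, from which the proposition immediately follows: \emph{there exists $\lambda_0>0$ such that for every $\lambda\geq\lambda_0$, every minimizer $(A,u)$ of $\cF^\lambda$ in $\admissible$ satisfies $|A|=\fm$}. Granted this claim, a minimizer of $\cF^\lambda$ is automatically a competitor for \eqref{min_prob_globals} with $\cF(A,u)=\cF^\lambda(A,u)$, so $\inf\cF^\lambda = \min\{\cF(A,u):(A,u)\in\admissible,|A|=\fm\}$ and the two sets of minimizers coincide. Combined with the existence of minimizers of $\cF^\lambda$ provided by the $\tau_\admissible$-compactness and lower semicontinuity of Theorems \ref{teo:compactness} and \ref{teo:lower_semicontinuity}, this yields the equivalence.

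I would argue by contradiction: assume the claim fails, so there exist $\lambda_j\to+\infty$ and minimizers $(A_j,u_j)$ of $\cF^{\lambda_j}$ with $|A_j|\neq\fm$. Fixing a solution $(A^*,u^*)$ of \eqref{min_prob_globals} and testing minimality,
\[
\lambda_j\bigl||A_j|-\fm\bigr| \leq \cF(A^*,u^*)-\cF(A_j,u_j) \leq \cF(A^*,u^*) + \bound_2\cH^{n-1}(\Sigma),
\]
where the last inequality uses $\cW\geq0$ and the lower bound $\cS(A_j,u_j)\geq -\bound_2\cH^{n-1}(\Sigma)$ coming from \eqref{hyp:bound_anis}. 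Thus $||A_j|-\fm|\to0$ and $\sup_j\cF(A_j,u_j)<+\infty$.

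The heart of the matter is a \emph{volume-fixing construction}: for each large $j$, produce $(B_j,v_j)\in\admissible$ with $|B_j|=\fm$ and the \emph{linear} estimate
\begin{equation}\label{voladj}
\cF(B_j,v_j) \leq \cF(A_j,u_j) + C\bigl||A_j|-\fm\bigr|,
\end{equation}
with $C$ independent of $j$. Assuming the non-degenerate case $\fm<|\Omega|$ (the case $\fm=|\Omega|$ forces $A=\Omega$ a.e.\ and is handled by a direct argument), I pick a density point $x_0\in A^*$ with $B_{2r_0}(x_0)\strictlyincluded \Omega\setminus\overline{S}$ and $|A^*\cap B_{r_0}(x_0)|\geq c_0>0$; the $L^1$-convergence $A_j\to A^*$ then gives $|A_j\cap B_{r_0}(x_0)|\geq c_0/2$ for $j$ large. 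Choosing a vector field $X\in C_c^\infty(B_{2r_0}(x_0);\R^n)$ of the form $X(x):=\phi(x)\e$ for a suitable cutoff $\phi$ and unit vector $\e$ so that $|\int_{A_j}\operatorname{div}X\,\d x|\geq c_1>0$ uniformly in $j$, the map $\Phi_t:=\operatorname{id}+tX$ is a $C^\infty$-diffeomorphism of $\R^n$ for $|t|\leq\epsilon_0$ small that fixes $\R^n\setminus B_{2r_0}(x_0)$ (in particular $S$ and $\Sigma$). The change of variables formula
\[
|\Phi_t(A_j)|-|A_j| = t\int_{A_j}\operatorname{div}X\,\d x + O(t^2)
\]
together with the implicit function theorem then selects $t_j$ of appropriate sign with $|t_j|\leq C||A_j|-\fm|$ such that $B_j:=\Phi_{t_j}(A_j)$ has $|B_j|=\fm$. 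Setting $v_j:=u_j\circ\Phi_{t_j}^{-1}$ on $B_j\cup S$ and extending by a generic constant on $\Omega\setminus B_j$ (cf.\ Remark \ref{rem:ext_u_out_A}) yields $(B_j,v_j)\in\admissible$; the area formula together with the continuity of $\varphi$ and $\C$, the $H^1$-regularity of $u_0$, and the bound $\|D\Phi_{t_j}-I\|_\infty\leq C|t_j|$ gives
\[
\cS(B_j,v_j) \leq (1+C|t_j|)\,\cS(A_j,u_j),\qquad \cW(B_j,v_j) \leq (1+C|t_j|)\,\cW(A_j,u_j) + C|t_j|,
\]
which, in view of $\sup_j\cF(A_j,u_j)<+\infty$, yields \eqref{voladj}. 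Comparing with the minimality of $(A_j,u_j)$ and $|B_j|=\fm$,
\[
\cF(A_j,u_j) + \lambda_j||A_j|-\fm| \leq \cF^{\lambda_j}(B_j,v_j) = \cF(B_j,v_j) \leq \cF(A_j,u_j) + C||A_j|-\fm|,
\]
forcing $\lambda_j\leq C$, a contradiction.

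The main obstacle will be the volume-fixing step: carefully verifying the admissibility of $(B_j,v_j)$ and the \emph{linear} $O(|t_j|)$ transformation of every component of the weighted surface energy \eqref{surergy} (in particular the delamination term along $\Sigma\cap\p^*A\cap J_u$ and the adhesion term involving $\beta$) and of the elastic energy $\cW$ with its fixed mismatch $\bM_0$. The key simplification provided by our choice $\operatorname{supp}X\strictlyincluded\Omega\setminus\overline{S}$ is that $\Phi_{t_j}$ acts as the identity in a neighborhood of $\Sigma$, preserving the $\Sigma$-contributions of $\cS$ exactly; the remaining estimates then follow from the standard chain rule for GSBD-functions under $C^1$-diffeomorphisms, the uniform continuity of $\C\in C^0(\cl{\Omega})$, and the control on the mismatch contribution furnished by $\|\str u_0\|_{L^2(\R^n)}<\infty$.
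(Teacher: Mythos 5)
Your overall strategy agrees with the paper's: both reduce the equivalence to the claim that for $\lambda$ large every minimizer of $\cF^\lambda$ already satisfies the volume constraint, and both prove this claim by contradiction via a compactly supported, volume-adjusting perturbation of $A_j$ inside a ball away from $\Sigma$, following the scheme of \cite{EF:2011}. The paper uses the explicit radial bi-Lipschitz map of \cite{EF:2011}, whereas you use the flow $\Phi_t=\operatorname{id}+tX$; this is a cosmetic difference, and your implicit-function-theorem selection of $t_j$ with $|t_j|\le C\bigl||A_j|-\fm\bigr|$ is fine.

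There is, however, a genuine gap in your treatment of the elastic energy. You set $v_j:=u_j\circ\Phi_{t_j}^{-1}$ and claim $\cW(B_j,v_j)\le(1+C|t_j|)\cW(A_j,u_j)+C|t_j|$ by ``the standard chain rule for GSBD-functions under $C^1$-diffeomorphisms.'' The chain rule gives $\nabla v_j(y)=\nabla u_j(\Phi^{-1}_{t_j}(y))D\Phi_{t_j}^{-1}(y)$, so $\str{v_j}$ is the \emph{symmetrization of} $\nabla u_j\,D\Phi_{t_j}^{-1}$, which is \emph{not} a small perturbation of $\str{u_j}\circ\Phi_{t_j}^{-1}$: the $O(t_j)$ correction is $-\tfrac{t_j}{2}\bigl(\p_k u_i\,\p_j X_k+\p_k u_j\,\p_i X_k\bigr)$, which involves the \emph{full} gradient $\nabla u_j$. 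For $u_j\in GSBD^2$ only the approximate symmetric gradient $\str{u_j}$ is controlled by the energy; there is no control on $\nabla u_j$ (no global Korn inequality in the presence of jumps), and indeed the object $\nabla u_j$ is not even defined for generic GSBD functions. So the asserted elastic estimate does not follow and the argument breaks at exactly the point you flagged as the ``main obstacle.'' The paper sidesteps this entirely by defining the competitor displacement as $u_0$ inside the perturbation ball (and $u_j$ outside): since $\bM_0=\str{u_0}$ in $\Omega$, the elastic integrand vanishes identically on $E_h\cap B_r$, making the elastic term trivially monotone ($I_3\ge0$) with no pushforward of strains required. If you adopt the same replacement --- keep $v_j=u_j$ outside the ball, set $v_j=u_0$ on $B_j$ inside the ball, and absorb the new interface along $\partial B_{2r_0}(x_0)$ into an $O(r_0^{n-1})$ error --- your argument goes through.

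A second, minor, gap: you pick a density point $x_0\in A^*$ and then invoke ``the $L^1$-convergence $A_j\to A^*$,'' but this convergence was never established; $A^*$ is just a fixed comparison configuration. You should instead use the uniform perimeter bound to extract a (not relabelled) $L^1$-convergent subsequence $A_j\to A$ with $|A|=\fm$, and pick the density point $x_0$ in this limit $A$, as the paper does.
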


\begin{proof}
Note that any minimizer $(A,u)\in\admissible $ of $\cF^\lambda$ with $|A|=\fm$ is also minimizer of $\cF.$ Hence, it suffices to show that  there exists $\lambda_0>0$ such that any minimizer $(A,u)$ of $\cF^\lambda$ for $\lambda>\lambda_0$ satisfies  $|A|=\fm.$ 

Assume by contradiction that there exist a sequence $\lambda_h\to\infty$ and a sequence $(A_h,u_h)\in \admissible$ minimizing $\cF^{\lambda_h}$ such that  $|A_h|\ne \fm.$ Take any $A_0\in BV(\Omega;\{0,1\})$ with $|A| = \fm.$ Then by minimality, $\cF^{\lambda_h}(A_h,u_h) \le \cF^{\lambda_h}(A_0,u_0)=\cF(A_0,u_0)$ for all large $h,$ and hence by \eqref{finsler_norm} and \eqref{hyp:bound_anis},
\begin{equation}\label{gsrtaer}
\sup\limits_{h\ge 1} P(A_h) \le a:=\frac{\cF(A_0,u_0) + \bound_2\cH^{n-1} (\Sigma) + \cH^{n-1} (\p\Omega)}{\bound_1}
\end{equation}
and 
$$
\sup_{h\ge1} \,\lambda_h||A_h| - \fm|\le \cF(A_0,u_0) + \bound_2\cH^{n-1} (\Sigma).
$$ 
This implies $|A_h| \to \fm$ as $h\to\infty.$ By compactness, there exists a finite perimeter set $A\subset\Omega$ and a not relabelled subsequence such that $\chi_{A_h} \to \chi_A$ a.e.\ in $\R^n.$ In particular, $|A|=\fm.$  

Further we assume $|A_h|<\fm$ for all $h;$ the case $|A_h|>\fm$ can be treated analogously. As in the proof of \cite[Theorem 1.1]{EF:2011},  given $\epsilon\in(0,2\epsilon_n)$, where $\epsilon_n>0$ will be chosen later, there exist small  $r>0$ and $x_r\in\Omega$ such that  $B_r(x)\strictlyincluded\Omega$ and
$$
|A\cap B_{r/2}(x_r)|<\epsilon r^n,\qquad |A\cap B_r(x_r)|>\frac{\omega_n r^n}{2^{n+2}}.
$$
For shortness, we suppose that $x_r=0$ we write $B_r:=B_r(x_r).$  Since $A_h\to A$ in $L^1 (\R^n),$ for all large $h$
\begin{equation*} 
|A_h\cap B_{r/2}|<\epsilon r^n,\qquad |A_h\cap B_r|>\frac{\omega_n r^n}{2^{n+2}}. 
\end{equation*}
Let $\Phi:\R^n\to \R^n$ be the bi-Lipschitz homeomorphism, mapping $B_r$ into $B_r,$ defined as
$$
\Phi(x):= 
\begin{cases}
(1 - (2^n-1)\sigma)x, & |x|<\frac r2,\\
x+ \sigma\Big(1 - \frac{r^2}{|x|^2}\Big)x, & \frac{r}{2} \le x < r,\\
x, & |x|\ge r 
\end{cases}
$$
for some $\sigma\in(0,\frac{1}{2^n}).$ 
Recall from \cite[pp. 420-422]{EF:2011} that 
the Jacobian $J\Phi$ of $\Phi$ satisfies 
$$
J\Phi(y) \ge 1+ C_1(n)\,\sigma \qquad y\in B_r\setminus B_{r/2},
$$
for some $C_1(n)>0,$
and 
$$
J\Phi(y) \le 1+ 2^nn\sigma \qquad y\in B_r.
$$
Moreover, the tangential Jacobian $J_{n-1}T_x$ of $\Phi$ on the tangent space $T_x$ of $\p^* A_h$ satisfies 
\begin{equation}\label{tang_jacob}
J_{n-1}T_x \le 1+(1 + 2^n(n-1))\sigma,\qquad x\in B_r \cap \p^* A_h.  
\end{equation}
Set 
\begin{equation}
E_h:= \Phi(A_h),\qquad v_h: =u_h\chi_{A_h\setminus B_r}^{} + u_0\chi_{E_h\cap B_r}^{}.
\label{shaushaush}
\end{equation}
Note that $|E_h|<\fm$ and $E_h\Delta A_h\subset \overline{B_r}.$  Let us estimate
\begin{align}
\cF^{\lambda_h}(A_h,u_h) - \cF^{\lambda_h}(E_h,v_h)
= &\Big(\int_{\cl{B_r}\cap \p^*A_h} \varphi(x,\nu_{A_h})\d\cH^{n-1} -
\int_{\cl{B_r}\cap \p^*E_h} \varphi(x,\nu_{E_h})\d\cH^{n-1}\Big) \nonumber \\
+ & \Big(2\int_{\overline{B_r}\cap J_{u_h}} \varphi(x,\nu_{J_{u_h}})\d\cH^{n-1}
- 2\int_{\overline{B_r}\cap J_{v_h}} \varphi(x,\nu_{J_{v_h}})\Big)\d\cH^{n-1}\nonumber \\
+ & \Big( \int_{B_r\cap A_h} W(x,\str{u_h} - \bM_0)\d x - \int_{B_r\cap E_h} W(x,\str{v_h} - \bM_0)\d x \Big) \nonumber \\
+ & \lambda_h \Big(|E_h| - |A_h|\Big)
:= I_1 + I_2 + I_3+I_4. \label{saisnainsa}
\end{align}
By the definition of $v_h$ and the nonnegativity of $\cW,$ $I_3\ge0$  and since $B_r\cap J_{v_h}=\emptyset,$
$$
I_2\ge - 2\int_{\p B_r} \varphi(x,\nu_{J_{v_h}})\d\cH^{n-1} -2\bound_2\cH^{n-1}(\p B_r) = -2\bound_2 n\omega_n r^{n-1}.
$$
Moreover, by \eqref{tang_jacob} and the area formula as well as from \eqref{finsler_norm} and \eqref{gsrtaer},
\begin{align*}
\int_{B_r\cap \p^* E_h} \varphi(x,\nu_{E_h})\d\cH^{n-1} = &
\int_{B_r\cap \p^* A_h}  \varphi(\Phi(y),\nu_{A_h})J_{n-1} T_y\,d\cH^{n-1}(y)\\
\le & 2\bound_2(1+2^n(n-1)\sigma) \cH^{n-1}(B_r\cap \p^* A_h) 
  \le 2\bound_2(1+(1+2^n(n-1))\sigma)a.
\end{align*}
Moreover, by \eqref{finsler_norm},
$$
\int_{\p B_r\cap \p^* E_h} \varphi(x,\nu_{E_h})d\cH^{n-1} \le 2\bound_2\cH^{n-1}(\p B_r) \le 2n\omega_n \bound_2r^{n-1},
$$
and hence
$$
I_1\ge - 2\bound_2(1+(1+2^n(n-1))\sigma)a - 2n\omega_n \bound_2r^{n-1}.
$$
Finally, repeating the same arguments of Step 4  in the proof of \cite[Theorem 1.1]{EF:2011}, we obtain
\begin{align*}
I_4 \ge \lambda_h \sigma r^n \Big[C_1(n)\,\frac{\omega_n}{2^{n+1}} - C_1(n)\epsilon - (2^n-1)n\epsilon\Big],
\end{align*}
thus, 
\begin{multline}
\cF^{\lambda_h}(A_h,u_h) -  \cF^{\lambda_h}(E_h,v_h) \ge   \lambda_h \sigma r^n \Big[C_1(n)\,\frac{\omega_n}{2^{n+1}} - C_1(n)\epsilon - (2^n-1)n\epsilon\Big] \\
-  2\bound_2(1+(1+2^n(n-1))\sigma)a - 2n\omega_n \bound_2r^{n-1}.  \label{asfgsa}
\end{multline}
Now if we define 
$$
\epsilon_n:=\frac{C_1(n)\omega_n}{2^{n+2} [1+C_1(n) + (2^n-1)n]},
$$
then from \eqref{asfgsa} applied with $\epsilon=\epsilon_n$ we deduce 
\begin{equation*}
\cF^{\lambda_h}(A_h,u_h) -  \cF^{\lambda_h}(E_h,v_h) \ge \lambda_h \sigma \epsilon_n r^n - C  
\end{equation*}
for some $C$ independent of $h.$ Thus, $\cF^{\lambda_h}(A_h,u_h) >\cF^{\lambda_h}(E_h,v_h)$  for all sufficiently large $h,$   which contradicts to the minimality of $(A_h,u_h).$
\end{proof}

\begin{remark}\label{rem:extension_fuscortr}
The same proof of Proposition \ref{prop:fusco} works also with $\cF_p$ and $\cF_{\rm Dir}$ in Theorems \ref{teo:elastic_plow} and \ref{teo:dirichlet_plow}. Indeed, in case $\cF_p,$ for the configuration $(E_h,v_h)$ given by \eqref{shaushaush}, the  equality \eqref{saisnainsa} is written as
\begin{align*}
\cF_p^{\lambda_h}(A_h,u_h) - \cF_p^{\lambda_h}(E_h,v_h)
= & \int_{\cl{B_r}\cap \p^*A_h} \varphi(x,\nu_{A_h})\d\cH^{n-1} - 
\int_{\cl{B_r}\cap \p^*E_h} \varphi(x,\nu_{E_h})\d\cH^{n-1} \\
+ & 2\int_{\overline{B_r}\cap J_{u_h}} \varphi(x,\nu_{J_{u_h}})\d\cH^{n-1}
- 2\int_{\overline{B_r}\cap J_{v_h}} \varphi(x,\nu_{J_{v_h}})\d\cH^{n-1} \\
+ &  \int_{B_r\cap A_h} W_p(x,\str{u_h} - \bM_0)\d x - \int_{B_r\cap E_h} W_p(x,\str{v_h} - \bM_0)\d x \\
+ & \lambda_h \Big(|E_h| - |A_h|\Big)
:= I_1 + I_2 + I_3+I_4.  
\end{align*}
The estimates of $I_1,I_2$ and $I_4$ are the same as above, and for $I_3,$ by \eqref{lower_boundas_papap} we have the bound
$$
I_3 \ge \int_{A_h\cap B_r} W_p(x,\str{u_h} - \bM_0)\d x 
\ge -\int_{B_r}|f|\d x,
$$
which is independent of $h.$

Similarly, in case of $\cF_{\rm Dir}$ we define $v_h$ in \eqref{shaushaush} as 
$$
v_h = u_h\chi_{A_h\setminus B_r}
$$
and the proof runs as in the case of $\cF_p.$
\end{remark}

\subsection{Some properties of GSBD-functions}

\begin{lemma}\label{lem:changing_GSBD_functions}
Let $U$ be an open set and $A\subset BV(U;\{0,1\})$. Assume that $u,v\in GSBD^2(U).$ Then $u\chi_A + v\chi_{U\setminus A}\in GSBD^2(U).$  
\end{lemma}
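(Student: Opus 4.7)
The plan is to verify the definition of $GSBD^2(U)$ directly for $w := u\chi_A + v\chi_{U\setminus A}$, using Dal Maso's characterization \cite{D:2013_jems}: a measurable map $w\colon U \to \R^n$ lies in $GSBD(U)$ provided there is a bounded positive Radon measure $\lambda$ on $U$ such that for every $\xi \in \S^{n-1}$ and every $\tau \in C^1(\R;(-\tfrac12,\tfrac12))$ with $0 \le \tau' \le 1$, the function $\tau(w\cdot\xi)$ belongs to $BV_\loc(U)$ and satisfies $|D_\xi \tau(w\cdot\xi)|(B) \le \lambda(B)$ for every Borel $B \subset U$. Membership in $GSBD^2$ is then obtained by verifying the $L^2$-integrability of the approximate symmetric gradient.

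First I would fix $\xi \in \S^{n-1}$ and $\tau$ as above, and observe that
$$
\tau(w\cdot\xi) = \tau(u\cdot\xi)\,\chi_A + \tau(v\cdot\xi)\,\chi_{U\setminus A}.
$$
Since $u,v \in GSBD^2(U)$, both $\tau(u\cdot\xi)$ and $\tau(v\cdot\xi)$ lie in $L^\infty \cap BV_\loc(U)$, and since $A \in BV(U;\{0,1\})$, the products with $\chi_A$ and $\chi_{U\setminus A}$ are in $BV_\loc(U)$. Next, decomposing the distributional derivative and taking its $\xi$-component yields the measure inequality
$$
|D_\xi \tau(w\cdot\xi)|(B) \le |D_\xi \tau(u\cdot\xi)|(B\cap A^{(1)}) + |D_\xi \tau(v\cdot\xi)|(B\cap (U\setminus A)^{(1)}) + \cH^{n-1}(B \cap U \cap \p^*A),
$$
where the last term accounts for the surface jump along $\p^*A$ and is estimated using $|\tau| \le 1/2$. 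Since $u, v \in GSBD(U)$, there exist bounded positive Radon measures $\lambda_u, \lambda_v$ controlling the corresponding $|D_\xi \tau(\cdot)|$ uniformly in $\xi$ and $\tau$; hence
$$
\lambda := \lambda_u + \lambda_v + \cH^{n-1}\res(U\cap\p^*A)
$$
is a bounded positive Radon measure (using $P(A,U) < +\infty$) satisfying $|D_\xi \tau(w\cdot\xi)|(B) \le \lambda(B)$ uniformly in $\xi$ and $\tau$. This proves $w \in GSBD(U)$.

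To upgrade to $GSBD^2$, I would observe that at $\cL^n$-a.e.\ $x \in A^{(1)}$ one has $w = u$ in a set of density $1$ at $x$, so the approximate symmetric gradient $\str w(x)$ exists and equals $\str u(x)$; analogously $\str w = \str v$ a.e.\ on $(U\setminus A)^{(1)}$. Since $|U \setminus (A^{(1)} \cup (U\setminus A)^{(1)})| = 0$, this yields
$$
\int_U |\str w|^2 \,\d x = \int_{A^{(1)}} |\str u|^2 \,\d x + \int_{(U\setminus A)^{(1)}} |\str v|^2 \,\d x < +\infty,
$$
so $w \in GSBD^2(U)$. The main point requiring care is the one-dimensional slicing step implicit in the measure estimate above: namely, that the jump contribution picked up on $U\cap \p^*A$ from the discontinuity of $\chi_A$ is controlled by $\cH^{n-1}\res(U\cap\p^*A)$ uniformly in $\xi$ and $\tau$, which is precisely where the boundedness of $\tau$ and the finiteness of $P(A,U)$ combine.
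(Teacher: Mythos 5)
Your proof is correct and follows essentially the same strategy as the paper: build a bounded positive Radon measure of the form $\lambda_u + \lambda_v + \cH^{n-1}\res(U\cap\p^*A)$ to verify Dal Maso's Definition~4.1 for $w = u\chi_A + v\chi_{U\setminus A}$, and then identify $\str w = \str u\,\chi_A + \str v\,\chi_{U\setminus A}$ by locality of approximate differentiability to get the $L^2$ bound. The only cosmetic difference is that the paper invokes the canonical measure $\mu_w(B)=\cH^{n-1}(B\cap J_w)+\int_B|\str w|\,\d x$ from Remark~9.3 of Dal Maso's paper, whereas you work directly with the arbitrary bounded measures guaranteed by the definition; both are equally valid.
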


\begin{proof}
Given $w\in GSBD(U),$ set
$$
\mu_w(B):= \cH^{n-1}(B\cap J_w) + \int_{B} |\str{w}|\d x,\quad B\subset U\quad\text{Borel}.
$$
By \cite[Remark 9.3]{D:2013_jems}, $\mu_w$ satisfies
$$
|D_\xi (\tau(w\cdot \xi))|(B) \le \mu_w(B),\quad B\subset U\quad \text{Borel},
$$
for any $\xi\in\R^n$ and smooth truncations $\tau.$ Now it is not difficult to see that $\omega:=u\chi_A+v\chi_{U\setminus A}$ satisfies
$$
|D_\xi (\tau(w\cdot \xi))|(B) \le \lambda(B),\quad B\subset U\quad\text{Borel}
$$
with the Radon measure
$$
\lambda(B):=\mu_u(B\cap A^{(1)}) + \mu_v(B\cap [U\setminus A]^{(1)})
+\cH^{n-1}(B\cap \p^*A)
$$
in $U.$ Thus, $\omega\in GBD(U).$
The fact that the one dimensional slices of $\hat\omega_y^\xi$ belongs to $SBV_\loc(U_y^\xi)$ follows from the observation that one dimensional slices $[\p^*A]_y^\xi$ of $\p^*A$ partitions the line  $U_y^\xi$ into connected components in each $\hat\omega_y^\xi$ is either $u_y^\xi$ or $v_y^\xi.$ In particular, $\omega\in GSBD(U).$ Finally,
since
$\str{\omega} = \str{u}\chi_A + \str{v}\chi_{U\setminus A},$ it follows that $\omega \in GSBD^2(U).$
\end{proof}

Note that this property does not hold for $GSBV$-functions, because the condition $u\chi_A + v\chi_{U\setminus A}\in GSBV(U)$ requires some regularity of the traces of   $u$ and $v$ along $U\cap \p^*A.$ From Lemma \ref{lem:changing_GSBD_functions} we get 
$
GSBD^2(\Ins{\Omega}) = GSBD^2(\Omega\cup S).
$

\begin{lemma}\label{lem:gsbd_is_sobolev}
Let $n\ge2$ and $D\subset\R^n$ be a connected bounded Lipschitz open set and let $u\in GSBD^2(D)$ be such that $\cH^{n-1}(J_u)=0.$ Then $u\in H^1(D)$ and there exists a rigid displacement $a$ such that 
\begin{equation*}
\|u-a\|_{H^1(D)} \le C_{n,D} \|\str{u}\|_{L^2(D)} 
\end{equation*} 
for some constant $C_{n,D}>0$ depending only on $n$ and $D.$
\end{lemma}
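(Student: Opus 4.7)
The plan is to first invoke a Korn--Poincar\'e inequality for $GSBD^2$-functions to promote $u$ (modulo a rigid displacement) to a genuinely $L^2$-integrable function, then use the absence of jumps to identify the approximate symmetric gradient $\str u$ with the distributional symmetric gradient of $u$, and finally conclude via the classical second Korn inequality for $LD$-functions on the bounded Lipschitz domain $D$.

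First I would apply the Korn--Poincar\'e-type inequality for $GSBD^2$-functions from \cite[Proposition 2]{CCF:2016} (or its sharpening in \cite[Theorem 1.1]{ChC:2020_jems}) on the bounded connected Lipschitz open set $D$. This produces a rigid displacement $a$ and a measurable exceptional set $\omega\subset D$ with $|\omega|\le c\,\cH^{n-1}(J_u)$ and
\[
\|u-a\|_{L^{2^*}(D\setminus\omega)} \le c\,\|\str u\|_{L^2(D)},
\]
where $2^*$ is the Sobolev exponent (or any finite exponent if $n=2$). Since $\cH^{n-1}(J_u)=0$, one has $|\omega|=0$, so the bound holds on all of $D$; as $D$ is bounded, the continuous embedding $L^{2^*}(D)\hookrightarrow L^2(D)$ yields $u-a\in L^2(D;\R^n)$ with $\|u-a\|_{L^2(D)}\le C_{n,D}\,\|\str u\|_{L^2(D)}$.

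Next I would identify the distributional symmetric gradient of $u-a$ with the approximate symmetric gradient $\str u\in L^2(D;\mtwo)$. Fix $\xi\in\S^{n-1}$. By the slicing characterization of $GSBD$-functions, for $\cH^{n-1}$-a.e.\ $y\in\Pi_\xi$ the slice $\hat u_y^\xi$ lies in $SBV^2_{\loc}(D_y^\xi)$ with $J_{\hat u_y^\xi}\subset(J_u)_y^\xi$. Since $\cH^{n-1}(J_u)=0$, Fubini forces $\cH^0((J_u)_y^\xi)=0$ for $\cH^{n-1}$-a.e.\ $y\in\Pi_\xi$, so $\hat u_y^\xi\in W^{1,2}_{\loc}(D_y^\xi)$ with classical weak derivative equal to $\str u(y+t\xi)\xi\cdot\xi$. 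Now that $u-a\in L^2(D)$, its distributional partial derivatives are well defined, and the slice identity (obtained by integrating against $\eta(y)\chi'(t)$ for $\eta\in C_c^\infty(\Pi_\xi)$, $\chi\in C_c^\infty(\R)$) upgrades to the distributional identity $\str(u-a)\xi\cdot\xi = \str u\,\xi\cdot\xi$ on $D$. Polarizing in $\xi$ gives $\str(u-a)=\str u\in L^2(D;\mtwo)$ distributionally.

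Finally, the classical second Korn inequality on the bounded connected Lipschitz domain $D$ asserts that any $w\in L^2(D;\R^n)$ whose distributional symmetric gradient lies in $L^2(D;\mtwo)$ belongs to $H^1(D;\R^n)$ with
\[
\|\nabla w\|_{L^2(D)} \le C_{n,D}\bigl(\|w\|_{L^2(D)}+\|\str w\|_{L^2(D)}\bigr).
\]
Applied to $w=u-a$ and combined with the $L^2$-bound from the first step, this yields $u\in H^1(D)$ together with the asserted estimate $\|u-a\|_{H^1(D)} \le C_{n,D}\,\|\str u\|_{L^2(D)}$.

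The hard part will be the middle step: because an arbitrary $GSBD$-function need not be locally integrable, distributional derivatives are not available a priori, and one must first use the Korn--Poincar\'e step to promote $u-a$ to $L^2$ before comparing distributional and approximate derivatives via slicing. Once $L^2$-integrability is in hand, the rest is a routine application of standard Korn inequalities.
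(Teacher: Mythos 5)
Your approach is structurally different from the paper's. The paper first uses the slicing argument (citing \cite[Proposition A.3]{HP:2021_arxiv}) only to get $u\in H^1_\loc(D)$; it then runs a classical Poincar\'e--Korn exhaustion: on a star-shaped piece it applies the inequality on the increasing sets $D_i=(1-2^{-i})D$, extracts a limit $a_0$ of the rigid displacements $a_i$, uses monotone convergence to pass to $D$, and finally covers a general Lipschitz $D$ by finitely many such pieces. You instead want to invoke a GSBD Korn--Poincar\'e inequality with exceptional set to get the global $L^2$ bound for $u-a$, identify the distributional symmetric gradient by slicing, and close with the $L^2\cap LD\Rightarrow H^1$ form of Korn's inequality on Lipschitz domains.

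There is a genuine gap in your first step. \cite[Proposition 2]{CCF:2016} is stated (and used in this paper, see the proof of Proposition \ref{prop:conti_prop_3.4}) for cubes, not for arbitrary bounded connected Lipschitz open sets; \cite[Theorem 1.1]{ChC:2020_jems} is a compactness theorem, not a quantitative Korn--Poincar\'e estimate, so it does not produce a rigid displacement $a$ with an $L^{2^*}$ bound. To run your argument on $D$ you would therefore need a GSBD Korn--Poincar\'e inequality on Lipschitz domains, which requires a covering-and-patching argument: cover $\p D$ by cylinders on which $D$ is a Lipschitz subgraph, cover the interior by cubes, apply the cube version on each piece, and use connectedness to reconcile the rigid displacements produced on overlapping pieces. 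That is essentially the content of the paper's Step~2 together with its Step~1 exhaustion, so you have not bypassed the work but left it implicit. It is precisely this \emph{global} $L^2$ bound — not the local $H^1$ regularity, which is standard once $\cH^{n-1}(J_u)=0$ — that carries the weight of the lemma. Once the $L^2$ bound is available, your Step~2 (Fubini--slicing to see that the distributional symmetric gradient of $u-a\in L^2(D)$ coincides with $\str u\in L^2(D;\mtwo)$) and Step~3 (Korn's inequality without boundary conditions on a bounded Lipschitz domain, promoting $L^2\cap LD$ to $H^1$) are correct and standard.
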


\begin{proof}
Recall that by the Poincar\'e-Korn inequality, for any connected Lipschitz set $U\subset\R^n$ there exists $C_{n,U}>0$ such that
\begin{equation}\label{poincare_korne}
\|v-a\|_{H^1(U)} \le C_{n,U}\|\str{v}\|_{L^2(U)}
\end{equation}
for any $v\in H^1(U)$ and for some rigid displacement $a:\R^n\to\R^n.$ Obviously, $C_{n,U}$ is independent of translation, and assuming $0\in U,$  let us show
\begin{equation}\label{scaled_coeffos}
C_{n,\lambda U} \le C_{n,U}\quad \text{for any $\lambda\in(0,1].$}
\end{equation}
Note that \eqref{poincare_korne} is equivalent to
\begin{equation}\label{poinc_korn1}
\min\limits_{\text{$a$ rigid}} \|v-a\|_{H^1(U)} \le C_{n,U}\|\str{v}\|_{L^2(U)},\quad  v\in H^1(U).
\end{equation}
Fix any $u\in H^1(\lambda U)$ and let $v_\lambda(x):=u(\lambda x).$ Then $v_\lambda\in H^1(U),$ 
$$
\int_U |v_\lambda(x)|^2\d x = \lambda^{-n} \int_{\lambda U} |u(y)|^2\d y
$$
and 
$$ 
\int_U |\nabla v_\lambda(x)|^2\d x = \lambda^{2-n} \int_{\lambda U} |\nabla u(y)|^2\d y,\quad 
\int_U |\str{ v_\lambda(x)}|^2\d x = \lambda^{2-n} \int_{\lambda U} |\str{u(y})|^2\d y.
$$
Then for any rigid displacement $a(x)=\bM x+ b$ we have 
\begin{align*}
\| u - a\|_{H^1(\lambda U)}^2 = &
\lambda^n \|v_\lambda - a_\lambda\|_{L^2(U)}^2 + 
\lambda^{n-2} \|\nabla v_\lambda - \bM\|_{L^2(U)}^2
\le \lambda^{n-2} \|v_\lambda - a_\lambda\|_{H^1(U)}^2,
\end{align*}
where  $a_\lambda(x) = \lambda \bM x + b.$ Now taking $a_\lambda,$ satisfying \eqref{poincare_korne} with $v=v_\lambda,$ we have 
\begin{align*}
\| u - a\|_{H^1(\lambda U)}^2 
\le \lambda^{n-2}\|v_\lambda - a_\lambda\|_{H^1(U)}^2 
\le C_{n,U}^2\lambda^{n-2}\|\str{v_\lambda}\|_{L^2(U)}^2 =
 C_{n,U}^2\|\str{u}\|_{L^2(\lambda U)}^2,
\end{align*}
and thus, from \eqref{poinc_korn1} we get \eqref{scaled_coeffos}.

Now we prove the lemma. By \cite[Proposition A.3]{HP:2021_arxiv}, $u\in H^1_\loc(D)$ and hence, in view of  \eqref{poincare_korne} we just need to show that $u\in H^1(D).$

{\it Step 1.} First assume additionally that $D$ is simply connected and $0$ is in the interior of $D.$  
Consider  the sequence
$$
D_i=(1-2^{-i})D,\quad i\in\N,
$$
of rescalings of $D.$ Since $D_i\strictlyincluded D$ and $u\in H^1(D_i),$ by \eqref{poincare_korne} and \eqref{scaled_coeffos}, there exists a rigid displacement $a_i:\R^n\to\R^n$ such that
\begin{equation}\label{poincar_korna_ianqq}
\|u-a_i\|_{H^1(D_i)} \le C_{n,D}\|\str{u}\|_{L^2(D_i)}. 
\end{equation}
Consider the sequence $\{a_i\}.$ Since $D_1\subset D_i\subset D,$ by \eqref{poincar_korna_ianqq},
$$
\|a_i-a_1\|_{H^1(D_1)} \le \|u-a_i\|_{H^1(D_i)} + \|u-a_1\|_{H^1(D_1)}  \le  C_{n,D}\|\str{u}\|_{L^2(D)}.
$$
Thus, $\{a_i\}$ is uniformly bounded in $H^1(D_1).$ Since $a_i$ are linear, up to a subsequence, $a_i\to a_0$ in $H_\loc^1(\R^n)$ and $a_i\to a_0$ a.e.\  in $\R^n$ for some rigid displacement $a_0.$ Hence, by \eqref{poincar_korna_ianqq},
\begin{align*}
\|u-a_0\|_{H^1(D_i)} = \lim\limits_{j\to+\infty} \|u-a_j\|_{H^1(D_i)} \le \limsup\limits_{j\to+\infty} \|u-a_j\|_{H^1(D_j)} 
\le & C_{n,D}\,\limsup\limits_{j\to+\infty} \|\str{u}\|_{L^2(D_j)}
\end{align*}
Since $D_j\nearrow D$ and $\str{u}\in L^2(D),$ by the monotone convergence theorem,
$$
\|u-a_0\|_{H^1(D_i)}  \le C_{n,D}\|\str{u}\|_{L^2(D)}.
$$
Letting $i\to+\infty$ in this inequality and using again the monotone convergence theorem we get 
$
u-a_0\in H^1(D),
$ and thus, $u\in H^1(D).$
\smallskip

{\it Step 2.} Now consider the general case.  Since $D$ is Lipschitz, for any $x\in \p D$ there exists a cylinder $R_x$ such that $D\cap R_x$ is a subgraph of a Lipchitz function. In particular, $D\cap R_x$ is Lipschitz and simply connected. For $x\in D$ let $R_x$ be largest cube centered at $x$ and contained in $D.$ Then 
$\cl{D} \subseteq \bigcup_x R_x,$ and hence by the compactness of $\cl{D}$, there exist  finitely many points $x_1,\ldots,x_m$ such that $\cl{D} \subset \bigcup_{j=1}^m R_{x_j}.$ Since $R_{x_j}\cap D$ is simply connected, by Step 1, $u\in H^1(R_{x_j}\cap D)$ and there exists a rigid displacement $a_j$ such that
$$
\|u - a_j\|_{H^1(R_{x_j}\cap D)} \le C_{n,R_{x_j}\cap D}
\|\str{u}\|_{L^2(R_{x_j}\cap D)}.
$$
Thus,
$$
\|u\|_{H^1(D)}^2 \le 
\sum\limits_{j=1}^m \|u\|_{H^1(D\cap R_{x_j})}^2
\le 
2\sum\limits_{j=1}^m \|u - a_j\|_{H^1(D\cap R_{x_j})}^2 
+
2\sum\limits_{j=1}^m \|a_j\|_{H^1(D\cap R_{x_j})}^2 <+\infty.
$$
\end{proof}

\subsection*{Acknowledgments} 

Sh. Kholmatov acknowledges support from the Austrian Science Fund (FWF) projects M2571-N32 and P33716. P. Piovano acknowledges support from the Austrian Science Fund (FWF) projects P 29681 and TAI 293, from the Vienna Science and Technology Fund (WWTF) together with the City of Vienna and Berndorf Privatstiftung through Project MA16-005, and from BMBWF through the OeAD-WTZ project HR 08/2020. Furthermore, P. Piovano is member of the Italian ``Gruppo Nazionale per l'Analisi Matematica, la Probabilit\`a e le loro Applicazioni'' (GNAMPA) and has received funding from the  INdAM - GNAMPA  2022 project CUP: E55F22000270001 and 2023 Project Codice CUP: E53C22001930001.
Finally, P. Piovano is grateful for the support received from Wolfgang Pauli Institute (WPI) in Vienna, Austria and as Visiting Professor and Excellence Chair at the Okinawa Institute of Science and Technology (OIST) in Japan.

\subsection*{Data availability}

The manuscript has no associated data.

\subsection*{Competing interests}

There are no conflict of interests related to the work.

\end{document}